\numberwithin{equation}{section}
\newcommand{\beq}{\begin{equation}}
\newcommand{\eeq}{\end{equation}}
\newcommand{\beqs}{\begin{eqnarray*}}
\newcommand{\eeqs}{\end{eqnarray*}}
\newcommand{\beqn}{\begin{eqnarray}}
\newcommand{\eeqn}{\end{eqnarray}}
\newcommand{\beqa}{\begin{array}}
\newcommand{\eeqa}{\end{array}}
\def\lra{\longrightarrow}
\def\bc{\begin{center}}
\def\ec{\end{center}}
\def\begeq{\begin{equation}}
\def\endeq{\end{equation}}
\def\and{\quad{\rm and}\quad}
\let\lra=\longrightarrow
\def\mapright\#1{\,\smash{\mathop{\lra}\limits^{\#1}}\,}
\newtheorem{prop}{Proposition}[section]
\newtheorem{theo}[prop]{Theorem}
\newtheorem{lem}[prop]{Lemma}
\newtheorem{cor}[prop]{Corollary}
\newtheorem{rem}[prop]{Remark}
\newtheorem{defi}[prop]{Definition}
\begin{document}

\title{Uniform K-stability of $G$-varieties of complexity 1}


\author[Yan Li]{Yan Li$^{*1}$}
\author[Zhenye Li]{Zhenye Li$^{*2}$}

\address{$^{*1}$School of Mathematics and Statistics, Beijing Institute of Technology, Beijing, 100081, China.}
\address{$^{*2}$College of Mathematics and Physics, Beijing University of Chemical Technology, Beijing, 100029, China.}
\email{liyan.kitai@yandex.ru,\ \ \ lizhenye@pku.edu.cn}

\thanks {$^{*1}$Partially supported by NSFC Grant 12101043 and the Beijing Institute of Technology Research Fund Program for Young Scholars.}
\thanks {$^{*2}$Partially supported by NSFC Grant 12001032.}

\subjclass[2000]{Primary: 14L30, 14M17; Secondary: 14D06, 53C30}

\keywords{$G$-varieties of complexity 1, K-stability, Fano varieties}

\begin{abstract}
Let ${\rm k}$ be an algebraically closed field of characteristic 0 and $G$ a connected, reductive, linear algebraic group of simply connected type over ${\rm k}$. Let $X$ be a projective $G$-variety of complexity 1. We classify $G$-equivariant normal test configurations of $X$ with integral central fibre via the combinatorial data. We also give a formula of anti-canonical divisors on $X$. Based on this formula, when $X$ is $\mathbb Q$-Fano, we give an expression of the Futaki invariant, and derive a criterion of uniform K-stability in terms of the combinatorial data.
\end{abstract}
\maketitle

\section{Introduction}

The famous Yau-Tian-Donaldson conjecture asserts that the existence of K\"ahler-Einstein metrics on a Fano manifold is equivalent to the K-stability. The conjecture has been proved by Tian \cite{Tian-15} and Chen-Donaldson-Sun \cite{CDS} alternatively. In recent years, the conjecture has been widely extended to other canonical metrics (such as solitons, cscK metrics on polarized vareities), log pairs, and a uniform version for singular $\mathbb Q$-Fano varieties, see \cite{Boucksom-Hisamoto-Jonsson, Li19, Xu-EMS-survey-2021}, etc.

The notion of K-stability was first introduced by Tian \cite{Tian-97} in terms of special degenerations and then reformulated by Donaldson \cite{Do} in an algebro-geometric way via test-configurations. In general, to test K-stability one has to study infinitely many possible degenerations. A natural question is how to reduce it to a finitely dimensional process. The reduction would be possible when considering varieties with large group actions and the respective equivariant K-stability.

Let ${\rm k}$ be an algebraically closed field of characteristic 0 and $G$ a connected, reductive, linear algebraic group over ${\rm k}$. Let $X$ be a normal variety with a regular $G$-action. Fix any Borel subgroup $B$ of $G$. The \emph{complexity} of the $G$-action on $X$ is the codimension of a $B$-orbit in general position (cf. \cite{Vinberg-86}). The varieties of complexity 0 are called \emph{spherical varieties}, which include the well-known toric varieties. The geometry of spherical varieties has been comprehensively studied by various of authors, we refer to \cite{Luna-Vust, Brion-Luna-Vust, Knop91, Timashev-book}, and references therein for more knowledge. The spherical variety has a nice combinatorial description which brings great convenience to the study of its geometric structure. Roughly speaking, given a spherical variety $X$, one may begin with the lattice $\Gamma$ of $B$-semiinvariant rational functions\footnote{More precisely, the lattice of $B$-weights of semiinvariant functions. Note that on a spherical variety, up to multiplication by a non-zero constant, a $B$-semiinvariant function is uniquely determined by its $B$-weight.}, which is contained in the lattice $\mathfrak X(B)$ of $B$-weights, and its ($\mathbb Q$-)dual $\Gamma_\mathbb Q^*$ in which the set of $G$-valuations (called its valuation cone, which is a solid cosimplicial cone) is embedded. The variety $X$ itself is fully characterised by a fan of convex cones (more precisely, coloured cones, see \cite{Luna-Vust,Knop91,Timashev-book} for details), while line bundles are described by piecewise linear functions on this fan (cf. \cite{Brion89,Timashev-2000}). In particular, when $X$ is polarized by some ample line bundle $L$, there is a nice moment polytope in $\Gamma_\mathbb R:=\Gamma\otimes_\mathbb Q\mathbb R$ that encodes the structure of $G$-modules on the spaces of sections ${\rm H}^0(X,L^k)$ for $k\in\mathbb N$. In terms of these data, in recent years several combinatorial criterions of K-stability of spherical varieties were established, which give a practical way to test K-stability and provide rich examples. For instance, the K-stability criterion for toric varieties was studied by Donaldson \cite{Do}, for spherical varieties established by Delcroix \cite{Del-2021,Del-2020-09}, etc. It turns out that in these cases K-stability is characterised by certain barycenter of the moment polytope.

It is then natural to consider $G$-varieties of complexity 1. In fact, many examples in the literatures belong to this class, see for instance \cite{Mukai-Umemura-1983,Moser-Jauslin-87}. The classification theory of $G$-varieties of complexity 1 was established by Timash\"ev \cite{Timashev-1997}. In particular \cite{Timashev-1997} gives a combinatorial description of these varieties analogous to the spherical cases. Unlike the spherical case, a $B$-semiinvariant rational function can not be completely determined only by its weight. In this case we need to consider the hyperspace instead of $\Gamma_\mathbb Q^*$ in which the $G$-valuations are embedded. Roughly speaking, the hyperspace is a family of half-spaces $\{\mathscr Q_{x,+}|x\in C\}$ parameterized by points of a smooth projective curve $C$, each isomorphic to $\Gamma_\mathbb Q^*\times \mathbb Q_+$, that glued together along common boundary $\mathscr Q=\Gamma_\mathbb Q^*\times\{0\}$. The valuation cone is obtained by gluing a collection of solid convex cones in $\mathscr Q_{x,+}$ along common face in $\mathscr Q$. Varieties with given field of rational functions are classified by fans of coloured cones and hypercones (a hypercone is again a family of cones that glued along common boundary). The combinatorial theory of line bundles on $G$-varieties of complexity 1 was then studied in \cite{Timashev-2000}. Line bundles are described by collection of linear functionals defined on $\mathscr Q_{x,+}$ with common restriction on $\mathscr Q$. See Section 2.2 for details. For $T$-varieties of complexity 1 with $T$ an algebraic torus, Ilten-S\"u\ss\,\cite{Ilten-Suss-2011} developed the classification theory of polarized $T$-varieties of complexity 1. They introduced the ``divisorial polytope" which turns out to be a very useful tool in the study of polarized $T$-varieties of complexity 1. Based on this theory, they obtained a combinatorial criterion of equivariantly K-(poly)stability of Fano $T$-varieties of complexity 1 in \cite{Ilten-Suss-Duke}. Recently Rogers \cite{Rogers-2022} studied K-(poly)stability of smooth Fano ${\rm SL_2}$-varieties of complexity 1.

Let $G$ be an arbitrary connected, reductive, linear algebraic group of simply connected type, and $X$ a projective $G$-variety of complexity 1. Motivated by the works cited above, in this paper we give a criterion of $G$-equivariantly uniform K-stability in terms of its combinatorial data. To this end, we will first classify $G$-equivariant normal test configuration of $X$ with integral  central fibre. This class in particular includes the special test configurations, which are enough for testing K-stability of a $\mathbb Q$-Fano variety (cf. \cite{Li-Xu-Annl,Li19}). We call $X$ a $\mathbb Q$-Fano variety if $X$ is $\mathbb Q$-Gorenstein and $K_X^{-1}$ is ample. When $X$ is $\mathbb Q$-Fano, we then consider a family of polytopes $\{\Delta_x^O(K_X^{-1})\subset\mathfrak X_\mathbb R(B)\times\mathbb R|x\in C\}$ that encodes the information of K-stability of $X$, where $C\cong\mathbb P^1$ is birational to the rational quotient for the $B$-action. These polytopes were first introduced in \cite{Ilten-Suss-Duke} to characterize K-stability of $T$-varieties of complexity 1. Based on a formula of $B$-stable anti-canonical divisors of $X$ derived in Section 3, it turns out that the Futaki invariant can be expressed in a very simple way on this family of polytopes. Finally we prove a combinatorial criterion of K-stability if $X$ in addition has at most klt singularities. Let $\mathscr V$ be the valuation cone of $X$ and $\mathscr V_x$ its intersection with the hyperspace over $x\in C$, $\mathscr A_x\subset\mathscr V_x$ the space parameterizing one-parameter subgroups in the $G$-equivariant automorphism group ${\rm Aut}_G(X)$ of $X$ (see Section 4.2.1 below). Denote by $\kappa_P$ the sum of unipotent roots of the associated parabolic subgroup $P$ of $X$  (see Section 2.3.2 below). Then each $\Delta_{x}^O(K_X^{-1})$ is in fact a solid convex polytope in $(\Gamma_\mathbb R+\kappa_P)\times\mathbb R$. Also set
$${\mathbf b}(\Delta_{x}^O(K_X^{-1})):=\frac1V\int_{\Delta_{x}^O(K_X^{-1})}(\lambda,t)\pi(\lambda)d\lambda\wedge dt$$
the barycenter of a polytope $\Delta_{x}^O(K_X^{-1})(\subset(\Gamma_\mathbb R+\kappa_P)\times\mathbb R)$, where $(\lambda,t)$ are coordinates on $\Delta_{x}^O(K_X^{-1})$, $\pi(\lambda)$ is a non-negative polynomial function depends on $X$, $d\lambda$ the standard Lebesgue measure on $\Gamma_\mathbb R+\kappa_P$ which is normalized by the lattice $\Gamma$, and $$V=\int_{\Delta_{x}^O(K_X^{-1})}\pi(\lambda)d\lambda\wedge dt$$ is the volume of $\Delta_{x}^O(K_X^{-1})$ which is in fact independent of $x$ (see Section 5.2 below for details). We have the following
\begin{theo}\label{K-st-thm}
Let $X$ be a projective, $\mathbb Q$-Fano $G$-varieties of complexity 1 with klt singularities, which satisfies ${\rm k}(X)^B\cong{\rm k}(\mathbb P^1)$. Then $X$ is $G$-equivariantly K-semistable if and only if
\begin{align}\label{K-ss-eq}
\kappa_P-\mathbf{b}(\Delta_{x}^O(K_X^{-1}))\in(\mathscr V_x)^\vee,~\forall x\in C,
\end{align}
where $(\mathscr V_x)^\vee$ is the dual cone of $\mathscr V_x$. Moreover, the following conditions are equivalent:
\begin{itemize}
\item[(1)] It holds \eqref{K-ss-eq} and
\begin{align}\label{K-ps-eq}
(\kappa_P-\mathbf{b}(\Delta_{x}^O(K_X^{-1})))^\perp\cap \mathscr V_x=\mathscr A_x,~\forall x\in C;
\end{align}
\item[(2)] $X$ is $G$-equivariantly K-polystable;
\item[(3)] $X$ is ${\mathbf G}$-uniformly K-stable, provided 
$X$ is not a $G\times{\rm k}^\times$-spherical variety. 
Here ${\mathbf G}$ is the subgroup of the automorphism group ${\rm Aut}(X)$ generated by $G$ and a torus ${\mathbf T}$ contained in the central automorphism group of $X$ whose Lie algebra is the linear part of $\mathscr V$ (see Section 2.2.3 below).
\end{itemize}
\end{theo}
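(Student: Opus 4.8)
The plan is to convert each stability notion into an inequality for the Donaldson--Futaki invariant evaluated on the combinatorial data, and then read off the cone conditions. Since $X$ is $\mathbb Q$-Fano with klt singularities, by \cite{Li-Xu-Annl,Li19} its $G$-equivariant K-(semi/poly)stability can be tested on $G$-equivariant \emph{special} test configurations, which sit inside the class of normal test configurations with integral central fibre classified combinatorially in the earlier sections. First I would record that, under the classification, each such special test configuration is encoded by a pair $(x,\upsilon)$ with $x\in C$ and $\upsilon\in\mathscr V_x$ an integral element of the slice of the valuation cone over $x$; product configurations correspond exactly to $\upsilon\in\mathscr A_x$, and the lineality space of $\mathscr V$ is $\operatorname{Lie}(\mathbf T)$, which supplies the central torus of $\mathbf G$.

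Second, and this is the technical core, I would derive the Donaldson--Futaki invariant of the configuration attached to $(x,\upsilon)$ purely in terms of the family $\{\Delta_{x}^O(K_X^{-1})\}$. Using the formula for $B$-stable anti-canonical divisors from Section 3, the polarization $K_X^{-1}$ produces a fibrewise polytope $\Delta_{x}^O(K_X^{-1})$ whose volume and barycenter control the leading asymptotics of the equivariant Hilbert/Ehrhart data, while the unipotent twist contributes the weight $\kappa_P$, exactly as in the spherical (complexity $0$) case. The target identity is
\[
\mathrm{DF}(x,\upsilon)=\big\langle\,\kappa_P-\mathbf b(\Delta_{x}^O(K_X^{-1})),\,\upsilon\,\big\rangle,
\]
up to a fixed positive normalization, the one extra subtlety being that the integration over $C\cong\mathbb P^1$ localizes at $x$ for these vertical configurations, so that the resulting criterion is pointwise in $x$ rather than an averaged inequality. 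Granting this, K-semistability is immediate: $\mathrm{DF}(x,\upsilon)\ge0$ for every admissible $(x,\upsilon)$ is equivalent to $\langle\kappa_P-\mathbf b(\Delta_{x}^O(K_X^{-1})),\upsilon\rangle\ge0$ for all $\upsilon\in\mathscr V_x$ and all $x$, i.e. $\kappa_P-\mathbf b(\Delta_{x}^O(K_X^{-1}))\in(\mathscr V_x)^\vee$ for all $x$, which is precisely \eqref{K-ss-eq}.

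For the equivalences I would argue as follows. Assuming \eqref{K-ss-eq}, the locus inside $\mathscr V_x$ on which $\mathrm{DF}$ vanishes is exactly $(\kappa_P-\mathbf b(\Delta_{x}^O(K_X^{-1})))^\perp\cap\mathscr V_x$; since the product configurations are those with $\upsilon\in\mathscr A_x$, condition \eqref{K-ps-eq} says that this vanishing locus consists only of product directions, which is the definition of $G$-equivariant K-polystability. This yields (1)$\Leftrightarrow$(2). For (2)$\Leftrightarrow$(3) I would invoke reduced uniform stability theory (\cite{Boucksom-Hisamoto-Jonsson,Li19}): the automorphism directions along which $\mathrm{DF}$ degenerates are precisely those generated by ${\mathbf T}$ together with the $G$-action, so that ${\mathbf G}$-uniform K-stability is equivalent to $G$-equivariant K-polystability, once one excludes the degenerate case in which $X$ is a $G\times{\rm k}^\times$-spherical variety (where the complexity drops and ${\mathbf T}$ no longer exhausts the product directions, so the reduction fails).

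The main obstacle I anticipate is the second step: making the anti-canonical/barycenter computation rigorous in the complexity-one setting, where the polytopes $\Delta_{x}^O(K_X^{-1})$ genuinely vary with $x$ and the relevant Duistermaat--Heckman measure lives over the base $C$. Isolating the $\kappa_P$ contribution correctly and establishing the localization-at-$x$ phenomenon, rather than obtaining a single functional integrated over $C$, is the delicate point, and it is exactly what upgrades the conclusion to the pointwise criterion ``$\forall x\in C$'' appearing in \eqref{K-ss-eq} and \eqref{K-ps-eq}.
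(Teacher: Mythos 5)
Your first two steps coincide with the paper's actual route: the classification of special test configurations by integral $v_0\in\mathscr V_x$ (with product ones exactly those with $v_0\in\mathscr A_x$, Proposition \ref{Aut-cone-prop}), and the Futaki formula ${\rm Fut}(\mathcal X,\mathcal L)=\langle\kappa_P-\mathbf b(\Delta_{x_0}^O(K_X^{-1})),v_0\rangle$ (Theorem \ref{Fut-one-para-thm}), from which K-semistability $\Leftrightarrow$ \eqref{K-ss-eq} and (1)$\Leftrightarrow$(2) follow exactly as you say. Your reading of the spherical exclusion is also essentially right: when $X$ is $G\times{\rm k}^\times$-spherical one can have $\mathscr A_x\supsetneq\mathscr A$, so twisting by ${\mathbf T}$ cannot absorb all product directions (Corollaries \ref{central-spherical} and \ref{non-spherical-cor}).

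The genuine gap is your treatment of (1)$\Rightarrow$(3). You propose to ``invoke reduced uniform stability theory'' from \cite{Boucksom-Hisamoto-Jonsson,Li19} to conclude that $G$-equivariant K-polystability is equivalent to ${\mathbf G}$-uniform K-stability. No such equivalence exists in those references: \cite{Boucksom-Hisamoto-Jonsson} sets up the non-Archimedean functionals and \cite{Li19} proves that ${\mathbf G}$-uniform K-stability implies existence of K\"ahler--Einstein metrics, but the implication ``K-polystable $\Rightarrow$ (reduced/${\mathbf G}$-)uniformly K-stable'' is a deep statement that cannot be cited as a black box here --- establishing it in the complexity-one setting is precisely the content of this part of the theorem. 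The paper proves it directly and quantitatively: it first computes ${\rm J}^{\rm NA}$ of a test configuration combinatorially (Proposition \ref{J-NA-prop}), shows via Lemma \ref{twist-lem} that the twist of the configuration attached to $v_0$ by $\ell'\in\mathscr A\cong{\rm Lie}({\mathbf T})$ is (after base change) the configuration attached to $v_0+\ell'$, normalizes so that $v_0+\ell'\perp\mathscr A$, and then runs a Donaldson-style compactness argument: if \eqref{uni-sta-def} failed, a sequence $v_k'/c_k$ normalized by ${\rm J}^{\rm NA}=1$ would be bounded (using that there are only finitely many distinct polytopes $\Delta_x^O(K_X^{-1})$, so one can pass to a single $\mathscr Q_{x_1,+}$), and a limit point $v_*'\in\mathscr A^\perp$ would satisfy $\langle\kappa_P-\mathbf b(\Delta_{x_1}^O(K_X^{-1})),v_*'\rangle=0$, forcing $v_*'\in\mathscr A$ by \eqref{K-ps-eq}, hence $v_*'=0$, contradicting the normalization. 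None of this linearity-plus-compactness mechanism, nor the explicit formula \eqref{JNA-eq} for ${\rm J}^{\rm NA}$ of twists that it requires, appears in your proposal, so as written the uniform-stability direction does not go through.
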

Precise definitions and useful properties of the terms involved here can be found in the preliminary section and indicated above places. By the famous Yau-Tian-Donaldson conjecture \cite{Tian-15,Li19}, when ${\rm k}=\mathbb C$ the above K-stability criterion then provides an existence criterion of K\"ahler-Einstein metrics.

\begin{rem}
Note that when $X$ is a $\mathbb Q$-Fano $G\times{\rm k}^\times$-spherical variety, the K-stability can be tested by the combinatorial criterion of Delcroix \cite[Theorem A]{Del-2021}.
\end{rem}

As mentioned above, we need to derive a formula of $B$-stable anti-canonical divisors which helps to simplify the Futaki invariant. The following is a counterpart of a  result of Brion \cite{Brion97} for spherical varieties.
\begin{theo}\label{anti-can-div-thm-in-all}
Let $X$ be a $\mathbb Q$-Gorenstein, projective $G$-variety of complexity 1, and $\mathscr B(X)$ the set of its $B$-stable prime divisors. Denote by $P\subset G$ its associated parabolic subgroup. Then there is a $B$-stable anti-canonical $\mathbb Q$-divisor of $X$,
\begin{align}\label{anti-can-div-in-all}
\mathfrak d=\sum_{D\in\mathscr B(X), h_D\not=0}(1-h_D+h_Da_{x_D})D+\sum_{D\in\mathscr B(X)^G, h_D=0}D+\sum_{D\in\mathscr D^B, h_D=0}\bar m_DD,
\end{align}
where $v_D=h_Dq_{x_D}+\ell_D\in\mathscr Q_{x_D,+}$ is the valuation defined by $D$, $\mathfrak a:=\sum_{x\in\mathbb P^1}a_x[x]$ is an anti-canonical $\mathbb Q$-divisor of $C$. The coefficient $\bar m_D$ of each colour $D$ with $h_D=0$ is explicitly defined in \eqref{coe-colour-one-para} and \eqref{bar-m-D-quasi}. Moreover, if $m\mathfrak d$ is a Cartier divisor for some $m\in\mathbb N_+$, then $m\mathfrak d$ is the divisor of a $B$-semiinvariant rational section of $K_X^{-m}$ with weight $m\kappa_P$.
\end{theo}
As will be seen below, there are two classes of $G$-varieties of complexity one, the one-parameter and quasihomogeneous ones. For this reason Theorem \ref{anti-can-div-thm-in-all} will be proved separately in Sections 3.1 and 3.2 (under slightly weaker assumptions, respectively). In fact the proof in the quasihomogeneous case relies on that in the one-parameter case. We would also like to briefly explain here the relation between Theorem \ref{anti-can-div-thm-in-all} with results derived by different authors in the literatures. The $T$-varieties, or more general, horospherical varieties of complexity 1 are of one-parameter type. There such a formula has been proved by Petersen-S\"uss \cite{Petersen-Suss-2011} and Langlois-Terpereau \cite{Langlois-Terpereau-2016}, respectively. Later Langlois \cite{Langlois} proved a general formula, which in particular include the one-parameter case here, in a different framework. However, up to the authors' knowledge, there is not a unified formula like \eqref{anti-can-div-in-all} for the quasihomogeneous cases in the literatures.

Concerning applications of the above results, we give examples in Section 6. In Section 6.1 we illustrate our frame on the well-known Mukai-Umemura threefold, which is a ${\rm SL}_2$-variety of complexity 1. Then in Section 6.2 we apply the above results to the following example
$$X = \{(p_1, p_2, p_3, l_1, l_2, l_3)|p_j\in \mathbb P^2,~l_i \in \mathbb P^{2^*},~p_j \in l_i~\text{whenever}~i\not=j\},$$
which can be realized as a ${\rm SL}_3$-variety of complexity 1 (see Section 6.2 for details on group actions). We will prove
\begin{prop}
The  ${\rm SL}_3$-variety $X$ of complexity 1 defined above is a $\mathbb Q$-Fano variety with klt singularities, and is ${\mathbf G}$-uniformly K-stable.
\end{prop}

The paper is organized as following: In Section 2 we give preliminaries. In Section 3 we derive a formula of anti-canonical divisors on $X$. This formula will be crucial when simplifying the expression of Futaki invariants later. In Section 4 we classify $G$-equivariant normal test configuration of $X$ with integral central fibre. We also study the central fibre and give combinatorial parameterizations of ${\rm Aut}_G(X)$ via studying the associated filtration in Section 4.2. In Section 5 we give a formula of Futaki invariant in terms of the combinatorial data, based on the formula of total weights. In particular we simplify the formula on $\mathbb Q$-Fano varieties. In Section 5.4 we prove Theorem \ref{K-st-thm}. In Section 6 we give two examples where Theorem \ref{K-st-thm} is applied. In the Appendix we collect useful Lemmas, and give an alternative computation of the Futaki invariant for $\mathbb Q$-Fano varieties via a formula of intersection numbers. Also some discussions on $\Delta_{x}^O(K_X^{-1})$'s are included by the end of the Appendix.

\subsection*{Notations and conventions}
In this paper we use the following conventions:
\begin{itemize}
\item ${\rm k}$-an algebraically closed field of characteristic 0;
\item $G$-a connected, reductive, linear algebraic group over ${\rm k}$. $G$ is assumed to be of simply connected type from Section 2.3;
\item $\Phi_G$-root system of $G$;
\item $\Pi_G$-the set of positive roots in $\Phi_G$ of some group $G$ with respect to certain Borel subgroup;
\item $\rho:=\frac12\sum_{\alpha\in\Pi_G}\alpha$-half the sum of positive roots of $G$;
\item $P=L_P\cdot P_u$-Levi decomposition of a parabolic subgroup $P\subset G$, where $L_P$ is a Levi subgroup of $P$ and $P_u$ the unipotent radical of $P$;
\item $\Pi_{L_P}\subset\Pi_G$-positive roots of $L_P$ in $\Pi_G$;
\item $\Pi_{P_u}\subset\Pi_G$-the unipotent roots of a parabolic subgroup $P$;
\item $\mathfrak X(\cdot)$-characters of a group;
\item $V^H$-the subset of all $H$-invariants in a linear $H$-representation $V$;
\item $V^{(H)}_\chi$-the subset of all $H$-semiinvariants with weight $\chi$ in a linear $H$-representation $V$;
\item $\langle S\rangle$-linear span of a set $S$ in a linear space;
\item $\langle v,w^*\rangle$-linear pairing of $v\in V$ and $w^*\in V^*$, where $V$ is a linear space;
\item ${\rm Conv}(S)$-convex hull of a set $S$;
\item $S^\vee$-dual cone of a set $S$ in $V^*$, where $V$ is a linear space containing $S$;
\item $S^\perp$-elements in $V^*$ that is orthogonal to $S$, where $V$ is a linear space containing $S$;
\item ${\rm Stab}_G(S)$-the group of normalizer in $G$ of a set $S$. That is, the set of elements in $g$ that keep the set $S$ invariant. If $S=\{D\}$ for a single divisor $D$, we write ${\rm Stab}_G(D)$ in short of ${\rm Stab}_G(\{D\})$;
\item $K$-a function field which admits a $G$-action. We assume in this paper that $K^B\cong {\rm k}(\mathbb P^1)$ unless otherwise stated;
\item $X$-a $G$-model of $K$, see Section 2.2; 
\item $\kappa_P$-sum of unipotent roots of the associated parabolic subgroup of $X$ (see Section 2.3 for this parabolic subgroup). It is also the $B$-weight of a canonically chosen rational section of $K_X^{-1}$, see Theorems \ref{anti-can-div-thm} and \ref{anti-can-div-quasi-homo} below;
\item $L^n$-the $n$-th tensor power of a line bundle $L$;
\item $L^{\cdot k}$-the $k$-th self-intersection of a line bundle $L$;
\item If $\mathfrak M$ is a lattice, then we denote $\mathfrak M_\mathbb Q=\mathfrak M\otimes_\mathbb Z\mathbb Q$ and $\mathfrak M_\mathbb R=\mathfrak M\otimes_\mathbb Q\mathbb R$;
\item $d\lambda$-the standard Lebesgue measure on $\Gamma_\mathbb R$ or its translation $\Gamma_\mathbb R+\lambda_0$ for some $\lambda_0\in\mathfrak X(B)$, which is normalized by $\Gamma$.
\end{itemize}

\section{Preliminaries}

\subsection{Test configurations and K-stability}
Let $X$ be a projective variety. Then the pair $(X,L)$ with $L$ an ample line bundle on $X$ is called a \emph{polarized variety} (polarized by the line bundle $L$). The notation of K-stability of a polarized variety are usually stated in terms of test configurations.
\begin{defi}
Let $(X,L)$ be a polarized variety. A test configuration of $X$ consists of the following data:
\begin{itemize}
\item A scheme $\mathcal X$ with a ${\rm k}^\times$-action;
\item A ${\rm k}^\times$-equivariant flat and proper morphism ${\rm pr}:\mathcal X\to{\rm k}$ so that $\mathcal X\setminus{\rm pr}^{-1}(0)$ is ${\rm k}^\times$-equivariantly isomorphic to $X\times{\rm k}^\times$.
\end{itemize}
A (semi-)test configuration of $(X,L)$ consists of a test configuration $\mathcal X$ and a ${\rm k}^\times$-linearized $\mathbb Q$-line bundle $\mathcal L$ on $\mathcal X$ so that:
\begin{itemize}
\item $\mathcal L$ is relative (semi-)ample on $\mathcal X$ with respect to the morphism ${\rm pr}$;
\item With respect to the ${\rm k}^\times$-action on $\mathcal X$, $(\mathcal X,\mathcal L)|_{{\rm pr}^{-1}({\rm k}^\times)}$ is ${\rm k}^\times$-equivariantly isomorphic to $(X,L^{r_0})\times{\rm k}^\times$ for some fix $r_0\in\mathbb N_+$ $($called the
index of $(\mathcal X,\mathcal L)$$)$.
\end{itemize}
A test configuration $(\mathcal X,\mathcal L)$ is called
\begin{itemize}
\item a special test configuration if its central fibre $\mathcal X_0:={\rm pr}^{-1}(0)$ is normal;
\item a product test configuration if there is a ${\rm k}^\times$-equivariant isomorphism $(\mathcal X,\mathcal L)\cong(X,L ^{r_0})\times{\rm k}$, and the ${\rm k}^\times$-action on the right-hand side is given diagonally by a ${\rm k}^\times$-action on $(X,L)$ with the standard multiplication on ${\rm k}$.
\end{itemize}
Assume further that a connected, reductive group ${\mathbf G}$ acts on $(X,L)$. A test configuration $(\mathcal X,\mathcal L)$ is called ${\mathbf G}$-equivariant if ${\mathbf G}$ acts on $(\mathcal X,\mathcal L)$, commutes with the ${\rm k}^\times$-action of the test configuration, and the ${\mathbf G}$-action on $(\mathcal X,\mathcal L)\times_{\rm k}{\rm k}^\times\cong(X,L^{r_0})\times{\rm k}^\times$ coincides with the ${\mathbf G}$-action on (the first factor of) $(X,L^{r_0})\times{\rm k}^\times$. 
\end{defi}

There is also a geometric way to construct test configuration of $(X,L)$ (cf. \cite[Section 2]{Do}, \cite{Ross-Thomas} or \cite[Section 2.3]{Boucksom-Hisamoto-Jonsson}). Suppose that there is a Kodaira embedding of $X$ by $|L ^{r_0}|$ for some $r_0\in\mathbb N_+$,
$$i:X\to\mathbb P({\rm H}^0(X,L ^{r_0}))=:\mathbb P^{N-1}.$$
Choose a vector $\Lambda\in\mathfrak{psl}_N$ so that $\Lambda$ generates a rank $1$ torus of ${\rm PSL}_N$. Then it defines a test configuration $(\mathcal X,\mathcal L)$ via
$$\mathcal X_{t}:=\exp(z\Lambda)\cdot i(X),~t=e^z\in{\rm k}^\times,$$
and
$$\mathcal L|_{\mathcal X_t}:=\mathcal O_{\mathbb P^{N-1}}(1)|_{\mathcal X_t}.$$
Also define $\mathcal X_0:=\lim_{t\to0}\mathcal X_t$ in the sense of the flat limit of $\mathcal X_t$ as $t\to0$, and $\mathcal L_0:=\mathcal L|_{\mathcal X_0}$. Moreover, the total space of $(\mathcal X,\mathcal L)$ can be interpreted as the Zariski closure in ${\mathbb P}^{N-1}\times{\rm k}$ of the image of the closed embedding $X\times{\rm k}^\times\to\mathbb P^{N-1}\times{\rm k}$ mapping $(x,t)$ to $(\exp(t\Lambda)\cdot i(x),t)$. Indeed, any test configuration can be realized in this way (cf. \cite[Section 2.3]{Boucksom-Hisamoto-Jonsson}). Product test configurations are precisely those with $\Lambda\in\mathfrak{aut}(X)$.

For our later use, in the following we compactify $(\mathcal X,\mathcal L)$ to a family over $(\mathcal X,\mathcal L)\overset{{\rm pr}}{\to}\mathbb P^1$ by adding a trivial fibre $(X,L^{r_0})$ at $\infty\in\mathbb P^1$. Alternatively, we glue $(\mathcal X,\mathcal L)$ with $(X,L ^{r_0})\times{\rm k}\cong(X,L ^{r_0})\times({\rm k}^\times\cup\{\infty\})$ along the common part $(X,L ^{r_0})\times{\rm k}^\times$. Also, if $\mathcal L$ is relative ample with respect to ${\rm pr}$, then by replacing $\mathcal L$ by $\mathcal L+c{\rm pr}^*\mathscr O_{\mathbb P^1}(1)$ with sufficiently large $c\gg1$, we can always assume that $\mathcal L$ is ample (cf. \cite[Proposition 1.7.10]{Lazarsfield-1}). From now on, by $(\mathcal X,\mathcal L)$ we always refer to the compactified family with ample $\mathcal L$. 
Finally, in the following, we always assume in addition that any test configurations under consideration is \emph{normal}. That is, its total space $\mathcal X$ is a normal variety. Note that by Hironaka's lemma (cf. \cite[Chapter \uppercase\expandafter{\romannumeral3}, Lemma 9.12]{GTM52}), a special test configuration is always normal.

Without loss of generality we assume that $r_0=1$ and denote $d_k:=\dim{\rm H}^0(X,L^k)$. Otherwise, we replace $L$ by $L ^{r_0}$. Choose a basis $\{e_p\}_{p=1}^{d_1}$ of ${\rm H}^0(\mathcal X_0,\mathcal L_0)$ so that each $e_p$ is an eigenvector of the $\exp(t\Lambda)$-action. Denote by $\{e^{\Lambda_p^k}\}_{p=1}^{d_k}$ the eigenvalues of the canonical lifting of the $\exp(t\Lambda)$-action on ${\rm H}^0(\mathcal X_0,\mathcal L^k_0)$. Here we use the fact that $d_k=\dim{\rm H}^0(\mathcal X_0,\mathcal L_0^k)$. Define the total weight
\begin{align*}
w_k(\mathcal X,\mathcal L):=&\sum_{p=1}^{d_k}\Lambda_p^k.
\end{align*}
It is showed in \cite[Section 2.2]{Do} that there are constants $A,B,C,D$ so that
$$w_k(\mathcal X,\mathcal L)=Ak^{n+1}+Bk^{n}+O(k^{n-1}),~k\to+\infty,$$
and
$$\dim{\rm H}^0(X,L^k)=Ck^n+Dk^{n-1}+O(k^{n-2}),~k\to+\infty.$$
The Futaki invariant of $(\mathcal X,\mathcal L)$ is then defined as
\begin{align}\label{Fut-def}
{\rm Fut}(\mathcal X,\mathcal L):=\frac{AD-BC}{C^2}.
\end{align}
The K-stability is defined as
\begin{defi}\label{K-stab-def-Fut}
We say that a polarized ${\mathbf G}$-variety $(X,L)$ is ${\mathbf G}$-equivariantly K-semistable if the Futaki invariant for any ${\mathbf G}$-equivariant test configuration is nonnegative, and is ${\mathbf G}$-equivariantly K-polystable if in addition the Futaki invariant vanishes precisely on product ${\mathbf G}$-equivariant test configurations. When $X$ is not ${\mathbf G}$-equivariantly K-semistable, we say it is K-unstable.
\end{defi}

Let $(X,L)$ be a polarized variety. Denote by ${\rm Aut}(X)$ the automorphism group of $X$. Let ${\mathbf G}\subset{\rm Aut}(X)$ be any of its connected, reductive subgroup and ${\mathbf T}$ the identity component of its centre. Hisamoto \cite{Hisamoto} introduced the twist of equivariant test configurations and ${\mathbf G}$-uniform K-stability. We briefly recall the construction below and refer to \cite{Hisamoto} for details.

Let $(\mathcal X,\mathcal L)$ be a ${\mathbf G}$-equivariant test configuration of $(X,L)$. Denote by $\Lambda$ the vector field induced by the ${\rm k}^\times$-action, it defines a grading (still denoted by $\Lambda$) on
\begin{align}\label{Kodaira-ring}
R(X,L):=\bigoplus_{k=0}^{+\infty}R_k(X,L),~R_k(X,L):={\rm H}^0(X,L^k),
\end{align}
the Kodaira ring of $(X,L)$. Let $\sigma\in{\rm Lie}({\mathbf T})$. The twist $(\mathcal X_\sigma,\mathcal L_\sigma)$ of $(\mathcal X,\mathcal L)$ is defined as following: The uncompactified total space $(\mathcal X_\sigma\setminus\mathcal X_{\sigma\,\infty},\mathcal L_\sigma|_{\mathcal X\setminus\mathcal X_{\sigma\,\infty}})$ is isomorphic to that of $(\mathcal X,\mathcal L)$, but grading induced by $(\mathcal X_\sigma,\mathcal L_\sigma)$ is $\Lambda+\sigma$. That is, if the grading $\Lambda$ on $R_k$ has weight $\Lambda_1,...,\Lambda_{n_k}$, then the same basis diagonalizes $\sigma$ so that the weights for the grading $\Lambda+\sigma$ are $\Lambda_1+\sigma_1,...,\Lambda_{n_k}+\sigma_{n_k}$. When $\sigma$ is integral, $(\mathcal X_\sigma,\mathcal L_\sigma)$ is a test configuration so that the ${\rm k}^\times$-action induces a vector field $\Lambda+\sigma$. However, the compactified total space of $(\mathcal X_\sigma,\mathcal L_\sigma)$ is different from that of $(\mathcal X,\mathcal L)$. When $\sigma$ is irrational, $(\mathcal X_\sigma,\mathcal L_\sigma)$ in general may not be a test configuration in the above sense, and is called an $\mathbb R$-test configuration (cf. \cite{Hisamoto,Li19}).

The ${\mathbf G}$-uniform K-stability is phrased in terms of non-Archimedean functionals. For any test configuration $(\mathcal X,\mathcal L)$, denote by ${\rm DH}(\mathcal X,\mathcal L)$ its induced Duistermaat-Heckman measure, and $\Lambda_{\max}(\mathcal X, \mathcal L)$ the upper bound of the support of ${\rm DH}(\mathcal X,\mathcal L)$ (cf. \cite{Boucksom-Hisamoto-Jonsson}). Define the non-Archimedean J-functional
\begin{align}\label{J-NA-def}
{\rm J}^{\rm NA}(\mathcal X, \mathcal L):=&\Lambda_{\max}(\mathcal X, \mathcal L)-\frac1{(n+1)V}\mathcal L^{\cdot(n+1)},
\end{align}
where $\mathcal L^{\cdot(n+1)}$ denotes the top intersection number of $\mathcal L$. Also, the non-Archimedean Mabuchi functional
\begin{align*}
{\rm M}^{\rm NA}(\mathcal X,\mathcal L):=\frac{1}VK^{\rm log}_{\mathcal X/\mathbb P^1}\cdot\mathcal L^{\cdot n}+\frac{\bar S}{V(n+1)}\mathcal L^{\cdot(n+1)},
\end{align*}
where $V=L^{\cdot n}$ is the volume of $(X,L)$,
\begin{align*}
\bar S=\frac nV(K_X^{-1}\cdot L^{\cdot n})
\end{align*}
the mean value of the scalar curvature of $(X,L)$, and
\begin{align*}
K^{\log}_{\mathcal X/\mathbb P^1}=K_{\mathcal X}+\mathcal X_0^{\rm red}-{\rm pr}^*(K_{\mathbb P^1}+[0])
\end{align*}
a Weil divisor on $\mathcal X$. Here by $\mathcal X_0^{\rm red}$ we denote the reduced structure of the central fibre ${\rm pr}^{-1}(0)$. We have
\begin{defi}\label{K-stab-def-Fut}
A polarized ${\mathbf G}$-variety $(X,L)$ is called ${\mathbf G}$-uniformly K-stable if there is a constant $\epsilon_0>0$ so that for any ${\mathbf G}$-equivariant test configuration it holds
\begin{align}\label{uni-sta-def}
{\rm M}^{\rm NA}(\mathcal X,\mathcal L)\geq\epsilon_0\inf_{\sigma\in {\rm Lie}({\mathbf T})}{\rm J}^{\rm NA}(\mathcal X_\sigma, \mathcal L_\sigma).
\end{align}
\end{defi}
When there is no confusion, we may say ``uniform K-stability" in short of ``${\mathbf G}$-uniform K-stability" when the group ${\mathbf G}$ is fixed.

Let us give a few remarks. In general it holds ${\rm Fut}(\mathcal X,\mathcal L)\geq{\rm M}^{\rm NA}(\mathcal X,\mathcal L)$. However they coincide on test configurations with reduced central fibre. It is also proved that a test configuration can always have reduced central fibre after a suitable base change $t\to t^d$ on ${\rm k}^\times$ (and then taking normalization of the total space). Both ${\rm M}^{\rm NA}(\cdot)$ and ${\rm J}^{\rm NA}(\cdot)$ vary linearly under base change. That is, if $(\mathcal X^{(d)},\mathcal L^{(d)})$ is the base change of $(\mathcal X,\mathcal L)$, then
\begin{align}\label{base-change-of-NA}
{\rm M}^{\rm NA}(\mathcal X^{(d)},\mathcal L^{(d)})=d{\rm M}^{\rm NA}(\mathcal X,\mathcal L)~\text{and}~{\rm J}^{\rm NA}(\mathcal X^{(d)},\mathcal L^{(d)})=d{\rm J}^{\rm NA}(\mathcal X,\mathcal L).
\end{align}
For a detailed study of the non-Archimedean functionals and K-stability, we refer to the readers \cite{Boucksom-Hisamoto-Jonsson, Li19}. On the other hand, we recall that for a $\mathbb Q$-Fano variety with klt singularities, Li-Xu \cite{Li-Xu-Annl} (Li \cite{Li19}, resp.) proved that to check K-stability (equivariantly uniform K-stability, resp.) of $(X,K_X^{-1})$, it suffices to consider only special test configurations, which always has reduced central fibre.


The K-stability is closely related to the existence of K\"ahler-Einstein metrics when ${\rm k}=\mathbb C$. The Yau-Tian-Donaldson conjecture for Fano manifolds with discrete auto morphism group was proved by Tian \cite{Tian-15} (see also \cite{CDS}). For the equivariant version, Datar-Sz\'ekelyhidi \cite{DS} proved that equivariant K-stability is equivalent to the existence of K\"ahler-Einstein metrics for Fano manifolds. Later Li \cite{Li19} proved the uniform Yau-Tian-Donaldson conjecture (where the conjecture is stated for a stronger uniform K-stability condition) for $\mathbb Q$-Fano varieties with klt-singularities. We refer to the readers \cite{Tian-15,DS,Boucksom-Hisamoto-Jonsson,Li19} and references therein for further knowledge.

\subsection{$G$-varieties of complexity 1}
\subsubsection{General Luna-Vust theory} In this section we recall the general theory of normal varieties with a reductive group action, which is now referred as Luna-Vust theory. This theory was originally developed by Luna-Vust \cite{Luna-Vust} and has been extended and modified by Timash\"ev \cite{Timashev-1997}. In the following we use \cite{Timashev-1997,Timashev-book} as our main references and collect useful to us information.

Let $G$ be a connected, reductive, linear algebraic group over an algebraic closed field $\rm k$ of characteristic 0. Suppose that $G$ acts on a function field $K$. A $G$-\emph{model} of $K$ is a normal variety $X$ with regular $G$-action so that ${\rm k}(X)=K$. A $G$-model $X$ is uniquely determined by the set of local rings $\mathscr O_{X,Y}\subset K$ of all $G$-stable subvarieties $Y\subset X$ (cf. \cite[Section 1]{Timashev-1997}).

Fix a Borel subgroup $B$ in $G$. Denote by $\Gamma\subset\mathfrak X(B)$ the lattice of weights of all (non-zero) $B$-semiinvariant functions $f\in K^{(B)}\subset K$. Denote by $\mathscr V$ the set of $G$-invariant discrete $\mathbb Q$-valued geometric valuations (called $G$-\textit{valuation}) of $K$. Knop \cite{Knop93} proved that such a valuation is uniquely determined by its restriction on $K^{(B)}$. The valuation ring of a valuation $v$ on $K$ will be denoted by $\mathscr O_v$ below.

We would like to recall a characteristic of $\mathscr V$ for our later use, which we learned from \cite[Section 20]{Timashev-book}. Let us begin with some definitions there. Note that we have an exact sequence
\begin{align}\label{ex-seq-func}
1\to(K^B)^\times\to K^{(B)}\to\Gamma\to0.
\end{align}
Let $\nu$ be any geometric valuation of $K^B$. Factoring the sequence \eqref{ex-seq-func} by $\mathscr O_\nu^\times(\subset\mathscr O_\nu,~\text{the valuation ring of}~\nu)$ we get an exact sequence of lattices
$$1\to\mathbb Z_\nu\to\Gamma_\nu\to\Gamma\to0,$$
where $\mathbb Z_\nu\cong\mathbb Z$ or $0$ is the value group of $\nu$. Taking $\mathbb Q$-dual we get
\begin{align*}
&0&    &\longleftarrow&       &\mathbb Q_\nu&      &\longleftarrow&    &\mathscr Q_\nu&    &\longleftarrow&  &\mathscr Q_{0}&   &\longleftarrow&  &0&\\
&\,&    &\,&                     &\cup&               &\,&              &\cup&        &\,&                  &||&                   &\,&         &\,&\\
&0&    &\longleftarrow&       &\mathbb Q_{\nu,+}&   &\longleftarrow&  &\mathscr Q_{\nu,+}&  &\longleftarrow&  &\mathscr Q_{0,+}& &\longleftarrow&  &0.&
\end{align*}
Here $\mathbb Q_\nu=\mathbb Q$ and $\mathscr Q_{\nu,+}$ is the preimage of the positive ray $\mathbb Q_{\nu,+}$ for $\nu\not=0$, and $\mathbb Q_0=\mathbb Q_{0,+}=0$, $\mathscr Q_{0,+}=\mathscr Q_0\cong{\rm Hom}(\Gamma,\mathbb Q)$ (which will be simply denoted by $\mathscr Q$ below). The \emph{hyperspace of} $K$ is defined as
$$\mathscr E:=\cup_\nu\mathscr Q_{\nu,+},$$
where $\nu$ runs over all a geometric valuation of $K^B$ up to proportionality. More precisely, $\mathscr E=\mathscr Q$ if $c(X)=0$, while when $c(X)>0$, each $\mathscr Q_{\nu,+}$ is a half-space, and $\mathscr E$ is obtained by gluing together the $\mathscr Q_{\nu,+}$'s along their common boundary hyperplane $\mathscr Q$. This boundary $\mathscr Q$ is called the \emph{centre} of $\mathscr E$. The valuation cone $\mathscr V$ can be embedded into $\mathscr E$ as follows: Suppose that $v\in\mathscr V$ is a geometric valuation dominating $\nu$. Then via restriction $v|_{K^{(B)}}$, $v$ is mapped to an element in $\mathscr Q_{\nu,+}$ which is linear on $\Gamma_\nu$ and non-negative on $\mathbb Z_\nu$. It is further proved by \cite{Knop93} that this map is injective.

Fix any $G$-model $X$ of $K$ and denote by $\mathscr D$ the set of all prime divisors that is not $G$-stable. This set in fact depends only on $K$ but not on the choice of $X$. $B$-stable elements $\mathscr D^B$ are called the \emph{colours}. For any $D\in\mathscr D$, denote by $v_D={\rm ord}_D$ the corresponding normalized valuation of $K$. Similarly, there is a restriction map $\varrho:\mathscr D^B\to\mathscr E$ that maps a colour $D$ to $v_D|_{K^{(B)}}$, which can be identified with an element of $\mathscr E$. In the following we often denote this element by $v_D$ when there is no confusions. The map $\varrho$ is in general not injective. For more knowledge on hyperspaces, we refer to \cite[Sections 19-20]{Timashev-book}.

We may assume that $K={\rm Quot}R$ for some rational $G$-algebra $R$. For example we can take $R={\rm k}[X]$ if $X$ is a quasi-affine model of $K$ or the Kodaira ring of a complete model. Let $f_1,...,f_s\in R^{(B)}$ and $f\not=f_1...f_s$ be any $B$-eigenvector in $\langle G\cdot f_1\rangle...\langle G\cdot f_s\rangle$. Here by $\langle G\cdot f\rangle$ we mean the linear span of the orbit $G\cdot f$ in $R$. Then $f/f_1...f_s$ is called a \emph{tail vector} of $R$ and its weight is called a \emph{tail}. The tails are negative linear combination of simple roots of $G$ (with respect to $B$). For an affine $G$-variety, as proved by \cite{Alexeev-Brion} (see also \cite[Corollary E.15]{Timashev-book}), the tails span a finitely generated semigroup. It is showed that (cf. \cite[Section 20.5, proof of Proposition 20.11]{Timashev-book})
\begin{prop}\label{tail-valuation-cone-dual}
Assume that $K={\rm Quot}R$ for some rational $G$-algebra $R$. An element $v\in\mathscr E$ lies in $\mathscr V$ if and only if $v$ is non-negative on all tail vectors of $R$.
\end{prop}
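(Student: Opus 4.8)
The plan is to prove the two implications separately. The forward implication (\(v\in\mathscr V\Rightarrow v\ge 0\) on tail vectors) is a soft consequence of \(G\)-invariance, while the converse requires manufacturing a \(G\)-invariant valuation out of the functional \(v\) and then invoking Knop's uniqueness theorem quoted above.

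For the forward implication, suppose \(v\in\mathscr V\), so that \(v\) is the restriction to \(K^{(B)}\) of a \(G\)-invariant valuation of \(K\), and let \(h=f/(f_1\cdots f_s)\) be a tail vector with \(f\) a \(B\)-eigenvector in \(N:=\langle G\cdot f_1\rangle\cdots\langle G\cdot f_s\rangle\). The module \(N\) is spanned over \({\rm k}\) by products \((g_1f_1)\cdots(g_sf_s)\) with \(g_i\in G\), and since \(v\) is \(G\)-invariant and additive each such product has value \(\sum_i v(g_if_i)=\sum_i v(f_i)=v(f_1\cdots f_s)\). As \(f\) is a finite \({\rm k}\)-linear combination of these generators, the ultrametric inequality yields \(v(f)\ge v(f_1\cdots f_s)\), whence \(v(h)=v(f)-v(f_1\cdots f_s)\ge 0\). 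Thus every \(G\)-valuation is non-negative on tail vectors; the only input is that all \(G\)-translates of a fixed \(B\)-eigenvector share one \(v\)-value.

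For the converse I would produce a \(G\)-invariant valuation \(\hat v\) of \(K\) whose restriction to \(K^{(B)}\) is the given \(v\). Since \(R\) is a rational \(G\)-algebra it is a sum of finite-dimensional irreducible \(G\)-submodules, each generated by a \(B\)-eigenvector (its highest-weight vector). The natural candidate filtration is to let \(F^{\ge c}R\) be the sum of all irreducible \(G\)-submodules \(M\subset R\) whose highest-weight vector \(w_M\) satisfies \(v(w_M)\ge c\); this is \(G\)-stable by construction and descends to a \(G\)-invariant function \(\hat v(f)=\sup\{c:f\in F^{\ge c}R\}\) restricting to \(v\) on \(K^{(B)}\). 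The tail hypothesis enters first in showing that \(F^{\ge\bullet}R\) is multiplicative: for submodules \(M,M'\) with \(v(w_M)\ge c\), \(v(w_{M'})\ge d\), every irreducible constituent of \(M\cdot M'\subset\langle G w_M\rangle\langle G w_{M'}\rangle\) has highest-weight vector \(w'\) for which \(w'/(w_Mw_{M'})\) is a tail vector, so \(v(w')=v(w_Mw_{M'})+v(w'/(w_Mw_{M'}))\ge c+d\) by non-negativity on tails; hence \(F^{\ge c}R\cdot F^{\ge d}R\subset F^{\ge c+d}R\) and \(\hat v(fg)\ge\hat v(f)+\hat v(g)\).

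The crux, and what I expect to be the main obstacle, is to upgrade this submultiplicativity to genuine multiplicativity \(\hat v(fg)=\hat v(f)+\hat v(g)\), equivalently to show that \(\operatorname{gr}_{\hat v}R\) is an integral domain, so that \(\hat v\) really is a valuation. Non-negativity on tails is again the decisive input: it forces the highest-weight term \(w_Mw_{M'}\) of a product to have the minimal \(v\)-value among all constituents, and being the unique term of top weight it cannot be cancelled, so that the images of nonzero elements have nonzero product in the associated graded. The delicate point, and the reason this demands care in complexity \(\ge 1\), is that the weight spaces \(R^U_\lambda\) are no longer lines but modules over the one-dimensional ring \(K^B\); one must therefore track not only the root-theoretic tails but also the geometric valuation of \(K^B\) encoded in \(\mathscr E\), and exclude cancellation among the several constituents of a given weight. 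Once \(\hat v\) is known to be a \(G\)-invariant geometric valuation with \(\hat v|_{K^{(B)}}=v\), Knop's uniqueness theorem recalled above identifies its class with \(v\), so that \(v\in\mathscr V\) and the proof is complete.
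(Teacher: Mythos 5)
The paper itself gives no proof of this proposition: it is quoted from Timash\"ev's book (Section 20.5, proof of Proposition 20.11), so your attempt can only be measured against that standard argument. Your forward implication is correct and agrees with it. The converse, however, has a genuine gap, and it sits exactly where you yourself place ``the crux'': you never prove that $\hat v$ is additive, i.e.\ that ${\rm gr}_{\hat v}R$ is a domain. The heuristic you offer --- the Cartan component $w_Mw_{M'}$ is ``the unique term of top weight'' and so ``cannot be cancelled'' --- breaks down precisely in complexity $\geq 1$: an element $f$ may involve several distinct irreducible constituents with the \emph{same} highest weight and the \emph{same} filtration value (the spaces $R^U_\lambda$ of $U$-invariants of a fixed weight can have dimension $>1$), and likewise for $g$; the products of the corresponding highest-weight vectors all land in one weight space in one degree of the graded algebra, and your constituent-by-constituent argument gives no reason why their sum is nonzero, nor why its value is exactly $c+d$. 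You acknowledge this (``exclude cancellation among the several constituents of a given weight'') and leave it unresolved, so the proof is incomplete; the unproven assertion that $\hat v$ restricts to $v$ on $K^{(B)}$ suffers from the same omission.

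For the record, the cited proof closes this hole by multiplying whole weight components rather than individual constituents. Pass to $R^U$ ($U$ the unipotent radical of $B$), a domain on which $v$ is genuinely additive: by the construction of the hyperspace, $v$ is a homomorphism on $K^{(B)}$ and an honest valuation on each subspace $K^Be_\lambda$, where it is $h\,q_x$ plus a constant. Write $f=\sum_\lambda f_\lambda$, $g=\sum_\mu g_\mu$ in $R^U$ and $\hat v(f)=\min_\lambda v(f_\lambda)$; choose a total order on weights compatible with addition and let $\lambda^*$, $\mu^*$ be the \emph{largest} weights attaining the two minima. Then the weight-$(\lambda^*+\mu^*)$ component of $fg$ is the single product $f_{\lambda^*}g_{\mu^*}$ --- nonzero since $R^U$ is a domain, of value exactly $\hat v(f)+\hat v(g)$ by additivity of $v$ --- plus cross terms $f_\lambda g_\mu$ of strictly larger value, so the strict ultrametric inequality inside that weight space forces $\hat v(fg)=\hat v(f)+\hat v(g)$; multiplicities cause no trouble because $f_{\lambda^*}$ and $g_{\mu^*}$ are the full weight components. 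The tail hypothesis enters only where you use it, namely to make the filtration of $R$ by $G$-spans of $B$-eigenvectors multiplicative, so that ${\rm gr}\,R$ is a rational $G$-algebra with $({\rm gr}\,R)^U={\rm gr}(R^U)$; one then concludes with the standard fact that a rational $G$-algebra is a domain if and only if its algebra of $U$-invariants is. After that your final step goes through --- and in fact only the \emph{existence} of a $G$-invariant extension of $v$ is needed there, not Knop's uniqueness.
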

Moreover, it is known that each $\mathscr V_\nu=\mathscr V\cap\mathscr Q_{\nu,+}$ is a finitely generated solid convex cone in $\mathscr Q_{\nu,+}$ (cf. \cite[Section 20]{Timashev-book}).

Let $X$ be a $G$-model of $K$. $B$-stable affine open subsets of $X$ are called $B$-charts. Recall the sets $\mathscr D$ and $\mathscr D^B$ defined above. For any $D\in\mathscr D$, denote by $\mathscr O_D(=\mathscr O_{v_D})$ its valuation ring. Then given any $B$-chart $\mathring X$, there are subsets $\mathscr W\subset\mathscr V$ and $\mathscr R\subset\mathscr D^B$ so that (cf. \cite[Section 1.4]{Timashev-1997})
\begin{align*}
{\rm k}[\mathring X]=(\bigcap_{w\in\mathscr W}\mathscr O_w)\cap(\bigcap_{D\in\mathscr R\sqcup(\mathscr D\setminus\mathscr D^B)}\mathscr O_D).
\end{align*}
It is of great interest to find a criterion on when the ring
\begin{align}\label{A(W,R)}
A(\mathscr W,\mathscr R)=(\bigcap_{w\in\mathscr W}\mathscr O_w)\cap(\bigcap_{D\in\mathscr R\sqcup(\mathscr D\setminus\mathscr D^B)}\mathscr O_D)
\end{align}
of a given  pair $(\mathscr W,\mathscr R)$ defines a $B$-chart. It turns out
\begin{theo}\label{B-chart-thm}{\rm (}\cite[Theorem 3]{Timashev-2000}, see also \cite[Section 1.4]{Timashev-1997}{\rm )}
A pair $(\mathscr W,\mathscr R)$ determines a $B$-chart if and only if it satisfies the conditions:
\begin{itemize}
\item[(C)] For any finite subset $\mathscr V_0\subset\mathscr W\sqcup\mathscr R$, there exists $f\in K^{(B)}$ so that $$(\mathscr W\sqcup\mathscr R)(f)\geq0~\text{and}~\mathscr V_0(f)>0.$$
\item[(F)] ${\rm k}[A(\mathscr W,\mathscr R)^{(B)}]$ is finitely generated.
\end{itemize}
\end{theo}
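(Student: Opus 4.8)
The plan is to prove the two implications by systematically passing between the $B$-algebra $A := A(\mathscr W,\mathscr R)$ and its subalgebra $A^{(B)} = A\cap K^{(B)}$ of $B$-semiinvariants, exploiting two structural facts recalled above: a $G$-valuation is determined by its restriction to $K^{(B)}$ (Knop), and a rational $B$-module is the increasing union of finite-dimensional $B$-submodules, each filtered so that the successive quotients are $B$-eigenlines. Note first that the semiinvariants span the $U$-invariant subalgebra $A^U$, which is closed under multiplication, so $A^U = {\rm k}[A^{(B)}]$ and condition (F) is precisely the assertion that $A^U$ is finitely generated.

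Necessity is the easier direction. Suppose $(\mathscr W,\mathscr R)$ already comes from a $B$-chart $\mathring X = \spec A$. For (F): $A = {\rm k}[\mathring X]$ is a finitely generated ${\rm k}$-algebra carrying a rational action of $B = T\ltimes U$, and finite generation of $A^U$ is inherited here because $U$ is the unipotent radical of a Borel subgroup of the reductive group $G$ acting on the ambient field, so the relevant Hadziev--Grosshans finiteness for such unipotent invariants applies. For (C): since $\mathring X$ is affine, its complement in a completion is $B$-stable of pure codimension one, and each element of $\mathscr W\sqcup\mathscr R$ is the valuation of such a boundary divisor. Given a finite subset $\mathscr V_0\subset\mathscr W\sqcup\mathscr R$, I would first choose an ordinary regular function on $\mathring X$ vanishing along the divisors indexed by $\mathscr V_0$, then replace it by a $B$-eigenvector with the same qualitative vanishing, obtained by passing to the finite-dimensional $B$-submodule it generates and selecting a suitable weight component; this eigenvector is the required $f\in K^{(B)}$ with $(\mathscr W\sqcup\mathscr R)(f)\ge 0$ and $\mathscr V_0(f)>0$.

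For sufficiency, assume (C) and (F). By (F) the algebra $A^U = {\rm k}[A^{(B)}]$ is finitely generated, and $\spec A^U$ plays the role of a rational quotient of the prospective chart by $U$. The heart of the argument is to promote this to finite generation of the full algebra $A$: writing each element of $A$ inside a finite-dimensional $B$-submodule and using the weight filtration, its ``leading'' semiinvariant components lie in $A^{(B)}$, and condition (C) controls these leading terms so that finitely many eigenvectors together with finitely many lifts of $U$-module generators suffice. Once $A$ is finitely generated, $\mathring X := \spec A$ is an affine variety with a regular $B$-action; it is normal, being an intersection of the integrally closed rings $\mathscr O_w$ and $\mathscr O_D$, and has function field $K$ because the defining valuations are nonnegative but not uniformly positive. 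Finally (C) certifies that $\mathring X$ is a genuine $B$-chart realizing exactly $(\mathscr W,\mathscr R)$: the separating semiinvariant attached to each $w\in\mathscr W$ and each $D\in\mathscr R$ forces the corresponding center on $\mathring X$ to be nonempty of codimension one, so that no element of the data is redundant and these are precisely the $B$-stable divisors removed when embedding $\mathring X$ into a $G$-model.

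The main obstacle will be the sufficiency direction, and within it the finite-generation step: unlike for reductive groups, Hilbert--Nagata does not apply to the solvable $B$, so one cannot simply cite finite generation of invariants, and the passage from (F) to finiteness of $A$ must be engineered through the rational $B$-module structure with (C) supplying the needed control on semiinvariant leading terms. The subsequent geometric realization---showing that $\spec A$ embeds into a $G$-model as a $B$-chart carrying precisely the data $(\mathscr W,\mathscr R)$---is the other delicate point, and here Knop's identification of $G$-valuations with their restrictions to $K^{(B)}$ is the bridge that converts the semiinvariant statement (C) into assertions about nonemptiness and codimension of centers. Normality, the function-field computation, and the necessity of both conditions I expect to be routine by comparison.
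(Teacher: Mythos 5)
First, a point of orientation: the paper does not prove this statement at all. It is quoted as background, with the proof deferred to \cite[Theorem 3]{Timashev-2000} and \cite[Section 1.4]{Timashev-1997}, so your proposal can only be judged on its own terms and against those sources. Judged that way, it has a genuine gap in each direction. In the necessity direction, your derivation of (F) misapplies the Hadziev--Grosshans transfer: that theorem (finite generation of $A^U$ from finite generation of $A$) requires $A$ to be a \emph{rational $G$-algebra} for the reductive group $G$, whereas ${\rm k}[\mathring X]$ carries only a regular $B$-action --- the chart is merely $B$-stable, and $G$ acts on $K$ only birationally. For finitely generated algebras acted on by a unipotent group, even by $U\cong{\mathbb G}_a$, the maximal unipotent subgroup of ${\rm SL}_2$, the invariant ring can fail to be finitely generated (Daigle--Freudenburg type counterexamples to Hilbert's fourteenth problem), so there is no such inheritance for bare $B$-algebras. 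The standard repair is exactly the step your sketch omits: one realizes ${\rm k}[\mathring X]$ as the degree-zero part of a localization, at a $B$-eigenvector, of a finitely generated rational $G$-algebra (a cone algebra over the $G$-span of the chart), and applies Grosshans to \emph{that} $G$-algebra, using that taking $U$-invariants commutes with this localization. Relatedly, the correct handle on $A^{(B)}$ is valuative: a $B$-semiinvariant has $B$-stable divisor, so the conditions indexed by $\mathscr D\setminus\mathscr D^B$ are vacuous on $K^{(B)}$ and $A^{(B)}$ is cut out by the finitely many elements of $\mathscr W\sqcup\mathscr R$ alone; in complexity one this makes ${\rm k}[A^{(B)}]$ a section ring over the curve $C$, which is precisely why (F) is a genuine hypothesis here while it is automatic (Gordan's lemma) in the spherical case.

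Second, in the sufficiency direction the two steps you yourself flag as the heart of the matter are not argued but only restated. The passage from (F) to finite generation of all of $A$ is asserted (``finitely many eigenvectors together with finitely many lifts of $U$-module generators suffice'') with no mechanism, and nothing is said about why $A$ is even a rational $B$-module --- local finiteness of the action is not automatic for an intersection of valuation rings inside $K$ --- nor why ${\rm Quot}(A)=K$, both of which must come out of (C) together with the $G$-finiteness of elements of $K^{(B)}$ (which itself needs the representation $K={\rm Quot}(R)$ for a rational $G$-algebra $R$). Moreover, ``determines a $B$-chart'' means exhibiting a normal $G$-model $X$ that contains ${\rm Spec}\,A$ as a $B$-stable affine open subset; so the $G$-saturation $G\cdot{\rm Spec}\,A$ must actually be constructed (finitely many $G$-translates, gluing, normality and separatedness of the result), and this is where Luna--Vust and Timash\"ev do the real work with condition (C). Your proposal correctly locates the difficulties, but the only concrete new ingredient it offers --- the direct Grosshans citation --- is applied to the wrong object, and the remaining hard steps are left as declarations of intent.
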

However, the pair $(\mathscr W,\mathscr R)$ may still contains some redundant data that different choices of $(\mathscr W,\mathscr R)$ may lead to same $A(\mathscr W,\mathscr R)$ (and consequently the resulting $B$-chart). This can be overcame if we further that every valuation $w\in\mathscr W$ is \emph{normalized} and \emph{essential}. By normalized we mean the group of values of a valuation $w$ is precisely $\mathbb Z$, and by essential we mean removing $w$ from the right-hand side of \eqref{A(W,R)} will change $A(\mathscr W,\mathscr R)$. We recall that in \eqref{A(W,R)} any valuation in $\mathscr R\sqcup(\mathscr D\setminus\mathscr D^B)$ is essential (cf. \cite{Luna-Vust,Knop93} and \cite[Proposition 13.7]{Timashev-book}). All valuations in $\mathscr W$ are essential if and only if
\begin{itemize}
\item[(W)] For any $w\in\mathscr W$, there exists $f\in K^{(B)}$ so that for any $w'\in\mathscr W\setminus\{w\}$, $D\in\mathscr R$, it holds $$w'(f),~v_D(f)\geq0,~w(f)<0.$$
\end{itemize}
Theorem \ref{B-chart-thm} then can be refined to
\begin{theo}\label{B-chart-thm-ref} {\rm (}\cite[Section 1.4]{Timashev-1997}{\rm )}
$B$-charts are in bijection with the data $(\mathscr W,\mathscr R)$ satisfying the conditions (C), (F) and (W).
\end{theo}
In the following, unless otherwise stated, when referring to a data $(\mathscr W,\mathscr R)$ we always assume it satisfies the conditions (C), (F) and (W).

The $G$-model $X$ can be recovered by patching the $G$-spans of finitely many $B$-charts that intersect $G$-invariant subvarieties in it (cf. \cite[Section 1.5]{Timashev-1997}). For any $G$-invariant subvariety $Y\subset X$, denote by $\mathscr V_Y$ ($\mathscr D^B_Y$, resp.) the set of normalized valuations corresponding to $G$-stable prime divisors of $X$ containing $Y$ (colours containing $Y$, resp.). Then it is showed by \cite{Luna-Vust} (see also \cite[Section 1.5]{Timashev-1997}) that the local ring $$\mathscr O_{X,Y}=A(\mathscr V_Y,\mathscr D^B_Y).$$
The support $\mathscr S_Y$ of $Y$ is
$$\mathscr S_Y:=\{v\in\mathscr V|\mathscr O_v~\text{dominates}~\mathscr O_{X,Y}\}.$$
Given a $B$-chart $\mathring X$ associated to the data $(\mathscr W,\mathscr R)$, we can recover $(\mathscr V_Y,\mathscr D^B_Y)$ for any $G$-invariant subvariety $Y$ intersecting $\mathring X$ according to the conditions (V), (V'), (D') introduced in \cite[Theorem 1.3]{Timashev-1997}. Finally, we have
\begin{theo}\label{fan-classify}{\rm (}\cite[Theorem 4]{Timashev-2000}, see also \cite[Sections 1.5-1.6]{Timashev-1997}{\rm )}
\begin{itemize}
\item[(1)] Given a finite collection $\{(\mathscr W_i,\mathscr R_i)\}_{i=1}^N$ of data satisfying the conditions (C), (F) and (W), the $G$-spans $\{G\mathring X(\mathscr W_i,\mathscr R_i)\}_{i=1}^N$ patch together to a $G$-model of $K$ if and only if the supports of all $G$-invariant subvarieties intersecting those $B$-charts are disjointed.
\item[(2)] The $G$-model constructed in (1) is complete if and only if the above supports cover $\mathscr V$.
\end{itemize}
\end{theo}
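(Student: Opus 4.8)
The plan is to build the candidate $G$-model by gluing the finitely many $G$-spans and then to decide separatedness and completeness through the valuative criteria, in each case reducing the test from arbitrary geometric valuations of $K$ to the $G$-valuations in $\mathscr V$. First I would note that each $G\mathring X(\mathscr W_i,\mathscr R_i)$ is already a $G$-model of a $G$-stable open subset of the birational class: conditions (C), (F), (W) guarantee by Theorem \ref{B-chart-thm} that $(\mathscr W_i,\mathscr R_i)$ genuinely is a $B$-chart, and forming the $G$-span (as in the construction recalled from \cite[Section 1.5]{Timashev-1997}) produces a normal $G$-variety of finite type with function field $K$; normality is automatic, since every local ring in sight is an intersection of valuation rings $\mathscr O_v,\mathscr O_D\subset K$. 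Because all the pieces live inside the one fixed birational class, there is a canonical patching: glue $G\mathring X_i$ and $G\mathring X_j$ along their largest common open $G\mathring X_i\cap G\mathring X_j$. This yields a normal $G$-scheme $X$ of finite type with ${\rm k}(X)=K$, and the only questions left are when $X$ is \emph{separated} (part (1)) and when it is \emph{complete} (part (2)).

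For (1) I would apply the valuative criterion of separatedness: $X$ fails to be separated exactly when some valuation ring $\mathscr O\subset K$ dominates the local rings of two distinct points lying in different pieces. The essential reduction is that it suffices to test this on $G$-valuations. By Knop's theorem a $G$-valuation is determined by its restriction to $K^{(B)}$, and the center of an arbitrary valuation on a $G$-model is governed by its associated $G$-invariant valuation, so one may replace $\mathscr O$ by the corresponding $v\in\mathscr V$ and the two points by the $G$-invariant subvarieties $Y,Y'$ to which they specialize. By the definition of the support, $\mathscr O_v$ dominates $\mathscr O_{X,Y}=A(\mathscr V_Y,\mathscr D^B_Y)$ precisely when $v\in\mathscr S_Y$. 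A separation failure is therefore exactly a $G$-valuation $v$ lying simultaneously in $\mathscr S_Y$ and $\mathscr S_{Y'}$ with $Y\neq Y'$ arising from two different charts; within a single $G$-span separatedness already holds, so only cross-chart overlaps matter. Hence $X$ is separated if and only if the supports $\mathscr S_Y$ of all $G$-invariant subvarieties occurring in the charts are pairwise disjoint.

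For (2) I would invoke the valuative criterion of properness. Granting separatedness from (1), $X$ is complete if and only if every geometric valuation of $K$ trivial on ${\rm k}$ admits a center on $X$. Once more the test reduces to $\mathscr V$: a valuation has a center on the $G$-model if and only if its associated $G$-valuation does, and a $G$-valuation $v$ has a center on a piece $G\mathring X_i$ exactly when $v$ dominates $\mathscr O_{X,Y}$ for some $G$-invariant $Y$ occurring in that chart, i.e.\ when $v\in\mathscr S_Y$. Consequently $v$ has a center on $X$ if and only if $v\in\bigcup_Y\mathscr S_Y$, so $X$ is complete if and only if the supports cover all of $\mathscr V$.

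I expect the main obstacle to be precisely the two reductions from arbitrary valuations to $G$-valuations, which are what turn the set-theoretic conditions of separatedness and completeness into the combinatorial conditions on supports. One must show that the center of a valuation $\mathscr O$ of $K$ is controlled by the associated element of $\mathscr V$; this rests on Knop's determination of $G$-valuations through their restriction to $K^{(B)}$, together with the explicit description $\mathscr O_{X,Y}=A(\mathscr V_Y,\mathscr D^B_Y)$ and the recovery of $(\mathscr V_Y,\mathscr D^B_Y)$ from the chart data via conditions (V), (V$'$), (D$'$). The delicate point is the presence of colours, which are not $G$-stable: domination of $\mathscr O_{X,Y}$ must be read off against both the $G$-valuations $\mathscr V_Y$ and the colours $\mathscr D^B_Y$, and one has to check that the support $\mathscr S_Y\subset\mathscr V$ nonetheless faithfully records when $\mathscr O_v$ dominates $\mathscr O_{X,Y}$. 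Verifying that combinatorial disjointness and covering match separatedness and completeness through this colour bookkeeping is the technical heart of the argument.
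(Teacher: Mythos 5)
This statement is quoted in the paper from Timash\"ev's work (\cite[Theorem 4]{Timashev-2000}, \cite[Sections 1.5-1.6]{Timashev-1997}) and the paper gives no proof of it, so there is no internal argument to compare yours against; I can only judge your sketch against the argument in the cited sources. Your route --- glue the $G$-spans along their common open subsets, then convert separatedness and properness into combinatorics via the valuative criteria, reducing in both cases from arbitrary geometric valuations of $K$ to elements of $\mathscr V$ and reading domination off through the supports $\mathscr S_Y$ --- is exactly the Luna--Vust/Timash\"ev argument, and your identification of the reduction to $G$-valuations as the technical heart is accurate.

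One imprecision worth flagging: you attribute that reduction to Knop's theorem that a $G$-valuation is determined by its restriction to $K^{(B)}$. That injectivity statement is not what does the work here. What is actually needed are the separate Luna--Vust reduction lemmas: (i) for properness, every valuation of $K$ admits an associated $G$-invariant valuation whose possession of a center on a $G$-model is equivalent to that of the original valuation, so properness may be tested on $\mathscr V$ alone; and (ii) for separatedness, a failure of separation between two points forces a failure between two $G$-germs, i.e.\ a $G$-valuation dominating two distinct local rings $\mathscr O_{X,Y}$, $\mathscr O_{X,Y'}$. These rest on the foundational fact (stated in Section 2.2.1 of the paper) that a $G$-model is determined by the local rings of its $G$-stable subvarieties, together with the recovery of $(\mathscr V_Y,\mathscr D^B_Y)$ from the chart data --- not on Knop's injectivity result. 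Since your sketch defers precisely these lemmas to citation, it is an acceptable outline of the known proof rather than a self-contained one; that is consistent with how the paper itself treats the theorem, namely as imported background.
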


\subsubsection{The case of complexity 1} When $c(K)={\rm tr.deg}_{\rm k}(K^B)=1$, $K^B={\rm k}(C)$ for a unique smooth projective curve $C$ and there is a rational quotient ${\rm pr}_B:X\dasharrow C$ by the $B$-action. In the following we mainly consider the case when ${\rm k}(X)^B\cong{\rm k}(\mathbb P^1)$. This holds, for example when $X$ is unirational (by L\"uroth's Theorem).

Let us recall the reduction in \cite{Timashev-1997} of the general settings above to the complexity 1 case. As in \cite{Timashev-1997}, by fixing any monomorphism $e:\Gamma\to K^{(B)}$ that maps a weight $\lambda$ to a non-zero $B$-semiinvariant function $e_\lambda\in K^{(B)}_\lambda$, the sequence \eqref{ex-seq-func} splits. That is, any $f\in K^{(B)}_\lambda$ can be decomposed as $f=f_0e_\lambda$ with $f_0\in K^B\cong{\rm k}(C)$. On the other hand, any valuation $\nu$ on ${\rm k}(C)$ can be written as $\nu=h{\rm ord}_x$, where $h\in\mathbb Q_+$ and $x\in C$ a point. The hyperspace $\mathscr E$ introduced in the previous section then can be written more precisely as
$$\mathscr E=\bigsqcup_{x\in C}\mathscr Q_{x,+}/\sim,$$
where $\mathscr Q_{x,+}=\{x\}\times\mathscr Q\times\mathbb Q_+$, and $(x,\ell,h)\sim(x',\ell',h')$ if and only if $(x,\ell,h)=(x',\ell',h')$ or $h=h'=0$, $\ell=\ell'$. A valuation $v\in\mathscr V$ (when embedded in $\mathscr E$ via $v|_{K^{(B)}}$) is decomposed as
$$v=h_vq_{x_v}+\ell_v,$$
so that $v|_{K^B}=h_vq_{x_v}$ with $h_v\geq0$, $q_{x_v}=(x_v,O,1)$ the valuation at a point $x_v\in C$, and $\ell_v=v|_{e(\Gamma)}$. We call $h_v$ the \emph{jump} of $v$. Elements in $\mathscr Q$ (i.e. with zero jump) are said to be \emph{central}. For any central valuations $v\in\mathscr Q$, by the relation $v(f)=v(e_\lambda)$, we get $v(e_\lambda)=\ell_v(\lambda)$, and consequently $\mathscr Q\cong\Gamma_\mathbb Q^*$. Denote by $\mathscr B(X)$ the set of $B$-stable divisors in $X$. For any $D\in\mathscr B(X)$, by restricting the valuation corresponding to $D$ on $K^{(B)}$, $D$ is mapped to $v_D\in\mathscr E$ by
$$v_D=h_Dq_{x_D}+\ell_D.$$
It is proved in \cite[Section 3]{Timashev-1997} that for any $x\in C$, the set of divisors in $\mathscr B(X)$ that maps to $\mathscr Q_{x,+}$ is finite, and for almost every (that is, by ruling out finite exceptions) $x\in C$ there is exactly one $D\in\mathscr B(X)$ maps to $\mathscr Q_{x,+}\setminus\mathscr Q$, and this divisor satisfies $v_D=q_x$.

The above setting leads to a combinatorial description of functions in $K^{(B)}$ and $B$-charts on a $G$-model of $K$. For any $f\in K^B$, it induces a collection of linear functionals
$$\varphi=\{\varphi_x:\mathscr Q_{x,+}\to\mathbb Q|x\in C\}$$
so that $\varphi_x|_\mathscr Q\equiv0$ and $\varphi_x(q_x)=q_x(f)$ for any $x\in C$. In particular $\sum_{x\in C}\varphi_x(q_x)\cdot x$ is a principle divisor on $C$. A function $f=f_0e_\lambda\in K^{(B)}_\lambda$ with $f_0\in K^B$ defines the functionals
$$\varphi=\{\varphi_{0,x}+\lambda|x\in C\},$$
where $\varphi_0$ is the functional of $f_0$.

Given a $B$-chart $\mathring X$ with data $(\mathscr W,\mathscr R)$, a function $f\in K^{(B)}$ lies in ${\rm k}[\mathring X]$ if and only if $(\mathscr W\sqcup\mathscr R)(f)\geq0$, or equivalently, the functional of $f$, $\varphi\in(\mathscr W\sqcup\mathscr R)^\vee$. This condition selects regular functions in $K^{(B)}$. There are two types of $B$-charts:
\begin{itemize}
\item[(1)] Type \uppercase\expandafter{\romannumeral1}: ${\rm k}[\mathring X]^{(B)}\not={\rm k}$. In this case there is some $x\in C$ so that no element in $\mathscr W\sqcup\mathscr R$ falls into $\mathscr Q_{x,+}\setminus\mathscr Q$;
\item[(2)] Type \uppercase\expandafter{\romannumeral2}: ${\rm k}[\mathring X]^{(B)}={\rm k}$. In this case any $\mathscr Q_{x,+}$ contains a non-central element in $\mathscr W\sqcup\mathscr R$.
\end{itemize}
Accordingly, a $B$-chart of type \uppercase\expandafter{\romannumeral1} (or type \uppercase\expandafter{\romannumeral2} resp.) is characterised by the image of the rational quotient ${\rm pr}_B$ is a proper subset of $C$ (or equals to $C$, resp.).

For $\mathring X=\mathring X(\mathscr W,\mathscr R)$, it is proved in \cite[Section 3]{Timashev-1997} the set
$$\mathscr C(\mathscr W,\mathscr R)=\{v\in\mathscr E|v(\varphi)\geq0,~\forall \varphi\in(\mathscr W\sqcup\mathscr R)^\vee\}$$
together with $\mathscr R$ forms a \emph{coloured hypercone} in $\mathscr E$ defined as
\begin{defi}{\rm (}\cite[Definition 3.1]{Timashev-1997}{\rm )}
A coloured hypercone is a pair $(\mathscr C, \mathscr R)$ such that $\mathscr R\subset\mathscr D^B$, and $\mathscr C=\cup_{x\in C}\mathscr C_x$ is a union of finitely generated strictly convex cones $\mathscr C_x=\mathscr C\cap\mathscr Q_{x,+}$, each of them is generated by finitely many elements in $\mathscr V$ (and possibly by some vertices of a polytope $\mathscr P$ below) and elements of $\mathscr R$ that maps to $\mathscr Q_{x,+}$. Moreover, for almost every $x\in C$ the cone $\mathscr C_x$ is generated by $\mathscr K=\mathscr C\cap\mathscr Q$ and $q_x$, no elements of $\mathscr R$ is mapped to $O$, and either:
\begin{itemize}
\item[\uppercase\expandafter{\romannumeral1}.] $\exists x\in C$ so that $\mathscr C_x\subset\mathscr Q$;\\
or
\item[\uppercase\expandafter{\romannumeral2}.] $\mathscr P=\sum_{x\in C}\mathscr P_x\subset\mathscr K\setminus\{O\}$, where $q_x+\mathscr P_x$ is the convex hull of the vertices of a polyhedral domain $\mathscr K_x=\mathscr C_x\cap\{q_x+\mathscr Q\}$.
\end{itemize}
Elements in $\mathscr R$ are called the colours of the hypercone. Faces of $\mathscr K$ that does not intersect $\mathscr P$ is called a true face. Otherwise, it is called a pseudoface.
\end{defi}
The admissibility conditions of a coloured hypercone are obtained in \cite{Timashev-1997}, which are equivalent to the Conditions (C), (F) and (W) (cf. \cite[Remark 3.2, Propositions 3.1 and 4.1]{Timashev-1997}). It is proved in \cite{Timashev-1997} that $B$-charts of type \uppercase\expandafter{\romannumeral1} (or type \uppercase\expandafter{\romannumeral2}, resp.) are determined by coloured hypercones of type \uppercase\expandafter{\romannumeral1} (or type \uppercase\expandafter{\romannumeral2}, resp.).

In \cite{Timashev-1997}, the $G$-invariant subvarieties (referred as $G$-\emph{germs} in \cite{Timashev-1997}) are classified by $B$-charts intersecting them. A $G$-germ adimitting a $B$-chart of type \uppercase\expandafter{\romannumeral1} is called a $G$-germ of type \uppercase\expandafter{\romannumeral1}. $G$-germs admitting only $B$-charts of type \uppercase\expandafter{\romannumeral2} are called $G$-germs of type \uppercase\expandafter{\romannumeral2}. Also, a \emph{face} of a coloured hypercone $(\mathscr C, \mathscr R)$ is either
\begin{itemize}
\item a coloured cone $(\mathscr C',\mathscr R')\subset\mathscr Q_{x,+}$, where $\mathscr C'$ is a face of $\mathscr C_x$ intersecting $\mathscr K$ at a true face and $\mathscr R'$ are colours in $\mathscr R$ that maps to $\mathscr C'$;
\end{itemize}
or
\begin{itemize}
\item a hyperface $(\mathscr C',\mathscr R')$ of type \uppercase\expandafter{\romannumeral2}. In this case there is a linear functional $\varphi\in(\mathscr W\sqcup\mathscr R)^\vee$ so that $\mathscr C'=\mathscr C\cap\ker(\varphi)$ that intersects $\mathscr K$ at a pseudoface, and $\mathscr R'$ are colours in $\mathscr R$ that maps to $\mathscr C'$.
\end{itemize}
For a coloured cone $(\mathscr C,\mathscr R)\subset\mathscr Q_{x,+}$, we define its relative interior to be ${\rm RelInt}(\mathscr C)$ in the usual sense; for a coloured hypercone $(\mathscr C,\mathscr R)$ of type \uppercase\expandafter{\romannumeral2}, we define $${\rm RelInt}(\mathscr C):=(\cup_{x\in C}{\rm RelInt}(\mathscr C_x))\bigcup{\rm RelInt(\mathscr C\cap\mathscr Q)}.$$ It is proved in \cite[Sections 3.3 and 4.3]{Timashev-1997} that
\begin{theo}{\rm (}\cite[Theorems 3.2-3.2, 4.1]{Timashev-1997}{\rm )}
\begin{itemize}
\item $G$-germs intersecting a given $B$-chart are determined by faces of the corresponding coloured hypercone whose relative interior intersects $\mathscr V$;
\item The adherence of $G$-germs corresponds to the opposite inclusion of coloured (hyper-)cone as faces;
\item A normal $G$-model of $K$ is given by a set of coloured cones and hypercones of type \uppercase\expandafter{\romannumeral2} obtained from a finite collection of coloured hypercones as the set of all their faces whose relative interior intersects $\mathscr V$ and the relative interiors of these faces are disjoint inside $\mathscr V$.
\end{itemize}
\end{theo}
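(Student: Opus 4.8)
The plan is to reduce the global classification to the local picture of a single $B$-chart, for which Theorem~\ref{B-chart-thm} already supplies the dictionary, and then to patch charts by Theorem~\ref{fan-classify}. The engine behind every step is Knop's theorem \cite{Knop93} that a $G$-valuation is determined by its restriction to $K^{(B)}$: this lets me transport every question about $G$-invariant subvarieties and their local rings into the combinatorics of the hyperspace $\mathscr E=\bigsqcup_{x\in C}\mathscr Q_{x,+}/\!\sim$, where the region $\mathscr V\subset\mathscr E$ is cut out by nonnegativity on tail vectors (Proposition~\ref{tail-valuation-cone-dual}).

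First I would establish the first bullet. Fix a $B$-chart $\mathring X=\mathring X(\mathscr W,\mathscr R)$ and recall that a $G$-germ $Y$ meeting it has local ring $\mathscr O_{X,Y}=A(\mathscr V_Y,\mathscr D^B_Y)$ and support $\mathscr S_Y=\{v\in\mathscr V\mid\mathscr O_v\ \text{dominates}\ \mathscr O_{X,Y}\}$. The recovery conditions (V), (V'), (D') of \cite[Theorem 1.3]{Timashev-1997} describe $(\mathscr V_Y,\mathscr D^B_Y)$ through which members of $\mathscr W\sqcup\mathscr R$ annihilate the functions regular near $Y$; dualizing via the equivalence $f\in{\rm k}[\mathring X]\Leftrightarrow\varphi_f\in(\mathscr W\sqcup\mathscr R)^\vee$, this says exactly that $\mathscr S_Y$ is the relative interior of a face $(\mathscr C',\mathscr R')$ of the coloured hypercone $\mathscr C(\mathscr W,\mathscr R)$. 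The two geometric kinds of germs match the two kinds of faces: a germ of type I yields a coloured cone inside a single slice $\mathscr Q_{x,+}$ meeting $\mathscr K$ along a true face, whereas a germ of type II yields a type II hyperface, cut out by a single functional $\varphi$ and meeting $\mathscr K$ along a pseudoface. The proviso that the relative interior meet $\mathscr V$ is precisely the statement that the face carries an actual $G$-germ rather than being a merely formal face of the cone; conversely each such face is realized by the corresponding sub-chart.

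Next, for the adherence in the second bullet I would compare local rings directly: $Y_1\subseteq\overline{Y_2}$ holds if and only if $\mathscr O_{X,Y_1}\subseteq\mathscr O_{X,Y_2}$ (the generic point of $Y_2$ specializes to that of $Y_1$), and by the dictionary of the first bullet this is equivalent to the (hyper)cone $\mathscr C_{Y_2}$ being a face of $\mathscr C_{Y_1}$. Thus the closure order on germs is reversed by the face order on coloured (hyper)cones, as claimed. For the third bullet I assemble charts: by Theorem~\ref{fan-classify} the $G$-spans of a finite family $\{(\mathscr W_i,\mathscr R_i)\}$ patch to a $G$-model exactly when the supports of all the $G$-germs they contain are pairwise disjoint, and the model is complete if and only if these supports cover $\mathscr V$. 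Feeding in the first two bullets, the datum of a normal $G$-model becomes a finite set of coloured cones and type II hypercones---namely all faces of the chosen hypercones whose relative interiors meet $\mathscr V$---subject to the requirement that these relative interiors be pairwise disjoint inside $\mathscr V$. Finiteness is automatic, since only finitely many divisors of $\mathscr B(X)$ map into each $\mathscr Q_{x,+}$ and the generic slice contributes a single $D$ with $v_D=q_x$.

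The hard part is the complexity-$1$ bookkeeping that is absent in the spherical case: the hyperspace is fibred over the curve $C$, so a ``cone'' becomes a hypercone glued from the slices $\mathscr C_x=\mathscr C\cap\mathscr Q_{x,+}$, and its faces split into honest coloured cones localized at a single $x$ and type II hyperfaces that run over all $x$ and carry the polytope $\mathscr P=\sum_{x\in C}\mathscr P_x$. Making the face--germ dictionary precise therefore demands a careful separation of true faces from pseudofaces, a verification that the relative interior of a type II hyperface meets $\mathscr V$ exactly when it supports a genuine germ, and correct control of the non-central part $\ell_v$ of the valuations together with the admissibility condition on the hypercone \cite[Remark 3.2]{Timashev-1997}. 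This is where the real work lies; the remaining assembly is formal once the single-chart correspondence is nailed down.
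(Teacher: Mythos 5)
A preliminary point: the paper contains no proof of this statement. It is quoted as a known classification theorem of Timash\"ev (\cite[Theorems 3.2--3.2]{Timashev-1997}), recalled in the preliminaries alongside Theorems \ref{B-chart-thm} and \ref{fan-classify}, and is used later as a black box. So there is no ``paper proof'' to compare yours against; your proposal can only be judged as a reconstruction of Timash\"ev's argument from the surrounding machinery.

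Judged that way, it has a genuine gap, and it is the one you concede in your final paragraph. The entire mathematical content of the theorem is the first bullet: that the recovery conditions (V), (V$'$), (D$'$) of \cite[Theorem 1.3]{Timashev-1997} dualize \emph{exactly} to ``faces of the coloured hypercone whose relative interior meets $\mathscr V$'', with type~I germs matching coloured-cone faces lying in a single slice $\mathscr Q_{x,+}$ and meeting $\mathscr K$ in a true face, and type~II germs matching hyperfaces cut out by a functional and meeting $\mathscr K$ in a pseudoface. Your proposal asserts this correspondence (``this says exactly that \dots'') and then defers its verification (``this is where the real work lies''). But once this dictionary is granted, the second and third bullets are essentially formal consequences --- of local-ring comparison and of Theorem \ref{fan-classify} respectively --- so what you have written reduces the theorem to precisely the assertion it was supposed to establish. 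A proof must actually carry out the local step: given a face $(\mathscr C',\mathscr R')$ whose relative interior meets $\mathscr V$, produce the $G$-germ whose local ring is $A(\mathscr W',\mathscr R')$ for the data lying on that face (existence), and conversely show every germ meeting the chart arises this way (exhaustion), keeping track of the true-face/pseudoface dichotomy and the admissibility condition of \cite[Remark 3.2]{Timashev-1997}.

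A secondary problem sits in your adherence argument: you use ``$Y_1\subseteq\overline{Y_2}$ if and only if $\mathscr O_{X,Y_1}\subseteq\mathscr O_{X,Y_2}$'' as though it were a formal fact about subrings of $K$. Only the forward implication is formal; containment of local rings inside the function field does not by itself make the generic point of $Y_2$ a generization of that of $Y_1$ (what the valuative criterion detects is domination, not containment). In the Luna--Vust framework the converse is exactly what the support formalism ($\mathscr S_Y$ and its behaviour under closure) is designed to supply, so this step too leans on the machinery whose verification your plan postpones.
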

A collection $\mathfrak F_X$ of coloured cones and hypercones of type \uppercase\expandafter{\romannumeral2} that defines a $G$-model $X$ above is called the \emph{coloured fan} of $X$. A $G$-model $X$ is said to be of \emph{type \uppercase\expandafter{\romannumeral1}} if its coloured fan consists of only coloured cones.\footnote{In Section 6.1 below there is explicit data of a concrete example, the Mukai-Umemura threefold. We also refer to the readers Fig-2 there for an example of coloured hypercone of type \uppercase\expandafter{\romannumeral2}.}

Finally we recall the fact that there are two kinds of $G$-varieties of complexity 1:
\begin{itemize}
\item [(1)] The \emph{one-parameter} case: $K^G=K^B$. In this case, a $G$-orbit in general position has codimension 1 and contains a dense $B$-orbit;
\item [(2)] The \emph{quasihomogeneous} case: $K^G={\rm k}$. In this case, there is a homogeneous space $G/H$ so that any $G$-model of $K$ contains a dense $G$-orbit isomorphic to $G/H$. A quasihomogeneous variety is always unirational (cf. \cite[Section 2.2]{Timashev-1997}).
\end{itemize}

\subsubsection{The central automorphism group}
Let $X$ be a $G$-variety. The central $G$-equivariant automorphism group is defined as
$$\mathfrak A(X):=\{\sigma\in{\rm Aut}_G(X)|\sigma\cdot f= f,~\forall f\in{\rm k}(X)^{B}\}.$$
Then central valuation cone $\mathscr V\cap\mathscr Q$ is closely related to the Lie algebra of $\mathfrak A(X)$ (cf. \cite[Section 8]{Knop93} and \cite[Section 21]{Timashev-book}). It is showed in \cite{Knop93} that there is a torus ${\mathbf T}\subset \mathfrak A(X)$ whose Lie algebra is the linear part $\mathscr A(:=\mathscr V\cap(-\mathscr V)\cap\mathscr Q)$ of $\mathscr V$ (identified with a subalgebra of ${\rm Lie}(T)$, where $T=B\cap B^-$ is a maximal torus of $G$). Consider the group ${\mathbf G}\subset{\rm Aut}(X)$ generate by ${\mathbf T}$ and the $G$-action on $X$. Then ${\mathbf G}$ is a reductive group with ${\mathbf T}$ contained in its centre.

\subsection{Cartier divisors on $G$-varieties}
Let $X$ be a $G$-model of $K$ and $L$ a line bundle on it. Up to replacing $G$ by a finite covering, we may assume that $G$ is of simply connected type. Then by \cite{KKLV}, $L$ is $G$-linearized, and by \cite{FMSS} there always exists a $B$-stable divisor $\mathfrak d$ of $L$. In this section, we recall some basic results on Cartier divisors on $X$ established by \cite{Timashev-2000}, in particular the combinatorial data associated to ample divisors. Finally we give a formula of $B$-stable anti-canonical divisors.

Let $X$ be a $G$-model of $K$ and $\mathscr B(X)$ the set of all $B$-stable prime divisors of $X$. Let
\begin{align}\label{B-stable-divisor}
\mathfrak d=\sum_{D\in\mathscr B(X)}m_DD
\end{align}
be a $B$-stable Weil divisor on $X$. The following criterions of Cartier property and ampleness were proved in \cite{Timashev-2000}, which is a generalization of \cite{Brion89} for spherical varieties.

\begin{theo}\label{Cartier-criterion}{\rm (}\cite[Theorem 5]{Timashev-2000}{\rm )}
The divisor $\mathfrak d$ in \eqref{B-stable-divisor} is Cartier if and only if for any $G$-invariant subvariety $Y\subset X$, there exists $f_Y\in K^{(B)}$ such that for each prime divisor $D\supset Y$, $m_D=v_D(f_Y)$. That is, $\mathfrak d$ is Cartier if and only if it is locally principal at a generic point of each $G$-invariant subvariety.
\end{theo}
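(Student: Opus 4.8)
The plan is to use the elementary characterisation that a Weil divisor is Cartier precisely when it is locally principal at every point, and to reduce this pointwise condition to the generic points of the $G$-invariant subvarieties by exploiting the $B$-action. Since $X$ is normal, $\mathfrak d$ is automatically principal at every codimension-one point, so the only possible obstructions to being Cartier sit at higher-codimension points, and these all lie inside $G$-invariant (hence $B$-stable) subvarieties. This is exactly why the criterion is phrased through the local rings $\mathscr O_{X,Y}$ at the generic points $\eta_Y$: such a ring only records the prime divisors that actually contain $Y$.

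For the forward implication I would assume $\mathfrak d$ Cartier and fix a $G$-invariant subvariety $Y$. As $Y$ is irreducible and $B$-stable, the connected group $B$ fixes the scheme point $\eta_Y$, so $\mathscr O_{X,Y}$ carries a rational $B$-action and $\mathscr O_X(\mathfrak d)$ is $B$-linearised near $\eta_Y$. A local generator $f$ then satisfies $bf/f\in\mathscr O_{X,Y}^\times$ for every $b\in B$, and the residue of the stalk at $\eta_Y$ is a one-dimensional rational $B$-module, i.e. a single character of $B$. By $B$-equivariant Nakayama (using Lie--Kolchin to diagonalise on the relevant finite-dimensional $B$-submodule) I can then replace $f$ by a $B$-eigenvector generator, that is by an element $f_Y\in K^{(B)}$. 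Reading off the identity $\operatorname{div}(f_Y)=\mathfrak d$ in $\mathscr O_{X,Y}$, which only sees prime divisors through $Y$, gives $m_D=v_D(f_Y)$ for every prime $D\supset Y$, as required.

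For the converse I would use the Luna--Vust patching recalled above to cover $X$ by the $G$-stable open subsets $X_i$ attached to the maximal coloured (hyper)cones of the fan $\mathfrak F_X$, each a simple $G$-model with a unique closed orbit $Y_i$. Since being Cartier is a local property, it suffices to prove that each $\mathfrak d|_{X_i}$ is Cartier. Applying the hypothesis to $Y=Y_i$ produces $f_{Y_i}\in K^{(B)}$ with $v_D(f_{Y_i})=m_D$ for all prime $D\supset Y_i$, and I claim $\operatorname{div}(f_{Y_i})=\mathfrak d$ on $X_i$, which exhibits $\mathfrak d|_{X_i}$ as principal and hence Cartier. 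Both sides are $B$-stable divisors; by simplicity every $G$-stable prime divisor of $X_i$ contains $Y_i$, so the two agree along all of those, and the only remaining comparison is along colours, where one restricts to a sufficiently small $G$-stable neighbourhood of $Y_i$ (equivalently one uses that the boundary colours of the defining $B$-chart are exactly those in $\mathscr R_i$, all of which contain $Y_i$).

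The hard part will be this last step: organising the colours on each simple piece and showing that matching the coefficients $m_D$ only along divisors through the closed orbit $Y_i$ already forces $\operatorname{div}(f_{Y_i})=\mathfrak d$ throughout $X_i$. The forward direction's extraction of a $B$-semiinvariant local equation is routine once the $B$-fixedness of $\eta_Y$ is in place, but the bookkeeping separating $G$-stable prime divisors from colours on the local models, and ruling out any surviving discrepancy away from $Y_i$, is where the combinatorial classification (Theorems~\ref{B-chart-thm} and~\ref{fan-classify}) genuinely enters.
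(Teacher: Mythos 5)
First, a point of reference: the paper gives no proof of this statement at all --- it is quoted from \cite[Theorem 5]{Timashev-2000} --- so your proposal must be measured against the standard Luna--Vust argument, and against that it has genuine gaps in both directions, with a common cause: you never use the $G$-action, the reductivity of $G$, or the simple-connectedness normalization, and both of your key steps are actually false at the level of the $B$-action alone.

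In the forward direction, your argument only uses that $\eta_Y$ is fixed by $B$, so if it were correct it would produce a $B$-semiinvariant local equation at every $B$-stable irreducible subvariety. That is false: take $X=\mathbb P^1$ with $G={\rm SL}_2$, $Y=\{\infty\}$ the $B$-fixed point, and $\mathfrak d=[\infty]$, which is Cartier. Here $K^{(B)}={\rm k}^\times$, because a $B$-semiinvariant rational function has $B$-stable divisor, necessarily a multiple of $[\infty]$, hence zero by degree reasons; so no $f_Y\in K^{(B)}$ with $v_{[\infty]}(f_Y)=1$ exists. This example also pinpoints the false step: the stalk $M=\mathscr O_X(\mathfrak d)_{\eta_Y}$ contains the generator $x$, the finite-dimensional $B$-submodule $\langle B\cdot x\rangle={\rm span}\{1,x\}$ is available for Lie--Kolchin, yet \emph{every} $B$-eigenvector in it lies in $\mathfrak m_YM$ (only the constants are eigenvectors), so ``$B$-equivariant Nakayama'' produces nothing. (Also, for positive-dimensional $Y$ your claim that $M/\mathfrak m_YM$ is ``a single character of $B$'' is wrong: it is one-dimensional over ${\rm k}(Y)$, an infinite-dimensional semilinear $B$-module over ${\rm k}$.) The correct proof must use that $Y$ is $G$-stable: after Sumihiro's theorem and twisting by a $G$-linearized ample bundle (this is where $G$ simply connected and \cite{KKLV} enter, to linearize $\mathscr O_X(\mathfrak d)$), ``vanishing along $Y$'' cuts out a $G$-submodule of a rational $G$-module of sections, and by reductivity a $B$-eigenvector of the quotient lifts to a $B$-eigenvector not vanishing along $Y$; it is precisely this lifting that fails for a mere $B$-submodule.

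In the converse direction your intermediate claim ${\rm div}(f_{Y_i})=\mathfrak d$ on the whole simple piece $X_i$ is false, not merely hard: a $B$-stable Cartier divisor on a simple $G$-model need not be principal. Take $G={\rm SL}_2$ acting diagonally on $X=(\mathbb P^1)^3$, a simple complexity-one variety whose unique closed orbit is the small diagonal $\Delta$, and $\mathfrak d=\{\infty\}\times\mathbb P^1\times\mathbb P^1$: it is Cartier but not principal (${\rm Pic}(X)\cong\mathbb Z^3$), yet every prime divisor containing $\Delta$ has coefficient $0$ in $\mathfrak d$, so the criterion is satisfied by $f_\Delta=1$ while no admissible $f_\Delta$ can have divisor $\mathfrak d$. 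Your fallback of shrinking to a small $G$-stable neighbourhood of $Y_i$ cannot repair this, because a colour not containing $Y_i$ meets \emph{every} nonempty $G$-stable open set (its $G$-span is dense); only $B$-stable neighbourhoods, i.e.\ $B$-charts, avoid the colours outside $\mathscr R_i$. The missing idea is the propagation step of the standard proof: the hypothesis gives that $\mathfrak d$ is principal on the minimal $B$-chart of each $Y$ (equivalently near $\eta_Y$); one then uses that the non-Cartier locus is closed (openness of the invertibility locus) and $G$-stable --- because $g\cdot\mathfrak d\sim\mathfrak d$ for all $g$ in the connected group $G$, by the standard ${\rm Cl}(G\times X)$ argument --- so that if it were nonempty it would contain $\overline{Gx}$ for one of its points $x$, contradicting principality at $\eta_{\overline{Gx}}$. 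Without this $G$-invariance argument neither direction of your proof closes.
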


Given a $B$-stable Cartier divisor, we have the following ampleness criterion:
\begin{theo}\label{ampleness-criterion}{\rm (}\cite[Theorem 6]{Timashev-2000}{\rm )}
Suppose that the divisor $\mathfrak d$ in \eqref{B-stable-divisor} is Cartier and determined by local data $\{f_Y\}$ as in Theorem \ref{Cartier-criterion}. Then
\begin{itemize}
\item[(1)] $\mathfrak d$ is globally generated if and only if $f_Y$ can be chosen so that for any $G$-invariant subvariety $Y\subset X$, the following two condition hold:
\begin{itemize}
\item[(a)] For any other $G$-invariant subvariety $Y'\subset X$ and $B$-stable prime divisor $D\supset Y'$, $v_D(f_Y)\leq v_D(f_{Y'})$;
\item[(b)] $\forall D\in\mathscr D^B\setminus(\cup_{Y'\subset X}\mathscr D_{Y'}^B):~v_D(f_Y)\leq m_D$.
\end{itemize}
\item[(2)] $\mathfrak d$ is ample if and only if, after replacing $\mathfrak d$ by its certain multiple, the functions $\{f_Y\}$ can be chosen so that for any $G$-invariant subvariety $Y\subset X$, there exists $B$-charts $\mathring X$ of $Y$ such that (a)-(b) are satisfied and
\begin{itemize}
\item[(c)] the inequalities therein are strict if and only if $D\cap\mathring X=\emptyset$.
\end{itemize}
\end{itemize}
\end{theo}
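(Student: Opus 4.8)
The plan is to derive both statements directly from the local description of Cartier divisors in Theorem~\ref{Cartier-criterion}, combined with the classification of $B$-charts and $G$-germs by coloured hypercones. Throughout I identify a $B$-semiinvariant global section of $\mathcal{O}(\mathfrak d)$ with a function $f\in K^{(B)}$ satisfying $\mathrm{div}(f)+\mathfrak d\geq0$, i.e. $v_D(f)\geq -m_D$ for every $D\in\mathscr B(X)$; equivalently the functional $\varphi_f$ attached to $f$ lies in the appropriate dual cone. Since $\mathfrak d$ is $B$-stable and (after passing to a finite cover) $G$-linearized, the relevant positivity can be tested at the generic points of the $G$-invariant subvarieties $Y$, which are exactly the $G$-germs parameterized by the faces of the coloured fan $\mathfrak F_X$.

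First I would treat global generation. Fix a $G$-germ $Y$ with local equation $f_Y$, so that $v_D(f_Y)=m_D$ for every $D\supset Y$ by Theorem~\ref{Cartier-criterion}. The section of $\mathcal{O}(\mathfrak d)$ represented in the $B$-chart of $Y$ by the constant $1$ corresponds to the rational function $f_Y^{-1}$, and it is the unique candidate nonvanishing at the generic point of $Y$. This $f_Y^{-1}$ is a genuine \emph{global} section precisely when $\mathrm{div}(f_Y^{-1})+\mathfrak d\geq0$, that is $v_D(f_Y)\leq m_D$ for all $D\in\mathscr B(X)$. Splitting $\mathscr B(X)$ according to whether $D$ contains some $G$-germ $Y'$ or not converts this single inequality into the two listed conditions: for $D\supset Y'$ one has $m_D=v_D(f_{Y'})$, giving (a); for a colour $D\in\mathscr D^B\setminus(\cup_{Y'}\mathscr D^B_{Y'})$ one recovers (b) verbatim. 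Conversely, assuming (a) and (b) for a suitable choice of each $f_Y$ (each $f_Y$ is pinned down only at the divisors through $Y$, hence may be adjusted by a $B$-eigenfunction that is a unit along $Y$, which is the source of the phrase ``can be chosen''), every $f_Y^{-1}$ is a global section nonvanishing at the generic point of $Y$; since the base locus of $\mathcal{O}(\mathfrak d)$ is $G$-stable and therefore a union of germ-closures, it is then empty. This establishes part~(1).

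For ampleness in part~(2) I would use that $\mathfrak d$ is ample if and only if, after replacing it by a multiple, it is globally generated and the nonvanishing loci of the distinguished sections form an affine open cover realizing the projective embedding; combinatorially this is the statement that the piecewise-linear support function whose pieces are the functionals $\varphi_{f_Y}$ is \emph{strictly} convex across the coloured fan. Concretely, the nonvanishing locus $\{f_Y^{-1}\neq0\}$ is the complement of the divisors $D$ with $v_D(f_Y)<m_D$, and ampleness forces this locus to be exactly the $G$-span $G\mathring X$ of the $B$-chart $\mathring X$ of $Y$. Equivalently, a $B$-stable divisor $D$ disjoint from $\mathring X$ must lie in the zero locus of $f_Y^{-1}$, i.e. the inequality $v_D(f_Y)\leq m_D$ (equivalently $v_D(f_Y)\leq v_D(f_{Y'})$) is \emph{strict}, while any $D$ meeting $\mathring X$ gives equality. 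This is exactly condition~(c). For the converse one checks chart by chart that strictness makes $\mathfrak d|_{G\mathring X}$ relatively ample and cuts out $G\mathring X$ as the nonvanishing locus of the single section $f_Y^{-1}$; since by Theorem~\ref{fan-classify} finitely many such charts cover $\mathscr V$, relative ampleness on each piece assembles into global ampleness by the usual Nakai--Moishezon reduction.

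The hard part will be the ampleness direction and, more precisely, the passage from fibrewise and central strict convexity to genuine ampleness on the whole $G$-model. In complexity $1$ the support data live in the hyperspace $\mathscr E=\bigsqcup_{x\in C}\mathscr Q_{x,+}/\!\sim$ fibred over the curve $C$, and the coloured hypercones carry both the central part in $\mathscr Q$ and the extra polytope $\mathscr P$ with its distinction between true faces and pseudofaces. One must therefore verify strict convexity simultaneously in the central directions and fibrewise over each $x\in C$, tracking the jumps $h_D$ so that germs lying over the same point of $C$, as well as over different points, are separated by the sections $f_Y^{-1}$. I would control this by reducing to the defining conditions (C), (F), (W) of Theorem~\ref{B-chart-thm} on each chart and checking relative ampleness of $\mathfrak d$ over $C$ before globalizing.
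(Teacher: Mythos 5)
First, note that the paper itself does not prove this statement: it is quoted verbatim as \cite[Theorem 6]{Timashev-2000}, so there is no internal proof to compare against, and your attempt has to stand on its own. Your part (1) essentially does: identifying $B$-semiinvariant sections of $\mathscr O_X(\mathfrak d)$ with functions $g\in K^{(B)}$ satisfying $v_D(g)+m_D\geq 0$ for all $D\in\mathscr B(X)$, observing that every $D\in\mathscr B(X)^G$ contains a $G$-germ so that (a)+(b) together say exactly that each $f_Y^{-1}$ is a global section, and killing the base locus because it is closed and $G$-stable, hence would contain the generic point of some germ $Y'$ where $f_{Y'}^{-1}$ does not vanish. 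The only point you gloss over is the converse implication ``globally generated $\Rightarrow$ the data can be chosen'': you need that a \emph{$B$-semiinvariant} section not vanishing at the generic point of $Y$ exists, which uses reductivity of $G$ (the sections vanishing along $Y$ form a $G$-submodule, and a highest-weight vector of a complement does the job); your phrase ``the unique candidate'' is also inaccurate, though harmless, since uniqueness is neither true nor needed.

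Part (2) contains two genuine gaps. First, you repeatedly identify the nonvanishing locus of the $B$-semiinvariant section $f_Y^{-1}$ with the $G$-span $G\mathring X$; this cannot be right, since the zero divisor $\mathfrak d-{\rm div}(f_Y)$ is only $B$-stable, so its complement is a $B$-stable open set, not a $G$-stable one. The correct statement, which is what makes (c) work, is that this locus equals the $B$-chart $\mathring X$ itself: ampleness (after passing to a very ample multiple) forces $X_{f_Y^{-1}}$ to be affine, $B$-stable and to contain the generic point of $Y$, hence to \emph{be} a $B$-chart of $Y$; conversely, condition (c) says that $X\setminus X_{f_Y^{-1}}$ and $X\setminus\mathring X$ contain the same $B$-stable prime divisors, and since both complements have pure codimension one (complement of an affine open in a normal variety), this forces $X_{f_Y^{-1}}=\mathring X$. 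Second, your mechanism for the sufficiency direction fails: ``relative ampleness over $C$'' is not even defined in general, because the rational quotient $X\dashrightarrow C$ is a morphism only in the one-parameter type \uppercase\expandafter{\romannumeral1} case (cf. Lemma \ref{X-C-map-lem}), not in the quasihomogeneous case; and Nakai--Moishezon is not a device for assembling ampleness from pieces. What actually closes the argument is the standard criterion (EGA II, 4.5.2): a line bundle on a quasi-compact separated scheme is ample if and only if the sets $X_s$, for sections $s$ of its powers, that are affine cover $X$. Given (a)--(c), each $X_{f_Y^{-1}}=\mathring X$ is affine, its $G$-translates $X_{g\cdot f_Y^{-1}}$ are again affine nonvanishing loci of sections, and finitely many of them cover $X$ by noetherianity, which yields ampleness directly; no fibrewise or strict-convexity bookkeeping over $C$ is needed.
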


\subsubsection{The ample divisors}
Suppose that the divisor $\mathfrak d$ in \eqref{B-stable-divisor} is ample, and $L=L_\mathfrak d$ its line bundle. Also suppose that $\mathfrak d$ is the divisor of some section ${\rm H}^0(X,L)^{(B)}_{\lambda_0}$ for some $\lambda_0\in\mathfrak X(B)$. In this section we recall results on the space of global sections ${\rm H}^0(X,L^k)$ from \cite[Section 7]{Timashev-2000}. For our later use, we only focus on the results when $K^B$ is rational, in which $C=\mathbb P^1$. For results in general cases we refer to the readers \cite{Timashev-2000}.

Recall the functions
\begin{align*}
A_x(\mathfrak d,\lambda):=\min_{\{v_D=h_Dq_{x_D}+\ell_D\in\mathscr B(X)|x_D=x,h_D>0\}}\frac{\ell_D(\lambda)+m_D}{h_D},~\lambda\in\Gamma_\mathbb R,~x\in C,
\end{align*}
defined in \cite[Section 7]{Timashev-2000}. Note that the minimum taken here is well-defined since there are only finitely many divisors in $\mathscr B(X)$ satisfy $x_D=x$ for every $x\in C$. For almost every $x\in C$ (that is, there are at most finitely many exceptions), $A_x(\mathfrak d,\lambda)\equiv0$. Thus
\begin{align}\label{A(d,lambda)}
A(\mathfrak d,\lambda):=\sum_{x\in C}A_x(\mathfrak d,\lambda)
\end{align}
is well-defined. \cite[Section 7]{Timashev-2000} (see also \cite[Section 17.4]{Timashev-book}) introduced the following polytope 
\begin{align*}
\Delta_\mathscr Z(\mathfrak d):=\{\lambda\in\Gamma_\mathbb R|v_D(\lambda)+m_D\geq0,~\forall{D\in\mathscr B(X),h_D=0},~\text{and}~A(\mathfrak d, \lambda)\geq0\},
\end{align*}
and proved
\begin{prop}\label{Delta(d)-Z}
Assume that $X$ is a $G$-variety of complexity 1 with ${\rm k}(X)^B\cong{\rm k}(\mathbb P^1)$, and let $\mathfrak d$ be an ample divisor of $X$ as above. A rational point $\lambda\in\Gamma_\mathbb Q$ lies in $\Delta_\mathscr Z(\mathfrak d)$ if and only if ${\rm H}^0(X,L^k)^{(B)}_{k(\lambda+\lambda_0)}\not=0$ for some positive integer $k$ satisfying $k\lambda\in\Gamma$. Moreover, for any $\lambda\in\Gamma_\mathbb Q$,
\begin{align}\label{dim-Hk-lambda}
\dim{\rm H}^0(X,L^k)^{(B)}_{k(\lambda+\lambda_0)}=\max\{\sum_{x\in C}[kA_x(\mathfrak d,\lambda)]+1, 0\}.
\end{align}
As a consequence,
$$\Delta_\mathscr Z(L):=\Delta_\mathscr Z(\mathfrak d)+\lambda_0$$
is a solid convex polytope in $\Gamma_\mathbb R+\lambda_0\subset(\mathfrak X_\mathbb R(B))$ that depends only on $L$ but not on the choice of $\mathfrak d$.
\end{prop}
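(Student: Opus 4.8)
The plan is to compute $\dim {\rm H}^0(X,L^k)^{(B)}_{k(\lambda+\lambda_0)}$ by translating the regularity of $B$-eigensections into an explicit divisorial condition on the curve $C\cong\mathbb P^1$, and then to read off both the dimension formula and the polytope structure from Riemann--Roch on $\mathbb P^1$. Fix the $B$-semiinvariant section $s_0$ of $L$ of weight $\lambda_0$ with ${\rm div}(s_0)=\mathfrak d$. Every $B$-eigensection of $L^k$ of weight $k(\lambda+\lambda_0)$ is uniquely $f\cdot s_0^k$ with $f\in K^{(B)}_{k\lambda}$, and it is regular if and only if ${\rm div}(f)+k\mathfrak d\geq0$, i.e. $v_D(f)+km_D\geq0$ for every $D\in\mathscr B(X)$ (by $B$-semiinvariance the divisor of $f$ is $B$-stable, so testing on $\mathscr B(X)$ suffices). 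Writing $f=f_0e_{k\lambda}$ with $f_0\in K^B={\rm k}(C)$ and using the complexity-$1$ decomposition $v_D=h_Dq_{x_D}+\ell_D$, this inequality becomes $h_D\, q_{x_D}(f_0)+k(\ell_D(\lambda)+m_D)\geq0$, where $q_x(f_0)$ denotes the order of $f_0$ at $x\in C$.

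Next I would split $\mathscr B(X)$ into the central divisors ($h_D=0$) and the vertical ones ($h_D>0$). The central divisors give the $k$- and $f_0$-independent linear constraints $\ell_D(\lambda)+m_D\geq0$, which are exactly the first family of inequalities in the definition of $\Delta_{\mathscr Z}(\mathfrak d)$. For each $x\in C$ the vertical divisors lying over $x$ impose $q_x(f_0)\geq -k(\ell_D(\lambda)+m_D)/h_D$ for all such $D$; since $q_x(f_0)\in\mathbb Z$, these combine into the single sharp bound $q_x(f_0)\geq \lceil -kA_x(\mathfrak d,\lambda)\rceil=-\lfloor kA_x(\mathfrak d,\lambda)\rfloor$. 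Because $\mathfrak d$ is supported on finitely many divisors and the generic vertical divisor is $v_D=q_x$ with $m_D=0$ (so $A_x\equiv0$ off a finite set), these are finitely many nontrivial conditions, and when the central constraints hold they describe $f_0$ as an element of ${\rm H}^0\bigl(C,\mathcal O_C(\sum_{x}\lfloor kA_x(\mathfrak d,\lambda)\rfloor\, x)\bigr)$.

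Since $C\cong\mathbb P^1$, this space has dimension $\max\{\deg+1,0\}=\max\{\sum_{x}\lfloor kA_x(\mathfrak d,\lambda)\rfloor+1,0\}$, which is the asserted formula \eqref{dim-Hk-lambda}; the membership criterion for $\Delta_{\mathscr Z}(\mathfrak d)$ then follows by letting $k\to\infty$, as a rational $\lambda$ satisfies the central inequalities together with $A(\mathfrak d,\lambda)\geq0$ precisely when $\sum_x\lfloor kA_x\rfloor+1>0$ for some admissible $k$. For the concluding assertion, each $A_x$ is a minimum of affine functions of $\lambda$, hence concave, so $A(\mathfrak d,\cdot)=\sum_xA_x$ is concave and $\{A\geq0\}$ is convex; intersecting with the half-spaces $\ell_D(\lambda)+m_D\geq0$ shows $\Delta_{\mathscr Z}(\mathfrak d)$ is convex, while ampleness of $\mathfrak d$ (through Theorem \ref{ampleness-criterion}, together with the polynomial growth of $\dim{\rm H}^0(X,L^k)$) forces the region to be bounded, hence a polytope. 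That $\Delta_{\mathscr Z}(L):=\Delta_{\mathscr Z}(\mathfrak d)+\lambda_0$ depends only on $L$ is then immediate from the intrinsic characterization it now carries, namely the closure of $\{\mu\mid {\rm H}^0(X,L^k)^{(B)}_{k\mu}\neq0\text{ for some }k\}$: replacing $\mathfrak d$ by another $B$-stable divisor of $L$ changes $\lambda_0$ by the weight of the ratio of the two defining sections, which exactly compensates the translation.

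I expect the main obstacle to be the per-point floor bookkeeping in the second and third steps. The delicate point is that the correct count is $\sum_x\lfloor kA_x\rfloor+1$ rather than $\lfloor k\sum_xA_x\rfloor+1$; this discrepancy is precisely what the $\mathbb P^1$ line-bundle dimension produces, because the integrality $q_x(f_0)\in\mathbb Z$ is imposed separately at each point. I would also need to verify carefully that the minimum defining $A_x$ is taken over a finite nonempty set and that $A(\mathfrak d,\lambda)$ is a \emph{finite} sum, which rests on $\mathfrak d$ being supported on finitely many $B$-stable divisors together with the structural fact that $v_D=q_x$ for the generic vertical divisor.
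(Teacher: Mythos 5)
Your proof is correct. Note that the paper contains no proof of Proposition \ref{Delta(d)-Z} to compare against: it is quoted from \cite[Section 7]{Timashev-2000} (see also \cite[Section 17.4]{Timashev-book}); your argument --- writing a $B$-eigensection of weight $k(\lambda+\lambda_0)$ as $f_0e_{k\lambda}s_0^k$, testing effectivity of its divisor against each $v_D=h_Dq_{x_D}+\ell_D$ with $D\in\mathscr B(X)$, separating the central constraints $\ell_D(\lambda)+m_D\geq0$ from the pointwise integrality bound $q_x(f_0)\geq-[kA_x(\mathfrak d,\lambda)]$, and then invoking $h^0(\mathbb P^1,\mathcal O(\delta))=\max\{\deg\delta+1,0\}$ --- is exactly the standard proof in that source, and it is the same mechanism this paper itself re-runs (with an extra filtration constraint) in the proof of Proposition \ref{F-tau-Rk}. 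The one small inaccuracy is your attribution of boundedness of $\Delta_\mathscr Z(\mathfrak d)$ to the ampleness criterion: what does the work is simply finite-dimensionality of ${\rm H}^0(X,L^k)$ (a rational recession ray would, for sufficiently divisible $k$, yield infinitely many distinct $B$-weights with nonzero eigenspaces by your own dimension formula), which the polynomial growth you mention already provides.
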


From Proposition \ref{Delta(d)-Z} we directly get the following expression of $\dim{\rm H}^0(X,L^k)$, which will be frequently used later. Set
\begin{align}\label{pi-def}
\pi(\lambda)=\prod_{\alpha^\vee\in\Pi_{G}^\vee\setminus(\Gamma+\mathbb Z\lambda_0)^\perp}\frac{\langle\lambda,\alpha^\vee\rangle}{\langle\rho,\alpha^\vee\rangle},~\lambda\in\mathfrak X_\mathbb R(B).
\end{align}
We have the following asymptotic expression of $\dim{\rm H}^0(X,L^k)$ which is a refinement of \cite[Theorem 8]{Timashev-2000}, and also a counterpart of the intersection formulas established in \cite{Del-2021,Del-2020-09} for spherical varieties.
\begin{lem}\label{h0(X,Lk)}
Suppose that $X$ is a $G$-variety of complexity 1 with ${\rm k}(X)^B\cong{\rm k}(\mathbb P^1)$, and $L$ an ample $G$-linearized line bundle on $X$ with $\mathfrak d$ defined by \eqref{B-stable-divisor} a $B$-stable divisor of $L$. Then up to replace $L$ with $L^d$ for a sufficiently divisible $d$, it holds
\begin{align}\label{h0(X,Lk)-eq}
\dim{\rm H}^0(X,L^k)=&k^n\int_{\Delta_\mathscr Z(L)}A(\mathfrak d,\lambda-\lambda_0)\pi(\lambda)d\lambda+k^{n-1}\int_{\Delta_\mathscr Z(L)}A(\mathfrak d,\lambda-\lambda_0)\langle\nabla\pi(\lambda),\rho\rangle d\lambda\notag\\
&+\frac12k^{n-1}\left(\int_{\partial\Delta_\mathscr Z(L)}A(\mathfrak d,\lambda-\lambda_0)\pi(\lambda)d\sigma
-\sum_{x\in C}\sum_{a=1}^{N}\int_{\Omega_a}(1-\frac1{h_{D_a(x)}})\pi(\lambda)d\lambda\right)\notag\\
&+k^{n-1}\int_{\Delta_\mathscr Z(L)}\pi(\lambda)d\lambda+O(k^{n-2}),~k\to+\infty,
\end{align}
where $d\lambda$ is the standard Lebesgue measure on $\Gamma_\mathbb R+\lambda_0$, normalized by the lattice $\Gamma$, $d\sigma$ is the induced lattice measure on the boundary, $\nabla\pi$ the gradient of $\pi$,
$$\nabla\pi(\lambda)=\frac1{\prod_{\beta^\vee\in\Pi_{G}^\vee\setminus(\Gamma+\mathbb Z\lambda_0)^\perp}\langle\rho,\beta^\vee\rangle}\sum_{\alpha^\vee\in\Pi_{G}^\vee\setminus(\Gamma+\mathbb Z\lambda_0)^\perp}\left(\prod_{\beta^\vee\in\Pi_{G}^\vee\setminus(\Gamma+\mathbb Z\lambda_0)^\perp,\beta^\vee\not=\alpha^\vee}\langle\lambda,\beta^\vee\rangle\right)\cdot\alpha^\vee,$$
and $\Delta_\mathscr Z(L)=\cup_a\Omega_a$ so that $\Omega_a$'s are convex rational polytopes with disjoint interiors and on each $\Omega_a$, every
\begin{align}\label{affine-domain}
A_x(\mathfrak d,\lambda-\lambda_0)=\frac{m_{D_a(x)}+\ell_{D_a(x)}(\lambda-\lambda_0)}{h_{D_a(x)}}~\text{for some}~D_a(x)\in\mathscr B(X)\cap\{x_D=x\},
\end{align}
is affine.
\end{lem}
\begin{proof}
Up to replace $L$ with $L^d$ for sufficiently divisible $d$, we may also assume that $\Delta_\mathscr Z(\mathfrak d)$ is integral, and vertices of the graph of every $A_x(\mathfrak d,\lambda)$ on $\Delta_\mathscr Z(\mathfrak d)$ are also integral. This is possible since
$$\Delta_\mathscr Z(d\mathfrak d)=d\Delta_\mathscr Z(\mathfrak d),~A_x(d\mathfrak d,d\lambda)=dA_x(\mathfrak d,\lambda),~\forall \lambda\in\Delta_\mathscr Z(\mathfrak d)\subset\Gamma_\mathbb R,$$
and there are only finitely many $x\in C$ with $A_x(\mathfrak d,\lambda)\not\equiv0$.

Fix an $s_{\lambda_0}\in{\rm H}^0(X,L)$ as before. Then by the Weyl character formula, for any $\lambda\in (k\lambda_0+\Gamma$), it holds
$$\dim V_\lambda=\pi(\lambda+\rho).$$

Set
\begin{align*}
\mathfrak P_k=&\{\lambda\in k\Delta_\mathscr Z(L)\cap(\Gamma+k\lambda_0)|\sum_{x\in C}[kA_x(\mathfrak d,\frac\lambda k-\lambda_0)]<0\}\\
=&k\{\lambda\in \Delta_\mathscr Z(\mathfrak d)\cap\frac1k\Gamma|\sum_{x\in C}[kA_x(\mathfrak d,\lambda)]<0\}+k\lambda_0.
\end{align*}
By Proposition \ref{Delta(d)-Z},
\begin{align*}
\dim{\rm H}^0(X,L^k)=&\sum_{\lambda\in k\Delta_\mathscr Z(L)\cap(\Gamma+k\lambda_0)}\left(\max\{0,\sum_{x\in C}[kA_x(\mathfrak d,\frac\lambda k-\lambda_0)]+1\}\right)\dim V_{\lambda}\\
=&\sum_{\lambda\in k\Delta_\mathscr Z(L)\cap(\Gamma+k\lambda_0)}\left(\sum_{x\in C}[kA_x(\mathfrak d,\frac\lambda k-\lambda_0)]+1\right)\dim V_{\lambda}\\
&-\sum_{\lambda\in \mathfrak P_k}\left(\sum_{x\in C}[kA_x(\mathfrak d,\frac\lambda k-\lambda_0)]+1\right)\dim V_{\lambda}.
\end{align*}

Note that (cf. \cite[Korollar 3.3]{Knop93})
\begin{align}\label{n=c+r+1}
n={\rm rk}(\Gamma)+\deg(\pi)+1.
\end{align}
Applying Lemmas \ref{bad-point-lem} and \ref{riemann-sum} in the Appendix, the second term
$$0\leq-\sum_{\lambda\in \mathfrak P_k}\left(\sum_{x\in C}[kA_x(\mathfrak d,\frac\lambda k-\lambda_0)]+1\right)\dim V_{\lambda}\leq Ck^{n-2},~k\to+\infty$$
for some uniform constant $C>0$, which has lower order in $k$. Thus it suffices to deal with the first term. By applying Lemma \ref{riemann-sum} to the first sum, it holds
\begin{align*}
\dim{\rm H}^0(X,L^k)=&\sum_{\lambda\in k\Delta_\mathscr Z(L)\cap(\Gamma+k\lambda_0)}\left(\sum_{x\in C}[kA_x(\mathfrak d,\frac\lambda k-\lambda_0)]+1\right)\dim V_{\lambda}\\
=&\sum_{x\in C}\sum_{\lambda\in k\Delta_\mathscr Z(L)\cap(\Gamma+k\lambda_0)}[kA_x(\mathfrak d,\frac\lambda k-\lambda_0)]\pi(\lambda+\rho)+\sum_{\lambda\in k\Delta_\mathscr Z(L)\cap(\Gamma+k\lambda_0)}\pi(\lambda+\rho)\\
=&\sum_{x\in C}\left(k^n\int_{\Delta_\mathscr Z(L)}A_x(\mathfrak d,\lambda-\lambda_0)\pi(\lambda)d\lambda+k^{n-1}\int_{\Delta_\mathscr Z(L)}A_x(\mathfrak d,\lambda-\lambda_0)\langle\nabla\pi(\lambda),\rho\rangle d\lambda\right.\\
&+\frac12k^{n-1}\int_{\partial\Delta_\mathscr Z(L)}A_x(\mathfrak d,\lambda-\lambda_0)\pi(\lambda)d\sigma\left.-\frac12k^{n-1}\sum_{a=1}^{N}\int_{\Omega_a}(1-\frac1{|h_{D_a(x)}|})\pi(\lambda)d\lambda\right)\\
&+k^{n-1}\int_{\Delta_\mathscr Z(L)}\pi(\lambda)d\lambda+O(k^{n-2}),
\end{align*}
where $d\sigma$, $\Omega_a$ and $D_a(x)$ are defined as above. Here and above we used the fact that whenever $\lambda\in k\Delta_\mathscr Z(L)\cap(\Gamma+k\lambda_0)$, $\lambda-k\lambda_0\in k\Delta_\mathscr Z(\mathfrak d)\cap\Gamma$, where $\Gamma$ is a fix lattice on which Lemmas \ref{bad-point-lem} and \ref{riemann-sum} apply. The Lemma then follows from \eqref{A(d,lambda)}.

\end{proof}

\subsubsection{Associated parabolic subgroup and properties of the function $\pi$}

The set $\Pi_{G}^\vee\setminus(\Gamma+\mathbb Z\lambda_0)^\perp$ in \eqref{pi-def} is in fact independent with the choice of the ample line bundle $L$ (cf. \cite{Brion89, Timashev-2000}). Indeed it consists of unipotent coroots of the \emph{associated parabolic subgroup} of $X$. This seems to be well-known to the experts, while for the readers' convenience we give a short explanation below. Recall that for any $G$-variety $X$, its associated parabolic subgroup is defined by (see \cite{Knop91} for definition)
\begin{align}\label{ass-para}
P=\{g\in G|gBx=Bx~\text{for general}~x\in X\},
\end{align}
or equivalently, the maximal common stabilizer group of all colours. Let $L$ be any ample line bundle on $X$. Then $P$ stabilizes any line in ${\rm H^0}(X,L^k)$ generated by a $B$-semiinvariant section for all $k\in\mathbb N_+$, since the divisor of such a section is also $P$-stable. Moreover, since $L$ is ample, $P$ in particular equals to the stabilizer of one $B$-semiinvariant section in some ${\rm H^0}(X,L^{k_0})$ (\cite[Satz 2.5]{Knop93}). Thus for every $k\in\mathbb N_+$, the roots of the Levi group $L_P$ of $P$ are precisely roots of $G$ that are orthogonal to the set
$$\bigcup_{k\in\mathbb N_+}\{\lambda\in\mathfrak X(B)|\dim{\rm H}^0(X,L^k)^{(B)}_\lambda\not=0\}.$$
By Proposition \ref{Delta(d)-Z}, they are precisely the roots of $G$ that are orthogonal to $\Delta_\mathscr Z(L)$, which is a solid polytope in $\Gamma_\mathbb R+\lambda_0$. Hence
\begin{lem}\label{Pi-Pu}
Let $X$ be a projective $G$-variety of complexity 1 and $P$ its associated parabolic subgroup. Let $L$ be any ample $G$-line bundle on $X$. Then $\Pi_{L_P}^\vee=\Pi_G^\vee\cap(\Delta_\mathscr Z(L))^\perp$ and $\Pi_{P_u}^\vee=\Pi_{G}^\vee\setminus(\Delta_\mathscr Z(L))^\perp (=\Pi_{G}^\vee\setminus(\Gamma+\mathbb Z\lambda_0)^\perp)$. Consequently,
\begin{align*}
\pi(\lambda)=\prod_{\alpha^\vee\in\Pi_{P_u}^\vee}\frac{\langle\lambda,\alpha^\vee\rangle}{\langle\rho,\alpha^\vee\rangle},~\lambda\in\mathfrak X_\mathbb R(B).
\end{align*}
\end{lem}

For our later use, we introduce a further property of the gradient $\nabla\pi$ of $\pi$,
following the argument of \cite[Section 10]{Del-2020-09} in the spherical case. Let $L$ be any ample line bundle on $X$ and fix a section $s_L\in{\rm H}^0(X,L)^{(B)}_{\lambda_0}$ for some $\lambda_0\in\mathfrak X(B)$. Consider the Weyl group $W(L_P)$ of $L_P$ with respect to $B$. As discussed above, $P$ stabilizes any point $\lambda\in \Gamma$ and the weight $\lambda_0$. Thus $\lambda$ and $\lambda_0$ are $L_P$-character and hence are $W(L_P)$-invariant. On the other hand, $W(L_P)$ acts on $\Pi_G$ and permutes elements in $\Pi_G\setminus\Pi_{L_P}=\Pi_{P_u}$. By Lemma \ref{Pi-Pu}, $W(L_P)$ precisely permutes factors $\alpha^\vee$'s in \eqref{pi-def}. It follows
$$(w^*\pi)(\lambda)=\pi(\lambda),~\forall w\in W(L_P),~\forall\lambda\in\mathfrak X_\mathbb R(B).$$
Denote
$$\kappa_P:=\sum_{\alpha\in\Pi_{P_u}}\alpha=2\rho-\sum_{\alpha\in\Pi_{L_P}}\alpha.$$
Note that there is a longest element $w_o\in W(L_P)$ that maps every $\alpha\in\Pi_{L_P}$ to $-\alpha$. In particular,
$$w_o(\kappa_P-2\rho)=2\rho-\kappa_P.$$
Combining with the fact
$$w_0(\lambda+\lambda_0)=\lambda+\lambda_0,~\forall\lambda\in\Gamma_\mathbb R,$$
we get
\begin{lem}\label{nabla-pi}
Let $X$ be a projective $G$-variety of complexity 1 and $P$ its associated parabolic subgroup. Let $L$ be an ample $G$-line bundle on $X$. Fix any section $s_L\in{\rm H}^0(X,L)^{(B)}_{\lambda_0}$ for some $\lambda_0\in\mathfrak X(B)$. Then it holds
$$\langle\nabla\pi(\lambda+\lambda_0),2\rho\rangle=\langle\nabla\pi(\lambda+\lambda_0),\kappa_P\rangle,~\forall\lambda\in\Delta_\mathscr Z({\rm div}(s_L))\subset\Gamma_\mathbb R,$$
or equivalently,
$$\langle\nabla\pi(\lambda),2\rho\rangle=\langle\nabla\pi(\lambda),\kappa_P\rangle,~\forall\lambda\in\Delta_\mathscr Z(L)\subset(\Gamma_\mathbb R+\lambda_0).$$
\end{lem}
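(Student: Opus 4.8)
The plan is to split the statement into its two assertions and handle them separately: the product formula is an immediate reindexing, while the gradient identity is extracted from the $W(L_P)$-invariance of $\pi$ combined with the action of the longest Weyl element. For the product formula there is essentially nothing to do, since Lemma \ref{Pi-Pu} already identifies the index set $\Pi_G^\vee\setminus(\Gamma+\mathbb Z\lambda_0)^\perp$ occurring in the definition \eqref{pi-def} with the set $\Pi_{P_u}^\vee$ of unipotent coroots of the associated parabolic; substituting this identification into \eqref{pi-def} produces the claimed expression for $\pi$ verbatim.

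For the gradient identity I would first rewrite the goal. Since $\kappa_P=2\rho-\sum_{\alpha\in\Pi_{L_P}}\alpha$, the equality $\langle\nabla\pi(\mu),2\rho\rangle=\langle\nabla\pi(\mu),\kappa_P\rangle$ is equivalent to $\langle\nabla\pi(\mu),2\rho-\kappa_P\rangle=0$, where $2\rho-\kappa_P=\sum_{\alpha\in\Pi_{L_P}}\alpha$; and because $\Delta_\mathscr Z(L)=\Delta_\mathscr Z({\rm div}(s_L))+\lambda_0$, the two displayed forms differ only by the translation $\mu=\lambda+\lambda_0$, so establishing either one suffices. The structural input, already assembled in the paragraph preceding the lemma, is that $\pi$ is $W(L_P)$-invariant: each $w\in W(L_P)$ permutes the factors $\alpha^\vee\in\Pi_{P_u}^\vee$ of \eqref{pi-def}, whence $\pi\circ w=\pi$ as polynomials. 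Differentiating this identity via the chain rule then yields the equivariance $\nabla\pi(w\mu)=w\cdot\nabla\pi(\mu)$ for all $w\in W(L_P)$.

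The decisive observation is that the evaluation points themselves are $W(L_P)$-fixed. By Lemma \ref{Pi-Pu} every $\alpha^\vee\in\Pi_{L_P}^\vee$ lies in $(\Gamma+\mathbb Z\lambda_0)^\perp$, so $\langle\lambda+\lambda_0,\alpha^\vee\rangle=0$ for all $\lambda\in\Gamma_\mathbb Q$ and all $\alpha\in\Pi_{L_P}$; hence each generating reflection $s_\alpha$ of $W(L_P)$ fixes $\mu=\lambda+\lambda_0$, and therefore so does all of $W(L_P)$. Combined with the equivariance above, this forces $\nabla\pi(\mu)=w\cdot\nabla\pi(\mu)$ for every $w\in W(L_P)$, i.e.\ $\nabla\pi(\mu)$ is $W(L_P)$-invariant. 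Applying the longest element $w_o\in W(L_P)$, the $W$-equivariance of the natural pairing, and the relation $w_o(\kappa_P-2\rho)=2\rho-\kappa_P$ recorded before the lemma, I would then compute $\langle\nabla\pi(\mu),2\rho-\kappa_P\rangle=\langle\nabla\pi(\mu),w_o(2\rho-\kappa_P)\rangle=-\langle\nabla\pi(\mu),2\rho-\kappa_P\rangle$, so the pairing vanishes and the identity follows. The only real subtlety, and the step I would be most careful about, is this fixed-point property: one must verify that the orthogonality supplied by Lemma \ref{Pi-Pu} pins down the entire affine span $\Gamma_\mathbb Q+\lambda_0$ (not merely the linear part $\Gamma_\mathbb Q$), since it is precisely the fixedness of $\mu$ that upgrades the mere equivariance of $\nabla\pi$ into genuine $W(L_P)$-invariance and thereby licenses the antisymmetrization by $w_o$.
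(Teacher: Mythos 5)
Your proposal is correct and follows essentially the same route as the paper: the product formula comes directly from Lemma \ref{Pi-Pu}, and the gradient identity is obtained from the $W(L_P)$-invariance of $\pi$, the fact that points of $\Gamma_\mathbb Q+\lambda_0$ are fixed by $W(L_P)$, and the antisymmetrization via the longest element $w_o$ with $w_o(\kappa_P-2\rho)=2\rho-\kappa_P$. Your explicit attention to the fixed-point property (orthogonality of $\Pi_{L_P}^\vee$ to the whole affine span $\Gamma_\mathbb Q+\lambda_0$, not just $\Gamma_\mathbb Q$) is exactly the point the paper handles by noting that $\lambda$ and $\lambda_0$ are $L_P$-characters.
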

Here in the second line we used Proposition \ref{Delta(d)-Z}.

\subsubsection{Relation with the divisorial polytope construction}
Finally we introduce some combinatorial properties of ample line bundles. When $G$ is a torus, Ilten-S\"u\ss\,  classified polarized $T$-varieties of complexity 1 via the ``divisorial polytopes" introduced in \cite[Section 3]{Ilten-Suss-2011}. This is a family of concave functions on $\Delta_\mathscr Z(L)$ satisfying certain conditions. In particular, in the case of $T$-varieties of complexity 1 there is no colour and the fan of a polarized variety can be recovered from its divisorial polytopes. The following Proposition is a partial generalization of this direction in the case of a general reductive $G$.

For each $x\in C$, consider the following polyhedron
\begin{align}
\Delta_x(\mathfrak d):=\{(t,\lambda)\in\mathbb R\times\Delta_\mathscr Z(\mathfrak d)|t\geq -A_x(\mathfrak d,\lambda)\}.
\end{align}
Then we have
\begin{prop}\label{cone-generator-normal}
For each $x\in C$, every maximal cone $\mathscr C_x$ in the fan $\mathfrak F_{X,x}:=\{(\mathscr C_x,\mathscr R_x)|(\mathscr C,\mathscr R)\in\mathfrak F_X\}$ is an inner normal cone of $\Delta_x(\mathfrak d)$ at some of its vertex. 
\end{prop}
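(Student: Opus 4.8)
The plan is to identify $\mathfrak F_{X,x}$ with the inner normal fan of the polytope $\Delta_x(\mathfrak d)$, exactly as the fan of a polarized toric variety is the normal fan of its moment polytope, with the ampleness criterion of Theorem \ref{ampleness-criterion} supplying the convexity that pins the correspondence down. First I would record the facet structure of $\Delta_x(\mathfrak d)\subset\mathbb Q\times\Gamma_\mathbb Q$. Since
\begin{align*}
-A_x(\mathfrak d,\lambda)=\max_{\{D\in\mathscr B(X)\,:\,x_D=x,\ h_D>0\}}\frac{-(\ell_D(\lambda)+m_D)}{h_D}
\end{align*}
is convex and piecewise linear, the condition $t\geq-A_x(\mathfrak d,\lambda)$ together with the defining inequalities of $\Delta_\mathscr Z(\mathfrak d)$ cuts out $\Delta_x(\mathfrak d)$ by the half-spaces $h_Dt+\ell_D(\lambda)+m_D\geq0$ (for $x_D=x$, $h_D>0$) and $\ell_D(\lambda)+m_D\geq0$ (for $h_D=0$). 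The inward conormal of the facet indexed by $D$ is the pair $(h_D,\ell_D)$, which is exactly the image of the valuation $v_D=h_Dq_{x_D}+\ell_D$ under the tautological swap identifying the conormal space $\mathbb Q\times\mathscr Q$ of $\Delta_x(\mathfrak d)$ with $\mathscr Q_{x,+}$. Under this swap the facet carrying $v_D=q_x$ for generic $x$ becomes the ray $\mathbb Q_+q_x$, matching the generic generator of $\mathscr C_x$ prescribed by the coloured-hypercone definition.

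Next I would attach a vertex of $\Delta_x(\mathfrak d)$ to each maximal cone. Let $(\mathscr C,\mathscr R)\in\mathfrak F_X$ be maximal, associated to the closed $G$-orbit $Y$, so that $\mathscr C_x$ is generated by the $v_D$ with $D\in\mathscr V_Y$, $x_D=x$, and by the colours of $\mathscr R$ over $x$. Since $\mathfrak d$ is Cartier, Theorem \ref{Cartier-criterion} gives $f_Y\in K^{(B)}$ with $m_D=v_D(f_Y)$ for every $D\supset Y$. Write $f_Y=(f_Y)_0\,e_\mu$ with $(f_Y)_0\in K^B$ and $B$-weight $\mu\in\Gamma$, and set $h^\ast:=q_x((f_Y)_0)$, so that $v_D(f_Y)=h_Dh^\ast+\ell_D(\mu)$ for every $D$ over $x$. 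Then the single point $p_x:=(-h^\ast,-\mu)\in\mathbb Q\times\Gamma_\mathbb Q$ satisfies
\begin{align*}
h_D(-h^\ast)+\ell_D(-\mu)+m_D=m_D-\big(h_Dh^\ast+\ell_D(\mu)\big)=m_D-v_D(f_Y)=0
\end{align*}
for every $D\supset Y$, simultaneously for the $h_D>0$ facets and the $h_D=0$ facets. Thus every facet of $\Delta_x(\mathfrak d)$ indexed by a divisor through $Y$ contains $p_x$, and by Proposition \ref{Delta(d)-Z} the weight $-\mu$ lies in $\Delta_\mathscr Z(\mathfrak d)$.

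Then I would invoke ampleness to show $p_x$ is a genuine vertex with inner normal cone $\mathscr C_x$. For $D\not\supset Y$, condition (a) of Theorem \ref{ampleness-criterion} reads $v_D(f_Y)\leq v_D(f_{Y'})=m_D$, which is exactly $h_D(-h^\ast)+\ell_D(-\mu)+m_D\geq0$; together with condition (b) for the colours not meeting any germ, this says $p_x\in\Delta_x(\mathfrak d)$, while the computation above shows $p_x\in\partial\Delta_x(\mathfrak d)$. The strictness clause (c) makes these inequalities strict precisely when $D\cap\mathring X=\emptyset$, i.e. for the divisors not through $Y$; hence the facets active at $p_x$ are exactly those indexed by $D\in\mathscr V_Y\sqcup\mathscr D_Y^B$ over $x$ (and, for generic $x$, the $q_x$-facet). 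The inner normal cone at $p_x$ is generated by the conormals $(h_D,\ell_D)$ of precisely these facets, which under the swap of the first paragraph is the cone on $\{v_D:D\in\mathscr V_Y,\ x_D=x\}$ and the colours of $\mathscr R$ over $x$ — that is, $\mathscr C_x$. Full-dimensionality of this normal cone, equivalently that $p_x$ is an honest vertex, follows because $\mathscr C$ is maximal and strictly convex, so $\mathscr C_x$ is a full-dimensional strictly convex cone in $\mathscr Q_{x,+}$.

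The main obstacle I anticipate is bookkeeping rather than conceptual: reconciling the local data $\{f_Y\}$, which are attached to $B$-charts in the full hyperspace $\mathscr E$, with the facet incidences of the single-fibre polytope $\Delta_x(\mathfrak d)$, uniformly over all $x\in C$. One must check that the colours in $\mathscr R$ and the central ($h_D=0$) divisors contribute exactly the conormals active at $p_x$, handle both the finitely many special fibres where several $B$-stable divisors collide over one $x$ and the generic fibres where $\mathscr C_x=\mathrm{cone}(\mathscr K,q_x)$, and treat the type \uppercase\expandafter{\romannumeral2} hypercones — whose fibre over $x$ meets the polytope $\mathscr P$ along a pseudoface — on the same footing as the type \uppercase\expandafter{\romannumeral1} coloured cones. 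This is precisely where the face-lattice description of $(\mathscr C,\mathscr R)$ and the admissibility conditions (C), (F), (W) underlying Theorem \ref{ampleness-criterion} must be applied with care.
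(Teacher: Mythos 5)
Your treatment of the case where $\mathscr C_x$ is a coloured cone is essentially the paper's own first case and is sound: the Cartier data $f_Y=f_Y^0e_\mu$ produce the point $p_x=(-q_x(f_Y^0),-\mu)$, ampleness gives membership and the strictness needed to identify the active facets, and the conormal/valuation identification recovers $\mathscr C_x$. The genuine gap is the type \uppercase\expandafter{\romannumeral2} hypercone case, which you defer as ``bookkeeping'' but which is roughly half of the paper's proof and is conceptual, not clerical. Two things go wrong in your setup. First, your facet description of $\Delta_x(\mathfrak d)$ is incomplete: besides the half-spaces $h_Dt+\ell_D(\lambda)+m_D\geq0$ (for $x_D=x$, $h_D>0$) and $\ell_D(\lambda)+m_D\geq0$ (for $h_D=0$), the constraint $\lambda\in\Delta_\mathscr Z(\mathfrak d)$ includes $A(\mathfrak d,\lambda)\geq0$, whose facets are the pseudoface inequalities $m_p+\lambda(p)\geq0$ of \eqref{polyhedral-eq}; these mix divisors over \emph{all} points $y\in C$, and their conormals are precisely the vertices of the polytope $\mathscr P$. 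Second, when the maximal cone $\mathscr C_x$ comes from a hypercone of type \uppercase\expandafter{\romannumeral2}, its generators include vertices of $\mathscr P$, so the cone spanned by the conormals of the divisorial facets over $x$ through $p_x$ is strictly smaller than $\mathscr C_x$; moreover those facets alone need not even cut out a single point, so your final claim that full-dimensionality of the normal cone ``follows because $\mathscr C$ is maximal and strictly convex'' is circular --- $\mathscr C_x$ is full-dimensional only because of the $\mathscr P$-generators you have not accounted for.

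To close the gap one must show that the pseudoface facets are active at $p_x$, i.e.\ that $A(\mathfrak d,-\mu)=0$, and this is where the real content lies. For every $y\in C$ (not only $y=x$), the Cartier equalities and the ampleness inequalities give $q_y(f_Y^0)=\frac{m_D-\ell_D(\mu)}{h_D}\leq\frac{m_{D'}-\ell_{D'}(\mu)}{h_{D'}}$ for $v_D\in(\mathscr W\sqcup\mathscr R)\cap\mathscr Q_{y,+}$ and every $D'$ over $y$ --- using that a type \uppercase\expandafter{\romannumeral2} hypercone has non-central data over every point of $C$ --- hence $A_y(\mathfrak d,-\mu)=q_y(f_Y^0)$; summing over $y$ and using that $f_Y^0\in K^B={\rm k}(C)$ has a degree-zero principal divisor yields $A(\mathfrak d,-\mu)=\sum_{y\in C}q_y(f_Y^0)=0$. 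This identity simultaneously supplies the missing defining equation that pins $p_x$ down as a vertex (the Cartier equations over $x$ together with $A(\mathfrak d,-\lambda)=0$ determine it uniquely when $\mathscr C_x$ is maximal) and places the $\mathscr P$-vertices among the generators of the inner normal cone at $p_x$, so that the normal cone equals all of $\mathscr C_x$. Without this step the proposition remains unproved exactly for the cones that distinguish complexity one from the spherical case.
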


\begin{proof}
Each coloured hypercone $(\mathscr C,\mathscr R)$ in $\mathfrak F_X$ is associated to a $G$-subvariety $Y$, and by Theorem \ref{Cartier-criterion}, we associate to each $Y$ a function $f_\mathscr C\in K^B$ which locally defines $\mathfrak d$. Furthermore, the collection $\{f_{\mathscr C}\}$ can be chosen so that it satisfies the ampleness conditions in Theorem \ref{ampleness-criterion}. Suppose that $\mathscr C=\mathscr C(\mathscr W,\mathscr R)$ and denote by $\mathring X_\mathscr C$ the $B$-chart defined by $(\mathscr C,\mathscr R)$. It holds
\begin{align*}
\left\{\begin{aligned}{\rm div}(f_\mathscr C)|_{\mathring X_\mathscr C}=&\mathfrak d\cap\mathring X_\mathscr C,\\ \mathfrak d-{\rm div}(f_\mathscr C)>&0.\end{aligned}\right.
\end{align*}
The first condition is the Cartier condition in Theorem \ref{Cartier-criterion}, and the second is the ampleness condition in Theorem \ref{ampleness-criterion}. Let $f_\mathscr C=f^0_\mathscr Ce_{\lambda_\mathscr C}$ for some $\lambda_\mathscr C\in\Gamma$ and $f^0_\mathscr C\in K^B$. This equivalences to
\begin{align}\label{eq-Cartier}
v_D(f_\mathscr C)=&h_Dq_{x_D}(f^0_\mathscr C)+\ell_D(\lambda_\mathscr C)=m_D,~\forall D\in\mathscr B(X)~\text{with}~v_D\in(\mathscr W\sqcup\mathscr R),
\end{align}
and
\begin{align}\label{eq-ample}
\left\{\begin{aligned} m_D\geq&h_Dq_{x_D}(f^0_\mathscr C)+\ell_D(\lambda_\mathscr C),~\forall D\in\mathscr B(X),\\ m_D>&h_Dq_{x_D}(f^0_\mathscr C)+\ell_D(\lambda_\mathscr C),~\text{whenever}~v_D\not\in(\mathscr W\sqcup\mathscr R).\end{aligned}\right.
\end{align}
We see that each point $(-q_{x_D}(f^0_\mathscr C),-\lambda_\mathscr C)$ lies in $\Delta_{x_D}(\mathfrak d)$.

It remains to show that each $(-q_{x}(f^0_\mathscr C),-\lambda_\mathscr C)$ is in fact a vertex of $\Delta_{x}(\mathfrak d)$. It remains to deal with cones $\mathscr C_x=\mathscr C\cap\mathscr Q_{x,+}$ of maximal dimension. Suppose that $\mathscr C_x$ is a coloured cone. Then $\mathscr C_x$ is generated by $(\mathscr W\sqcup\mathscr R)\cap\mathscr Q_{x,+}$, and the equations \eqref{eq-Cartier} has a unique solution $(-q_{x}(f^0_\mathscr C),-\lambda_\mathscr C)$. It is further a vertex of $\Delta_{x}(\mathfrak d)$ due to \eqref{eq-ample}. When $\mathscr C$ is a hypercone of type \uppercase\expandafter{\romannumeral2}, it may happens that $\mathscr C_x$ has generators on its pseudofaces. Let $y$ be any point in $C$ and $v_D\in(\mathscr W\sqcup\mathscr R)\cap\mathscr Q_{y,+}$. From the Cartier condition it holds
\begin{align*}
m_D-h_Dq_{y}(f^0_\mathscr C)-\ell_D(\lambda_\mathscr C)=0.
\end{align*}
For any other $D'\in\mathscr B(X)$ so that $v_{D'}\in\mathscr Q_{y,+}$, by ampleness.
\begin{align*}
m_{D'}-h_{D'}q_{y}(f^0_\mathscr C)-\ell_{D'}(\lambda_\mathscr C)\geq0.
\end{align*}
Thus for any $y\in C$,
\begin{align*}
\frac{m_{D}-\ell_{D}(\lambda_\mathscr C)}{h_D}=q_{y}(f^0_\mathscr C)\leq\frac{m_{D'}-\ell_{D'}(\lambda_\mathscr C)}{h_{D'}},~\forall v_D\in(\mathscr W\sqcup\mathscr R)\cap\mathscr Q_{y,+}~\text{and}~v_{D'}\in\mathscr Q_{y,+}.
\end{align*}
Hence
$$A_y(\mathfrak d,-\lambda_\mathscr C)=\frac{m_{D}-\ell_{D}(\lambda_\mathscr C)}{h_D}$$ for some $v_D\in(\mathscr W\sqcup\mathscr R)\cap\mathscr Q_{y,+}$. By definition of the pseudoface of $\mathscr C$, we get
$$A(\mathfrak d,-\lambda_\mathscr C)\geq0~\text{if and only if}~(-\lambda_\mathscr C)\in(\mathscr C\cap\mathscr Q)^\vee.$$
Thus, if $\mathscr C_x$ is of maximal dimension, the condition \eqref{eq-Cartier} together with $A(\mathfrak d,-\lambda)=0$ uniquely determined the point $(-q_{x}(f^0_\mathscr C),-\lambda_\mathscr C)$. It is again a vertex of $\Delta_{x}(\mathfrak d)$ according to \eqref{eq-ample}.

Clearly, in both cases, $\mathscr C_x$ is the inner normal cone of $\Delta_x(\mathfrak d)$ at $(-q_{x}(f^0_\mathscr C),-\lambda_\mathscr C)$. 

\end{proof}

\begin{rem}
When $G=T$ is a torus, the divisorial polytope of $(X,L)$ constructed in \cite{Ilten-Suss-2011} is precisely $(\Delta_\mathscr Z(\mathfrak d),\{A_x(\mathfrak d,\cdot):\Gamma_\mathbb R\to \mathbb R\})$, where $\Delta_\mathscr Z(\mathfrak d)$ is precisely the projection of $\Delta_x(\mathfrak d)$ to $\mathscr Q$. This correspondence can be derived from \cite[Section 6]{Timashev-2008}.
\end{rem}

\begin{rem}
From Proposition  \ref{cone-generator-normal} we see that $\Delta_x(\mathfrak d)$ consists of points $(\lambda,t)\in\Gamma_\mathbb R\times\mathbb R$ that satisfy the following system:
\begin{align}\label{polyhedral-eq}
\left\{
\begin{aligned}
&m_D+th_D+\lambda(\ell_D)\geq0,~\forall D\in\mathscr B(X)~\text{so that}~v_D=h_Dq_x+\ell_D\in\mathscr Q_{x,+},\\
&m_p+\lambda(p)\geq0,~\forall p\in~\text{an extremal line of a pseudoface of a hypercone $\mathscr C$ of type \uppercase\expandafter{\romannumeral2}.}
\end{aligned}\right.
\end{align}
Here in \eqref{polyhedral-eq}, the second inequality occures only when $\mathscr C$ is a hypercone of type \uppercase\expandafter{\romannumeral2}. The inequality is taken over all $p=\sum_{y\in C}p_{Dy}$ that lies in an extremal line of $\mathscr C$, where each 
$$p_{Dy}=\sum_{\{D\in\mathscr B(X),~v_D=h_Dq_y+\ell_D\in\mathscr C_y,~h_D\not=0\}}c_D(\frac1{h_D}\ell_D)\in\mathscr P_y,$$
is a convex combination of vertices of $\mathscr P_y$ (with $c_D$'s the coefficients), the corresponding
$$m_{Dy}=\sum_{\{D\in\mathscr B(X),~v_D=h_Dq_y+\ell_D\in\mathscr C_y\}}c_Dm_D,$$
and $m_p=\sum_{y\in C}m_{Dy}$.
\end{rem}

\begin{rem}
Suppose that $X$ is complete. We see that coloured cones in $\mathfrak F_X$ are precisely those inner normal cones of $\Delta_x(\mathfrak d)$ whose generators lie in $\mathscr B(X)$ and relative interior intersects $\mathscr V$. The (maximal) coloured hypercones of type \uppercase\expandafter{\romannumeral2} are as follows: Consider a maximal inner normal cone at a vertex $p_*\in\Delta_{x_*}(\mathfrak d)$ so that not all its generators lie in $\mathscr B(X)$. Then $p_*$ projects to a boundary point $\lambda_*$ of $\Delta_\mathscr Z(\mathfrak d)$ with $A(\mathfrak d,\lambda_*)=0$. Denote by $p_*(x)$ the point on $\Delta_x(\mathfrak d)\cap\{t=-A_x(\mathfrak d,\lambda_*)\}$ which projects to $\lambda_*$, and the set
$$\mathscr S(x,\lambda_*):=\{D\in\mathscr B(X)|A_x(\mathfrak d,\lambda_*)=\frac{m_D+\ell_D(\lambda_*)}{h_D}\}.$$
Then there is a hypercone $(\mathscr C,\mathscr R)\in\mathfrak F_X$ of type \uppercase\expandafter{\romannumeral2} associated to the coloured data $(\mathscr W=\cup_{x\in C}\mathscr W_x,\mathscr R=\cup_{x\in C}\mathscr R_x)$, where
\begin{align*}
\mathscr C_x=&\text{the inner normal cone of}~\tilde\Delta_x(\mathfrak d)~\text{at}~p_*(x),\\
\mathscr W_x=&\{D\in\mathscr B(X)^G|h_D=0,v_D(\lambda_*)=-m_D\}\cup\left(\mathscr B(X)^G\cap\mathscr S(x,\lambda_*)\right),\\
\mathscr R_x=&\{D\in\mathscr B(X)\setminus\mathscr B(X)^G|h_D=0,v_D(\lambda_*)=-m_D\}\cup\left((\mathscr B(X)\setminus\mathscr B(X)^G)\cap\mathscr S(x,\lambda_*)\right),
\end{align*}
and any (maximal) hypercone of type \uppercase\expandafter{\romannumeral2} in $\mathfrak F_X$ is constructed in this way.
\end{rem}

\subsection{The affine cone over a projectively normal embedding}

Let $X$ be a projective $G$-variety of complexity 1, and $L$ a $G$-linearized ample line bundle on it. As the coloured fan of $X$ is a bit complicated, in this section we will study $X$ via its affine cone under certain embedding. This approach was used in \cite{Ilten-Suss-Duke} in the study of $T$-varieties.

Suppose that $L=L_\mathfrak d$ is a $G$-linearized ample line bundle on $X$ which has a divisor $\mathfrak d$ given in \eqref{B-stable-divisor}. Up to replacing $L$ by its tensor power, we may assume that $X$ can be embedded into a projective space by $|L|$ and the embedding is projectively normal. In this case, the affine cone $\hat X$ over $X$ is an affine normal variety $$\hat X={\rm Spec}R(X,L),$$
where $R(X,L)$ is the Kodaira ring of $(X,L)$ given by \eqref{Kodaira-ring}. As $X$ is complete, it holds ${\rm H}^0(X,L^0)={\rm k}$. Hence the vertex of the cone $\hat X$ is precisely the origin point $O$. Let ${\rm pr}:\hat X\setminus\{O\}\to X$ be the projection. Then for any subvariety $Y\subset X$, $\hat Y:={\rm pr}^{-1}(Y)$ is the affine cone over $Y$ and it also contains $O$ as its vertex.

\subsubsection{Combinatorial data of the affine cone}
Obviously that the affine cone $\hat X$ is a $(G\times {\rm k}^\times)$-variety. In fact, the group $\hat G:=G\times {\rm k}^\times$ acts on $\hat X$ as following: $G$ acts on each piece $R_k(X,L)$ naturally and $t\in k^\times$ acts on $R_k(X,L)$ by
$$(t\cdot s)(\hat x)(=:s(t^{-1}\cdot\hat x))=t^{-k}(s(\hat x)),~\forall s\in R_k(X,L),~\hat x\in \hat X~\text{and}~k\in\mathbb N.$$
In particular, $s$ has ${\rm k}^\times$-weight $-k$.\footnote{Note that this is opposite to the usual convention in the case of torus actions.} In the following we denote objects of  $\hat X$ by adding a hat.

It is direct to see that ${\rm k}(\hat X)^{B\times{\rm k}^\times}={\rm k}(X)^B$, the lattice $\hat \Gamma\cong\Gamma\oplus\mathbb Z$ and the hyperspace $\hat{\mathscr E}\cong\mathscr E\times\mathbb Q$. In fact, by choosing $B$-semiinvariant $s_*\in R_1(X,L)_{\lambda_*}^{(B)}$ so that $\lambda_*\in(\Delta_\mathscr Z(\mathfrak d)+\lambda_0)$, the map
\begin{align}
{\rm k}(X)^{(B)}\times\mathbb Z&\rightarrow {\rm k}(\hat X)^{(\hat B)}\label{k(hatX)-k(X)}\\
(f_\lambda,p)&\rightarrow f_\lambda s_*^{p}\in {\rm k}(\hat X)^{\hat B}_{(\lambda+p\lambda_*,p)},\notag
\end{align}
gives a desired isomorphism.

Then we determine the combinatorial data of $\hat B$-stable prime divisors $\hat{\mathscr B}(\hat X)$ in $\hat X$. The $\hat B$-stable divisors in $\hat X$ consist of all $\hat D$'s, where each $\hat D$ is the affine cone over $D$ with $D\in\mathscr B(X)$. For convenience, without loss of generality we can assume that the divisor \eqref{B-stable-divisor} is effective and choose $s_*=s_0$. Then for every $D\in\mathscr B(X)$, it holds
$$v_{\hat D}(f_\lambda)=v_D(f_\lambda),~\forall f_\lambda\in {\rm k}(X)^{(B)}_\lambda~\text{and}~v_{\hat D}(s_0)=-m_D.$$
Thus we get
\begin{align*}
v_{\hat D}=(v_{D},-m_D)\in\mathscr E\times\mathbb Q(=:\hat {\mathscr E}).
\end{align*}

Finally we derive the combinatorial data of $\hat X$. The affine cone $\hat X$ itself is a $\hat B$-chart determined by $\{\hat D|D\in\mathscr B(X)\}$. In fact, $\hat X$ is the $\hat B$-chart intersecting $O\in\hat X$. Since $X$ is complete, its coloured fan $\mathfrak F_X$ covers $\mathscr V$. In particular, for each $x\in C$, there is at least one $D\in\mathscr B(X)$ so that $v_D\in\mathscr Q_{x,+}\setminus\mathscr Q$. Clearly such a $D$ satisfies $v_{\hat D}\in(\mathscr Q_{x,+}\times\mathbb Q)\setminus(\mathscr Q\times\mathbb Q)$. Hence the coloured hypercone $(\hat{\mathscr C},\hat{\mathscr R})$ is a coloured hypercone of type \uppercase\expandafter{\romannumeral2}. We then determine its pseudofaces. For each $D\in\mathscr B(X)$ with $v_D\in\mathscr Q_{x_D,+}$, we have $x_{\hat D}=x_D$ and $h_{\hat D}=h_D$.
Set
$$\hat{\mathscr P_x}:={\rm Conv}(\{\frac{v_{\hat D}}{h_D}-q_{x_D}|D\in\mathscr B(X)~\text{so that}~x_D=x~\text{and}~h_D\not=0\}),$$
and
\begin{align}\label{affine-cone-P-eq}
\hat{\mathscr P}=\sum_{x\in C}\hat{\mathscr P_x},
\end{align}
where the right-hand side is the Minkowski sum. Then the pseudofaces of $\hat{\mathscr C}$ are exactly those intersect $\hat{\mathscr P}$. We get

\begin{prop}\label{affine-cone-data-prop}
Let $X$ be a complete normal $G$-variety of complexity 1 which is embedded into a projective space by sections of $L=L_\mathfrak d$ as a projectively normal subvariety. Let $\hat X$ be the affine cone over this embedding. Then $\hat X$ is a $\hat B$-chart of type  \uppercase\expandafter{\romannumeral2} determined by the data $(\hat{\mathscr W},\hat{\mathscr R})$, where
$$\hat{\mathscr W}=\{(v_D,-m_D)|D\in\mathscr B(X)^G\},$$
and
$$\hat{\mathscr R}=\{(v_D,-m_D)|D\in\mathscr B(X)\setminus\mathscr B(X)^G\}.$$
Moreover, for each $x\in C$ it holds
\begin{align}\label{comb-data-hatC-Cone}
\hat{\mathscr C}_x&={\rm Cone}(\{v_{\hat D}|D\in\mathscr B(X),~x_D=x\}\cup\hat{\mathscr P}),
\end{align}
and
\begin{align}\label{comb-data-hatC-Colour}
\hat{\mathscr R}_x&=\{\hat D|D\in\mathscr B(X)\setminus\mathscr B(X)^G,~x_D=x\},
\end{align}
where $\hat{\mathscr P}$ is given by \eqref{affine-cone-P-eq}.
\end{prop}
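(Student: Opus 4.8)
The plan is to read off the $\hat B$-chart data of $\hat X$ directly from the Luna--Vust dictionary recalled in Section 2.2.1, using the combinatorial identifications established in the paragraphs preceding the statement. First I would note that $\hat X={\rm Spec}\,R(X,L)$ is a normal affine $\hat G$-variety whose cone vertex $O$ is a $\hat G$-fixed point, so $\hat X$ is a $\hat B$-stable affine open, i.e. a $\hat B$-chart, namely the one meeting $O$. Since the cone $\hat D={\rm pr}^{-1}(D)$ over every $D\in\mathscr B(X)$ passes through $O$, each $\hat B$-stable prime divisor of $\hat X$ contains $O$; hence the chart data $(\hat{\mathscr W},\hat{\mathscr R})$ coincides with $(\hat{\mathscr V}_O,\hat{\mathscr D}^B_O)$ and consists of the full collection $\{\hat D\mid D\in\mathscr B(X)\}$, split according to $\hat G$-stability.

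Next I would carry out this splitting. A cone $\hat D$ is $\hat G=G\times{\rm k}^\times$-stable exactly when $D$ is $G$-stable, because the ${\rm k}^\times$-scaling preserves every subcone; thus the $\hat G$-stable divisors are those with $D\in\mathscr B(X)^G$ and the colours are those with $D\in\mathscr B(X)\setminus\mathscr B(X)^G$. Combining this with the computation $v_{\hat D}=(v_D,-m_D)\in\hat{\mathscr E}=\mathscr E\times\mathbb Q$ already made above yields the asserted $\hat{\mathscr W}$ and $\hat{\mathscr R}$, and selecting the colours whose image lies in $\mathscr Q_{x,+}\times\mathbb Q$ gives the formula for $\hat{\mathscr R}_x$ at once.

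For the type and the cone formula I would use completeness of $X$. Since $\mathfrak F_X$ covers $\mathscr V$, over each $x\in C$ there is a divisor $D$ with $v_D\in\mathscr Q_{x,+}\setminus\mathscr Q$, whose cone then satisfies $v_{\hat D}\in(\mathscr Q_{x,+}\times\mathbb Q)\setminus(\mathscr Q\times\mathbb Q)$; so every fibre carries a non-central element of $\hat{\mathscr W}\sqcup\hat{\mathscr R}$ and $(\hat{\mathscr C},\hat{\mathscr R})$ is of type II. To obtain $\hat{\mathscr C}_x={\rm Cone}(\{v_{\hat D}\mid x_D=x\}\cup\hat{\mathscr P})$ I would start from $\hat{\mathscr C}=\mathscr C(\hat{\mathscr W},\hat{\mathscr R})=\{v\mid v(\varphi)\geq0,~\forall\varphi\in(\hat{\mathscr W}\sqcup\hat{\mathscr R})^\vee\}$, observe that $(\hat{\mathscr W}\sqcup\hat{\mathscr R})^\vee$ is cut out by non-negativity against all $v_{\hat D}$ (equivalently, by the functionals of the regular $\hat B$-semiinvariants $\bigoplus_k{\rm H}^0(X,L^k)^{(B)}$), and then dualize fibrewise: the extremal generators of $\hat{\mathscr C}_x=\hat{\mathscr C}\cap(\mathscr Q_{x,+}\times\mathbb Q)$ are precisely the images $v_{\hat D}$ of the divisors over $x$ together with the vertices of the Minkowski summand $\hat{\mathscr P}_x$ contributed by the pseudofaces. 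Matching $\hat{\mathscr P}=\sum_x\hat{\mathscr P}_x$ with the type II part of the hypercone definition, using $x_{\hat D}=x_D$ and $h_{\hat D}=h_D$, identifies $\hat{\mathscr P}_x$ with ${\rm Conv}\{v_{\hat D}/h_D-q_x\mid x_D=x,\,h_D\neq0\}$ and completes the formula.

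The main obstacle will be this last dualization step: one must verify that the abstract coloured hypercone produced by Timash\"ev's $B$-chart theory has, fibrewise, no generators beyond the $v_{\hat D}$ over $x$ and the polytope $\hat{\mathscr P}$, and in particular that the type II polytope $\mathscr P$ of $\hat{\mathscr C}$ is exactly the convex hull of the jump-normalized divisor slices. Concretely this means showing that the vertices of $\mathscr K_x=\hat{\mathscr C}_x\cap(q_x+\mathscr Q)$ are precisely the points $v_{\hat D}/h_D$ with $h_D\neq0$ and $x_D=x$, which follows once one checks that a $\hat B$-semiinvariant of $R(X,L)$ is regular if and only if it is non-negative on every $\hat D$; the remaining bookkeeping (strict convexity of each $\hat{\mathscr C}_x$ and admissibility of the hypercone) is routine from the definitions.
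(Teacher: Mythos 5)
Your proposal is correct and follows essentially the same route as the paper: realize $\hat X$ as the $\hat B$-chart meeting the vertex $O$, whose data are all the cones $\hat D$ with $v_{\hat D}=(v_D,-m_D)$ split according to $G\times{\rm k}^\times$-stability, deduce type \uppercase\expandafter{\romannumeral2} from completeness of $X$, and read off the hypercone together with $\hat{\mathscr P}$ from Timash\"ev's chart--hypercone correspondence; the dual-cone verification you flag as the main obstacle is exactly what the paper carries out immediately after the statement, in the admissibility subsection, where $(\hat{\mathscr C}_x)^\vee$ is identified with the cone over $\Delta_x(\mathfrak d)\times\{-1\}$. One small caution: in the fibrewise description the generators of $\hat{\mathscr C}_x$ must include the full Minkowski sum $\hat{\mathscr P}=\sum_{y\in C}\hat{\mathscr P}_y$ and not only the summand $\hat{\mathscr P}_x$, as your concluding sentence (but not the sentence before it) correctly states.
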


\subsubsection{Admissibility}
It is also direct to check that the combinatorial data \eqref{comb-data-hatC-Cone}-\eqref{comb-data-hatC-Colour} is admissible. More precisely, in the following we show that they fulfill the conditions in \cite[Definition 3.1]{Timashev-1997}.

For each $x\in C$, consider the cone\footnote{Again, note the opposite sign convention used here compared with the case of torus actions.}
$$\tilde{\mathscr C}_x=\{(\epsilon,m)\in\Gamma_\mathbb R\times\mathbb R\times\mathbb R|-\frac \epsilon m\in\Delta_x(\mathfrak d)\}.$$
In fact, it is the cone over $\Delta_x(\mathfrak d)\times\{-1\}\subset(\Gamma_\mathbb R\times\mathbb R\times\mathbb R)$. We will see that under the ampleness assumption, $\hat{\mathscr C}_x$ is the dual cone of $\tilde{\mathscr C}_x$, as showed in \cite{Ilten-Suss-Duke} for $T$-varieties.

It is clear that a point $(\epsilon,m)\in(\Gamma_\mathbb R\times\mathbb R\times\mathbb R_{\leq0})$ lies in the dual cone $(\hat{\mathscr C}_x)^\vee$ of $\hat{\mathscr C}_x$ if and only if
\begin{align}\label{hatC-dual-eq-1}
v_D(\epsilon)-m_Dm\geq0,~\forall D\in\mathscr B(X)~\text{so that}~x_D=x,
\end{align}
and
\begin{align}\label{hatC-dual-eq-2}
p(\epsilon)-m_pm\geq0,~\forall p=\sum_{y\in C}p_y\in\hat{\mathscr P},
\end{align}
where each
\begin{align}\label{hatC-py}
p_y=\sum_{D_y}c_{D_y}(\frac{v_{\hat D_y}}{h_{D_y}}-q_y)\in\hat{\mathscr P}_y
\end{align}
is a convex combination of some $D_y\in\mathscr B(X)$ so that $h_{D_y}\not=0$ and $x_{D_y}=y$, and $$m_p=\sum_{y\in C}\sum_{D_y}c_{D_y}\frac {m_{D_y}}{h_{D_y}}.$$

Let us show that some inequalities in the system \eqref{hatC-dual-eq-1}-\eqref{hatC-dual-eq-2} can be removed while keeping the solution set unchanged. In fact, suppose that there are two points $(v_i,m_i)$, $i=1,2$, which lie on edges of two different cones $\mathscr C_{i,x}\in\mathfrak F_{X,x},~i=1,2$, respectively, and $m_i=m_D$ if $v_i=v_D$ or $m_i=m_p$ if $v_i$ lies in a pseudoface. Then
$$\Delta_x(\mathfrak d)\subset\{\epsilon\in\mathscr Q_{x,+}|v_i(\epsilon)+m_i\geq0,~i=1,2\}.$$
Consequently for their convex combination $v=tv_1+(1-t)v_2$ with $t\in[0,1]$, it holds $m_v=tm_1+(1-t)m_2$ and
$$\Delta_x(\mathfrak d)\subset\{\epsilon\in\mathscr Q_{x,+}|v(\epsilon)+m_v\geq0\}.$$
The same holds true for convex combination of arbitrarily many points $\{v_i\}$ where each $v_i$ lies on an edge of some $\mathscr C_{i,x}\in\mathfrak F_{X,x}$. 
Hence in \eqref{hatC-dual-eq-2} it suffices to consider only inequalities given by $p$'s for which all $D_y$'s in \eqref{hatC-py} belong to a same hypercone of type \uppercase\expandafter{\romannumeral2} in $\mathfrak F_X$ (i.e. removing other inequalities in \eqref{hatC-dual-eq-2} does not change the solution set). In this way the system \eqref{hatC-dual-eq-1}-\eqref{hatC-dual-eq-2} then reduces to \eqref{polyhedral-eq} and we get that $(\hat{\mathscr C}_x)^\vee=\tilde{\mathscr C}_x$. Thus $\hat{\mathscr C}_x=(\tilde{\mathscr C}_x)^\vee$ is a convex cone lies in $\mathscr Q_{x,+}\times\mathbb Q_{\leq0}$. Clearly $O\not\in\hat{\mathscr P}$ and the conditions in \cite[Definition 3.1]{Timashev-1997} are fulfilled.

\subsubsection{Relationship with the coloured fan of $X$}
Let $\hat X$ be the affine cone as before. We see that except the origin $O$, any $\hat G$-invariant subvariety of $\hat X$ is the affine cone over a $G$-invariant subvariety in $X$. Thus by \cite[Theorem 3.2]{Timashev-1997}, $G$-invariant subvarieties in $X$ are in one-one correspondence with faces (coloured cone intersecting only true faces or hypercones of type \uppercase\expandafter{\romannumeral2} that intersecting pseudofaces) of $(\hat{\mathscr C},\hat{\mathscr R})$, whose relative interior intersects the valuation cone of $\hat X$.

Let $\hat Y\subset\hat X$ be a $\hat G$-invariant subvariety and $(\hat{\mathscr C}',\hat{\mathscr R}')$ the face of $(\hat{\mathscr C},\hat{\mathscr R})$ that is associated to $\hat Y$. Then it defines a $\hat B$-chart $\widehat{X'}\subset\hat X\setminus\{O\}$ intersecting $\hat Y$ (when $(\hat{\mathscr C}',\hat{\mathscr R}')$ is a coloured cone, we choose $\widehat{X'}$ any $\hat B$-chart that corresponds to a coloured hypercone of type \uppercase\expandafter{\romannumeral1} containing $(\hat{\mathscr C}',\hat{\mathscr R}')$). Then $\widehat{X'}$ is a normal affine open set and $\mathring X':={\rm Proj}(\widehat{X'})$ is a $B$-chart of $X$ intersecting $Y={\rm Proj}(\hat Y)$. It remains to determine the coloured hypercone $({\mathscr C}',{\mathscr R}')$ defining $\mathring X'$. From \eqref{k(hatX)-k(X)} we see that the cone ${\mathscr C}'$ associated to $\mathring X'$ is the image of $\hat{\mathscr C}'$ under the canonical projection from $\hat{\mathscr E}$ to $\mathscr E$. We then select the colours. Suppose that $D\in\mathscr B(X)\setminus\mathscr B(X)^G$ is a colour with $x_D=x$. By Lemma \ref{cone-generator-normal}, $\mathscr C'_x$ is the inner normal cone of $\Delta_x(\mathfrak d)$ at some face $F$. From the ampleness condition, $D\in\mathscr R'$ if and only if the hyperplane $\{\epsilon\in\Gamma_\mathbb R\times\mathbb R|v_D(\epsilon)+m_D=0\}$ intersects $F$.

As showed in the previous subsection, $(\hat{\mathscr C})_x^\vee\subset\{(\epsilon,m)\in(\Gamma_\mathbb R\times\mathbb R\times\mathbb R)|m\leq0\}$ for each $x\in C$. Hence, $(0,-1)\in\hat{\mathscr C}\subset\hat{\mathscr E}\cong\mathscr E\times\mathbb Q$. We conclude that the image of proper faces of $(\hat{\mathscr C},\hat{\mathscr R})$ under the canonical projection to $\mathscr E$ forms a coloured fan in $\mathscr E$, and it is indeed the coloured fan of $X$.



\section{The anti-canonical divisors}

Let $X$ be a normal variety. The anti-canonical sheaf is defined by $\check{\omega}_X=i_*\check{\omega}_{X_{\rm reg}}$, where $X_{\rm reg}$ is the regular locus, $\check{\omega}_{X_{\rm reg}}$ its anti-canonical sheaf, and $i$ denotes the inclusion. Then $\check{\omega}_X$ is isomorphic to $\mathscr O_X(\mathfrak d)$ for some Weil divisor $\mathfrak d$ on $X$, which we call \emph{anti-canonical divisor}. In this section, we derive a formula of $B$-stable anti-canonical divisor on a $G$-variety of complexity 1. The one-parameter and quasihomogeneous cases will be treated separately. 

\subsection{The one-parameter case}
In Section 3.1 we do not need to assume that ${\rm k}(X)^B$ is rational. For our later use, we first study the structure of a generic $G$-orbit in $X$, and derive some coloured data of $X$ from that of this orbit.

\subsubsection{Generic $G$-orbits}
In the one-parameter case, consider the rational quotient for the $B$-action
$${\rm pr}_B: X\dashrightarrow C.$$
Then the fibre $X_z:={\rm pr}_B^{-1}(z)$ over a generic $z\in C$ contains a $G$-spherical homogeneous space $O_z$. In fact, by \cite[Theorem 3.1]{Alexeev-Brion} there exists a $G$-stable dense open subset $X_O\subset X$ and a spherical subgroup $H\subset G$ such that in an open set $\mathring C\subset C$, any $G$-orbit $X_z\cap X_O=O_z$ is $G$-equivariantly isomorphic to a fixed spherical homogeneous space $O:=G/H$. It is further showed in \cite[Section 3.1]{Langlois} that
\begin{theo}\label{birational-modle}
Let $X$ be a one-parameter $G$-variety with generic $G$-orbit $O$. Then there exists a smooth projective curve $\tilde C$, a $G$-equivariant rational map
\begin{align}\label{Galois-covering}
\Psi:\tilde X:=O\times\tilde C\dashrightarrow X_O,
\end{align}
which is a Galois covering on a $G$-stable dense open subset. After shrinking $C$, $\Psi$ also induces a Galois covering $\tilde C\to C$ which gives rise to a $G$-equivariant isomorphism between ${\rm k}(\tilde X)$ and ${\rm k}(\tilde C)\otimes_{{\rm k}(C)}{\rm k}(X)$. The corresponding Galois group $A$ acts on $\tilde X$ (as $G$-equivariant birational transformations) via a generically free $A$-action on $\tilde C$ and a $G$-equivariant $A$-action on $O$. Moreover, up to shrinking $X_O$, it holds
\begin{align}\label{Galois-quot-model}
X_O=\tilde X/A.
\end{align}
\end{theo}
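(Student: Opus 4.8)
The plan is to realize $X_O\to\mathring C$ as an \emph{isotrivial} family of spherical homogeneous spaces and to trivialize it after a finite base change, reading off the Galois data from the automorphism group of the generic fibre. By \cite[Theorem 3.1]{Alexeev-Brion}, as recalled above, we already have a $G$-stable dense open $X_O\subset X$, a spherical subgroup $H\subset G$ and an open $\mathring C\subset C$ such that every fibre $O_z=X_z\cap X_O$ is $G$-equivariantly isomorphic to $O=G/H$. Thus the generic fibre $X_\eta$ of $X_O\to\mathring C$ (with function field $K={\rm k}(X)$ over $F:={\rm k}(C)=K^B=K^G$) is a \emph{form} of $O$: it becomes $G$-isomorphic to $O$ after base change to $\bar F$. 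Such forms are classified by $H^1(F,{\rm Aut}_G(O))$, where ${\rm Aut}_G(O)=N_G(H)/H$. The crucial structural input is that, $H$ being spherical, $N_G(H)/H$ is a \emph{diagonalizable} group; over the algebraically closed field ${\rm k}$ of characteristic $0$ it therefore splits as $T'\times F'$ with $T'$ a torus and $F'$ a finite abelian group.

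Geometrically I would work with the torsor $\mathcal T:={\rm Isom}_G(O\times\mathring C,\,X_O)\to\mathring C$ of fibrewise $G$-equivariant isomorphisms, which is a torsor under ${\rm Aut}_G(O)$ (nonempty fibres by Alexeev--Brion, and smooth in characteristic $0$). Using the splitting ${\rm Aut}_G(O)=T'\times F'$, I would trivialize $\mathcal T$ in two steps. The torus part is a $T'$-torsor on the affine curve $\mathring C$, i.e. a collection of line bundles $\mathcal O(D)$; each becomes trivial after removing the finitely many points in the support of $D$, so after \emph{shrinking} $\mathring C$ the $T'$-part of $\mathcal T$ is trivial. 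The remaining $F'$-part is a torsor under a finite group, hence a finite \'etale cover $\tilde C\to\mathring C$, which I take to be Galois with group $A$ (pass to the Galois closure if necessary). Pulling back $X_O$ along $\tilde C\to\mathring C$ then trivializes $\mathcal T$ completely, yielding a $G$-equivariant isomorphism $X_O\times_{\mathring C}\tilde C\cong O\times\tilde C=:\tilde X$; equivalently the form $X_\eta$ splits over $\tilde F:={\rm k}(\tilde C)$, which produces the $G$-equivariant function-field isomorphism ${\rm k}(\tilde X)\cong{\rm k}(\tilde C)\otimes_{{\rm k}(C)}{\rm k}(X)$ asserted in the theorem.

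It remains to describe the $A$-action and to recover $X_O$ as a quotient. The Galois group $A$ acts on $\tilde C$ by deck transformations, generically freely since $\tilde C\to\mathring C$ is Galois. Because ${\rm Aut}_G(O)$ is defined over ${\rm k}$, the Galois action on it is trivial, so the splitting cocycle is simply a homomorphism $\chi\colon A\to{\rm Aut}_G(O)$; this endows $O$ with a $G$-equivariant $A$-action. Equipping $\tilde X=O\times\tilde C$ with the diagonal $A$-action (through $\chi$ on $O$ and by deck transformations on $\tilde C$), Galois descent identifies the invariant function field $({\rm k}(\tilde X))^A$ with $K$, so $\tilde X/A\cong X_O$ after a final shrinking of $X_O$, and the quotient map is exactly the asserted map $\Psi$, which is Galois with group $A$ on a $G$-stable dense open subset.

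The main obstacle, and the step I would treat most carefully, is the structural claim that ${\rm Aut}_G(O)=N_G(H)/H$ is diagonalizable for spherical $H$: this is what forces the clean dichotomy between a torus part (killed merely by shrinking $C$, via ${\rm Pic}$ of an affine curve) and a finite part (producing the genuine cover $\tilde C\to C$), and it is what makes the splitting cocycle an honest homomorphism $A\to{\rm Aut}_G(O)$ rather than a twisted $1$-cocycle. A secondary technical point is the passage between the birational/Galois-cohomological trivialization and the geometric statement about dense open subsets: one must spread out the generic isomorphism and choose $X_O$, $\mathring C$ and $\tilde C$ compatibly, so that $\Psi$ is defined and \'etale Galois on a $G$-stable open set and so that $X_O=\tilde X/A$ holds after shrinking.
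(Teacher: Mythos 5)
The paper does not actually prove Theorem \ref{birational-modle}; it quotes it, attributing the first part to \cite[Theorem 2.13]{colliot-kunyavskii-popov-reichstein} in the reformulation \cite[Theorem 3.4]{Langlois}, the statement on the $A$-action to \cite[Proposition 3.9]{Langlois}, and \eqref{Galois-quot-model} to \cite[Corollary 3.5]{Langlois}. What you have written is essentially a reconstruction of the argument behind those citations: regard the generic fibre of $X_O\dashrightarrow C$ as a form of $O$ over $F={\rm k}(C)$, classify forms by $H^1(F,{\rm Aut}_G(O))$ with ${\rm Aut}_G(O)=N_G(H)/H$, use diagonalizability to split a torus part $T'$ from a finite abelian part $F'$, kill the $T'$-part (your line-bundle/shrinking argument; birationally this is just Hilbert 90, since $T'$ is split over $F$), kill the $F'$-part by a finite abelian Galois cover $\tilde C\to\mathring C$, and descend. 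The diagonalizability of $N_G(H)/H$ for spherical $H$ is a theorem of Brion--Pauer (see also \cite{Knop96}), so it should be cited as known rather than treated as the main obstacle; with that citation your structural skeleton is sound and agrees with the cited proofs.

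The one step you state too quickly is the assertion that, because ${\rm Aut}_G(O)$ is defined over ${\rm k}$, ``the Galois action on it is trivial, so the splitting cocycle is simply a homomorphism $\chi\colon A\to{\rm Aut}_G(O)$.'' This is false for the torus part: after trivializing the family over $\tilde C$, the $A$-action on $O\times\tilde C$ covering the deck action a priori has the form $(o,c)\mapsto(\phi_a(c)\cdot o,\ a\cdot c)$, where each $\phi_a$ is a morphism from an open part of $\tilde C$ into ${\rm Aut}_G(O)$. Morphisms into $F'$ are constant (connectedness), but morphisms into $T'$ are tuples of invertible functions, and the deck group acts nontrivially on these, so the descent datum is a $1$-cocycle of $A$ valued in $T'(\mathcal O(\tilde C^{\circ}))\times F'$, not in ${\rm Aut}_G(O)({\rm k})$. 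To get the product (diagonal) action claimed in the theorem you must first make the $T'$-valued part of this cocycle a coboundary: over the generic point this is Hilbert 90 for the finite Galois extension ${\rm k}(\tilde C)/{\rm k}(C)$ (equivalently, $H^1(A,\mathcal O(\tilde C^{\circ})^{\times})$ injects into ${\rm Pic}(\mathring C)$, so it dies after a further shrinking of $\mathring C$); only after adjusting the trivialization by the resulting coboundary is the remaining cocycle valued in the finite constant group $F'$, where triviality of the action does make it an honest homomorphism $\chi\colon A\to F'\subset{\rm Aut}_G(O)$. With this repair, together with the spreading-out compatibilities you already flag, the proof is complete.
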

The first part of Theorem \ref{birational-modle} was originally proved in \cite[Theorem 2.13]{colliot-kunyavskii-popov-reichstein}. Here we use its reformulation \cite[Theorem 3.4]{Langlois}. The statement on the $A$-action on $\tilde X$ was proved in \cite[Proposition 3.9]{Langlois}. The last point is due to \cite[Corollary 3.5]{Langlois}.

We want to compute central birational invariants of $X$. By Theorem \ref{birational-modle}, it suffices to consider the open dense set $X_O$ and with out loss of generality we can assume that \eqref{Galois-quot-model} holds.

First we consider the lattice $\Gamma$ of weights of $B$-semiinvariant functions on $X$. It suffices to find ${\rm k}(\tilde X)^{(B)\times A}$. Denote by $\mathfrak M(O)$ the lattice of $B$-semiinvariant functions of $O$. Suppose that $e_\lambda\in{\rm k}(O)^{(B)}_\lambda$ for $\lambda\in\mathfrak M(O)$. Then $e_\lambda$ pulls back to a function (still denoted by $e_\lambda$) in ${\rm k}(\tilde X)^{(B)}_\lambda$. Since $A$ acts on $O$ as a subgroup in ${\rm Aut}_G(O)$, there is an $A$-character $\chi_\lambda\in\mathfrak X(A)$ such that
$$a\cdot e_\lambda=\chi_\lambda(a)e_\lambda,~\forall a\in A.$$
On the other hand, since the field extension ${\rm k}(\tilde C):{\rm k}(C)$ is Galois, ${\rm k}(\tilde C)$ is a regular $A$-module (cf. \cite[Section 5.4]{Kostrikin-3}). 
There always exists an $\tilde f_0\in{\rm k}(\tilde C)^{(A)}_{-\chi_\lambda}$, and consequently $\tilde f_0e_\lambda\in{\rm k}(\tilde X)^{(B)\times A}_\lambda$. Also, since ${\rm k}(\tilde X):{\rm k}(X)$ is a Galois extension, ${\rm k}(X)={\rm k}(\tilde X)^A$. We get $\tilde f_0e_\lambda\in{\rm k}(X)^{(B)}_\lambda$, whence $\mathfrak M(O)\subset \Gamma$. The converse inclusion is obvious (cf. \cite[Section 2.3]{Timashev-1997}). Hence we can identify $\Gamma$ with $\mathfrak M(O)$. Also, the central part $\mathscr V\cap\mathscr Q$ of the valuation cone $\mathscr V$ of $X$ can be identified with the valuation cone $\mathscr V(O)$ of $O$, as showed in \cite[Section 2.3]{Timashev-1997}.

It remains to deal with the colours. By \cite[Lemma 2.1]{Timashev-1997}, all colours in one-parameter case are central. Let $D\in\mathscr D^B$ be a colour of $X$. We assign to $D$ any colour $\hat D$ in $O$ so that $\hat D\times \tilde C$ (which is a colour in $\tilde X$) that maps to $D$ via $\Psi$. Suppose that $\hat D$ and $\hat D'$ are any two such divisors in $O$. Then they differ from each other by an $A$-action. By Lemma \ref{spherical-quot} in the Appendix, both $\hat D$ and $\hat D'$ map to a same point $v_{\hat D}=v_{\hat D'}$ in the hyperspace of $G/H$, which can be identified with $\Gamma^*_\mathbb Q$. The assignment
$$\mathscr D^B\ni D\to v_{\hat D}\in\Gamma^*_\mathbb Q$$
is well-defined. Note that for any $\tilde f_0e_\lambda\in{\rm k}(\tilde X)^{(B)\times A}$ as above,
$${\rm ord}_D(\tilde f_0e_\lambda)={\rm ord}_{\hat D}(e_\lambda).$$
We can further identify $v_D$ with $v_{\hat D}$. Thus we have
\begin{prop}\label{central-elements-one-para}
Let $X$ be one-parameter $G$-spherical variety with generic $G$-orbit $O$. Then
$$\Gamma\cong\mathfrak M(O),~(\mathscr V\cap\mathscr Q)\cong\mathscr V(O),$$
and for any $D\in\mathscr D^B$,
$$v_D=v_{\hat D},$$
where $\hat D$ is any colour in $O$ such that $\hat D\times\tilde C$ maps to $D$ under the Galois covering \eqref{Galois-covering}.
\end{prop}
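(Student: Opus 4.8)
The plan is to establish the three identities of Proposition~\ref{central-elements-one-para} by transporting the computation of central birational invariants from $X$ to the concrete model $\tilde X = O\times\tilde C$ provided by Theorem~\ref{birational-modle}, and then descending the $A$-invariant data back to $X$. Since all the objects in the statement ($\Gamma$, $\mathscr V\cap\mathscr Q$, the values $v_D$ on colours) are central, hence determined by the $B$-action on the generic fibre rather than by the curve direction, the key mechanism is that the extension ${\rm k}(\tilde C):{\rm k}(C)$ is Galois with group $A$ and that ${\rm k}(X)={\rm k}(\tilde X)^A$. Because $\tilde X$ splits as a product, its $B$-semiinvariant functions factor as (a function on $O$) times (a function on $\tilde C$), and the curve factor contributes nothing to the central weight; this is what lets me reduce every central invariant of $X$ to the corresponding invariant of $O$.

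First I would treat the lattice identification $\Gamma\cong\mathfrak M(O)$. Given $\lambda\in\mathfrak M(O)$ with semiinvariant $e_\lambda\in{\rm k}(O)^{(B)}_\lambda$, pull it back to $\tilde X$; since $A\subset{\rm Aut}_G(O)$, there is a character $\chi_\lambda\in\mathfrak X(A)$ with $a\cdot e_\lambda=\chi_\lambda(a)e_\lambda$. The crucial input is that ${\rm k}(\tilde C)$ is a \emph{regular} $A$-module, so it contains an $\tilde f_0\in{\rm k}(\tilde C)^{(A)}_{-\chi_\lambda}$; then $\tilde f_0 e_\lambda$ is $(B\times A)$-semiinvariant of central weight $\lambda$, and taking $A$-invariants yields an element of ${\rm k}(X)^{(B)}_\lambda$, giving $\mathfrak M(O)\subset\Gamma$. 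The reverse inclusion is immediate (any $B$-semiinvariant on $X$ pulls back to one on $\tilde X$, whose $O$-component lies in $\mathfrak M(O)$), as noted already in \cite[Section~2.3]{Timashev-1997}. The identification $(\mathscr V\cap\mathscr Q)\cong\mathscr V(O)$ is handled the same way and is exactly the content of \cite[Section~2.3]{Timashev-1997}, so I would simply cite it rather than reprove it.

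For the statement on colours I would proceed as follows. By \cite[Lemma~2.1]{Timashev-1997} every colour of a one-parameter $G$-variety is central, so $v_D$ genuinely lives in $\Gamma^*_\mathbb Q$. To a colour $D\in\mathscr D^B$ of $X$ I associate a colour $\hat D$ of $O$ for which $\hat D\times\tilde C$ maps onto $D$ under $\Psi$; any two such choices $\hat D,\hat D'$ differ by an element of $A$. The main point to verify is that this ambiguity does not affect the induced valuation, and this is precisely where Lemma~\ref{spherical-quot} of the Appendix enters: it guarantees that colours of $O$ lying in one $A$-orbit are sent to the same point of the hyperspace of $G/H$, so $v_{\hat D}=v_{\hat D'}$ and the assignment $D\mapsto v_{\hat D}$ is well-defined. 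Finally, using the $(B\times A)$-semiinvariants $\tilde f_0 e_\lambda$ constructed above, the order along $D$ of such a function equals the order of $e_\lambda$ along $\hat D$ (the $\tilde C$-factor contributes a divisor disjoint from the vertical colour), which identifies $v_D$ with $v_{\hat D}$ on all of $\Gamma$.

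The step I expect to be the main obstacle is the well-definedness of $D\mapsto v_{\hat D}$ under the $A$-action, i.e.\ the reduction invoking Lemma~\ref{spherical-quot}: one must be careful that different preimage colours in $O$ truly represent the same point of the hyperspace, and that the generic freeness of the $A$-action on $\tilde C$ (so that $\hat D\times\tilde C$ really is a colour of $\tilde X$ mapping dominantly and $A$-equivariantly to $D$) is correctly used. The lattice and valuation-cone identifications are essentially bookkeeping once the regular-module trick is in place; it is the colour computation, and in particular matching orders of vanishing across the Galois quotient $\tilde X\to X_O=\tilde X/A$, that carries the real geometric content.
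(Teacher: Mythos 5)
Your proposal is correct and follows essentially the same route as the paper's proof: reduction to the model $\tilde X=O\times\tilde C$ with $X_O=\tilde X/A$, the regular-$A$-module trick producing $\tilde f_0e_\lambda\in{\rm k}(\tilde X)^{(B)\times A}_\lambda$ for the lattice identification, citation of \cite[Section 2.3]{Timashev-1997} for the central valuation cone, and Lemma \ref{spherical-quot} plus the order-matching identity ${\rm ord}_D(\tilde f_0e_\lambda)={\rm ord}_{\hat D}(e_\lambda)$ for the colours. No gaps to report.
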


Recall that any colour in a spherical variety is assigned to one of certain types (type-a, a', b) according to the minimal parabolic subgroup of $G$ that moves it. We refer to the readers \cite{Ga-Ho-datum} and \cite[Section 30.10]{Timashev-book} for a convenient survey on properties of colours of different types. By Lemma \ref{spherical-quot} in the Appendix, different colours in $O$ that are assigned to a same colour $D\in\mathscr D^B$ of $X$ have the same type. Thus we have
\begin{defi}
Let $X$ be a one-parameter $G$-variety of complexity 1. A colour $D\in\mathscr D^B$ is said to be of  type-a (or a', b, respectively) if any (hence all) $\hat D$ assigned to $D$ is a colour of type-a (or a', b, respectively) of $O$.
\end{defi}

For our later use, we make the following remarks:

\begin{rem}\label{v-colour}

Let $O=G/H$ be a $G$-spherical homogeneous space. For each simple root $\alpha$ of $G$, denote by $P_\alpha$ the corresponding minimal standard parabolic subgroup of $G$ that contains $B$. We say that a colour $D$ of $O$ lies in the set $\mathscr D^B(O;\alpha)$ if $P_\alpha\cdot D\not=D$. Let $P(O)$ be the associated parabolic subgroup of $O$. Then $P(O)=P$ is the associated parabolic subgroup of $X$. Also, denote by $\Sigma(O)$ the spherical roots of $O$. We have:
\begin{itemize}
\item If $D$ is of type-a, then $D\in\mathscr D^B(O;\alpha)$ for some simple root $\alpha$ of $G$ such that $\alpha\in \Sigma(O)$, and $v_D(\alpha)=1$. In particular, $v_D$ is primitive;
\item If $D$ is of type-a', then $D\in\mathscr D^B(O;\alpha)$ for some simple root $\alpha$ of $G$ such that $2\alpha\in \Sigma(O)$. In this case $v_D=\frac12\alpha^\vee|_{\mathfrak M(O)}$;
\item If $D$ is of type-b and $D\in\mathscr D^B(O;\alpha)$ for some simple root $\alpha$ of $G$, then $v_D=\alpha^\vee|_{\mathfrak M(O)}$.
\end{itemize}

For a one-parameter $G$-variety $X$ with generic $G$-orbit $O$, and $D\in\mathscr D^B$ a colour of it, we write $D\in\mathscr D^B(\alpha)$ if any (hence all) corresponding $\hat D\in\mathscr D^B(O;\alpha)$. As in the spherical cases \cite[Section 4]{Brion97}, set
\begin{align}\label{coe-colour-one-para}
\bar m_D=\left\{\begin{aligned}&\frac12\langle\alpha^\vee,\kappa_P\rangle=1,~&\text{for}~D\in \mathscr D^B(\alpha)~\text{of type-a or a'},\\ &\langle\alpha^\vee,\kappa_P\rangle\geq2,~&\text{for}~D\in \mathscr D^B(\alpha)~\text{of type-b}. \end{aligned}\right.
\end{align}
\end{rem}

\subsubsection{The anti-canonical divisor}
In Theorem \ref{anti-can-div-thm} below we give a formula of $B$-stable anti-canonical divisors on a one-parameter $G$-variety $X$. The first part of Theorem \ref{anti-can-div-thm} (i.e. equation \eqref{anti-can-div}) for $T$-varieties of complexity 1 was proved by Petersen-S\"u\ss\,\cite[Theorem 3.21]{Petersen-Suss-2011}, and Langlois-Terpereau \cite[Theorem 2.18]{Langlois-Terpereau-2016} for horospherical complexity 1 varieties. Langlois \cite[Theorem 5.1]{Langlois} gives a formula of canonical divisors for general normal $G$-varieties with spherical orbits, which in fact leads to \eqref{anti-can-div} in our case. We remark that \cite[Theorem 5.1]{Langlois} uses a different approach and the formula there is given in terms of certain Galois covering of $X$.

Recall Remark \ref{v-colour}. For a one-parameter $G$-variety $X$, its associated parabolic subgroup $P$ equals to that of $O$. In the following we fix a Levi decomposition $P=L_PP_u$, where $P_u$ is the unipotent radical and $L_P$ is the Levi subgroup that contains the maximal torus $B\cap B^-$. We have
\begin{theo}\label{anti-can-div-thm}
Let $X$ be a $\mathbb Q$-Gorenstein one-parameter $G$-variety of complexity 1. Denote by $P$ its associated parabolic subgroup. Then there is a $B$-stable anti-canonical $\mathbb Q$-divisor of $X$,
\begin{align}\label{anti-can-div}
\mathfrak d=\sum_{D\in\mathscr B(X), h_D\not=0}(1-h_D+h_Da_{x_D})D+\sum_{D\in\mathscr B(X)^G, h_D=0}D+\sum_{D\in\mathscr D^B, h_D=0}\bar m_DD,
\end{align}
where $\mathfrak a:=\sum_{x\in\mathbb P^1}a_x[x]$ is an anti-canonical $\mathbb Q$-divisor of $C$.\footnote{Thus $a_x=0$ for almost every $x\in C$.} Moreover, if $m\mathfrak d$ is a Cartier divisor for some $m\in\mathbb N_+$, then $m\mathfrak d$ is the divisor of a $B$-semiinvariant rational section of $K_X^{-m}$ with weight $m\kappa_P$.
\end{theo}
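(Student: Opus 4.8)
The plan is to separate the formula \eqref{anti-can-div} from the weight assertion, and to prove the weight assertion by a birational reduction to the generic $G$-orbit. Since the $B$-weight of a rational section of a $G$-linearized $\mathbb Q$-line bundle depends only on the $B$-action on the function field $K={\rm k}(X)$, I am free to compute it on any $G$-stable dense open set, and in particular on $X_O=\tilde X/A$ with $\tilde X=O\times\tilde C$ as in Theorem \ref{birational-modle}. Here $K_X^{-m}$ is a genuine line bundle once $m\mathfrak d$ is Cartier, and is $G$-linearized after replacing $G$ by a finite cover, as in Section 2.3.

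First I would compute an anti-canonical $B$-eigensection on $\tilde X=O\times\tilde C$. By the product rule $K_{\tilde X}=p_1^*K_O+p_2^*K_{\tilde C}$, so it suffices to multiply an anti-canonical section on each factor. On the spherical homogeneous space $O=G/H$, Brion's computation \cite{Brion97} gives a rational $B$-eigensection $s_O$ of $K_O^{-1}$ whose divisor is $\sum_{D}\bar m_D D$, summed over the colours of $O$ with $\bar m_D$ as in \eqref{coe-colour-one-para}, and whose $B$-weight is $\kappa_P$; here $P=P(O)$ is the associated parabolic group by Remark \ref{v-colour}. Moreover $s_O$ is unique up to scalar, since two such sections of the same weight differ by an element of ${\rm k}(O)^B={\rm k}$. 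The second factor $p_2^*K_{\tilde C}^{-1}$ is pulled back from $\tilde C$, hence $B$-invariant, so the product $p_1^*s_O\otimes p_2^*s_{\tilde C}$ is a $B$-eigensection of $K_{\tilde X}^{-1}$ of weight $\kappa_P$, the weight coming entirely from the $O$-factor.

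Next I would descend along the Galois quotient $\tilde X\to X_O=\tilde X/A$. Since $A$ acts on $O$ through $G$-equivariant automorphisms preserving $B$-weights, and $s_O$ is unique up to scalar, $A$ scales $s_O$ by a character $\psi\in\mathfrak X(A)$. As ${\rm k}(\tilde C)$ is a regular $A$-module, exactly as in the proof of Proposition \ref{central-elements-one-para} I can choose $s_{\tilde C}$ to be an $A$-eigensection with the opposite character $\psi^{-1}$, so that the product $p_1^*s_O\otimes p_2^*s_{\tilde C}$ is $A$-invariant and descends to a rational section of $K_{X_O}^{-1}$. This descended section is $B$-semiinvariant of weight $\kappa_P$: every ingredient beyond $s_O$ — the section $s_{\tilde C}$, the $A$-character adjustment supplied by ${\rm k}(\tilde C)$, and the ramification of $\tilde C\to C$ — involves only functions in the $\tilde C$-direction, on which $B$ acts trivially, and so carries trivial $B$-weight. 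Passing to the $m$-th power, with $m$ chosen so that $m\mathfrak d$ is Cartier, yields a $B$-semiinvariant rational section $\sigma$ of the line bundle $K_X^{-m}$ of weight $m\kappa_P$, proving the last assertion.

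It then remains to identify ${\rm div}(\sigma)$ with $m\mathfrak d$, i.e.\ to read off the coefficients in \eqref{anti-can-div}. The colours $\mathscr D^B$ inherit $\bar m_D$ from $s_O$ through the identification $v_D=v_{\hat D}$ of Proposition \ref{central-elements-one-para}; the $G$-stable divisors absorb the contributions of $K_{\tilde C}$ and of the ramification of $\tilde C\to C$. A horizontal $G$-stable divisor ($h_D=0$) receives coefficient $1$, while a vertical one ($h_D\neq0$, lying over $x_D\in C$) receives $1-h_D+h_Da_{x_D}$, where $\mathfrak a=\sum_x a_x[x]$ is the chosen anti-canonical $\mathbb Q$-divisor of $C=\mathbb P^1$; this is precisely the specialization to complexity $1$ of the general canonical-divisor formula of Langlois \cite[Theorem 5.1]{Langlois}, which I would invoke to fix the vertical coefficients. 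The main obstacle I expect lies exactly in this last bookkeeping: tracking how $K_{\tilde C}$ and the ramification divisor of $\tilde C\to C$ redistribute among the $G$-stable divisors of $X$ under the quotient by $A$, and verifying that the outcome is the stated coefficient $1-h_D+h_Da_{x_D}$, with the jump $h_D$ entering through the ramification index over $x_D$. By contrast the weight computation is clean once the character-cancellation of Proposition \ref{central-elements-one-para} is in place, since the entire $\tilde C$-direction is $B$-trivial.
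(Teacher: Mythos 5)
Your proposal is correct in outline but follows a genuinely different route from the paper. You work on the birational model $O\times\tilde C\dashrightarrow X_O$ of Theorem \ref{birational-modle} and descend the product section $p_1^*s_O\otimes p_2^*s_{\tilde C}$, so the weight $\kappa_P$ falls out because the whole $\tilde C$-direction is $B$-trivial, the colour coefficients $\bar m_D$ come from Brion's formula on $O$ together with the fact that the Galois quotient is generically unramified along horizontal divisors, and the $G$-stable coefficients are imported from Langlois \cite[Theorem 5.1]{Langlois}. The paper instead stays inside $X$: it uses Knop's local structure theorem \cite[Theorem 2.3]{Knop94} to write $X'\setminus\delta\cong P_u\times Z$ with $Z$ a toric complexity-one variety, reads off the $G$-stable coefficients from the toric formula of Petersen--S\"u\ss\ \cite{Petersen-Suss-2011}, obtains the colour coefficients by restricting to a generic fibre $X_z$ (a spherical embedding of $O$) and applying adjunction plus \cite[Theorem 4.2]{Brion97}, and computes the weight from the adjoint $B$-action on $K_{P_u}^{-1}$. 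Your route buys a very transparent weight computation and matches the approach the paper explicitly attributes to Langlois; the paper's route buys self-containedness, since no translation of a covering-theoretic canonical-divisor formula is required.

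One caveat you should fix: $X_O$ consists only of the generic (codimension-one) orbits, so \emph{no} $G$-stable divisor of $X$ meets it --- not just the special vertical ones. Indeed, if a central $G$-stable divisor $W$ (with $h_W=0$) met $X_O$, then $W\cap X_O$ would be a union of generic orbits of dimension $\dim W$, forcing $W$ to be the closure of a generic orbit, which is vertical. Hence the coefficient $1$ on central $G$-stable divisors is exactly as invisible to your descent construction as the vertical coefficients $1-h_D+h_Da_{x_D}$, and when you say you would invoke Langlois ``to fix the vertical coefficients'' you in fact need his theorem (or an argument like the paper's local-structure reduction) for \emph{all} $G$-stable divisors, horizontal ones included; your construction by itself pins down only the colours and the weight. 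With that adjustment the plan goes through, and the bookkeeping you flag (jumps $h_D$ entering as ramification data of $\tilde C\to C$, redistribution of $K_{\tilde C}$ and the ramification divisor under the quotient by $A$) is indeed where the remaining work sits.
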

\begin{proof}
We will adopt the method of \cite[Section 4]{Brion97}. It suffices to deal with the case when $X$ is Gorenstein and $m=1$. Suppose that $\mathfrak d={\rm div}(s)$ for some $B$-semiinvariant global section of $K^{-1}_X$. Then $\mathfrak d$ can be decomposed as \eqref{B-stable-divisor}. Also by removing singular locus of $X$ (which is of codimension at least 2), we may assume that $X$ is regular.

We compute $m_D$ in \eqref{B-stable-divisor} for $D\in\mathscr B(X)^G$ following the argument of \cite[Proposition 4.1]{Brion97}. Consider the divisor
$$\delta:=\sum_{D\in\mathscr D^B}D.$$
Note that since $\delta$ is not $G$-stable, the union of all $G$-orbits contained in $\delta$ is a set of codimension at least 2. By \cite[Lemma 2.2]{Knop94}, each $D\in\mathscr D^B$ is Cartier on
$$X'=X\setminus\{G\text{-orbits contained in}~\delta\},$$
and $\delta$ is an effective $B$-stable divisor on $X'$. At the same time, $P={\rm Stab}_{G}(\delta)$ is the associated parabolic subgroup of $X'$. Thus by \cite[Theorem 2.3 and Proposition 2.4]{Knop94},
\begin{align}\label{local-structure}
X'\setminus\delta\cong P_u\times  Z\cong P\times^{L_P} Z
\end{align}
for some $T$-variety of complexity 1 with respect to the torus $T:=L_P/[L_P,L_P]$-action. Moreover, any $G$-stable divisor in $X'$ descends to a $T$-stable divisor of $Z$. Thus the restriction of the anti-canonical section $s$ on $\mathring X'$ can be written as $s=s_1\wedge s'$ with $s_1$ a section of $K^{-1}_Z$ and $s'$ a section of $K^{-1}_{P_u}$. The coefficients in \eqref{B-stable-divisor} for $D\in\mathscr B(X)^G$ then follows from the case of $T$-varieties proved in \cite[Theorem 3.21]{Petersen-Suss-2011}.

Then we determine the coefficients of colours in \eqref{B-stable-divisor}. By removing $G$-germs of type \uppercase\expandafter{\romannumeral2} (which has codimension at least 2), we may assume that $X$ contains only $G$-invariant subvarieties of type \uppercase\expandafter{\romannumeral1}. Then by Lemma \ref{X-C-map-lem} in the Appendix, ${\rm pr}_B:X\to C$ is a morphism. And by \cite[Theorem 16.25]{Timashev-book}, for $z$ in an open subset $\mathring C\subset C$, $X_z={\rm pr}_B^{-1}(z)$ is normal, whence a spherical embedding of $O$. On the other hand, up to shrinking $\mathring C$ we may assume both $\mathfrak d$ and the central divisors in $\mathscr B(X)$ intersects with $X_z$ transversally. In particular, $X_z$ intersects $D$ transversally and we get
\begin{align*}
m_D={\rm ord}_D(s)=&{\rm ord}_{\hat D}(s|_{X_z}),
\end{align*}
where $\hat D$ is any colour of $O$ that is assigned to $D$ (i.e. a component of $X_z\cap D$). 

Note that for any $z'\in C\setminus\{z\}$, $X_z\cap {\rm pr}_B^{-1}(z')=\emptyset$. 
Consider the line bundle $L_{X_z}={\rm pr}_B^*\mathscr O_{C}([z])$. By the moving lemma there is another divisor $D'$ of $L_{X_z}$ which does not have $z$ in its support. It follows the divisor ${\rm pr}_B^*(D')$ does not intersect with $X_z$. Thus $L_{X_z}$ restricts to a trivial line bundle on $X_z$. By the adjunction formula, $s|_{X_z}$ is a section of $K^{-1}_{X_z}$. We may also require that $a_z=0$ and there is no $D\in\mathscr B(X)^G\setminus\{X_z\}$ so that $h_D\not=0$ and $v_D\in\mathscr Q_{z,+}$. Thus every $G$-stable divisor in $X_z$ is the intersection of $X_z$ with a central $G$-stable divisor of $X$. It then follows from the previous discussion that the order of $s|_{X_z}$ is $1$ along any $G$-stable divisor of $X_z$. Combining with \cite[Theorem 4.2]{Brion97} we get
\begin{align*}
{\rm ord}_{\hat D}(s|_{X_z})=\bar m_D,
\end{align*}
whence $m_D=\bar m_D$ for every $D\in\mathscr D^B$. 

It remains to compute the $B$-weight of $s$. Since $s_Z$ is $T$-invariant, it suffices to compute the $B$-weight of $s_{P_u}$. Recall \eqref{local-structure}. Fix any $u\in P_u$ and $z\in Z$. For any $b=b_L\cdot u_b\in B$, where $b_L\in L_P\cap B$ and $u_b\in P_u$, on the fibre product $P\times^{L_P}Z\cong P_u\times Z$ it holds
$$b\cdot[(u,z)]=[(bu,z)]=[(b_L(u_bu)b_L^{-1},b_L\cdot z)],$$
where $b_L(u_bu)b_L^{-1}\in P_u$. Thus $B$ acts on $K_{P_u}^{-1}$ by the adjoint action and clearly the corresponding weight is $\kappa_P$. At the same time, $Z$ is a $T$-variety of complexity 1 and $s_Z$ is $(B\cap L_P)$-invariant. We see that the $B$-weight of $s$ is $\kappa_P$.

\end{proof}

\begin{rem}
Note that any (rational) $B$-semiinvariant section of $K_X^{-m}$ with weight $m\kappa_P$ differs from each other by a divisor of some $B$-invariant rational function. We see that the divisor of any such section is of form \eqref{anti-can-div}.
\end{rem}

\subsection{The quasihomogeneous case}
\subsubsection{Reduction to the one-parameter case}
In this section we deal with the quasihomogeneous case. Let $X$ be a quasihomogeneous $G$-variety of complexity 1, which contains the homogeneous space $G/H$ as an open $G$-orbit. To find an anti-canonical divisor of $X$ we first reduce the problem to the one-parameter case, which has already been solved above. Put
$$P':=\{g\in G|gD=D~\text{for any $B$-stable divisor $D$ in general position}\}.$$
Then $P'$ is a parabolic subgroup satisfying
\begin{align}\label{stab-parabolics}
B\subset P\subset P'\subsetneq G,
\end{align}
where the last inequality is strict since $X$ is quasihomogeneous. We will show that
\begin{lem}\label{stab-parabolics-lem}
$P'={\rm Stab}_G(D)$ for any $D\in\mathscr D^B$ in general position. Moreover, $P'$ stabilizes all colours with non-zero jump.
\end{lem}
Before the proof of Lemma \ref{stab-parabolics-lem}, let us recall some facts on colours with non-zero jump from \cite[Section 2.2]{Timashev-1997}. Note that via pull-back, there is a one-to-one correspondence between $B$-stable divisors in $G/H$ and $B$-semiinvariant sections (up to multiplication by an invertible function) of homogeneous line bundles on $G$. Here a homogeneous line bundle is $L_\chi:=G\times^H{\rm k}_\chi$, where ${\rm k}_\chi$ is the 1-dimensional $H$-representation with $\chi\in\mathfrak X(H)$. Recall that we have assumed $G$ is of simply connected type, then by \cite[Section 3]{KKLV} the $B$-semiinvariant section further reduces to a function in ${\rm k}[G]^{(B\times H)}$, which is a defining equation of the pull-back of this $B$-stable divisor.

With this correspondence, as showed in \cite[Section 2.2]{Timashev-1997}, the rational quotient for the $B$-action ${\rm pr}_B$ is defined by a 1-dimensional system of colours as follows: There is a $G$-linearized line bundle $L$ on $G/H$ and a 2-dimensional subspace $M\subset{\rm H}^0(G/H,L)^{(B)}_{\mu_0}\cong{\rm k}[G]^{(B\times H)}_{(\mu_0,\chi_0)}$ with $\mu_0\in\mathfrak X(B)$ and $\chi_0\in\mathfrak X(H)$ so that ${\rm pr}_B$ is given by
$${\rm pr}_B: G/H\dashrightarrow C:=\mathbb P^1(M^*)(\cong\mathbb P^1).$$
Clearly ${\rm pr}_B$ is a morphism on $X^o:=X\setminus{\rm Bs}(M)$, i.e. outside the base locus ${\rm Bs}(M)$ of $M$. Moreover, it is further shown in \cite[Section 2.2]{Timashev-1997} that $M$ (considered as a subset of ${\rm k}[G]$) consists of elements of the following types:
\begin{itemize}
\item Generic members in $M$ are indecomposable, which correspond to colours at a general position. These colours are called the \emph{regular} colours. Any regular colour $D$ satisfies $D={\rm pr}_B^*([x])$ (on $X^o$) for some $x\in C$, and is mapped to $v_D=q_{x}+\ell_D\in\mathscr E$. Moreover, $\ell_D=O$ for all but finitely many regular colours. Such a regular colour will be denoted by $X_{x}$. Conversely, for all but finitely many $x$ in $C$, the closure of ${\rm pr}_B^*([x])$ (considered as a divisor in $X^o$) in $G/H$ is the regular colour $X_x$.
\item There are finitely many lines in $M$ such that elements on these lines are decomposable. Components of their corresponding $B$-stable divisors are the remaining colours with non-zero jump. These colours are called the \emph{subregular} colours, and there are finitely many subregular colours. A subregular colour $D$ occurs in ${\rm pr}_B^*([x])$ (on $X^o$) for some $x\in C$ with multiplicity $h_D$, and is mapped to $v_D=h_Dq_{x}+\ell_D$.
\end{itemize}
The remaining colours in $G/H$ are precisely the central ones, i.e. those $D$ with $v_D\in\mathscr Q$.

Consider the variety $X$ that contains $G/H$. For any colour $D$ of $G/H$, its closure in $X$ is a colour of $X$, which will still be denoted by $D$ for short. Any colour in $X$ arises in this way. Obviously, a colour $D$ of $G/H$ is central if and only if its closure in $X$ is. Thus for convenience, below we will call a colour $D$ in a quasihomogeneous $G$-variety $X$ regular (subregular, central, resp.) if $D\cap(G/H)$ is regular (subregular, central, resp.).

\begin{proof}[Proof of Lemma \ref{stab-parabolics-lem}]
It suffices to prove the Lemma on the homogeneous space $G/H$. Under the above conventions, for any regular colour $D$, it holds
$${\rm Stab}_G(D)=P[\mu_0],$$
where $P[\mu_0]$ is the parabolic subgroup associate to $\mu_0$ (that is, the unipotent roots of $P[\mu_0]$ are precisely those positive roots not orthogonal to $\mu_0$). Clearly $P'=P[\mu_0]$ stabilises every line in $M$, hence stabilises every colour with non-zero jump. The Lemma is proved.
\end{proof}

Denote by $P'=L_{P'}P'_u$ its Levi decomposition so that $L_{P'}$ is a standard Levi subgroup. Take any $x\in C$ so that the only member in $\mathscr B(X)$ with non-zero jump that mapped into $\mathscr Q_{x,+}$ is the regular colour $X_x$, and consider
$$X'=X\setminus \{\text{$G$-orbits contained in}~X_x\}.$$
Then by \cite[Lemma 2.2]{Knop94}, $X_x$ is an effective $B$-stable Cartier divisor on $X'$. By \cite[Theorem 2.3]{Knop94},
$$X'\setminus X_x\cong P'_u\times^{L_{P'}}Z'\cong P'_u\times Z',$$
where $Z'$ is an $L_{P'}$-variety of complexity 1. Note that any colour $D$ in $X'$ of non-zero jump except $X_x$ (in fact every divisor in $\mathscr B(X)\setminus\{X_x\}$ with non-zero jump) descends to a $L_{P'}$-stable divisor in $Z'$ since $D$ is $P'$-stable. We conclude $Z'$ is a one-parameter $L_{P'}$-variety. Also any central colour $D$ in $X$ descends to a central $B\cap L_{P'}$-stable prime divisor $D'$ in $Z'$. Set
\begin{align}\label{bar-m-D-quasi}
\bar m_D=\left\{\begin{aligned}&1,~\text{if}~D'~\text{is}~L_{P'}\text{-stable},\\ &\bar m_{D'},~\text{if}~D'~\text{is a colour of}~Z', \end{aligned}\right.
\end{align}
where $\bar m_{D'}$ is given by \eqref{coe-colour-one-para} for $D'\subset Z'$. We say that a central colour $D\subset X$ is of type-a (a', b, resp.) if $D'$ is a central colour of $Z'$ of type-a (a', b, resp.).

\subsubsection{The anti-canonical divisor}

\begin{theo}\label{anti-can-div-thm-quasi-homo}
Let $X$ be a projective quasihomogeneous $G$-variety of complexity 1. Denote by $P\subset G$ its associated parabolic subgroup. Then there is a $B$-stable anti-canonical $\mathbb Q$-divisor of $X$,
\begin{align}\label{anti-can-div-quasi-homo}
\mathfrak d=\sum_{D\in\mathscr B(X), h_D\not=0}(1-h_D+h_Da_{x_D})D+\sum_{D\in\mathscr B(X)^G, h_D=0}D+\sum_{D\in\mathscr D^B, h_D=0}\bar m_DD,
\end{align}
where $\mathfrak a:=\sum_{x\in\mathbb P^1}a_x[x]$ is an anti-canonical $\mathbb Q$-divisor of $C$. Moreover, if $m\mathfrak d$ is a Cartier divisor for some $m\in\mathbb N_+$, then $m\mathfrak d$ is the divisor of a $B$-semiinvariant rational section of $K_X^{-m}$ with weight $m\kappa_P$.
\end{theo}

\begin{proof}
Recall the construction in the previous section. 
As before, $s$ can be decomposed into $s=s_{Z'}\wedge s_{P'_u}$, where $s_{Z'}$ is a $B\cap L_{P'}$-semiinvariant section of $K_{Z'}^{-1}$ and $s_{P'_u}$ a section of $K_{P'_u}^{-1}$. By \eqref{anti-can-div} we see that coefficients of $B$-stable prime divisors in ${\rm div}(s)$ are given by \eqref{anti-can-div-quasi-homo}, except possibly the one of $X_x$. More precisely,
\begin{align}\label{ax-unknown}
\mathfrak d=&\sum_{D\in\mathscr B(X)\setminus\{X_x\}, h_D\not=0}(1-h_D+h_Da_{x_D})D+\bar a_xX_x\notag\\
&+\sum_{D\in\mathscr B(X)^G, h_D=0}D+\sum_{D\in\mathscr D^B, h_D=0}\bar m_DD,
\end{align}
where $\bar a_x$ remains to be determined. At the same time, as in the one-parameter case, the $B$-weight of $s$ equals to
$$\sum_{\alpha\in \Pi_{P'_u}}\alpha+\sum_{\alpha\in\Pi_{L_{P'}}\cap\Pi_{P_u}}\alpha=\sum_{\alpha\in\Pi_{P_u}}\alpha=\kappa_P,$$
where the second term in the left-hand side appears when applying Theorem \ref{anti-can-div-thm} to the one-parameter $L_{P'}$-variety ${Z'}$.

We claim that
\begin{align*}
\bar a_x=a_x,
\end{align*}
and hence the coefficient of $X_x$ is also given by \eqref{anti-can-div-quasi-homo}. It suffices to show that
\begin{align}\label{deg(a)-lem}
\bar a_x+\sum_{y\in C\setminus\{x\}}a_y=2.
\end{align}
We divide the proof into two steps:

\emph{Step-1. The case when $X$ is $\mathbb Q$-Gorenstein.} Without loss of generality, we may assume that $X$ is Gorenstein so that $K_X^{-1}$ is Cartier. Since $X$ is projective, there is a very ample $G$-linearized line bundle $L$ on $X$. Let $$\mathfrak d_L=\sum_{D\in\mathscr B(X)}m_DD$$ be the ($B$-stable) divisor some section $s_L\in{\rm H}^0(X,L)^{(B)}_{\lambda_0}$ for some $\lambda_0\in\mathfrak X(B)$.

Recall the anti-canonical divisor \eqref{ax-unknown}. For convenience in the following we also denote $\bar m_D=1$ for $D\in\mathscr B(X)^G\cap\{h_D=0\}$ in \eqref{ax-unknown}.
Consider a Cartier divisor
$$\mathfrak d_\epsilon:=\mathfrak d_L+\epsilon\mathfrak d,~0\leq\epsilon\ll1.$$
Then $\mathfrak d_\epsilon$ is a divisor of the ample line bundle $L_\epsilon:=L-\epsilon K_X$, which is the divisor of some (rational) section of $L_\epsilon$ with weight $\lambda_0+\epsilon\kappa_P$. Clearly, the corresponding piecewise concave functions
\begin{align*}
A_y(\mathfrak d_\epsilon,\lambda)=\min_{x_D=x}\frac{m_D+\epsilon(1-h_D+h_D\bar a_x)+\ell_D(\lambda)}{h_D},~\forall\lambda\in\Gamma_\mathbb R.
\end{align*}
By \cite[Theorem 8]{Timashev-2000},
\begin{align*}
L_\epsilon^{\cdot n}=n!\int_{\Delta_\mathscr Z(\mathfrak d_\epsilon)}A(\mathfrak d_\epsilon,\lambda)\pi(\lambda+\lambda_0+\epsilon\kappa_P)d\lambda,
\end{align*}
where $A(\mathfrak d_\epsilon,\lambda)=\sum_{x\in C}A_x(\mathfrak d_\epsilon,\lambda)$, and
\begin{align*}
\Delta_\mathscr Z(\mathfrak d_\epsilon)=\cap_{D\in\mathscr B(X),h_D=0}\{m_D+\epsilon\bar m_D+\ell_D(\lambda)\geq0\}\cap\{A(\mathfrak d_\epsilon,\lambda)\geq0\}.
\end{align*}
Denote by $F_D$ the facet of $\Delta(\mathfrak d_L)$ that lies on $\{\ell_D(\lambda)+m_D=0\}$ for a central $D\in\mathscr B(X)$, and $\{F_{D_b}|b=1,...,M\}$ the facets of $\Delta_\mathscr Z(\mathfrak d_L)$ where $A(\mathfrak L,\cdot)\not=0$. Also denote by $\{\Omega'_a\}_{a=1}^N$ the common domains of linearity of all $\{A_y(\mathfrak d_L,\lambda)|y\in C\}$ so that for each $y\in C$,
$$A_y(\mathfrak d_L,\lambda)=\frac{m_{D_a(y)}+\ell_{D_a(y)}(\lambda)}{h_{D_a(y)}},~\forall\lambda\in\Omega'_a.$$
Taking variation in the previous integration we get\footnote{To be precise, we add a short explanation to \eqref{var-int}. In the last summation term it may happen $F_{D}=F_{D'}$ for different $D, D'\in\mathscr B(X)$ with $h_D=h_{D'}=0$. If one of them is of type-a' or b, then $\pi(\lambda+\lambda_0)=0$ on this facet. Otherwise $n_D\bar m_D=n_{D'}\bar m_{D'}=1$. Thus each term in the last sum does not depend on the choice in $\{D,D'\}$.}
\begin{align}\label{var-int}
\frac1{(n-1)!}K_X^{-1}\cdot L^{\cdot(n-1)}=&\frac1{n!}\left.\frac\partial{\partial\epsilon}\right|_{\epsilon=0}L_\epsilon^{\cdot n}\notag\\
=&\sum_{y\in C\setminus\{x\}}\sum_{a=1}^N\int_{\Omega'_a}(\frac1{h_{D_a(y)}}-1+a_y)\pi(\lambda+\lambda_0)d\lambda+\bar a_x\notag\int_{\Delta_\mathscr Z(\mathfrak d_L)}\pi(\lambda+\lambda_0)d\lambda\\
&+\int_{\Delta_\mathscr Z(\mathfrak d_L)}A(\mathfrak d,\lambda)\langle\nabla\pi(\lambda+\lambda_0),\kappa_P\rangle d\lambda\notag\\
&+\sum_{b=1}^Mn_{D_b}\bar m_{D_b}\int_{F_{D_b}}A(\mathfrak d,\lambda)\pi(\lambda+\lambda_0)d\sigma,
\end{align}
where the measure $d\sigma|_{F_D}$ is the induced lattice measure on the facet $F_D$, and $n_D\in\mathbb Q_+$ so that $n_Dv_D$ is primitive. Note that $n_D\bar m_D\not=1$ only if $D$ descends to a colour $D'$ of type-a' or b of $Z'$. Suppose also that $D'\in\mathscr D^B(\alpha)$. As $P_\alpha$ moves $D'$ (hence also moves $D$), $\alpha\in\Pi_{P_u}$, where $P$ is the associated parabolic subgroup of $X$ defined by \eqref{ass-para}. Thus there is a factor $\langle\alpha^\vee,\lambda+\lambda_0\rangle$ in $\pi(\lambda+\lambda_0)$, and by Lemma \ref{face-vanishing} in the Appendix,
$$\int_{F_D}A(\mathfrak d,\lambda)\pi(\lambda+\lambda_0)d\sigma=0.$$
Also, applying Lemma \ref{nabla-pi} to the second last term of \eqref{var-int}, one gets
\begin{align*}
&\frac1{(n-1)!}K_X^{-1}\cdot L^{\cdot(n-1)}\notag\\=&\sum_{y\in C}\sum_{a=1}^N\int_{\Omega'_a}(\frac1{h_{D_a(y)}}-1)\pi(\lambda+\lambda_0)d\lambda+\left(\bar a_x+\sum_{y\in C\setminus\{x\}}a_y\right)\cdot\int_{\Delta_\mathscr Z(\mathfrak d)}\pi(\lambda+\lambda_0)d\lambda\notag\\
&+2\int_{\Delta_\mathscr Z(\mathfrak d)}A(\mathfrak d,\lambda)\langle\nabla\pi(\lambda+\lambda_0),\rho\rangle d\lambda+\int_{\partial\Delta_\mathscr Z(\mathfrak d)}A(\mathfrak d,\lambda)\pi(\lambda+\lambda_0)d\sigma.
\end{align*}
Here we used in the first term in \eqref{var-int} the fact that the only divisor in $\mathscr B(X)\cap\{x_D=x\}$ with non-zero jump is $X_x$, which has $h_{X_x}=1$, and as a consequence
$$\sum_{y\in C\setminus\{x\}}\sum_{a=1}^N\int_{\Omega_a}(\frac1{h_{D_a(y)}}-1)\pi(\lambda+\lambda_0)d\lambda=\sum_{y\in C}\sum_{a=1}^N\int_{\Omega_a}(\frac1{h_{D_a(y)}}-1)\pi(\lambda+\lambda_0)d\lambda.$$
On the other hand, by the Riemann-Roch formula (cf. \cite[Appendix A]{Boucksom-Hisamoto-Jonsson}),
\begin{align*}
\frac1{(n-1)!}K_X^{-1}\cdot L^{\cdot(n-1)}=\lim_{k\to+\infty}\frac2{k^{n-1}}(\dim{\rm H}^0(X,L^k)-\frac{L^{\cdot n}}{n!}k^n),
\end{align*}
where again by \cite[Theorem 8]{Timashev-2000},
$$L^{\cdot n}=n!\int_{\Delta_\mathscr Z(\mathfrak d)}A(\mathfrak d,\lambda)\pi(\lambda+\lambda_0)d\lambda.$$
Comparing with \eqref{h0(X,Lk)-eq} we get \ref{deg(a)-lem}.

\emph{Step-2. The general case.} For general $X$, by a deep result of Hironaka (cf. \cite{Hironaka-1964,Bogolomov-Pantev-1996}) and Koll\'ar \cite[Chapter 3]{Kollar-lecture}, we can take a $G$-equivariant resolution of singularities $\gamma:\tilde X\to X$ so that $\tilde X$ is also projective. Note that all exceptional divisors are in $\mathscr B(\tilde X)^G$. Hence the strict transformation of $X_x$ is $\tilde X_x$. Also, the difference between $K_{\tilde X}^{-1}$ and the strict transformation of $K_X^{-1}$ consists of only exceptional divisors. Apply the result in \emph{Step-1} to $\tilde X$ we get \eqref{deg(a)-lem} again. Hence we get the Theorem.
\end{proof}

\begin{rem}\label{coef-relation}
Set
$$\kappa_{P'}:=\sum_{\alpha\in \Pi_{P'_u}}\alpha~\text{and}~\kappa_{L_{P'}\cap P}:=\sum_{\alpha\in\Pi_{L_{P'}}\cap\Pi_{P_u}}\alpha.$$
In the quasihomogeneous case, by construction it holds (cf. \eqref{coe-colour-one-para})
\begin{align*}
\bar m_D=\left\{\begin{aligned}&\frac12\langle\alpha^\vee,\kappa_{L_{P'}\cap P}\rangle,~\text{if}~D'~\text{is of type-a or a' in $Z'$},\\ &\langle\alpha^\vee,\kappa_{L_{P'}\cap P}\rangle,~\text{if}~D'~\text{is of type-b in $Z'$,}  \end{aligned}\right.
\end{align*}
and $\alpha$ is a simple root in $\Pi_{L_{P'}}$. Note that the set $\Pi_{P'_u}=\Pi_G\setminus\Pi_{L_{P'}}$, which is invariant under the Weyl group $W(L_{P'})$ generated by simple roots in $\Pi_{L_{P'}}$. The weight $\kappa_{P'}$ is $W(L_{P'})$-invariant, and hence orthogonal to $\Pi_{L_{P'}}$. Since $\kappa_P=\kappa_{P'}+\kappa_{L_{P'}\cap P}$,
we have
\begin{align*}
\bar m_D=\left\{\begin{aligned}&\frac12\langle\alpha^\vee,\kappa_{P}\rangle,~\text{if}~D'~\text{is of type-a or a' in $Z'$},\\ &\langle\alpha^\vee,\kappa_{P}\rangle,~\text{if}~D'~\text{is of type-b in $Z'$.}  \end{aligned}\right.
\end{align*}
\end{rem}

\section{Equivariant test configurations}
\subsection{The structure of an equivariant test configuration} Let $(X,L)$ be a polarized, projective $G$-variety of complexity 1 with ${\rm k}(X)^B\cong{\rm k}(\mathbb P^1)$, and $\mathfrak d$ an effective divisor of $L$. In this section we classify $G$-equivariant normal test configurations of $(X,L)$ with irreducible central fibres. The case of $T$-varieties of complexity 1 has been solved by \cite{Ilten-Suss-Duke}.

Let $(\mathcal X,\mathcal L)\overset{{\rm pr}}{\to}\mathbb P^1$ be a $G$-equivariant normal test configuration of $(X,L)$ with index $r_0$. Then $(\mathcal X,\mathcal L)$ is a polarized projective $G\times{\rm k}^\times$-variety of complexity 1.\footnote{We do not write $\hat G$ instead of $G\times{\rm k}^\times$ here in order not to be confounded with the previous section.} It is clear that the hyperspace $\bar{\mathscr E}$ of $\mathcal X$ is $\bar{\mathscr E}\cong\mathscr E\times\mathbb Q$, and the valuation cone $\bar{\mathscr V}\cong\mathscr V\times\mathbb Q$. We have

\begin{prop}\label{tc-to-v0}
Suppose that $(\mathcal X,\mathcal L)\overset{{\rm pr}}{\to}\mathbb P^1$ is a $G$-equivariant normal test configuration of $(X,L)$ and the reduced structure $\mathcal X_0^{\rm red}$ of the central fibre $\mathcal X_0:={\rm pr}^{-1}(0)$ is irreducible. Then $\mathcal X_0^{\rm red}$ is mapped to some primitive $(v_0,m)\in\mathscr V\times\mathbb Z_{<0}\subset\bar{\mathscr E}$, and ${\rm pr}^{-1}(0)=-m\mathcal X_0^{\rm red}$. Consequently, $\mathcal X$ has integral central fibre if and only if $m=-1$ and $v_0$ is integral.
\end{prop}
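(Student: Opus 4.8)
The plan is to read off the combinatorial datum $(v_0,m)$ directly from the prime divisor $\mathcal X_0^{\rm red}$ and then to match the coordinate $m$ with the scheme-theoretic multiplicity of the fibre. First I would observe that, since ${\rm pr}$ is $k^\times$-equivariant for the multiplication action on the base and $0\in\mathbb P^1$ is a fixed point, while $G$ acts fibrewise (trivially on the base), the central fibre $\mathcal X_0={\rm pr}^{-1}(0)$ is $(G\times k^\times)$-stable; as ${\rm pr}$ maps onto a curve, $\mathcal X_0$ is a divisor, and the hypothesis that it is irreducible means that $\mathcal X_0^{\rm red}$ is a single $(G\times k^\times)$-stable prime divisor. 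Its normalized valuation $v_{\mathcal X_0^{\rm red}}$ on the function field $\bar K$ of $\mathcal X$ is therefore a $(G\times k^\times)$-valuation, so it lies in $\bar{\mathscr V}\cong\mathscr V\times\mathbb Q$; writing it as $(v_0,m)\in\mathscr E\times\mathbb Q$, the product description of $\bar{\mathscr V}$ (which follows from the tail-vector criterion, Proposition \ref{tail-valuation-cone-dual}, the tails of a $G\times k^\times$-algebra for $\mathcal X$ coinciding with those for $X$) forces $v_0\in\mathscr V$. Moreover $v_{\mathcal X_0^{\rm red}}$ is the normalized valuation of a $B\times k^\times$-stable prime divisor, so by the Luna--Vust normalization recalled in Section 2.2 its image in the lattice of $\bar{\mathscr E}$ is primitive; this is the asserted primitivity of $(v_0,m)$.

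Next I would pin down $m$ by evaluating on the base coordinate. Let $t$ denote the pullback under ${\rm pr}$ of a local coordinate at $0\in\mathbb P^1$; on the uncompactified part $t$ is the test-configuration parameter, it is $B$-invariant and a $k^\times$-eigenvector, and ${\rm div}(t)$ is supported over $0$. Because $\mathcal X_0^{\rm red}$ is the \emph{only} prime divisor lying over $0$, the Weil divisor of the fibre is ${\rm pr}^{-1}(0)=v_{\mathcal X_0^{\rm red}}(t)\,\mathcal X_0^{\rm red}$ with $v_{\mathcal X_0^{\rm red}}(t)>0$. With the sign convention fixed above in the affine-cone construction (under which the degree generator carries negative $k^\times$-weight, so that $\hat{\mathscr C}\subset\mathscr E\times\mathbb Q_{\le0}$), the $\mathbb Q$-coordinate of $v_{\mathcal X_0^{\rm red}}$ is $m=-v_{\mathcal X_0^{\rm red}}(t)$. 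This simultaneously yields $m\in\mathbb Z_{<0}$ and the identity ${\rm pr}^{-1}(0)=-m\,\mathcal X_0^{\rm red}$.

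Finally, the stated equivalence is a bookkeeping consequence of the above: $\mathcal X$ has integral (i.e. reduced and irreducible) central fibre precisely when the multiplicity $-m$ equals $1$, that is $m=-1$. Since $(v_0,m)$ is already a primitive integral vector of $\bar{\mathscr E}$, the condition $m=-1$ is equivalent to ``$m=-1$ and $v_0$ integral'': a primitive lattice vector whose last coordinate is $-1$ automatically has $v_0$ integral, and conversely any $(v_0,-1)$ with $v_0$ integral is primitive. I expect the only genuinely delicate point to be this matching of the scheme-theoretic multiplicity of the fibre with the valuation-theoretic coordinate $m$: one must use that ${\rm div}(t)$ is supported on the single prime $\mathcal X_0^{\rm red}$ (this is exactly where irreducibility of the central fibre is needed) and apply the sign and normalization conventions of the affine-cone construction consistently; the remaining assertions, namely $v_0\in\mathscr V$ and the primitivity of $(v_0,m)$, are immediate from the Luna--Vust dictionary and the product structure $\bar{\mathscr V}\cong\mathscr V\times\mathbb Q$.
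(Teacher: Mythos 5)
Your proposal is correct, and its first step (recognizing $\mathcal X_0^{\rm red}$ as a $G\times{\rm k}^\times$-stable prime divisor whose valuation is a primitive vector $(v_0,m)\in\bar{\mathscr V}\cong\mathscr V\times\mathbb Q$) coincides with the paper's. Where you genuinely diverge is the multiplicity computation: the paper passes to the affine cone $\hat{\mathcal X}$ over a projectively normal embedding, writes down the combinatorial data of the two $\hat B$-charts $\hat{\mathcal U}_\infty=\hat{\mathcal X}\setminus\hat{\mathcal X}_0^{\rm red}$ and $\hat{\mathcal U}_0=\hat{\mathcal X}\setminus\hat{\mathcal X}_\infty$, and uses regularity of $\hat{\rm pr}$ together with the pairing $v_{\hat{\mathcal X}_0^{\rm red}}(\hat{\rm pr}^*t)=-m$ to conclude $m<0$ and $\hat{\rm pr}^{-1}(0)=-m\hat{\mathcal X}_0^{\rm red}$; you instead work directly on $\mathcal X$, observing that ${\rm div}({\rm pr}^*t)$ over $0$ is supported on the single prime $\mathcal X_0^{\rm red}$ (this is exactly where irreducibility enters, in both proofs), so ${\rm pr}^{-1}(0)=v_{\mathcal X_0^{\rm red}}(t)\,\mathcal X_0^{\rm red}$ with $v_{\mathcal X_0^{\rm red}}(t)=-m>0$. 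Your route is more elementary — it needs neither the projectively normal embedding nor the cone formalism — while the paper's detour buys explicit chart data that it reuses in the later sections (e.g.\ in the classification of Section 4.3), so both are legitimate. Two small points to tighten: the sign identity $m=-v_{\mathcal X_0^{\rm red}}(t)$ is fixed not by the cone-degree convention you cite (that concerns the homothety ${\rm k}^\times$, a different factor) but by the normalization of the test-configuration ${\rm k}^\times$-weight, which the paper pins down by declaring $v_{\mathcal X_\infty}=(0,1)$, equivalently by splitting the weight $(0,1)$ to the function $1/t$; and your closing observation that primitivity makes ``$v_0$ integral'' automatic once $m=-1$ is a correct (and slightly more careful) treatment of the final equivalence than the paper's ``the Proposition then follows.''
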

\begin{proof}
By definition of a $G$-equivariant test configuration, there are exactly three kinds of $B\times{\rm k}^\times$-stable divisors in $\mathcal X$:
\begin{itemize}
\item The fibre $\mathcal X_\infty$ at $\infty$, which is isomorphic to $X$. We have $v_{\mathcal X_\infty}=(0,1)$;
\item The closure $\bar D$ of $D\times{\rm k}^\times$ in $\mathcal X$ for $D\in\mathscr B(X)$. Clearly $v_{\bar D}=(v_D,0)$;
\item The reduced structure $\mathcal X_0^{\rm red}$ of the central fibre ${\rm pr}^{-1}(0)$, which is a $G\times{\rm k}^\times$-stable prime divisor by our assumption. Thus $v_{\mathcal X_0^{\rm red}}$ is a primitive vector in $\bar{\mathscr V}$, and can be written as
$$v_{\mathcal X_0^{\rm red}}=(v_0,m)$$ for some $v_0\in\mathscr V$ and $m\in\mathbb Z$.
\end{itemize}

Now we show ${\rm pr}^{-1}(0)=-m\mathcal X_0^{\rm red}$. Suppose that $\mathfrak d$ is a divisor of $L$ defined by \eqref{B-stable-divisor} with respect to some weight $\lambda_0$ and $(\mathcal X,\mathcal L)$ has index $r_0$. Then $\mathcal L$ has a $B\times{\rm k}^\times$-stable divisor
\begin{align}\label{div-mathcal-L}
\mathfrak D=r_0(\sum_{D\in\mathscr B(X)}m_D\bar D)+m_0\mathcal X_0^{\rm red}+m_\infty\mathcal X_\infty.
\end{align}
Since $[0]-[\infty]$ is a principle divisor on $\mathbb P^1$, up to linear equivalence we may assume that $m_\infty=0$. Also up to replacing $L$ by $L^{r_0}$ we can assume that the index $r_0=1$, and $\mathcal X$ can be embedded in certain projective space by sections of $\mathcal L$ as a projectively normal variety. In this case let $\hat{\mathcal X}$ be the affine cone over $\mathcal X$. Then $\hat{\mathcal X}$ is an affine normal $\widehat{G\times{\rm k}^\times}(:=G\times{\rm k}^\times\times{\rm k}^\times)$-variety of complexity 1. Here the middle ${\rm k}^\times$-factor stands for the ${\rm k}^\times$-action on $\mathcal X$ and the last stands for the cone direction. Also, the projection ${\rm pr}$ on the test configuration induces a projection $\hat{\rm pr}:\hat{\mathcal X}\to\mathbb P^1$.



Consider a $\widehat{B\times{\rm k}^\times}$-chart $\hat{\mathcal U}_0$ intersecting $\hat{\mathcal X}_0^{\rm red}$ with data $(\hat{\mathscr W}_0,\hat{\mathscr R}_0)$. Then $(v_0,m,-m_0)\in \hat{\mathscr W}_0$. Denote by $t$ the coordinate on $\mathbb P^1$. Since $\hat{\rm pr}$ is regular, it must hold $\hat{\rm pr}^*(t)\in{\rm k}[\hat{\mathcal U}_0]$ and ${\rm ord}_{\hat{\mathcal X}_0^{\rm red}}(\hat{\rm pr}^*(t))>0$. Any divisor in $(\hat{\mathscr W}_0\sqcup\hat{\mathscr R}_0)\setminus\{\hat{\mathcal X}_0^{\rm red}\}$ is of form $\hat{\bar D}$ and
$$v_{\hat{\bar D}}(\hat{\rm pr}^*(t))=\langle(v_D,0,-m_D),(0,-1,0)\rangle=0,~\forall D\in\mathscr B(X).$$
Together with
$$v_{\hat{\mathcal X}_0^{\rm red}}(\hat{\rm pr}^*(t))=\langle(v_0,m,-m_0),(0,-1,0)\rangle=-m,$$
we see that $m\in\mathbb Z_{<0}$ and $\hat{\rm pr}^{-1}(0)=-m\hat{\mathcal X}_0^{\rm red}$. 
The Proposition then follows.
\end{proof}

\begin{rem}\label{base-change-rmk}
Let $(\mathcal X,\mathcal L)$ be a $G$-equivariant normal test configuration which is associated to data $(v_0,m)$ as in Proposition \ref{tc-to-v0}. Let $\mathfrak F_{\mathcal X}$ be its coloured fan. Taking base change $t\to t^{-m}$ (note that $m<0$) and then a normalization, we get another $G$-equivariant normal test configuration $(\mathcal X^{(-m)},\mathcal L^{(-m)})$. Under this process, each ray $\mathbb Q_{\geq0}(v,u)\subset\bar{\mathscr E}\cong\mathscr E\times\mathbb Q$ is mapped to $\mathbb Q_{\geq0}(-mv,u)$. In particular $\mathbb Q_{\geq0}(v_0,m)$ is mapped to $\mathbb Q_{\geq0}(v_0,-1)$ which has primitive generator $(v_0,-1)$. Thus $(\mathcal X^{(-m)},\mathcal L^{(-m)})$ is the $G$-equivariant normal test configuration associated to $(v_0,-1)$. In particular it has integral central fibre.
\end{rem}

\subsection{Filtration induced by the test configuration}
Suppose that $(\mathcal X,\mathcal L)$ is the $G$-equivariant normal test configuration of $(X,L)$ with associated data $(v_0,m)$ as in Proposition \ref{tc-to-v0}, and \eqref{div-mathcal-L} gives a divisor of $\mathcal L$ (with $m_\infty=0$). Then by \cite{Popov-1986,Witt-Ny} $(\mathcal X,\mathcal L)$ induces a $G$-equivariant ($\mathbb Z$-)filtration $\mathscr F_{(\mathcal X,\mathcal L)}$ on the Kodaira ring $R(X,L)$ by setting
\begin{align}\label{Fil-XL}
\mathscr F_{(\mathcal X,\mathcal L)}^\tau R_k:=\{\sigma\in R_k|t^{-\tau}\sigma\in R(\mathcal X,\mathcal L)\},~\forall k,\tau\in\mathbb N,
\end{align}
and
\begin{align*}
\mathscr F_{(\mathcal X,\mathcal L)}^\tau R_k:=\mathscr F_{(\mathcal X,\mathcal L)}^{\lceil\tau\rceil}R_k,~\forall k\in\mathbb N~\text{and}~\tau\in\mathbb R.
\end{align*}
Then $\mathscr F_{(\mathcal X,\mathcal L)}$ is left-continuous and decreasing. It is further multiplicative, pointwise left-bounded at $\tau=0$ and linearly right-bounded by the finite generation of $R(X,L)$ (cf. \cite[Proposition 6.4]{Witt-Ny}). Since each $\mathscr F_{(\mathcal X,\mathcal L)}^\tau R_k$ is a finite dimensional $G$-invariant linear space, to determine $\mathscr F_{(\mathcal X,\mathcal L)}^\tau R_k$ it suffices to determine all $B$-semiinvariants in it.
\begin{lem}\label{FsRk-prop}
With the above conventions, for any $s_\lambda\in{\rm H}^0(X,L^k)^{(B)}_\lambda$,  $s_\lambda$ lies in $\mathscr F_{(\mathcal X,\mathcal L)}^\tau R_k$ if and only if
\begin{align*}
v_0(s_\lambda)+\tau m+km_0\geq0.
\end{align*}
Here $m_0$ is the coefficient given in \eqref{div-mathcal-L}.
\end{lem}
\begin{proof}
By \eqref{Fil-XL}, $s_\lambda\in\mathscr F_{(\mathcal X,\mathcal L)}^\tau R_k$ if and only if $t^{-\tau}s_\lambda\in{\rm H}^0(\mathcal X,\mathcal L^k)$. The Proposition then follows from \eqref{div-mathcal-L}.
\end{proof}

Suppose that $R(X,L)$ is generated by the first piece $R_1$ over $R_0\cong{\rm k}$. Fix the section $s_0\in (R_1)^{(B)}_{\lambda_0}$ so that the divisor of $s_0$ is $\mathfrak d$ given by \eqref{B-stable-divisor}. Then any $s_\lambda\in{\rm H}^0(X,L^k)^{(B)}_\lambda$ can be written as
\begin{align}\label{sigma-lambda-decomp}
s_\lambda=f_0e_{\lambda-k\lambda_0}s_0^k
\end{align}
for some $f_0\in{\rm k}(X)^B$. By Lemma \ref{FsRk-prop} we have
\begin{prop}\label{F-tau-Rk}
Suppose that $v_0=\ell_0+h_0q_{x_0}\in\mathscr Q_{x_0,+}$ for some $x_0\in C$ (if $h_0=0$ then $v_0\in\mathscr Q\subset\mathscr Q_{x,+}$ for all $x\in C$). Then:
\begin{itemize}
\item[(1)] If $h_0=0$,
\begin{align*}
\dim(\mathscr F_{(\mathcal X,\mathcal L)}^\tau R_k)^{(B)}_\lambda=&\max\{0,\deg(\delta_k(\lambda))+1\},\\~&\text{for}~\lambda\in k\Delta_\mathscr Z(L)\cap(\Gamma+k\lambda_0)~\text{and}~\tau m+km_0+\ell_0(\lambda-k\lambda_0)\geq0,
\end{align*}
with
\begin{align}\label{delta(d,lambda)-tsentr}
\delta_k(\lambda):=\sum_{x\in C}[kA_x(\mathfrak d,\frac\lambda k-\lambda_0)]\cdot x,
\end{align}
and $$\dim(\mathscr F_{(\mathcal X,\mathcal L)}^\tau R_k)^{(B)}_\lambda=0,~\text{otherwise};$$
\item[(2)] If $h_0\not=0$,
\begin{align*}
\dim(\mathscr F_{(\mathcal X,\mathcal L)}^\tau R_k)^{(B)}_\lambda=\max\{0,\deg(\delta_k(\lambda,\tau))+1\},~\text{for}~\lambda\in k\Delta_\mathscr Z(L)\cap(\Gamma+k\lambda_0),
\end{align*}
with
\begin{align}\label{delta(d,lambda)}
\delta_k(\lambda,\tau):=&[\min\{kA_{x_0}(\mathfrak d,\frac\lambda k-\lambda_0),\frac{\tau m+km_0+\ell_0(\lambda-k\lambda_0)}{h_0}\}]\cdot x_0\notag\\
&+\sum_{x\in C,x\not=x_0}[kA_x(\mathfrak d,\frac\lambda k-\lambda_0)]\cdot x,
\end{align}
and $$\dim(\mathscr F_{(\mathcal X,\mathcal L)}^\tau R_k)^{(B)}_\lambda=0~\text{for}~\lambda\not\in k\Delta_\mathscr Z(L).$$
\end{itemize}
\end{prop}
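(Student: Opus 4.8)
The plan is to combine the valuative criterion of Lemma~\ref{FsRk-prop} with the explicit form \eqref{sigma-lambda-decomp} of $B$-semiinvariant sections, and then to read off each dimension from a Riemann--Roch count on $C\cong\mathbb P^1$. Fix $k$ and $\tau$, and fix $\lambda$; if $\lambda\notin k\lambda_0+\Gamma$ the weight space is empty and there is nothing to prove, so assume $\lambda\in k\lambda_0+\Gamma$ and write any $s_\lambda\in{\rm H}^0(X,L^k)^{(B)}_\lambda$ as $s_\lambda=f_0e_{\lambda-k\lambda_0}s_0^k$ with $f_0\in{\rm k}(X)^B={\rm k}(C)$.

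First I would recover the description of the unfiltered weight space underlying Proposition~\ref{Delta(d)-Z}. Since ${\rm div}(s_0^k)=k\mathfrak d$ with $\mathfrak d$ as in \eqref{B-stable-divisor}, and $v_D=h_Dq_{x_D}+\ell_D$, the regularity ${\rm ord}_D(s_\lambda)\geq0$ for all $D\in\mathscr B(X)$ splits into two families. The divisors with $h_D=0$ contribute $\ell_D(\lambda-k\lambda_0)+km_D\geq0$, i.e.\ the condition $\lambda\in k\Delta_\mathscr Z(L)$. For each $x\in C$ the divisors with $x_D=x$, $h_D>0$ contribute $h_D\,{\rm ord}_x(f_0)+\ell_D(\lambda-k\lambda_0)+km_D\geq0$; taking the sharpest over such $D$ and using ${\rm ord}_x(f_0)\in\mathbb Z$, these are equivalent to ${\rm ord}_x(f_0)\geq -[kA_x(\mathfrak d,\tfrac\lambda k-\lambda_0)]$, with $[\,\cdot\,]$ the integer part. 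Hence, for $\lambda\in k\Delta_\mathscr Z(L)$, the map $s_\lambda\mapsto f_0$ identifies the weight space with $\{f_0\in{\rm k}(C)\mid{\rm div}(f_0)+\delta_k(\lambda)\geq0\}={\rm H}^0(\mathbb P^1,\mathscr O(\delta_k(\lambda)))$, where $\delta_k(\lambda)$ is the divisor \eqref{delta(d,lambda)-tsentr}.

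Next I would substitute the same decomposition into Lemma~\ref{FsRk-prop}. Writing $v_0=h_0q_{x_0}+\ell_0$ gives $v_0(s_\lambda)=h_0\,{\rm ord}_{x_0}(f_0)+\ell_0(\lambda-k\lambda_0)$ in the trivialization by $s_0^k$, so membership $s_\lambda\in\mathscr F^\tau_{(\mathcal X,\mathcal L)}R_k$ becomes $h_0\,{\rm ord}_{x_0}(f_0)+\ell_0(\lambda-k\lambda_0)+\tau m+km_0\geq0$. If $h_0=0$ this inequality is free of $f_0$: it holds for every section at once when $\tau m+km_0+\ell_0(\lambda-k\lambda_0)\geq0$ and fails for all of them otherwise, so the filtered space is either the whole of ${\rm H}^0(\mathbb P^1,\mathscr O(\delta_k(\lambda)))$ or zero, which is exactly case~(1). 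If $h_0\neq0$, then $h_0>0$ since $v_0\in\mathscr V$, and the inequality is a single further pole bound at $x_0$, namely ${\rm ord}_{x_0}(f_0)\geq -\big[\tfrac{\tau m+km_0+\ell_0(\lambda-k\lambda_0)}{h_0}\big]$, again after using integrality of ${\rm ord}_{x_0}(f_0)$.

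Finally I would merge the two lower bounds at $x_0$. As the integer part is monotone, $[\min\{a,b\}]=\min\{[a],[b]\}$, so the joint constraint at $x_0$ is ${\rm ord}_{x_0}(f_0)\geq -\big[\min\{kA_{x_0}(\mathfrak d,\tfrac\lambda k-\lambda_0),\,\tfrac{\tau m+km_0+\ell_0(\lambda-k\lambda_0)}{h_0}\}\big]$, while the bounds at the remaining $x\neq x_0$ are untouched; this is precisely ${\rm div}(f_0)+\delta_k(\lambda,\tau)\geq0$ with $\delta_k(\lambda,\tau)$ as in \eqref{delta(d,lambda)}. The filtered weight space is therefore ${\rm H}^0(\mathbb P^1,\mathscr O(\delta_k(\lambda,\tau)))$, of dimension $\max\{0,\deg\delta_k(\lambda,\tau)+1\}$ by Riemann--Roch on $\mathbb P^1$ (and zero outside $k\Delta_\mathscr Z(L)$), which is case~(2); the analogous count with $\delta_k(\lambda)$ finishes case~(1). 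The one genuinely delicate point is the bookkeeping in the third paragraph: identifying $v_0(s_\lambda)=h_0\,{\rm ord}_{x_0}(f_0)+\ell_0(\lambda-k\lambda_0)$ in the chosen trivialization so that the constant $km_0$ of \eqref{div-mathcal-L} is exactly the $km_0$ of Lemma~\ref{FsRk-prop}, and keeping the integer parts in the right place so the two $\mathbb Z$-valued pole bounds fuse into a single $[\min\{\cdot,\cdot\}]$.
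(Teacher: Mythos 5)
Your proposal is correct and follows essentially the same route as the paper's proof: decompose $s_\lambda=f_0e_{\lambda-k\lambda_0}s_0^k$ via \eqref{sigma-lambda-decomp}, translate Lemma \ref{FsRk-prop} and regularity into pole bounds on $f_0\in{\rm k}(C)$, and count sections of the resulting divisor on $\mathbb P^1$ as in Proposition \ref{Delta(d)-Z}. The only difference is presentational: you re-derive the unfiltered weight-space description and spell out the fusion of integer parts via $[\min\{a,b\}]=\min\{[a],[b]\}$, where the paper simply cites Proposition \ref{Delta(d)-Z} and leaves that bookkeeping implicit.
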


\begin{proof}
By \eqref{sigma-lambda-decomp} and Lemma \ref{FsRk-prop}, $s_\lambda\in \mathscr F_{(\mathcal X,\mathcal L)}^\tau R_k$ if and only if
\begin{align*}
0\leq&v_0(s_\lambda)+\tau m+km_0
=\ell(\lambda-k\lambda_0)+h_0q_{x_0}(f_0)+\tau m+km_0,
\end{align*}
and
\begin{align*}
v_D(s_\lambda)=\ell_D(\lambda-k\lambda_0)+h_Dq_{x_D}(f_0)+km_D\geq0,~\forall D\in\mathscr B(X).
\end{align*}
Here the second relation confirms $s_\lambda\in R_k$. Thus $s_\lambda\in \mathscr F_{(\mathcal X,\mathcal L)}^\tau R_k$ if and only if
\begin{align*}
h_0q_{x_0}(f_0)\geq-\tau m-km_0-\ell_0(\lambda-k\lambda_0),
\end{align*}
and
\begin{align*}
h_{x_D}q_{x_D}(f_0)\geq-km_D-\ell_D(\lambda-k\lambda_0),~\forall D\in\mathscr B(X).
\end{align*}
Combining with Proposition \ref{Delta(d)-Z}, this is equivalents to $\lambda\in k\Delta_\mathscr Z(L)$ and $f_0\in {\rm H}^0(C,\delta(\lambda,\tau))$. Thus we get the Proposition.

\end{proof}

It is known that the Kodaira ring of the central fibre $\mathcal X_0$ with respect to $\mathcal L_0:=\mathcal L|_{\mathcal X_0}$ is the graded algebra of the filtration $\mathscr F_{(\mathcal X,\mathcal L)}$,
\begin{align}\label{Gr(F)}
{\rm Gr}(\mathscr F_{(\mathcal X,\mathcal L)})=\bigoplus_{k=0}^{+\infty}\bigoplus_{\tau=0}^{+\infty}\mathscr F_{(\mathcal X,\mathcal L)}^\tau R_k/\mathscr F_{(\mathcal X,\mathcal L)}^{>\tau}R_k,
\end{align}
and the induced $\rm k^\times$-action acts on the $(\mathscr F_{(\mathcal X,\mathcal L)}^\tau R_k/\mathscr F_{(\mathcal X,\mathcal L)}^{>\tau}R_k)$-piece with weight $\tau$. In the following we also say that a non-zero element $s\in\mathscr F_{(\mathcal X,\mathcal L)}^\tau R_k\setminus\mathscr F_{(\mathcal X,\mathcal L)}^{>\tau}R_k$ has weight $\tau$ (induced by $(\mathcal X,\mathcal L)$). Note that each $(\mathscr F_{(\mathcal X,\mathcal L)}^\tau R_k/\mathscr F_{(\mathcal X,\mathcal L)}^{>\tau}R_k)$-piece in \eqref{Gr(F)} is also a ${\rm k}^\times$-module of weight $\tau$. Thus
$$(\mathscr F_{(\mathcal X,\mathcal L)}^\tau R_k/\mathscr F_{(\mathcal X,\mathcal L)}^{>\tau}R_k)^{(B)}_\lambda\subset{\rm Gr}(\mathscr F_{(\mathcal X,\mathcal L)})$$
is a $B\times{\rm k}^\times$-module of weight $(\lambda,\tau)$ in the degree $k$-piece of \eqref{Gr(F)}.

For the study of K-stability, its suffices to consider only special test configurations (cf. \cite{Li-Xu-Annl}). In this case, $v_0\in\mathscr E$ is an integral element and $m=-1$. Using Proposition \ref{F-tau-Rk} we get
\begin{cor}\label{central-spherical}
Suppose that $(\mathcal X,\mathcal L)$ is a special test configuration associated to some integral $v_0\in\mathscr E$. Then the central fibre $\mathcal X_0$ is a $G\times{\rm k}^\times$-spherical variety if and only if $v_0$ is not a central valuation. That is, $v_0=\ell_0+h_0q_{x_0}\in\mathscr Q_{x_0,+}$ for some $x_0\in C$ and $h_0\not=0$.
\end{cor}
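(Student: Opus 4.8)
The plan is to compute the complexity of the central fibre $\mathcal X_0$ as a $G\times{\rm k}^\times$-variety and to show it drops to $0$ exactly when $h_0\neq0$. Since $(\mathcal X,\mathcal L)$ is special we have $m=-1$, $v_0$ is integral (so $h_0\in\mathbb Z_{\geq0}$), and $\mathcal X_0$ is normal and irreducible, hence a genuine $G\times{\rm k}^\times$-model. Recall that the graded algebra ${\rm Gr}(\mathscr F_{(\mathcal X,\mathcal L)})$ in \eqref{Gr(F)} is the Kodaira ring of $(\mathcal X_0,\mathcal L_0)$, so the lattice $\bar\Gamma_0$ of weights of $B\times{\rm k}^\times$-semiinvariant rational functions on $\mathcal X_0$ is generated by the differences $(\lambda,\tau)-(\lambda',\tau')$ of $B\times{\rm k}^\times$-weights occurring in a common graded degree $k$. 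As $\mathcal X_0$ has dimension $n=\dim X$, the group $G$ acts with the same colours (the restrictions of the $\bar D$, $D\in\mathscr B(X)$, appearing in Proposition \ref{tc-to-v0}) and ${\rm k}^\times$ is a central torus fixing all of them, so its associated parabolic is $P\times{\rm k}^\times$ and $\deg\pi$ is unchanged by Lemma \ref{Pi-Pu}. Applying Knop's formula \eqref{n=c+r+1} to both $\mathcal X_0$ and $X$ and subtracting gives ${\rm rk}(\bar\Gamma_0)+c(\mathcal X_0)=r+1$, where $r={\rm rk}(\Gamma)$. Since $\bar\Gamma_0\subseteq\Gamma\oplus\mathbb Z$ surjects onto $\Gamma$ (differences of $B$-weights in a fixed large degree already generate $\Gamma$ by Proposition \ref{Delta(d)-Z}), we get ${\rm rk}(\bar\Gamma_0)\in\{r,r+1\}$, and $\mathcal X_0$ is spherical if and only if ${\rm rk}(\bar\Gamma_0)=r+1$.

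Everything thus reduces to deciding, via Proposition \ref{F-tau-Rk}, whether the grading $\tau$ is pinned by $(\lambda,k)$ or varies genuinely. If $h_0=0$ (case (1)), then for each admissible $(\lambda,k)$ the dimension $\dim(\mathscr F_{(\mathcal X,\mathcal L)}^\tau R_k)^{(B)}_\lambda$ is constant for $\tau\leq km_0+\ell_0(\lambda-k\lambda_0)$ and vanishes above, so ${\rm Gr}^\tau$ is nonzero only at the single value $\tau=km_0+\ell_0(\lambda-k\lambda_0)$, an affine function of $(\lambda,k)$. Within degree $k$ the weight differences are exactly $(\gamma,\ell_0(\gamma))$, $\gamma\in\Gamma$, whence ${\rm rk}(\bar\Gamma_0)=r$, $c(\mathcal X_0)=1$, and $\mathcal X_0$ is not spherical. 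If $h_0\neq0$, then $h_0\geq1$ by integrality; here I would use \eqref{delta(d,lambda)} together with the observation that $g(\tau):=\frac{-\tau+km_0+\ell_0(\lambda-k\lambda_0)}{h_0}$ decreases by $1/h_0\leq1$ per unit step of $\tau$, so the floor of $\min\{kA_{x_0}(\mathfrak d,\tfrac\lambda k-\lambda_0),g(\tau)\}$ drops by at most $1$ at each integer. Hence the non-increasing integer function $\tau\mapsto\dim(\mathscr F_{(\mathcal X,\mathcal L)}^\tau R_k)^{(B)}_\lambda$ drops by at most $1$ per step, so the number of $\tau$ with ${\rm Gr}^\tau\neq0$ equals its total mass $\deg\delta_k(\lambda)+1$. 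Choosing $k$ large and $\lambda$ interior with $\deg\delta_k(\lambda)=\sum_{x}[kA_x(\mathfrak d,\tfrac\lambda k-\lambda_0)]\geq1$ (possible since completeness forces $A(\mathfrak d,\cdot)\not\equiv0$, hence $A>0$ on the interior of $\Delta_\mathscr Z$), we obtain two distinct gradings $\tau_1\neq\tau_2$ at one common $(\lambda,k)$. Then $(0,\tau_2-\tau_1)\in\bar\Gamma_0$, so ${\rm rk}(\bar\Gamma_0)=r+1$, $c(\mathcal X_0)=0$, and $\mathcal X_0$ is spherical.

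The main obstacle is the $h_0\neq0$ direction: one must rule out that the entire jump of the filtration is concentrated at a single grading $\tau$. This is precisely where integrality of $v_0$ (giving $h_0\geq1$, hence $1/h_0\leq1$) is essential, through the ``drops by at most one'' estimate; were $h_0$ a proper fraction, the floor in \eqref{delta(d,lambda)} could fall by more than one per step and the counting would break down. A secondary point to verify carefully is that the associated parabolic, and hence $\deg\pi$, is genuinely unchanged in passing from $X$ to $\mathcal X_0$, which I would justify by noting that the colours of $\mathcal X_0$ are the restrictions of the colours $\bar D$ of $\mathcal X$, carry the same $G$-stabilizers, and are all fixed by the test-configuration torus ${\rm k}^\times$.
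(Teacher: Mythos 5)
Your proof is correct in substance, but it takes a genuinely different route from the paper's. The paper argues through the affine cone $\hat{\mathcal X}_0={\rm Spec}({\rm Gr}(\mathscr F_{(\mathcal X,\mathcal L)}))$ and the representation-theoretic characterization of sphericity: using Proposition \ref{F-tau-Rk} it shows that $\dim(\mathscr F_{(\mathcal X,\mathcal L)}^\tau R_k/\mathscr F_{(\mathcal X,\mathcal L)}^{>\tau}R_k)^{(B)}_\lambda\leq1$ for all $(k,\tau,\lambda)$ precisely when $h_0\neq0$ (via the same ``drops by at most $1/h_0\leq1$'' estimate you use), and then invokes \cite[Theorem 5.16]{Timashev-book} together with Vinberg--Kimel'fel'd to translate multiplicity-freeness of the cone into sphericity of $\mathcal X_0$; the $h_0=0$ direction is handled by contradiction, since multiplicity-freeness there would force $\dim(R_k)^{(B)}_\lambda\leq1$ and hence sphericity of $X$ itself. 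You instead compute the complexity of $\mathcal X_0$ directly from Knop's formula \eqref{n=c+r+1}: when $h_0=0$ all the graded mass over a fixed $\lambda$ sits at the single grading $\tau=km_0+\ell_0(\lambda-k\lambda_0)$, so $\bar\Gamma_0$ is the graph of $\ell_0$ and has rank $r$ (complexity $1$); when $h_0\neq0$ the ``at most one per step'' bound plus total mass $\geq2$ produces two gradings over one $\lambda$, so $(0,\tau_1-\tau_2)\in\bar\Gamma_0$ and the rank is $r+1$ (complexity $0$). The combinatorial engine (Proposition \ref{F-tau-Rk}) is identical in both proofs; what you buy is a more direct $h_0=0$ direction with no contradiction argument, at the price of having to control the associated parabolic of $\mathcal X_0$, which the paper's route avoids entirely.

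That last point is the one step you must tighten. Your justification — that the colours of $\mathcal X_0$ are the restrictions of the colours $\bar D$ of $\mathcal X$, with the same stabilizers — is not reliable: colours of a central fibre need not be restrictions of colours of the total space (intersections $\bar D\cap\mathcal X_0$ can be reducible, can become $G\times{\rm k}^\times$-stable, and new colours can appear). The clean repair uses only tools already in the paper: since each $\mathscr F_{(\mathcal X,\mathcal L)}^\tau R_k$ is a $G$-submodule, the associated graded ${\rm Gr}(\mathscr F_{(\mathcal X,\mathcal L)})$ is isomorphic to $R(X,L)$ as a graded $G$-module, so the set of $B$-weights (projected to $\mathfrak X(B)$) occurring in ${\rm H}^0(\mathcal X_0,\mathcal L_0^k)$ coincides with the set occurring in ${\rm H}^0(X,L^k)$; since the coroots of $G\times{\rm k}^\times$ are exactly the coroots of $G$ and pair trivially with the ${\rm k}^\times$-weight, the argument of Section 2.3.2 and Lemma \ref{Pi-Pu} applied to the polarized variety $(\mathcal X_0,\mathcal L_0)$ give $\Pi_{P_u(\mathcal X_0)}^\vee=\Pi_{P_u}^\vee$, hence $\deg\pi$ is unchanged and Knop's formula subtracts as you claim. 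With that substitution your argument is complete.
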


\begin{proof}
Note that when $(\mathcal X,\mathcal L)$ is special, the central fibre $\mathcal X_0$ is a normal $G\times{\rm k}^\times$-variety embedded in projective space by sections of $\mathcal L_0$. Up to replace $\mathcal L$ (as well as $\mathcal L_0$) by a sufficiently large multiple, we can assume that $\mathcal X_0$ is embedded as a projectively normal variety. Denote by $\hat {\mathcal X}_0:={\rm Spec}({\rm Gr}(\mathscr F_{(\mathcal X,\mathcal L)}))$ the affine cone of $\mathcal X_0$. Then it admits a $({\rm k}^\times\times G\times {\rm k}^\times)$-action, where the first ${\rm k}^\times$-factor stands for the cone direction (the homothety). Clearly the $({\rm k}^\times\times G\times {\rm k}^\times)$-invariants in ${\rm Gr}(\mathscr F_{(\mathcal X,\mathcal L)})$ is the 0-th piece, which is isomorphic to ${\rm k}$.

We will show that
\begin{align}\label{mul-X0}
\dim(\mathscr F_{(\mathcal X,\mathcal L)}^\tau R_k/\mathscr F_{(\mathcal X,\mathcal L)}^{>\tau}R_k)^{(B)}_\lambda\leq1,~\forall k,\tau\in\mathbb N~\text{and}~\lambda\in\mathfrak X(B),
\end{align}
if and only if $v_0$ is not central. Once \eqref{mul-X0} holds, ${\rm Gr}(\mathscr F_{(\mathcal X,\mathcal L)})$ will be multiplicity-free, and by \cite[Theorem 5.16]{Timashev-book}, $\hat{\mathcal X}_0$ is a $({\rm k}^\times\times G\times {\rm k}^\times)$-variety of complexity 0. Consequently, $\hat {\mathcal X}_0$ will be a $({\rm k}^\times\times G\times {\rm k}^\times)$-spherical variety by \cite{Vinberg-Kimel'fel'd}. Taking quotient we get $\mathcal X_0$ is a $G\times {\rm k}^\times$-spherical variety.

It remains to check \eqref{mul-X0}. Suppose that $v_0=\ell_0+h_0q_{x_0}\in\mathscr Q_{x_0,+}$ for some $x_0\in C$ and $h_0\not=0$. Since $v_0$ is integral, $h_0\geq1$. For any $ k,\tau\in\mathbb N$ and $\lambda\in k\Delta_\mathscr Z(L)\cap(\Gamma+k\lambda_0)$, it holds
\begin{align}\label{pt-disconti-t-GrR}
0\leq&\deg(\delta_k(\lambda,\tau))-\deg(\delta_k(\lambda,\tau+1))\notag\\=&[k\min\{A_{x_0}(\mathfrak d,\frac\lambda k-\lambda_0),\frac{-\frac\tau k+m_0+\ell_0(\frac\lambda k-\lambda_0)}{h_0}\}]\notag\\
&-[k\min\{A_{x_0}(\mathfrak d,\frac\lambda k-\lambda_0),\frac{-\frac{\tau+1} k+m_0+\ell_0(\frac\lambda k-\lambda_0)}{h_0}\}]\leq\max\{\frac1{h_0},1\}\leq1.
\end{align}
By Proposition \ref{F-tau-Rk} (2),
\begin{align*}
\dim(\mathscr F_{(\mathcal X,\mathcal L)}^\tau R_k/\mathscr F_{(\mathcal X,\mathcal L)}^{>\tau}R_k)^{(B)}_\lambda=&\dim(\mathscr F_{(\mathcal X,\mathcal L)}^\tau R_k/\mathscr F_{(\mathcal X,\mathcal L)}^{\tau+1}R_k)^{(B)}_\lambda\leq1,
\end{align*}
and we get \eqref{mul-X0}. Hence $\mathcal X_0$ is a $G\times{\rm k}^\times$-spherical vareity.

When $v_0$ is a central element, by Proposition \ref{F-tau-Rk} (1) we have
\begin{align*}
\dim(\mathscr F_{(\mathcal X,\mathcal L)}^\tau R_k/\mathscr F_{(\mathcal X,\mathcal L)}^{\tau+1}R_k)^{(B)}_\lambda=\dim (R_k)^{(B)}_\lambda\chi_{\{\tau+1> m_0+\ell_0(\frac\lambda k-\lambda_0)\geq\tau\}},
\end{align*}
where by $\chi_S$ we denote the characteristic function of a set $S$. Suppose that $\mathcal X_0$ is spherical. Then
$$\dim(\mathscr F_{(\mathcal X,\mathcal L)}^\tau R_k/\mathscr F_{(\mathcal X,\mathcal L)}^{\tau+1}R_k)^{(B)}_\lambda\leq1,~\forall\lambda\in k\Delta_\mathscr Z(L)\cap(\Gamma+k\lambda_0),$$
which is equivalent to
$$\dim (R_k)^{(B)}_\lambda
\leq1,~\forall k\in\mathbb N~\text{and}~\lambda\in k\Delta_\mathscr Z(L)\cap(\Gamma+k\lambda_0).$$
Again, by \cite[Theorem 5.16]{Timashev-book} and \cite{Vinberg-Kimel'fel'd} we conclude that $X$ itself is a $G$-spherical variety, a contradiction.
\end{proof}

\subsubsection{Product test configurations}

\begin{prop}\label{Aut-cone-prop}
Let $(X,L)$ be a polarized $G$-variety of complexity 1. Then for each $x\in C$, there is a cone $\mathscr A_x\subset \mathscr V_x$ which is a face of $\mathscr V$ so that $(v_0,-1)$ defines a product test configuration of $(X,L)$ if and only if $v_0\in\mathscr A_x$ for some $x\in C$. Moreover, each $\mathscr A_x$ intersects with $\mathscr Q$ with a common cone $\mathscr V\cap(-\mathscr V)\cap\mathscr Q(=:\mathscr A)$.
\end{prop}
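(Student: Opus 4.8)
The plan is to identify product test configurations with one–parameter subgroups of ${\rm Aut}_G(X)$, to compute the valuation such a subgroup induces through the structure of this group, and then to read off $\mathscr A_x$ together with its face property. The first step is the reduction to automorphisms. As recalled in Section 2.1, a test configuration is a product exactly when its generating vector field $\Lambda$ lies in $\mathfrak{aut}(X)$; $G$-equivariance forces $\Lambda$ to commute with $G$, so $\Lambda\in{\rm Lie}({\rm Aut}_G(X))$. Conversely every integral one–parameter subgroup $\sigma\colon{\rm k}^\times\to{\rm Aut}_G(X)$ yields a product test configuration whose datum, in the sense of Proposition \ref{tc-to-v0}, is $(v_0,-1)$, where $v_0\in\mathscr V$ is the $G$-valuation induced by $\sigma$ on the generic fibre. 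Thus the task becomes to determine exactly which $v_0$ arise from such $\sigma$.

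Next I would analyse $\sigma$ through the exact sequence $1\to\mathfrak A(X)\to{\rm Aut}_G(X)\xrightarrow{\rho}{\rm Aut}(C)$, where $\rho$ records the action on $K^B={\rm k}(C)$ and $\mathfrak A(X)$ is the central automorphism group of Section 2.2.3. If $\rho\circ\sigma$ is trivial, then $\sigma$ lies in the central torus ${\mathbf T}\subset\mathfrak A(X)$ with ${\rm Lie}({\mathbf T})=\mathscr A$; invertibility of $\sigma$ places both $v_0$ and $-v_0$ in $\mathscr V$, so $v_0\in\mathscr V\cap(-\mathscr V)\cap\mathscr Q=\mathscr A$. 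If $\rho\circ\sigma$ is a nontrivial torus of ${\rm Aut}(C)={\rm PGL}_2$, it fixes two points $x_0,x_\infty\in C$; using the Galois model $X_O=(O\times\tilde C)/A$ of Theorem \ref{birational-modle} (and the open orbit in the quasihomogeneous case), $\sigma$ is, up to a central factor, induced by a torus of ${\rm Aut}(\tilde C)$ fixing a preimage of $x_0$ and acting trivially on the spherical fibre $O$. Since that torus kills every $B$-semiinvariant $e_\lambda$ coming from $O$, the central part $\ell_0$ of its valuation vanishes and one gets $v_0=h_0q_{x_0}$ with $h_0>0$. Defining $\mathscr A_x$ as the set of product–configuration valuations lying in $\mathscr V_x$, this yields $\mathscr A_x=\mathscr A$ when no automorphism projects to a torus of ${\rm PGL}_2$ fixing $x$, and $\mathscr A_x=\mathscr A+\mathbb Q_{\geq0}q_x$ otherwise; in both cases $\mathscr A_x\cap\mathscr Q=\mathscr A$, because the curve direction has $h_0>0$.

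It remains to verify the face property, which I would settle inside the single slice $\mathscr Q_{x,+}$. When $\mathscr A_x=\mathscr A$ this is the lineality space of $\mathscr V$, its minimal face. When $\mathscr A_x=\mathscr A+\mathbb Q_{\geq0}q_x$, I would choose a functional $\phi$ in the relative interior of $(\mathscr V\cap\mathscr Q)^\vee$, which vanishes exactly on $\mathscr A$, and extend it to $\mathscr Q_{x,+}$ by zero along $q_x$; then $\phi\ge0$ on $\mathscr V_x$ with $\phi^{-1}(0)\cap\mathscr V_x=\mathscr A+\mathbb Q_{\geq0}q_x$, so that $\mathscr A_x$ is a face of the cone $\mathscr V_x$ and hence of the coloured hypercone $\mathscr V$. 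Here I use the identification $(\mathscr V\cap\mathscr Q)\cong\mathscr V(O)$ from Proposition \ref{central-elements-one-para}, so that such a $\phi$ exists.

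The main obstacle is the non–central case: proving that a $\sigma$ projecting to ${\rm PGL}_2$ contributes precisely the pure ray $\mathbb Q_{\geq0}q_x$ with no extra central shift, and conversely that once this ray is realized the entire cone $\mathscr A+\mathbb Q_{\geq0}q_x$ consists of product configurations. The first point rests on the triviality of the curve torus on $O$ in Theorem \ref{birational-modle}, which forces $\ell_0=0$. The second rests on combining the commuting central and curve tori into a single torus of ${\rm Aut}_G(X)$, and on integrating the combinatorial weight datum of Proposition \ref{F-tau-Rk} back to a genuine ${\rm k}^\times$-action; I would justify the latter by Knop's identification \cite{Knop93} of equivariant automorphisms with the linear part of the valuation cone, applied to the affine cone $\hat X$, so that a valuation vanishing on the tails (Proposition \ref{tail-valuation-cone-dual}) along the relevant directions splits the filtration as a grading and realizes $\mathcal X_0\cong X$.
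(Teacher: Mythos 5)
Your strategy is genuinely different from the paper's: you analyse the group ${\rm Aut}_G(X)$ and the valuations induced by its one-parameter subgroups, whereas the paper never touches automorphisms at all. The paper characterizes a product test configuration by the condition ${\rm Gr}(\mathscr F_{(\mathcal X,\mathcal L)})\cong R(X,L)$, shows via the Rees algebra that this holds if and only if $v_0$ vanishes on all tail vectors of $R(X,L)$, and then Proposition \ref{tail-valuation-cone-dual} (tail vectors are nonnegative on all of $\mathscr V$) immediately exhibits $\mathscr A_x$ as the face of $\mathscr V_x$ cut out by the hyperplanes where the tail vectors vanish; the identity $\mathscr A_x\cap\mathscr Q=\mathscr A$ falls out of the same criterion applied to $\pm\ell_v$. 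Your route could conceivably be completed, but as written it has a genuine gap at its core: the non-central case. You assert that a one-parameter subgroup $\sigma$ projecting to a nontrivial torus of ${\rm Aut}(C)$ is, up to a central factor, a pure rotation of the base with valuation $h_0q_{x_0}$, so that $\mathscr A_x\in\{\mathscr A,\,\mathscr A+\mathbb Q_{\geq0}q_x\}$. This claim is strictly stronger than the proposition and is exactly the hard point. A $G$-equivariant ${\rm k}^\times$-action on $X_O=(O\times\tilde C)/A$ need not split as (fibre automorphism)$\times$(base automorphism): on the cover it has the form $(o,z)\mapsto(\phi_{t,z}(o),\psi_t(z))$ with a $z$-dependent cocycle $\phi_{t,z}\in{\rm Aut}_G(O)$, and the valuation of such an action generically carries a nonzero central component $\ell_0$. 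Showing that $\ell_0$ lies in $\mathscr A$ (let alone vanishes) requires a rigidity argument for maps ${\rm k}^\times\to{\rm Aut}_G(O)^0$ plus control of the Galois twisting, since the semiinvariants of $X$ are $\tilde f_{0,\lambda}e^O_\lambda$ with $\tilde f_{0,\lambda}\in{\rm k}(\tilde C)$ nonconstant, and zeros or poles of these functions at the fixed points of $\psi$ produce an extra central shift. None of this appears in your proposal.

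The face property inherits the problem. Your supporting functional $\phi$ is nonnegative on $\mathscr V\cap\mathscr Q$ by construction, but to conclude $\phi\geq0$ on $\mathscr V_x$ you implicitly need the central part of every element of $\mathscr V_x$ to lie in $\mathscr V\cap\mathscr Q$, a containment you neither prove nor cite (and which the paper's argument never needs, since its supporting functionals are the tail vectors themselves, nonnegative on $\mathscr V$ by Proposition \ref{tail-valuation-cone-dual}). Finally, the converse inclusion --- that every point of the claimed cone actually produces a product configuration --- is deferred in your last paragraph to ``splitting the filtration as a grading'' via vanishing on tails; this is precisely the paper's central computation, so the part of your argument that is solid (the central case via Knop, which is Remark \ref{central-auto-rmk} of the paper) is the easy half, while the substance of the proposition remains a sketch.
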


\begin{proof}
We see that the test configuration $(\mathcal X,\mathcal L)$ associated to $(v_0,-1)$ is a product test configuration if and only if its centre $\mathcal X_0\cong X$, or equivalently,
\begin{align}\label{prod-centre}
{\rm Gr}(\mathscr F_{(\mathcal X,\mathcal L)})\cong R(X,L).
\end{align}
In the following we write $R$ instead of $R(X,L)$ in short. Since $t\in {\rm k}(\mathcal X)^G$. For any non-zero
\begin{align}\label{choice-si}
s_i\in (R_{k_i})^{(B)}_{\lambda_i},~i=1,...,p
\end{align}
and
$$\tau_i=v_0(s_i)+k_im_0,~i=1,...,p,$$
$t^{-\tau_i}s_i$ lies in the Rees algebra of $\mathscr F_{(\mathcal X,\mathcal L)}$ (cf. \cite{Popov-1986}) and maps to a non-zero element in ${\rm Gr}(\mathscr F_{(\mathcal X,\mathcal L)})$.
Note that in the Rees algebra,
$$\prod_{i=1}^p\langle G\cdot t^{-\tau_i}s_i\rangle=t^{-\sum_{i=1}^p\tau_i}\prod_{i=1}^p\langle G\cdot s_i\rangle.$$
Here as in Section 2.2, for an $s\in R$, we denote by $\langle G\cdot s\rangle$ the linear span of the $G$-orbit $G\cdot s$ in $R$. Then \eqref{prod-centre} holds if and only if $t^{-\sum_{i=1}^p\tau_i}s'$ maps to a non-zero element in ${\rm Gr}(\mathscr F_{(\mathcal X,\mathcal L)})$ for any non-zero $s'\in (R_{\sum_{i=1}^pk_i})^{(B)}_{\lambda'}\cap \prod_{i=1}^p\langle G\cdot s_i\rangle$. In particular $\frac{s'}{\prod_{i=1}^ps_i}$ is a tail vector of $R(X,L)$. By \eqref{Gr(F)}, this is equivalent to
$$s'\not\in\mathcal F^{>\sum_{i=1}^p\tau_i}R_{\sum_{i=1}^pk_i},$$
for any such $s'$. On the other hand, it always holds $s'\in\mathcal F^{\sum_{i=1}^p\tau_i}R_{\sum_{i=1}^pk_i}$. Combining with Lemma \ref{FsRk-prop}, we get
\begin{align*}
v_0(s')-\sum_{i=1}^p\tau_i+\sum_{i=1}^pk_im_0=0,
\end{align*}
and we conclude that for any choice of $s_i$'s in \eqref{choice-si},
$$v_0(\frac{s'}{\prod_{i=1}^ps_i})=0,~\forall s'\in\prod_{i=1}^p\langle G\cdot s_i\rangle~\text{such that}~\frac{s'}{\prod_{i=1}^ps_i}~\text{is a tail vector of}~R(X,L).$$
Since the fraction field of $R(X,L)$ is ${\rm k}(X)$, by Proposition \ref{tail-valuation-cone-dual}, $v_0$ lies in a face of some $\mathscr V_x$. In fact, the above equations define an intersection of certain linear subspaces of $\mathscr Q\times\mathbb Q$ and the (upper) half-space $\mathscr Q_{x,+}$. On the other hand, as $\mathscr Q(=\mathscr Q\times\{0\})$ is a  linear subspace, the above equations cut out a linear subspace in it. Thus it holds
\begin{align*}
\mathscr A_x\cap\mathscr Q=&\{v=h_vq_x+\ell_v\in\mathscr V_x|h_v=0~\text{and}~\ell_v~\text{vanishes on any tail of}~R(X,L)\}\\
=&\{v=h_vq_x+\ell_v\in\mathscr V_x|h_v=0,~\text{and both}~\pm\ell_v\in\mathscr V\}=\mathscr A.
\end{align*}
\end{proof}

\begin{rem}\label{central-auto-rmk}
Recall the central automorphism group defined in Section 2.2.1, and the torus ${\mathbf T}$ in it whose Lie algebra is isomorphic to $\mathscr A$. The action of the one-parameter group generated by $v\in\mathscr A$ on $X$ precisely coincides with the induced ${\rm k}^\times$-action on the central fibre $\mathcal X_0\cong X$ of $(\mathcal X,\mathcal L)$ associated to $v$ (cf. \cite[Section 8]{Knop93} or \cite[Theorem 21.5]{Timashev-book}).  The last point follows from \cite[Section 8]{Knop93} (see also \cite[Section 21]{Timashev-book}). Thus, test configurations associated to $v\in\mathscr A$ are product test configurations induced by a one-parameter subgroup in $\mathfrak A(X)$.
\end{rem}

Combining with Proposition \ref{central-spherical}, we get
\begin{cor}\label{non-spherical-cor}
Suppose that $X$ is not a $G\times{\rm k}^\times$-spherical variety. Then $\mathscr A_x=\mathscr A$ for any $x\in C$.
\end{cor}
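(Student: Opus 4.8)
The plan is to prove the two inclusions $\mathscr A\subseteq\mathscr A_x$ and $\mathscr A_x\subseteq\mathscr A$ separately. The first is immediate: by the ``moreover'' part of Proposition \ref{Aut-cone-prop} we have $\mathscr A_x\cap\mathscr Q=\mathscr A$, and since $\mathscr A=\mathscr V\cap(-\mathscr V)\cap\mathscr Q\subset\mathscr Q$, it follows that $\mathscr A\subseteq\mathscr A_x$ for every $x\in C$, with no hypothesis on $X$ needed. Thus the whole content of the corollary is the reverse inclusion $\mathscr A_x\subseteq\mathscr A$. Because $\mathscr A=\mathscr A_x\cap\mathscr Q$, this reduces to showing that $\mathscr A_x\subseteq\mathscr Q$, i.e. that every element of $\mathscr A_x$ is central (has vanishing jump).

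I would argue this by contradiction. Suppose $\mathscr A_x$ contains a non-central element. Recall that $\mathscr A_x$ is a face of the rational polyhedral cone $\mathscr V$, hence itself rational, and that the non-central locus is the complement in $\mathscr A_x$ of the face $\mathscr A_x\cap\mathscr Q$; since the jump $v\mapsto h_v$ is a nonzero rational linear functional on $\mathscr A_x$ in this case, and rational points are dense in a rational polyhedral cone, $\mathscr A_x$ then contains a rational non-central element. Clearing denominators (scaling preserves both membership in the cone $\mathscr A_x$ and the condition $h_0\neq0$), I obtain an \emph{integral} non-central $v_0=\ell_0+h_0q_{x_0}\in\mathscr A_x$ with $h_0\neq0$.

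Next, since $v_0\in\mathscr A_x$, Proposition \ref{Aut-cone-prop} tells us that the normal test configuration $(\mathcal X,\mathcal L)$ associated to $(v_0,-1)$ is a \emph{product} test configuration; in particular its central fibre satisfies $\mathcal X_0\cong X$, which is normal, so $(\mathcal X,\mathcal L)$ is a special test configuration. As $v_0$ is integral and non-central, Corollary \ref{central-spherical} now forces $\mathcal X_0$ to be a $G\times{\rm k}^\times$-spherical variety. Combining this with $\mathcal X_0\cong X$ yields that $X$ itself is $G\times{\rm k}^\times$-spherical, contradicting the hypothesis. Therefore $\mathscr A_x$ contains no non-central element, so $\mathscr A_x\subseteq\mathscr Q$ and hence $\mathscr A_x=\mathscr A_x\cap\mathscr Q=\mathscr A$.

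The argument is essentially a direct combination of Proposition \ref{Aut-cone-prop} and Corollary \ref{central-spherical}, so no genuinely hard analytic step is expected. The point requiring the most care is the reduction in the second paragraph: one must check that a non-central element of $\mathscr A_x$ can be replaced by an integral non-central one, since Corollary \ref{central-spherical} is stated only for integral $v_0$; and one must ensure that the $G\times{\rm k}^\times$-action witnessing the sphericity of $\mathcal X_0$ is precisely the product $G\times{\rm k}^\times$-action on $X$ appearing in the hypothesis, which is guaranteed here exactly because the test configuration is a product so that the ${\rm k}^\times$-factor acts on $\mathcal X_0\cong X$ through the one-parameter subgroup determined by $v_0$.
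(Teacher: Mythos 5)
Your proposal is correct and takes essentially the same route as the paper: argue by contradiction, use Proposition \ref{Aut-cone-prop} to produce a non-central $v_0$ defining a product test configuration with $\mathcal X_0\cong X$, and then invoke Corollary \ref{central-spherical} to conclude $X$ is $G\times{\rm k}^\times$-spherical. The only difference is that you spell out details the paper leaves implicit (passing to an integral non-central element of $\mathscr A_x$ by rationality and scaling, and noting the product test configuration is special so that Corollary \ref{central-spherical} applies), which is a reasonable tightening rather than a different argument.
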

\begin{proof}
Otherwise, there is some $v_0=h_0q_{x_0}+\ell_0$ with $h_0\not=0$ which defines a product test configuration $\mathcal X$ of $X$, whose central fibre $\mathcal X_0\cong X$. By Proposition \ref{central-spherical}, $\mathcal X_0$ is a $G\times{\rm k}^\times$-spherical variety, a contradiction.
\end{proof}

\subsubsection{Some combinatorial data}
We hope to compute the polytope $\Delta_\mathscr Z(\mathcal L)$ and related combinatorial data of $(\mathcal X,\mathcal L)$ as done for $(X,L)$ before. We mainly interest in the case when $m=-1$, that is, $\mathcal X$ has integral central fibre.

Keep the notations above. By direct computation we have
\begin{align}\label{A-of-D}
A_{x}(\mathfrak D,\lambda,t)=\left\{\begin{aligned}&\min\{A_{x_0}(\mathfrak d,\lambda),\frac{-t+m_0+\ell_0(\lambda)}{h_0}\},~\text{when}~x=x_0,\\
&A_{x}(\mathfrak d,\lambda),~\text{when}~x\not=x_0,\end{aligned}\right.~\text{for}~\lambda\in\Gamma_\mathbb R,
\end{align}
and
\begin{align}\label{A-sum-of-D}
A(\mathfrak D,\lambda,t)=A_{x_0}(\mathfrak D,\lambda,t)+\sum_{x\not=x_0}A_x(\mathfrak d,\lambda),~\lambda\in\Delta_\mathscr Z(\mathfrak d).
\end{align}
Here when $h_0=0$, we formally take $\frac c{h_0}=\pm\infty$ or $0$ according to the sign of $c$. Under this convention $A_{x_0}(\mathfrak D,\lambda,t)$ reduces to
$$ A_{x_0}(\mathfrak D,\lambda,t)=A_{x_0}(\mathfrak d,\lambda)\chi_{\{-t+m_0+\ell_0(\lambda)\geq0\}},~\lambda\in\Delta_\mathscr Z(\mathfrak d),$$
where by $\chi_S$ we denote the characteristic function of a set $S$, and
$$A(\mathfrak D,\lambda,t)=A(\mathfrak d,\lambda)\chi_{\{-t+m_0+\ell_0(\lambda)\geq0\}},~\lambda\in\Delta_\mathscr Z(\mathfrak d).$$
Thus, we get
\begin{align}\label{tilde-DZ(d)}
{\Delta}_\mathscr Z(\mathcal L)=&\{(\lambda,t)\in\Delta_\mathscr Z(L)\times\mathbb R|0\leq t\leq \tau^0(\lambda-\lambda_0)\}\notag\\=&(\Delta_\mathscr Z(L)\times\mathbb R)\cap\{A(\mathfrak D,\lambda-\lambda_0,\tau)\geq0\},
\end{align}
and ${\Delta}_\mathscr Z(\mathfrak D)={\Delta}_\mathscr Z(\mathcal L)-(\lambda_0,0)$, where
\begin{align}\label{tau-0}
\tau^0(\lambda):=m_0+\ell_0(\lambda)+h_0(A(\mathfrak d,\lambda)-A_{x_0}(\mathfrak d,\lambda)),~\lambda\in\Gamma_\mathbb R.
\end{align}

Also, for each $x\in C$,
\begin{align*}
{\Delta}_x(\mathfrak D)=\{(\lambda,h, t)\in\Gamma_\mathbb R\times\mathbb R_+\times\mathbb R|(\lambda,t)\in {\Delta}_\mathscr Z(\mathfrak D),~h\geq-A(\mathfrak D,\lambda,t)\},
\end{align*}
and the coloured fan $\mathfrak F_\mathcal X$ of $\mathcal X$ at each $x\in C$ consists of inner normal cones of ${\Delta}_x(\mathfrak D)$ whose relative interior intersects $\bar{\mathscr V}$.

\subsection{The classification}
In this section we consider the inverse direction of Proposition \ref{tc-to-v0}. Given any integral $v_0$ in $\mathscr V_{x_0}$, we will construct a $G$-equivariant normal test configuration of $(X,L)$ whose central fibre is integral.

Suppose that $v_0=\ell_0+h_0q_{x_0}\in\mathscr V_{x_0}$ (when $h_0=0$, we can choose $x_0$ to be any point in $C$). We can choose a sufficiently large integer $m_0$ so that the function $\tau^0(\cdot)$ defined by \eqref{tau-0} is positive on $\Delta_\mathscr Z(\mathfrak d)=\Delta_\mathscr Z(L)-\lambda_0$. As in \eqref{A-of-D}, set
\begin{align}\label{A-of-v0}
\tilde A_{x}(\lambda,t):=\left\{\begin{aligned}&\min\{A_{x_0}(\mathfrak d,\lambda),\frac{-t+m_0+\ell_0(\lambda)}{h_0}\},~\text{when}~x=x_0,\\
&A_{x}(\mathfrak d,\lambda),~\text{when}~x\not=x_0,\end{aligned}\right.~\text{for}~\lambda\in\Gamma_\mathbb R,
\end{align}
and
$$\tilde A(\lambda,t):=\tilde A_{x_0}(\lambda,t)+\sum_{x\not=x_0}A_x(\mathfrak d,\lambda),~\lambda\in\Delta_\mathscr Z(\mathfrak d).$$
Then $\tilde A(\lambda,t)\geq0$ for any $(\lambda,t)$ lies in
\begin{align*}
\tilde {\Delta}_\mathscr Z:=\{(\lambda,t)\in\Delta_\mathscr Z(\mathfrak d)\times\mathbb R|0\leq t\leq \tau^0(\lambda)\}.
\end{align*}
Also, for each $x\in C$, set
\begin{align*}
\tilde{\Delta}_x=\{(\lambda+hq_x, t)\in\Gamma_\mathbb R\times\mathbb R_+\times\mathbb R|(\lambda,t)\in\tilde{\Delta}_\mathscr Z,~h\geq-\tilde A_x(\lambda,t)\},
\end{align*}
We call $\tilde{\Delta}_x\cap\{h=-\tilde A_x(\lambda,t)\}$ the bottom of $\tilde{\Delta}_x$.

Now we construct a coloured fan $\bar{\mathfrak F}$. Since the total space $\mathcal X$ is assumed to be complete, it suffices to give its maximal coloured cones and coloured hypercones of type \uppercase\expandafter{\romannumeral2}. We say a coloured hypercone $\mathscr C$ is maximal if at least one $\mathscr C_x=\mathscr C\cap\mathscr Q_{x,+}$ is of maximal dimension. Consider the inner normal cone of every vertex of $\tilde{\Delta}_x$ for all $x\in C$. If the relative interior of such a cone $\mathscr C$ intersects $\bar{\mathscr V}$ and all its generators lies in $$\tilde{\mathscr B}:=\{(v_D,0)|D\in\mathscr B(X)\}\cup\{(v_0,-1)\},$$ then $\mathscr C$ will be a coloured cone in $\bar{\mathfrak F}$. It remains to select colours. Suppose that $\mathscr C$ is the inner normal cone of $\tilde{\Delta}_x$ at a vertex $p_0$. Then we choose
$$\mathscr R=\{\bar D|D\in\mathscr B(X)\setminus\mathscr B(X)^G,~m_D=-p_0(v_D)\},$$
and put the coloured cone $(\mathscr C,\mathscr R)$ in $\bar{\mathfrak F}$.

It remains to construct hypercones of type \uppercase\expandafter{\romannumeral2}. Consider a remaining normal cone as above which has generators out of $\tilde{\mathscr B}$. If $\mathscr C$ is the inner normal cone of $\tilde{\Delta}_x$ at a vertex $p_*$, then $p_*$ projects to some $p_*'$, where $p_*'=(\lambda_*,t_*)$ is a boundary point of $\tilde\Delta_\mathscr Z$ such $\tilde A(p_*')=0$. For any $x\in C$, set
$$\mathscr S(x,\lambda_*):=\{D\in\mathscr B(X)|v_D\in\mathscr Q_{x,+},~\tilde A_x(\lambda_*,t)=\frac{m_D+\ell_D(\lambda_*)}{h_D}\}.$$
Define a coloured hypercone $(\mathscr C,\mathscr R)$ by setting
\begin{align*}
\mathscr C_x=&\text{the inner normal cone of $\tilde{\Delta}_x$ at the point on its bottom that projects to}~p_*'\in\tilde {\Delta}_\mathscr Z,\\
\mathscr W_x=&\{\bar D|D\in\mathscr B(X)^G,h_D=0,m_D=-\lambda_*(v_D)\}\cup\left\{\begin{aligned}&\mathscr B(X)^G\cap\mathscr S(x,\lambda_*),~\text{if}~x\not=x_0,\\&(\mathscr B(X)^G\cap\mathscr S(x_0,\lambda_*))\cup\{\mathcal X_0\},~\text{if}~x=x_0,\end{aligned}\right.\\
\mathscr R_x=&\{\bar D|D\in\mathscr B(X)\setminus\mathscr B(X)^G,h_D=0,m_D=-\lambda_*(v_D)\}\cup\left((\mathscr B(X)\setminus\mathscr B(X)^G)\cap\mathscr S(x,\lambda_*)\right).
\end{align*}
Then $(\mathscr C,\mathscr R)=\{(\mathscr C_x,\mathscr R_x)|x\in C\}$ is the coloured hypercone defined by the data $(\mathscr W,\mathscr R):=(\cup_{x\in C}\mathscr W_x,\cup_{x\in C}\mathscr R_x)$. Such a cone will also be taken into account in $\bar{\mathfrak F}$ if its relative interior intersects $\bar{\mathscr V}$.

Together with the set 
$$\{(\mathscr C\times\mathbb Q_+,\mathscr R\times\{0\})|(\mathscr C,\mathscr R)\in\mathfrak F_X\},$$
and all their faces, we get a coloured fan $\bar{\mathfrak F}$ that covers $\bar{\mathscr V}$. Let $\mathcal X$ be the $G\times{\rm k}^\times$-variety defined by $\bar{\mathfrak F}$. From our contraction, we can directly check that the divisor $\mathfrak D$ given by \eqref{div-mathcal-L} is ample on $\mathcal X$ by Theorem \ref{ampleness-criterion}. On the other hand, by removing the divisor $\mathcal X_0$ that corresponds to $(v_0,-1)$, there is a ${\rm k}^\times$-equivariant morphism ${\rm pr}:\mathcal X\setminus\{\mathcal X_0\}(\cong X\times{\rm k})\to{\rm k}(\cong\mathbb P^1\setminus\{0\})$. We then show ${\rm pr}$ extends to a ${\rm k}^\times$-equivariant projection ${\rm pr}:\mathcal X\to\mathbb P^1$. It suffices to look at ${\rm pr}$ near $\mathcal X_0$. Recall that the hyperspace $\tilde{\mathscr E}$ of the $G\times{\rm k}^\times$-variety $\mathcal X$ is isomorphic to $\mathscr E\times\mathbb Q$, where $\mathscr E$ is the hyperspace of $X$ and $\mathbb Q$ stands for the ${\rm k}^\times$-factor. Under the projection ${\rm pr}:\mathscr E\times\mathbb Q\to\mathbb Q$ to the second factor, the coloured cones and hypercones of type \uppercase\expandafter{\romannumeral2} in $\bar{\mathfrak F}$ are mapped into either $\mathbb Q_{\geq0}$ or $\mathbb Q_{\leq0}$. In particular, if $\mathcal Y\subset\mathcal X_0$ is a $G\times{\rm k}^\times$-orbit, then its coloured cone/hypercone of type \uppercase\expandafter{\romannumeral2} in $\bar{\mathfrak F}$ maps surjectively to $\mathbb Q_{\leq0}$. Also, the colours in $\mathcal X$ are precisely $\{\overline{D\times{\rm k}^\times}|D\in\mathscr D^B\}$, which all map dominantly to ${\rm k}^\times$. Following the argument of \cite[Theorem 5.1]{Knop91} we get $\mathscr O_{\mathcal X,\mathcal Y}$ dominates $\mathscr O_{\mathbb P^1,0}$. Hence ${\rm pr}$ extends to a regular map on the whole $\mathcal X$ (cf. \cite[Proposition 12.12]{Timashev-book}), and $\mathcal X$ is indeed a test configuration associated to $(v_0,-1)$ in the sense of Proposition \ref{tc-to-v0}.

Combining with Proposition \ref{tc-to-v0}, we get
\begin{theo}\label{tc-classification}
Let $(X,L)$ be a polarized projective $G$-variety of complexity 1 with ${\rm k}(X)^B\cong{\rm k}(\mathbb P^1)$. Then $G$-equivariant normal test configurations of $(X,L)$ with integral central fibre are in one-one correspondence with pairs $(v_0,m_0)$ in the set $$\mathscr T:=\{(v_0,m_0)\in\mathscr V\times\mathbb N_+|v_0~\text{integral and}~\tau^0(\lambda)>0~\text{on}~\Delta_\mathscr Z(\mathfrak d)\}.$$
In particular, any $G$-equivariant special test configuration of $(X,L)$ is defined in this way.
\end{theo}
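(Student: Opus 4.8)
The plan is to read the statement as the synthesis of the two halves already developed: Proposition \ref{tc-to-v0} supplies one direction, the explicit combinatorial construction carried out just above supplies the other, and it remains to exhibit them as mutually inverse maps and to match the defining inequalities of $\mathscr T$ with normality, ampleness and integrality of the central fibre.

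\emph{The forward map.} I would begin with a $G$-equivariant normal test configuration $(\mathcal X,\mathcal L)$ of $(X,L)$ having integral central fibre. Since integrality forces $\mathcal X_0=\mathcal X_0^{\rm red}$ to be irreducible, Proposition \ref{tc-to-v0} applies and yields a primitive $(v_0,m)\in\mathscr V\times\mathbb Z_{<0}$ with $\mathcal X_0=-m\,\mathcal X_0^{\rm red}$; integrality of the central fibre is exactly the assertion $m=-1$ and $v_0$ integral, so in particular $v_0\in\mathscr V$ is integral. Reading the coefficient of $\mathcal X_0^{\rm red}$ off the divisor $\mathfrak D$ of $\mathcal L$ in \eqref{div-mathcal-L}, normalized by $m_\infty=0$ and $r_0=1$, produces an integer $m_0$, and I would send $(\mathcal X,\mathcal L)$ to the pair $(v_0,m_0)$. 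To place this pair in $\mathscr T$ I would use ampleness of $\mathcal L$: by Theorem \ref{ampleness-criterion} together with the explicit shape \eqref{tilde-DZ(d)} of $\Delta_\mathscr Z(\mathcal L)$, ampleness translates precisely into the solidity of that polytope over $\Delta_\mathscr Z(\mathfrak d)$, i.e.\ into $\tau^0(\lambda)>0$ for the function $\tau^0$ of \eqref{tau-0}, while positivity in the fibre direction at $0\in\mathbb P^1$ forces $m_0>0$. Hence $(v_0,m_0)\in\mathscr T$.

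\emph{The backward map and bijectivity.} The reverse assignment is nothing but the construction preceding the statement: to $(v_0,m_0)\in\mathscr T$ one attaches the coloured fan $\bar{\mathfrak F}$ and the divisor $\mathfrak D$, and the verifications carried out there---ampleness of $\mathfrak D$ via Theorem \ref{ampleness-criterion} and the extension of ${\rm pr}$ to a morphism $\mathcal X\to\mathbb P^1$---show that the result is a $G$-equivariant normal test configuration associated to $(v_0,-1)$, thus with integral central fibre. I would then check that the two composites are identities. The composite ``construct, then read off'' is immediate, since the distinguished ray $(v_0,-1)$ and the coefficient $m_0$ are recovered verbatim from the construction. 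The composite ``read off, then reconstruct'' is the crux: a normal $G\times{\rm k}^\times$-model of the function field ${\rm k}(X\times{\rm k}^\times)$ is determined up to $G\times{\rm k}^\times$-equivariant isomorphism by its coloured fan (Theorem \ref{fan-classify}), and a $B\times{\rm k}^\times$-stable ample divisor is determined by its coefficients (Theorem \ref{Cartier-criterion}); since both the fan and the coefficients of $\mathfrak D$ are functions of $(v_0,m_0)$ and the fixed reference divisor $\mathfrak d$ of $L$ alone, the reconstructed configuration is isomorphic to the original one.

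\emph{The special case and the main difficulty.} For the final assertion I would argue that a special test configuration has normal central fibre; as the central fibre of the family is connected (being $\mathcal X$ a normal variety proper and flat over $\mathbb P^1$ with general fibre the connected $X$), normality makes it integral. Therefore special test configurations form a subclass of those treated, and the correspondence exhibits each of them via some $(v_0,m_0)\in\mathscr T$. The step I expect to be the main obstacle is the ``read off, then reconstruct'' composite: one must be sure that the normalized data $(v_0,m_0)$, together with $\mathfrak d$, genuinely recovers the full polarization $\mathcal L$ and not merely its restriction away from $\mathcal X_0^{\rm red}$, and that setting $m_\infty=0$ (legitimate because $[0]-[\infty]$ is principal on $\mathbb P^1$) discards no information. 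This is precisely the point at which the uniqueness content of Timash\"ev's classification, Theorems \ref{fan-classify} and \ref{Cartier-criterion}, must be invoked carefully.
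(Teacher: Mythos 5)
Your proposal follows essentially the same route as the paper: the forward direction is exactly Proposition \ref{tc-to-v0} (integrality forcing $m=-1$ and $v_0$ integral), the backward direction is the coloured-fan construction carried out immediately before the statement, and the paper likewise concludes by simply combining the two. Your additional elaborations---the bijectivity check via the uniqueness content of Theorems \ref{fan-classify} and \ref{Cartier-criterion}, and the observation that a special test configuration has connected, hence integral, normal central fibre---are correct fillings-in of steps the paper leaves implicit.
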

In the following, we often say a test configuration $(\mathcal X,\mathcal L)$ is associated to $v_0$ if it corresponds to some $(v_0,m_0)\in\mathscr T$.

\subsection{Twist of a test configuration}
Given a test configuration $(\mathcal X,\mathcal L)$ as above. It is proved by \cite[Section 2.1]{Hisamoto} that ${\rm J}^{\rm NA}(\mathcal X_{\ell'},\mathcal L_{\ell'})$ is a rational, convex piecewise linear and proper function of $\ell'\in{\rm Lie}({\mathbf T})$. In particular, it is continuous. With the help of the continuity, for our latter use it suffices to study the twist $(\mathcal X_{\ell'},\mathcal L_{\ell'})$ of $(\mathcal X,\mathcal L)$ when $\ell'$ is rational.

Let $(\mathcal X,\mathcal L)$ be a $G$-equivariant normal test configuration associated to $v_0$ as above, and $\ell'$ a rational element in the linear part $\mathscr A(\cong{\rm Lie}({\mathbf T}))$ of $\mathscr V$ so that $q\ell'$ in primitive for some $q\in\mathbb N_+$. Recall that the uncompactified total space $(\mathcal X_{\ell'}\setminus\mathcal X_{\ell'\,\infty},\mathcal L_{\ell'}|_{\mathcal X\setminus\mathcal X_{\ell'\,\infty}})$ is isomorphic to $(\mathcal X\setminus\mathcal X_{\infty},\mathcal L|_{\mathcal X\setminus\mathcal X_{\infty}})$, but the grading on $R(X,L)$ is shifted by $\ell'$. More precisely, if the $s$ in the $G$-span of ${\rm H}^0(X,L^k)^{(B)}_\lambda\subset R(X,L)$ and has grading $\tau(s)$ induced by $(\mathcal X,\mathcal L)$, then the grading of $s$ induced by $(\mathcal X_{\ell'},\mathcal L_{\ell'})$ is
\begin{align}\label{grading-twist}
\tau'(s)=\tau(s)+\ell'(\lambda).
\end{align}

We first deal with integral $\ell'$. In this case $q=1$, $(\mathcal X_{\ell'},\mathcal L_{\ell'})$ is a test configuration and we shall determine the compactified total space of $(\mathcal X_{\ell'},\mathcal L_{\ell'})$. By the above discussion, the coloured cones and hypercones of type \uppercase\expandafter{\romannumeral2} in $\mathfrak F_{\mathcal X}$ that lies in $\mathscr E\times\mathbb Q_{\leq0}$, which precisely give the coloured fan of $(\mathcal X_{\ell'}\setminus\mathcal X_{\ell'\,\infty},\mathcal L_{\ell'}|_{\mathcal X\setminus\mathcal X_{\ell'\,\infty}})\cong(\mathcal X\setminus\mathcal X_{\infty},\mathcal L|_{\mathcal X\setminus\mathcal X_{\infty}})$, keep the same in $\mathfrak F_{\mathcal X_{\ell'}}$. On the other hand, the ${\rm k}^\times$-action of $(\mathcal X_{\ell'},\mathcal L_{\ell'})$ corresponds to $(\ell',1)\in{\rm Lie}(\mathbf T\times{\rm k}^\times)\cong\mathscr A\times\mathbb Q$. Thus the fibre $\mathcal X_{\ell'\,\infty}$ at $\infty$ in $\mathbb P^1$ corresponds to the ray $\mathbb Q_{\geq0}(\ell',q)\in\bar{\mathscr E}$, and the other coloured cones and hypercones of type \uppercase\expandafter{\romannumeral2} in $\mathfrak F_{\mathcal X_\ell}$ are
\begin{align}\label{fan-trivial-part}
\{{\rm Cone}((\mathscr C,\mathscr R),(\ell',1))|~(\mathscr C,\mathscr R)\in\mathfrak F_{\mathcal X}~\text{is a coloured cone or hypercone of type \uppercase\expandafter{\romannumeral2} in}~\mathfrak F_{\mathcal X}\}.
\end{align}
Taking an ${\rm SL}_{{\rm rk}(\Gamma)+1}$-transformation
$$(v,u)\to(v-u\ell',u),~\forall (v,u)\in\mathscr E\times \mathbb Q,$$
which in particular transforms $(\ell',1)$ to $(0,1)$, we see that $(\mathcal X_{\ell'},\mathcal L_{\ell'})$ is isomorphic to the test configuration associated to $v_0+\ell'$.

Then we turn to the general rational case. Denote by $(\mathcal X^{(q)},\mathcal L^{(q)})$ the base change (and then a normalization) $t\to t^q$, $t\in{\rm k}^\times$ of $(\mathcal X,\mathcal L)$. In this case, the base change $(\mathcal X_{\ell'}^{(q)},\mathcal L_{\ell'}^{(q)})$ of the twist $(\mathcal X_{\ell'},\mathcal L_{\ell'})$ is a test configuration of $(X,L)$, which is defined as the twist of $(\mathcal X^{(q)},\mathcal L^{(q)})$ by the integral element $q\ell'$. Let $\mathfrak F_{\mathcal X}$ be the coloured fan of $\mathcal L$. The coloured fan of $\mathcal X^{(q)}$ consists of the rescalling coloured cones and hypercones of type \uppercase\expandafter{\romannumeral2} in $\mathfrak F_{\mathcal X}$. The rescalling maps each ray $\mathbb Q_{\geq0}(v,\pm1)$ to $\mathbb Q_{\geq0}(qv,\pm1)$. In particular, $\mathbb Q_{\geq0}(v_0,-1)$ is mapped to $\mathbb Q_{\geq0}(qv_0,-1)$ with primitive generator $(qv_0,-1)$, and $\mathbb Q_{\geq0}(\ell',1)$ is mapped to $\mathbb Q_{\geq0}(q\ell',1)$ with primitive generator $(q\ell',1)$. From the previous case, we see that $(\mathcal X_{\ell'}^{(q)},\mathcal L_{\ell'}^{(q)})$ is associated to $q(v_0+\ell')$. 

There is an alternative approach to see the relation. Taking a base change $t\to t^p$, we get the corresponding grading of $s$ induced by $(\mathcal X^{(q)}_{\ell'},\mathcal L^{(q)}_{\ell'})$ is $q\tau(s)+q\ell'(\lambda)$. We then concludes that $(\mathcal X^{(q)}_{\ell'},\mathcal L^{(q)}_{\ell'})$ coincides with the twist of $(\mathcal X^{(q)},\mathcal L^{(q)})$ by the integral element $q\ell'$. By the previous case we see that
\begin{lem}\label{twist-lem}
The base change $(\mathcal X^{(q)}_{\ell'},\mathcal L^{(q)}_{\ell'})$ of $(\mathcal X_{\ell'},\mathcal L_{\ell'})$ is isomorphic to the test configuration associated to $q(v_0+\ell')$.
\end{lem}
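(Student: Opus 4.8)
The plan is to reduce the general rational case to the integral case settled just above, exploiting that both twisting and base change act transparently on the grading of the Kodaira ring $R(X,L)$. Since $q\ell'$ is primitive it is in particular integral, so the twist of the base change $(\mathcal X^{(q)},\mathcal L^{(q)})$ by $q\ell'$ is a genuine test configuration to which the integral case applies; the argument thus reduces to identifying $(\mathcal X^{(q)}_{\ell'},\mathcal L^{(q)}_{\ell'})$ with this twist and then reading off its associated valuation.

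First I would track the grading. Let $s$ lie in the $G$-span of ${\rm H}^0(X,L^k)^{(B)}_\lambda$ with grading $\tau(s)$ induced by $(\mathcal X,\mathcal L)$. By \eqref{grading-twist} the twist $(\mathcal X_{\ell'},\mathcal L_{\ell'})$ assigns $s$ the grading $\tau(s)+\ell'(\lambda)$; since the base change $t\to t^q$ multiplies all gradings by $q$ (consistent with the linear scaling \eqref{base-change-of-NA}), the base change $(\mathcal X^{(q)}_{\ell'},\mathcal L^{(q)}_{\ell'})$ assigns $s$ the grading $q(\tau(s)+\ell'(\lambda))=q\tau(s)+q\ell'(\lambda)$. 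On the other hand $(\mathcal X^{(q)},\mathcal L^{(q)})$ assigns $s$ the grading $q\tau(s)$, so twisting it by the integral element $q\ell'$---again via \eqref{grading-twist}---assigns $s$ the grading $q\tau(s)+q\ell'(\lambda)$. The two gradings agree on every homogeneous $B$-semiinvariant $s$; since such elements span $R(X,L)$ and an ($\mathbb R$-)test configuration is determined by its induced grading on the Kodaira ring, this shows that $(\mathcal X^{(q)}_{\ell'},\mathcal L^{(q)}_{\ell'})$ is exactly the twist of $(\mathcal X^{(q)},\mathcal L^{(q)})$ by $q\ell'$.

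It then remains to identify the associated valuation. Under the base change $t\to t^q$ the central-fibre ray $\mathbb Q_{\geq0}(v_0,-1)$ is rescaled to $\mathbb Q_{\geq0}(qv_0,-1)$, whose primitive generator is $(qv_0,-1)$, so by Proposition \ref{tc-to-v0} the base change $(\mathcal X^{(q)},\mathcal L^{(q)})$ is the test configuration associated to $qv_0$. Applying the integral case to $(\mathcal X^{(q)},\mathcal L^{(q)})$ with the integral twisting element $q\ell'\in\mathscr A$, the resulting test configuration is associated to $qv_0+q\ell'=q(v_0+\ell')$, and combining this with the identification of the previous paragraph gives the claim. The point needing the most care is the compatibility of base change with twisting---that the $t\to t^q$ pullback of the twist equals the twist by the scaled element $q\ell'$---which is precisely what the grading computation establishes; the fan-theoretic description \eqref{fan-trivial-part}, under which the ray $\mathbb Q_{\geq0}(\ell',1)$ rescales to $\mathbb Q_{\geq0}(q\ell',1)$, furnishes an independent check.
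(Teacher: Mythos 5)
Your proof is correct and follows essentially the same route as the paper: the paper likewise identifies $(\mathcal X^{(q)}_{\ell'},\mathcal L^{(q)}_{\ell'})$ with the twist of $(\mathcal X^{(q)},\mathcal L^{(q)})$ by the integral element $q\ell'$ via the grading computation $q\tau(s)+q\ell'(\lambda)$, and reads off the associated valuation from the rescaling of the rays $\mathbb Q_{\geq0}(v_0,-1)\mapsto\mathbb Q_{\geq0}(qv_0,-1)$ and $\mathbb Q_{\geq0}(\ell',1)\mapsto\mathbb Q_{\geq0}(q\ell',1)$ before applying the integral case. Your two ingredients (grading argument plus fan-theoretic check) are exactly the two arguments the paper itself presents.
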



\section{The Futaki invariant and K-stability}

In this section we compute the Futaki invariant of $G$-equivariant normal test configurations with $m=-1$, and derive a K-stability criterion of $X$.
\subsection{Preparations}
Let $(X,L)$ be a polarized $G$-variety of complexity 1  with ${\rm k}(X)^B\cong{\rm k}(\mathbb P^1)$, and $(\mathcal X,\mathcal L)$ a test configuration of $(X,L)$ associated to some integral $v_0=h_0q_{x_0}+\ell_0\in\mathscr V_{x_0}(\subset\mathscr V)$. Recall that we have already calculated $\dim{\rm H}^0(X,L^k)$ in Lemma \ref{h0(X,Lk)}. In this section we calculate the total weight $w_k(\mathcal X,\mathcal L)$ of $(\mathcal X,\mathcal L)$ for general cases. Both expressions will be simplified in the next section when $X$ is $\mathbb Q$-Fano and $L$ equals to a multiple of $K_X^{-1}$.

We shall first introduce some notations here. By Proposition \ref{F-tau-Rk}, when $m=-1$, we directly conclude that $\mathscr F_{(\mathcal X,\mathcal L)}^\tau R_k=0$ whenever
\begin{align*}
\tau\geq[km_0+\ell_0(\lambda-k\lambda_0)+h_0k(A(\mathfrak d,\frac\lambda k-\lambda_0)-A_{x_0}(\mathfrak d,\frac\lambda k-\lambda_0))]+1=[k\tau^0(\frac\lambda k-\lambda_0)]+1,
\end{align*}
where the function $\tau^0(\cdot)$ is defined by \eqref{tau-0}. Also, recall the functions $A_x(\mathfrak D,\cdot,\cdot)$ defined by \eqref{A-of-D}-\eqref{A-sum-of-D} and the polytope $\Delta_\mathscr Z(\mathcal L)$ defined by \eqref{tilde-DZ(d)}. For our later use, we need to figure out domains of linearity of all $\{A_x(\mathfrak D,\cdot,\cdot)\}_{x\in C}$. When $h_0=0$, they are precisely those $\{\Omega_a\}$ defined above (see \eqref{affine-domain}). It remains to consider the cases when $h_0\not=0$. Note that when $h_0\not=0$ and $m_0\gg1$,
\begin{align}\label{tilde-A}
A(\mathfrak D,\lambda,\tau)=\left\{\begin{aligned}&A(\mathfrak d,\lambda),~&&\text{when}~0\leq\tau\leq\tilde\tau^0(\lambda),\\
&\sum_{x\not=x_0}A_x(\mathfrak d,\lambda)+\left(\frac{-\tau+m_0+\ell_0(\lambda)}{h_0}\right),~&&\text{when}~\tilde\tau^0(\lambda)\leq\tau\leq\tau^0(\lambda),\end{aligned}  \right.
\end{align}
where
$$\tilde\tau^0(\lambda):=m_0+\ell_0(\lambda)-h_0A_{x_0}(\mathfrak d,\lambda),~\lambda\in\Delta_{\mathscr Z}(\mathfrak d).$$
We divide ${\Delta}_\mathscr Z(\mathcal L)$ into two parts
\begin{align}\label{polytope-D-Zo(mathcal-L)}
{\Delta}_\mathscr Z^o(\mathcal L):={\Delta}_\mathscr Z(\mathcal L)\cap\{\tilde\tau(\lambda-\lambda_0)\leq\tau\leq\tau(\lambda-\lambda_0)\},
\end{align}
and ${\Delta}_\mathscr Z'(\mathcal L):={\Delta}_\mathscr Z(\mathcal L)\setminus{\Delta}_\mathscr Z^o(\mathcal L)$. Then by concavity of $A_{x_0}(\mathfrak d,\cdot)$, ${\Delta}_\mathscr Z^o(\mathcal L)$ is a convex polytope. The common domains of linearity of all $\{A_x(\mathfrak D,\cdot,\cdot)\}_{x\in C}$ precisely consist of
\begin{align}\label{linear-domains}
\tilde \Omega_a^o:=(\Omega_a\times\mathbb R)\cap{\Delta}_\mathscr Z^o(\mathcal L)~\text{and}~\tilde \Omega_a':=(\Omega_a\times\mathbb R)\cap{\Delta}_\mathscr Z'(\mathcal L)~\text{for}~a=1,...,N.
\end{align}
Sometimes we write those domains in total as $\{\tilde\Omega_{\tilde a}\}_{\tilde a}^{\tilde N}$ and denote
$$\tilde D_{\tilde a}(x_0)=(h_0,\ell_0),~\tilde D_{\tilde a}(x_0)=m_0~\text{if}~\tilde\Omega_{\tilde a}=\tilde \Omega_a^o~\text{for some}~a,$$
when there is no confusions.

With the conventions introduced above, we have
\begin{lem}\label{wk}
Suppose that $(\mathcal X,\mathcal L)$ is a test configuration associated to some integral $v_0=\ell_0+h_0q_{x_0}\in\mathscr Q_{x_0,+}$ for some $x_0\in C$ (if $h_0=0$ then $v_0\in\mathscr Q\subset\mathscr Q_{x,+}$ for all $x\in C$), $m=-1$. Then the total weight $w_k(\mathcal X,\mathcal L)$ of $(\mathcal X,\mathcal L)$ is given by
\begin{itemize}
\item[(1)] When $h_0=0$,
\begin{align}\label{wk-h0=0}
w_k(\mathcal X,\mathcal L)=&k^{n+1}\int_{\Delta_\mathscr Z(L)}\tau^0(\lambda-\lambda_0)A(\mathfrak d,\lambda-\lambda_0)\pi(\lambda)d\lambda\notag\\
&+k^{n}\int_{\Delta_\mathscr Z(L)}\tau^0(\lambda-\lambda_0)A(\mathfrak d, \lambda -\lambda_0)\langle\nabla\pi(\lambda),\rho\rangle d\lambda\notag\\
&+\frac12k^n\int_{\partial\Delta_\mathscr Z(L)}\tau^0(\lambda-\lambda_0)A(\mathfrak d,\lambda-\lambda_0)\pi(\lambda)d\sigma\notag\\
&+\frac12k^n\sum_{x\in C}\sum_{a=1}^N\int_{\Omega_a}\tau^0(\lambda-\lambda_0)(\frac1{|h_{D_a(x)}|}-1)\pi(\lambda)d\lambda\notag\\
&+k^n\int_{\Delta_\mathscr Z(L)}\tau^0(\lambda-\lambda_0)\pi(\lambda)d\lambda+O(k^{n-1}),~k\to+\infty;
\end{align}
\item[(2)] When $h_0\not=0$,
\begin{align}\label{wk-h0not=0}
w_k(\mathcal X,\mathcal L)=&k^{n+1}\int_{{\Delta}_\mathscr Z(\mathcal L)}A(\mathfrak D,\lambda-\lambda_0,t)\pi(\lambda)dt\wedge d\lambda+k^{n}\int_{{\Delta}_\mathscr Z(\mathcal L)}A(\mathfrak D,\lambda-\lambda_0,t)\langle\nabla\pi(\lambda),\rho\rangle dt\wedge d\lambda\notag\\
&+\frac12k^n\left(\int_{\partial{\Delta}_\mathscr Z(\mathcal L)}A(\mathfrak D,\lambda-\lambda_0,t)\pi(\lambda)d\tilde\sigma
+\sum_{x\in C}\sum_{a=1}^N\int_{\tilde \Omega_a'}(\frac1{h_{D_{a}}(x)}-1)\pi(\lambda)dt\wedge d\lambda\notag\right.\\
&\left.+\sum_{x\not=x_0}\sum_{a=1}^N\int_{\tilde \Omega_a^o}(\frac1{h_{D_{a}}(x)}-1)\pi(\lambda)dt\wedge d\lambda\notag+\int_{{\Delta}_\mathscr Z^o(\mathcal L)}(\frac1{h_0}-1)\pi(\lambda)dt\wedge d\lambda\right)\\
&+k^n\int_{{\Delta}_\mathscr Z(\mathcal L)}\pi(\lambda)dt\wedge d\lambda-k^n\int_{\Delta_\mathscr Z(L)}A(\mathfrak d,\lambda-\lambda_0)\pi(\lambda)d\lambda+O(k^{n-1}),~k\to+\infty.
\end{align}
\end{itemize}
\end{lem}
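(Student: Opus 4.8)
The plan is to convert the total weight into a sum of dimensions of the filtered pieces and then recognise it as a lattice-point Riemann sum, exactly as in the proof of Lemma \ref{h0(X,Lk)} but carrying one extra grading variable. By the construction of the graded algebra \eqref{Gr(F)} the induced ${\rm k}^\times$-action has weight $\tau$ on $\mathscr F^\tau R_k/\mathscr F^{>\tau}R_k$ (I suppress the subscript $(\mathcal X,\mathcal L)$ throughout), so $w_k(\mathcal X,\mathcal L)=\sum_{\tau}\tau\dim(\mathscr F^\tau R_k/\mathscr F^{>\tau}R_k)$. Since $v_0$ is integral and $m=-1$, the filtration has integer jumps, $\mathscr F^{>\tau}R_k=\mathscr F^{\tau+1}R_k$, and a summation by parts collapses this to the clean identity $w_k(\mathcal X,\mathcal L)=\sum_{\tau\ge1}\dim\mathscr F^\tau R_k$. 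Decomposing each $\mathscr F^\tau R_k$ into $B$-isotypic components and using the Weyl dimension formula $\dim V_\lambda=\pi(\lambda+\rho)$ as in Lemma \ref{h0(X,Lk)}, I reduce to $w_k=\sum_{\tau\ge1}\sum_{\lambda}\dim(\mathscr F^\tau R_k)^{(B)}_\lambda\,\pi(\lambda+\rho)$, into which I substitute the explicit dimensions from Proposition \ref{F-tau-Rk}.

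For the central case $h_0=0$, the dimension $\dim(\mathscr F^\tau R_k)^{(B)}_\lambda$ is independent of $\tau$ on its range $1\le\tau\le k m_0+\ell_0(\lambda-k\lambda_0)$; the crucial observation is that, because $\ell_0$ and $m_0$ are integral and $\lambda-k\lambda_0\in\Gamma$, this upper bound is an integer equal to $k\tau^0(\tfrac\lambda k-\lambda_0)$ (recall $\tau^0(\lambda)=m_0+\ell_0(\lambda)$ when $h_0=0$). Hence the inner $\tau$-sum contributes an exact factor $k\tau^0(\tfrac\lambda k-\lambda_0)$ and no fractional part intervenes. What remains is precisely the Riemann sum of Lemma \ref{h0(X,Lk)} weighted pointwise by the smooth factor $\tau^0$, with the power of $k$ raised by one. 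Applying Lemma \ref{riemann-sum} (the $\max\{0,\,\cdot\,\}$ truncation being absorbed into $O(k^{n-1})$ via Lemma \ref{bad-point-lem}) then yields \eqref{wk-h0=0} term by term, the Euler--Maclaurin boundary and floor corrections being those of Lemma \ref{h0(X,Lk)} with $\tau^0$ inserted under every integral.

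For $h_0\ne0$ the inner sum is genuinely two-dimensional: writing $t=\tfrac\tau k$ one has $\deg\delta_k(\lambda,\tau)=\sum_x[kA_x(\mathfrak D,\tfrac\lambda k-\lambda_0,t)]$, so $w_k$ becomes a lattice-point sum of $\bigl(\sum_x[kA_x(\mathfrak D,\cdot,t)]+1\bigr)\pi(\lambda+\rho)$ over the region cut out by $\Delta_\mathscr Z(\mathcal L)$ and $\tau\ge1$. I first restore the slice $\tau=0$: summing over $\tau\ge0$ gives the full Euler--Maclaurin expansion over $\Delta_\mathscr Z(\mathcal L)$, while for $m_0\gg1$ the omitted $t=0$ slice equals $\dim{\rm H}^0(X,L^k)$ (there $A(\mathfrak D,\cdot,0)=A(\mathfrak d,\cdot)$), which by Lemma \ref{h0(X,Lk)} accounts for the subtracted term $-k^n\int_{\Delta_\mathscr Z(L)}A(\mathfrak d,\lambda-\lambda_0)\pi(\lambda)\,d\lambda$. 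Applying Lemma \ref{riemann-sum} to the full polytope then produces the leading $k^{n+1}$ integral, the $\langle\nabla\pi,\rho\rangle$ term, the boundary term $\tfrac12\int_{\partial\Delta_\mathscr Z(\mathcal L)}A(\mathfrak D)\pi\,d\tilde\sigma$, and the constant term $\int_{\Delta_\mathscr Z(\mathcal L)}\pi$.

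The delicate point, and the step I expect to be the main obstacle, is the correct bookkeeping of the floor corrections $(\tfrac1{h_D}-1)$ in the $h_0\ne0$ case. These arise from the fractional parts of the piecewise-linear functions $A_x(\mathfrak D,\cdot,\cdot)$, and the relevant lattice width depends on which affine branch is active: by \eqref{tilde-A}, on $\Delta_\mathscr Z'(\mathcal L)$ the $x_0$-direction is governed by the central branch $A_{x_0}(\mathfrak d)$ (width $h_{D_a(x_0)}$), while on $\Delta_\mathscr Z^o(\mathcal L)$ it is governed by the $(v_0,-1)$-branch (width $h_0$); the remaining directions $x\ne x_0$ always use $A_x(\mathfrak d)$. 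Tracking this against the decomposition \eqref{tilde-A}--\eqref{linear-domains} into the domains $\tilde\Omega_a^o,\tilde\Omega_a'$ is exactly what separates the floor-correction sum into the three pieces appearing in \eqref{wk-h0not=0}: the sum $\sum_x\sum_a\int_{\tilde\Omega_a'}$ over all directions on the lower region, the sum $\sum_{x\ne x_0}\sum_a\int_{\tilde\Omega_a^o}$ over the non-$x_0$ directions on the upper region, and the single $x_0$-contribution $\int_{\Delta_\mathscr Z^o(\mathcal L)}(\tfrac1{h_0}-1)\pi$ coming from the $(v_0,-1)$-facet. Finally I must verify that the $\max\{0,\,\cdot\,\}$ truncation and the passage from $[kA_x]$ to $kA_x$ perturb the count only by $O(k^{n-1})$, once more through Lemmas \ref{bad-point-lem} and \ref{riemann-sum}, which closes the estimate.
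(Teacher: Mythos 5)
Your proposal is correct and follows essentially the same route as the paper: Abel summation to reduce to $w_k=\sum_{\tau\ge1}\dim\mathscr F^\tau_{(\mathcal X,\mathcal L)} R_k$, substitution of the dimensions from Proposition \ref{F-tau-Rk} together with the Weyl dimension formula, Euler--Maclaurin expansion via Lemmas \ref{bad-point-lem} and \ref{riemann-sum}, restoring and then subtracting the $t=0$ slice in the $h_0\neq0$ case, and splitting the floor corrections over $\tilde\Omega_a'$, $\tilde\Omega_a^o$ and $\Delta_\mathscr Z^o(\mathcal L)$ exactly as the paper does. The only cosmetic difference is in case (1), where you perform the exact inner $\tau$-sum first (using integrality of $\tau^0$) and then apply a $\tau^0$-weighted one-dimensional Riemann sum, whereas the paper applies the two-dimensional expansion over $\Delta_\mathscr Z(\mathcal L)$ and then decomposes the boundary integral into base, walls and graph using the same integrality of $\tau^0$; the two computations are arithmetically identical.
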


\begin{proof}
As in the proof of Lemma \ref{h0(X,Lk)}, we consider $(\mathcal X,\mathcal L)$ with sufficiently divisible index $r_0$ in \eqref{div-mathcal-L} so that $\mathcal L|_{\mathcal X_t}\cong L^{r_0}$ when $t\not=0$ so that $L^{r_0}$ satisfies the assumption of Lemma \ref{h0(X,Lk)}.

By definition, the total weight
\begin{align*}
w_k(\mathcal X,\mathcal L)=&\sum_{\tau=0}^{+\infty}\tau\dim(\mathscr F_{(\mathcal X,\mathcal L)}^\tau R_k/\mathscr F_{(\mathcal X,\mathcal L)}^{>\tau}R_k)\\
=&\sum_{\tau=0}^{+\infty}\tau(\dim(\mathscr F_{(\mathcal X,\mathcal L)}^\tau R_k)-\dim(\mathscr F_{(\mathcal X,\mathcal L)}^{\tau+1}R_k))\\
=&\sum_{\lambda\in k\Delta_\mathscr Z(L)\cap(\Gamma+k\lambda_0)}\sum_{\tau=1}^{[\tau_k^0(\lambda)]}\dim(\mathscr F_{(\mathcal X,\mathcal L)}^\tau R_k)^{(B)}_{\lambda}\dim V_{\lambda}\notag\\
=&\sum_{(\lambda,t)\in k\tilde\Delta_\mathscr Z(\mathcal L)\cap(\Gamma+k\lambda_0)\times\mathbb Z,t>0}\dim(\mathscr F_{(\mathcal X,\mathcal L)}^\tau R_k)^{(B)}_{\lambda}\dim V_{\lambda}.
\end{align*}

We compute $w_k$ for the cases $h_0=0$ and $h_0\not=0$ separately.

Case-1. $h_0=0$. In this case, by Proposition \ref{F-tau-Rk} (1),
\begin{align*}
w_k(\mathcal X,\mathcal L)
=&\sum_{\lambda\in k\Delta_\mathscr Z(L)\cap(\Gamma+k\lambda_0)}\sum_{\tau=1}^{[\tau_k^0(\lambda)]}(\sum_{x\in C}[kA_x(\mathfrak d, \frac\lambda k-\lambda_0)]+1)\dim V_{\lambda}+O(k^{n-1}),~k\to+\infty,
\end{align*}
where we also apply Lemmas \ref{bad-point-lem} and \ref{riemann-sum} to the divisor $\mathfrak D$ given by \eqref{div-mathcal-L} and as in the proof of Lemma \ref{h0(X,Lk)}.

For each $x\in C$ with $A_x(\mathfrak d,\cdot)\not\equiv0$, using Lemma \ref{riemann-sum} in the Appendix we have
\begin{align*}
&\sum_{\lambda\in k\Delta_\mathscr Z(L)\cap(\Gamma+k\lambda_0)}\sum_{\tau=1}^{[\tau_k^0(\lambda)]}[kA_x(\mathfrak d, \frac\lambda k-\lambda_0)]\dim(V_{\lambda})\\
=&k^{n+1}\int_{{\Delta}_\mathscr Z(\mathcal L)}A_x(\mathfrak d, \lambda -\lambda_0)\pi(\lambda)dt\wedge d\lambda+k^{n}\int_{{\Delta}_\mathscr Z(\mathcal L)}A_x(\mathfrak d, \lambda -\lambda_0)\langle\nabla\pi(\lambda),\rho\rangle dt\wedge d\lambda\\
&+\frac12k^n\int_{\partial{\Delta}_\mathscr Z(\mathcal L)}A_x(\mathfrak d, \lambda -\lambda_0)\pi(\lambda)d\tilde\sigma
+\frac12k^n\sum_{a=1}^N\int_{\tilde\Omega_a}(\frac1{h_{D_a(x)}}-1)\pi(\lambda)dt\wedge d\lambda\\
&-k^n\int_{\Delta_\mathscr Z(L)}A_x(\mathfrak d, \lambda -\lambda_0)\pi(\lambda) d\lambda+O(k^{n-1}),~k\to+\infty.
\end{align*}
Note that $\tau^0$ is integral. We have
\begin{align*}
&\int_{\partial{\Delta}_\mathscr Z(\mathcal L)}A_x(\mathfrak d, \lambda -\lambda_0)\pi(\lambda)d\tilde\sigma\\
=&\int_{\partial\Delta_\mathscr Z(L)}\tau^0(\lambda)A_x(\mathfrak d, \lambda -\lambda_0)\pi(\lambda)d\sigma\\
+&\int_{\Delta_\mathscr Z(L)}A_x(\mathfrak d, \lambda -\lambda_0)\pi(\lambda)d\lambda+\int_{\text{graph}(\tau^0)}A_x(\mathfrak d, \lambda -\lambda_0)\pi(\lambda)d\tilde\sigma\\
=&\int_{\partial\Delta_\mathscr Z(L)}\tau^0(\lambda)A_x(\mathfrak d, \lambda -\lambda_0)\pi(\lambda)d\sigma
+2\int_{\Delta_\mathscr Z(L)}A_x(\mathfrak d, \lambda -\lambda_0)\pi(\lambda)d\lambda.
\end{align*}
Summing over $x\in C$ we get \eqref{wk-h0=0}.

Case-2. $h_0\not=0$. Recall \eqref{tilde-A}. As in Case-1, by Proposition \ref{F-tau-Rk} (2), Lemmas \ref{bad-point-lem} and \ref{riemann-sum} we get
\begin{align}\label{wk-h0-neq-0-def}
w_k(\mathcal X,\mathcal L)=&\sum_{\lambda\in k\Delta_\mathscr Z(L)\cap(\Gamma+k\lambda_0)}\sum_{t=1}^{[\tau^0(\lambda)]}(\deg(\delta_k(\lambda,t))+1)\dim V_\lambda+O(k^{n-1})\notag\\
=&\sum_{(\lambda,t)\in k{\Delta}_\mathscr Z(\mathcal L)\cap((\Gamma+k\lambda_0)\times\mathbb Z)}\left(\sum_{x\in C}[kA_x(\mathfrak D,\frac\lambda k-\lambda_0,\frac tk)]+1\right)\dim V_\lambda\notag\\
&-\sum_{\lambda\in k\Delta_\mathscr Z(L)\cap(\Gamma+k\lambda_0)}\left(\sum_{x\in C}[kA_x(\mathfrak D,\frac\lambda k-\lambda_0,0)]+1\right)\dim V_\lambda+O(k^{n-1}),~k\to+\infty.
\end{align}
Also, using Lemma \ref{riemann-sum} we have,
\begin{align*}
&\sum_{(\lambda,t)\in k\Delta_\mathscr Z(\mathcal L)\cap((\Gamma+k\lambda_0)\times\mathbb Z)}\sum_{x\in C}[kA_x(\mathfrak D,\frac\lambda k-\lambda_0,\frac tk)]\dim V_\lambda\\
=&k^{n+1}\int_{\Delta_\mathscr Z(\mathcal L)}A(\mathfrak D,\lambda-\lambda_0,t)\pi(\lambda)dt\wedge d\lambda+k^{n}\int_{\Delta_\mathscr Z(\mathcal L)}A(\mathfrak D,\lambda-\lambda_0,t)\langle\nabla\pi(\lambda),\rho\rangle dt\wedge d\lambda\\
&+\frac12k^n\int_{\partial\Delta_\mathscr Z(\mathcal L)}A(\mathfrak D,\lambda-\lambda_0,t)\pi(\lambda)d\tilde\sigma+\frac12k^n\sum_{x\in C}\sum_{\tilde a=1}^{\tilde N}\int_{\tilde \Omega_{\tilde a}}(\frac1{h_{D_{\tilde a}}(x)}-1)\pi(\lambda)dt\wedge d\lambda\\
&+O(k^{n-1}),~k\to+\infty,
\end{align*}
where $d\tilde\sigma$ is the induced lattice measure on $\partial\Delta_\mathscr Z(\mathcal L)$ and $\{\tilde \Omega_{\tilde a}\}_{\tilde a=1}^{\tilde N}$ are domains defined in \eqref{linear-domains}. Thus
\begin{align*}
&\sum_{(\lambda,t)\in k\Delta_\mathscr Z(\mathcal L)\cap((\Gamma+k\lambda_0)\times\mathbb Z)}\sum_{x\in C}[kA_x(\mathfrak D,\frac\lambda k-\lambda_0,\frac tk)]\dim V_\lambda\\
=&k^{n+1}\int_{\Delta_\mathscr Z(\mathcal L)}A(\mathfrak D,\lambda-\lambda_0,t)\pi(\lambda)dt\wedge d\lambda+k^{n}\int_{\Delta_\mathscr Z(\mathcal L)}A(\mathfrak D,\lambda-\lambda_0,t)\langle\nabla\pi(\lambda),\rho\rangle dt\wedge d\lambda\\
&+\frac12k^n\int_{\partial\Delta_\mathscr Z(\mathcal L)}A(\mathfrak D,\lambda-\lambda_0,t)\pi(\lambda)d\tilde\sigma
+\frac12k^n\sum_{x\in C}\sum_{a=1}^N\int_{\tilde \Omega_a'}(\frac1{h_{D_{a}}(x)}-1)\pi(\lambda)dt\wedge d\lambda\\
&+\frac12k^n\sum_{x\not=x_0}\sum_{a=1}^N\int_{\tilde \Omega_a^o}(\frac1{h_{D_{a}}(x)}-1)\pi(\lambda)dt\wedge d\lambda\\
&+\frac12k^n\int_{\Delta_\mathscr Z^o(\mathcal L)}(\frac1{h_0}-1)\pi(\lambda)dt\wedge d\lambda+O(k^{n-1}),~k\to+\infty,
\end{align*}

Similarly, it holds
\begin{align*}
\sum_{\lambda\in k\Delta_\mathscr Z(L)\cap(\Gamma+k\lambda_0)}\sum_{x\in C}[kA_x(\mathfrak D,\frac\lambda k-\lambda_0,0)]\dim V_\lambda
=&k^n\int_{\Delta_\mathscr Z(L)}A(\mathfrak D,\lambda-\lambda_0,0)\pi(\lambda)d\lambda+O(k^{n-1})\\
=&k^n\int_{\Delta_\mathscr Z(L)}A(\mathfrak d,\lambda-\lambda_0)\pi(\lambda)d\lambda+O(k^{n-1}),
\end{align*}
as $~k\to+\infty$. Also,
\begin{align*}
&\sum_{(\lambda,t)\in k\Delta_\mathscr Z(\mathcal L)\cap(\Gamma+k\lambda_0)\times\mathbb Z}\dim V_\lambda
=k^n\int_{\Delta_\mathscr Z(\mathcal L)}\pi(\lambda)dt\wedge d\lambda+O(k^{n-1}),~k\to+\infty,
\end{align*}
and
\begin{align*}
&\sum_{\lambda\in k\Delta_\mathscr Z(L)\cap(\Gamma+k\lambda_0)}\dim V_\lambda=O(k^{n-1}),~k\to+\infty.
\end{align*}
Plugging the above relations into \eqref{wk-h0-neq-0-def} we get \eqref{wk-h0not=0}.
\end{proof}

\subsection{The Futaki invariant of $\mathbb Q$-Fano $G$-varieties}
In this section we will express the Futaki invariant in terms of purely combinatorial data. For simplicity we consider the case when $X$ is Gorenstein and choose the divisor $\mathfrak d$ in \eqref{anti-can-div-thm} with weight $\lambda_0=\kappa_P$ for $L=K_X^{-1}$. In general cases, we may choose $L=K_X^{-m}$ and $m\mathfrak d$ with weight $\lambda_0=m\kappa_P$ for a sufficiently divisible $m\in\mathbb N_+$.

In the following we mainly focus on the case $h_0\not=0$ since the case $h_0=0$ is much simpler. In this case we first do some reductions on $\partial\Delta_\mathscr Z(\mathcal L)$. The boundary $\partial\Delta_\mathscr Z(\mathcal L)$ contains three parts (see Fig-1):
\begin{itemize}
\item The base $\Delta_\mathscr Z(K_X^{-1})\times\{0\}$, on which $A(\mathfrak D,\lambda-\kappa_P,\tau)=A(\mathfrak d,\lambda-\kappa_P)$ and $d\tilde\sigma=d\lambda$;
\item The walls $\tilde F:=(F\times\mathbb R)\cap\Delta_\mathscr Z(\mathcal L)$, where $F$ is a facet of $\Delta_\mathscr Z(K_X^{-1})$. Note that if $F\subset\{A(\mathfrak d,\lambda-\kappa_P)=0\}$, then $\tilde\tau^0=\tau^0$ on $\tilde F$, and $\tilde F$ intersects $\Delta_\mathscr Z^o(\mathcal L)$ on a face of codimension at least 2;
\item The graph of $t=\tau^0(\lambda-\kappa_P)$ over $\Delta_\mathscr Z(K_X^{-1})$, on which $A(\mathfrak D,\lambda-\kappa_P,\tau)=0$.
\end{itemize}

\begin{figure}[h]
\begin{center}
\begin{tikzpicture}[scale=1.2]
\fill[color=gray!20] (0,0)--(3,0)--(3,3)--(2,3)--(1,2.5)--(0,1.5)--(0,0);
\fill[color=gray!50] (0,1.5)--(2,1)--(3,2)--(3,3)--(2,3)--(1,2.5)--(0,1.5);
\draw [very thick] (0,0) -- (3,0);
\draw [semithick] (0,0)--(3,0)--(3,3)--(2,3)--(1,2.5)--(0,1.5)--(0,0);
\draw [semithick] (0,1.5)--(2,1)--(3,2);

\draw (-0.2,-0.2) node {\scriptsize{$A(\mathfrak d,\cdot)=0$}};
\draw (3.2,-0.2) node {\scriptsize{$A(\mathfrak d,\cdot)\not=0$}};
\draw (1.5,-0.2) node {\scriptsize{$\Delta_\mathscr Z(K_X^{-1})$}};
\draw (1.5,2) node {\scriptsize{$\Delta_\mathscr Z^o(\mathcal L)$}};
\draw (1.5,0.5) node {\scriptsize{$\Delta_\mathscr Z'(\mathcal L)$}};
\draw (1.5,3.2) node {\scriptsize{$A(\mathfrak D,\cdot,\cdot)=0~(t=\tau^0)$}};
\draw (1.5,1.35) node {\scriptsize{$t=\tilde\tau^0$}};
\end{tikzpicture}
\end{center}
Fig-1: The polytope $\tilde\Delta_\mathscr Z(\mathfrak d)$.
\end{figure}

We need a few more discussions on the $\tilde F$'s with $F\subset\{A(\mathfrak d,\lambda-\kappa_P)\not=0\}$. Such an $F$ is defined by
\begin{align}\label{eq-F}
v_D(\lambda)+m_D-v_D(\kappa_P)=0.
\end{align}
We have the following cases:
\begin{itemize}
\item $D$ is a central $G$-stable divisor, or a colour of type-a, or a central colour in the quasihomogeneous case that descends to a $B\cap L'$-stable divisor in Section 3.2. In all these three cases $m_D=1$ and $v_D$ is primitive. The unit outer normal vector of $F$ is $\nu=-v_D/|v_D|$, and the induced lattice measure on $F$ is
    \begin{align*}
    d\sigma=\frac{\langle\lambda,\nu\rangle}{1-v_D(\kappa_P)}d\sigma_0=\langle\lambda-\kappa_P,\nu\rangle d\sigma_0,
    \end{align*}
    where $d\sigma_0$ is the standard induced Lebesgue measure. Consequently, the induced lattice measure on $\tilde F$ is
    \begin{align*}
    d\tilde\sigma=\langle\lambda-\kappa_P,\nu\rangle d\sigma_0\wedge dt;
    \end{align*}
\item $D$ is a colour of type-a' or b. By Remarks \ref{v-colour} and \ref{coef-relation}, \eqref{eq-F} reduces to $v_D(\lambda)=0$ in both cases. As for a colour of type-a' or b, $v_D$ is proportional to $\alpha^\vee|_{\mathscr Q}$, we get $\pi(\cdot)|_F\equiv0$ in both cases.
\end{itemize}
From the above discussions we get
\begin{lem}\label{bdry-measure}
\begin{itemize}
\item [(1)] If $F$ is a facet of $\Delta_\mathscr Z(K_X^{-1})$ on which \eqref{eq-F} holds for some central prime $B$-stable divisor $D$, then on $F$ it holds
$$\pi(\lambda)d\sigma=\pi(\lambda)\langle\lambda-\kappa_P,\nu\rangle d\sigma_0,$$
where $d\sigma_0$ is the standard induced Lebesgue measure.
\item [(2)] On the boundary of $\Delta_\mathscr Z(\mathcal L)$, it holds
\begin{align*}
A(\mathfrak D,\lambda-\kappa_P,t)\pi(\lambda)d\tilde\sigma=\left\{\begin{aligned}&A(\mathfrak d,\lambda-\kappa_P)\pi(\lambda)d\lambda,~\text{on}~\Delta_\mathscr Z(K_X^{-1})\times\{0\},\\
&A(\mathfrak D,\lambda-\kappa_P,t)\pi(\lambda)\langle\lambda-\kappa_P,\nu\rangle d\sigma_0\wedge dt,~\text{on walls},\\
&A(\mathfrak D,\lambda-\kappa_P,t)\pi(\lambda)\langle\lambda,\nu\rangle d\tilde\sigma_0,~\text{on the graph of}~\tau^0(\lambda-\kappa_P),\end{aligned}\right.
\end{align*}
where $d\sigma_0$ and $d\tilde\sigma_0$ are the standard induced Lebesgue measure on corresponding facets, respectively.
\end{itemize}
\end{lem}


Also we introduce a family of convex polytopes that describes the K-stability of $X$. The polytope $\Delta_\mathscr Z(K_X^{-1})$, although independent with the choice of divisor $\mathfrak d$, itself hardly represents full information of ${\rm H}^0(X,K_X^{-1})^{(B)}_\lambda$ for a fixed $\lambda$. In \cite[Section 4.4]{Ilten-Suss-Duke}, Ilten-S\"u\ss \,introduced a family of polytopes for Fano $T$-varieties of complexity 1 that fully encodes the information of K-stability. In the following we define its counterpart for a $\mathbb Q$-Fano $G$-variety of complexity 1.
\begin{lem}\label{polytope-anti-can}
The function $A(\mathfrak d,\cdot)$, and the polytopes
\begin{align}\label{Delta-O-K-def}
\Delta_x^O(K_X^{-1}):=&\{(\lambda,t)\in(\Gamma_\mathbb R+\kappa_P)\times\mathbb R|-A_{x}(\mathfrak d,\lambda-\kappa_P)\leq t\leq A(\mathfrak d,\lambda-\kappa_P) -A_{x}(\mathfrak d,\lambda-\kappa_P)\}\notag\\&+(0,a_{x}-1),~x\in C.
\end{align}
are independent of the choice of an anti-canonical divisor $\mathfrak d$ in \eqref{anti-can-div} in the oen-parameter case or \eqref{anti-can-div-quasi-homo} in the quasihomogeneous case.
\end{lem}

\begin{proof}
By \eqref{anti-can-div} and \eqref{anti-can-div-quasi-homo} we see that in both cases
$$(a_x-1)-A_x(\mathfrak d,\lambda)=-\min_{x_D=x}\frac{1+\ell_D(\lambda)}{h_D},~\lambda\in\Gamma_\mathbb R,$$
and
$$A(\mathfrak d,\lambda)=2+\sum_{x\in C}\min_{x_D=x}\frac{1-h_D+\ell_D(\lambda)}{h_D},~\lambda\in\Gamma_\mathbb R,$$
since $\sum_{x\in C}a_x=2$. Hence we get the Lemma.
\end{proof}
\begin{rem}
In the following we denote
$$A(K_X^{-1},\lambda-\kappa_P):=A(\mathfrak d,\lambda-\kappa_P),~\lambda\in\Delta_\mathscr Z(K_X^{-1}).$$
In fact, one can even choose any $s_0\in{\rm H}^0(X,K_X^{-1})^{(B)}_{\lambda_0}$ with any $\lambda_0$ (not necessarily equals to $\kappa_P$) and prove
$$A(K_X^{-1},\lambda-\kappa_P)=A({\rm div}(s_0),\lambda-\lambda_0),~\lambda\in\Delta_\mathscr Z(K_X^{-1}).$$
This shows that the function $A(K_X^{-1},\lambda-\kappa_P)$ is in fact totally determined by $K_X^{-1}$. The same also holds for $\Delta_x^O(K_X^{-1})$.
\end{rem}

Now we prove the main result in this section, which holds for both cases $h_0\not=0$ and $h_0=0$:
\begin{theo}\label{Fut-one-para-thm}
Let $X$ be a $\mathbb Q$-Fano $G$-variety of complexity 1 with ${\rm k}(X)^B\cong{\rm k}(\mathbb P^1)$. Let $(\mathcal X,\mathcal L)$ be a test configuration of $(X,K_X^{-1})$ that is associated to some integral $v_0=\ell_0+h_0q_{x_0}\in\mathscr Q_{x_0,+}$ for some $x_0\in C$ (if $h_0=0$ then $v_0\in\mathscr Q\subset\mathscr Q_{x,+}$ for all $x\in C$) and $m=-1$. Then
\begin{align}\label{Fut-one-para-eq}
{\rm Fut}(\mathcal X,\mathcal L)=\langle\kappa_P-\mathbf{b}(\Delta_{x_0}^O(K_X^{-1})),v_0\rangle,
\end{align}
where
$$\mathbf{b}(\Delta_{x_0}^O(K_X^{-1}))=\frac1V\int_{\Delta_{x_0}^O(K_X^{-1})}(\lambda,t)\pi(\lambda) d\lambda\wedge dt,$$
and
\begin{align*}V:=&{\int_{\Delta_\mathscr Z(K_X^{-1})}A(K_X^{-1},\lambda-\kappa_P)\pi(\lambda)d\lambda}.
\end{align*}
\end{theo}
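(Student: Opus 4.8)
The plan is to compute the leading coefficients $A,B,C,D$ in the asymptotic expansions of the total weight $w_k(\mathcal X,\mathcal L)$ and of $\dim{\rm H}^0(X,L^k)$, then substitute into the definition ${\rm Fut}(\mathcal X,\mathcal L)=(AD-BC)/C^2$ given by \eqref{Fut-def}, and finally recognize the resulting expression as the pairing $\langle\kappa_P-\mathbf b(\Delta_{x_0}^O(K_X^{-1})),v_0\rangle$. The two inputs are already at hand: Lemma \ref{h0(X,Lk)} supplies the expansion of $\dim{\rm H}^0(X,L^k)$, hence $C$ (the $k^n$-coefficient) and $D$ (the $k^{n-1}$-coefficient), while Lemma \ref{wk} supplies the expansion of $w_k(\mathcal X,\mathcal L)$, hence $A$ (the $k^{n+1}$-coefficient) and $B$ (the $k^n$-coefficient). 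The whole computation specializes to $L=K_X^{-1}$, $\lambda_0=\kappa_P$, so the anti-canonical formulas of Theorems \ref{anti-can-div-thm} and \ref{anti-can-div-thm-quasi-homo} are in force throughout.

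\textbf{Reducing the boundary and gradient terms.} The first substantive step is to simplify $B$, the $k^n$-coefficient of $w_k$. In both \eqref{wk-h0=0} and \eqref{wk-h0not=0} this coefficient contains a gradient term $\langle\nabla\pi(\lambda),\rho\rangle$ and several boundary integrals. I would first apply Lemma \ref{nabla-pi}, which (since $\lambda_0=\kappa_P$) lets me replace $\langle\nabla\pi(\lambda),\rho\rangle$ by $\tfrac12\langle\nabla\pi(\lambda),\kappa_P\rangle$ wherever the integrand is supported on $\Delta_\mathscr Z(K_X^{-1})$. Next I would invoke Lemma \ref{bdry-measure} to rewrite the induced lattice measures $d\tilde\sigma$ on the three types of boundary pieces of $\Delta_\mathscr Z(\mathcal L)$ in terms of the standard Lebesgue measures together with the pairing $\langle\lambda,\nu\rangle-\nu(\kappa_P)$; the point of the anti-canonical normalization is precisely that these boundary contributions combine, via integration by parts against $\nabla\pi$, into a clean multiple of the pairing with $\kappa_P$. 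The colour facets of type-a$'$ and type-b drop out because $\pi$ vanishes there (Lemma \ref{face-vanishing}), exactly as in the derivation of Theorem \ref{anti-can-div-thm-quasi-homo}. After these reductions both $A$ and $B$ should be expressible as integrals over $\Delta_{x_0}^O(K_X^{-1})$ of $A(K_X^{-1},\lambda-\kappa_P)\pi(\lambda)$ weighted by the $t$-coordinate and by $\langle\lambda,\kappa_P\rangle$ respectively.

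\textbf{Assembling the Futaki invariant.} With $C=\int_{\Delta_\mathscr Z(K_X^{-1})}A(K_X^{-1},\lambda-\kappa_P)\pi(\lambda)\,d\lambda=V$ identified from Lemma \ref{h0(X,Lk)}, and with the $\mathbb Q$-Fano Riemann--Roch relation $\tfrac1{(n-1)!}K_X^{-1}\!\cdot L^{n-1}=\lim_k \tfrac2{k^{n-1}}(\dim{\rm H}^0(X,L^k)-\tfrac{L^n}{n!}k^n)$ used in the proof of Theorem \ref{anti-can-div-thm-quasi-homo} to control $D$, I would form $AD-BC$ and divide by $C^2$. The key algebraic miracle to verify is that, after the gradient/boundary manipulations of the previous paragraph, the $\langle\nabla\pi,\rho\rangle$-type contributions to $B$ and the analogous contributions to $D$ cancel in the combination $AD-BC$, leaving precisely $\langle\kappa_P,v_0\rangle V - \langle\mathbf b(\Delta_{x_0}^O(K_X^{-1}))\cdot(\text{weight factors}),v_0\rangle$; dividing by $V^2$ yields $\langle\kappa_P-\mathbf b(\Delta_{x_0}^O(K_X^{-1})),v_0\rangle$. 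The translation by $(0,a_{x_0}-1)$ built into the definition of $\Delta_{x_0}^O(K_X^{-1})$ in Lemma \ref{polytope-anti-can} is what converts the $t$-integral of $w_k$ into a barycenter of this shifted polytope.

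\textbf{The main obstacle.} I expect the hardest part to be the bookkeeping that shows the barycenter integral is taken over the single fibre polytope $\Delta_{x_0}^O(K_X^{-1})$ rather than over the full $\Delta_\mathscr Z(\mathcal L)$: one must use that $v_0=\ell_0+h_0q_{x_0}$ is supported at the single point $x_0\in C$, so that the only $x$-fibre contributing a $t$-dependent (and hence $v_0$-dependent) term is $x=x_0$, all other fibres being $t$-independent and cancelling between numerator contributions. Concretely, the delicate point is matching the two regimes $h_0\neq0$ and $h_0=0$ to the \emph{same} final formula \eqref{Fut-one-para-eq}: when $h_0=0$ the weight enters through $\tau^0(\lambda-\lambda_0)=m_0+\ell_0(\lambda)$ and one integrates over $\Delta_\mathscr Z(K_X^{-1})$ with a linear weight in $\ell_0$, whereas when $h_0\neq0$ one genuinely integrates over the two-dimensional-in-$t$ region $\Delta_\mathscr Z(\mathcal L)$; verifying that both reduce to a pairing of $v_0$ with $\kappa_P-\mathbf b(\Delta_{x_0}^O(K_X^{-1}))$, and in particular that the $h_0$-dependent correction terms (the $(\tfrac1{h_0}-1)$ integrals) assemble correctly into the barycenter rather than surviving as a spurious additive constant, is where the computation must be carried out with care.
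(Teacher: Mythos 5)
Your proposal follows essentially the same route as the paper's own proof: specialize Lemmas \ref{h0(X,Lk)} and \ref{wk} to $L=K_X^{-1}$, $\lambda_0=\kappa_P$, simplify the gradient and boundary terms via Lemmas \ref{bdry-measure}, \ref{nabla-pi} and \ref{face-vanishing} together with the anti-canonical coefficients $m_D=1-h_D+h_Da_{x_D}$ and $\deg\mathfrak a=2$, and assemble ${\rm Fut}=(AD-BC)/C^2$, with the same delicate points (the single-fibre barycenter over $\Delta_{x_0}^O(K_X^{-1})$, the shift by $(0,a_{x_0}-1)$, and the two regimes $h_0=0$ versus $h_0\neq0$) identified exactly as in the paper. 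The only cosmetic difference is that you extract $D=\tfrac n2 V$ from the Riemann--Roch relation rather than by the paper's explicit integration by parts in \eqref{h0(X,Lk)-eq-one-para}; both give the same value, so this changes nothing of substance.
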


\begin{rem}
Clearly it holds
\begin{align*}
V=&\int_{\Delta_\mathscr Z(K_X^{-1})}\left(\int_{-A_{x}(\mathfrak d,\lambda-\kappa_P)}^{A(\mathfrak d,\lambda-\kappa_P)-A_{x}(\mathfrak d,\lambda-\kappa_P)} 1dt\right)\pi(\lambda)d\lambda\notag\\
=&\int_{\Delta_{x}^O(K_X^{-1})}\pi(\lambda) d\lambda\wedge dt,~\forall x\in C.
\end{align*}
\end{rem}

\begin{proof}[Proof of Theorem \ref{Fut-one-para-thm}]
Fix any divisor $\mathfrak d$ defined by \eqref{anti-can-div} (in one-parameter case) or \eqref{anti-can-div-quasi-homo} (in quasihomogeneous case), and take $\lambda_0=\kappa_P$ in Lemmas \ref{h0(X,Lk)} and \ref{wk}. We will simplify the terms there with the help of Lemmas \ref{bdry-measure} and \ref{nabla-pi}.

We first simplify the expression of $\dim{\rm H}^0(X,L^k)$ in \eqref{h0(X,Lk)-eq}.
By Lemma \ref{bdry-measure} (1),
\begin{align*}
\int_{\partial\Delta_\mathscr Z(K_X^{-1})}A(\mathfrak d,\lambda-\kappa_P)\pi(\lambda)d\sigma=&\int_{\partial\Delta_\mathscr Z(K_X^{-1})}A(\mathfrak d,\lambda-\kappa_P)\pi(\lambda)\langle\lambda-\kappa_P,\nu\rangle d\sigma_0.
\end{align*}
Note that
$$n=\dim X=r+\deg\pi+1.$$
By integration by parts,
\begin{align}\label{h0(X,Lk)-bdry-term}
&\int_{\partial\Delta_\mathscr Z(K_X^{-1})}A(\mathfrak d,\lambda-\kappa_P)\pi(\lambda)\langle\nu,\lambda\rangle d\sigma_0\notag\\
=&\int_{\Delta_\mathscr Z(K_X^{-1})}(n-1)A(\mathfrak d,\lambda-\kappa_P)\pi(\lambda)d\lambda+\int_{\Delta_\mathscr Z(K_X^{-1})}\langle\nabla(A(\mathfrak d,\lambda-\kappa_P)),\lambda\rangle\pi(\lambda)d\lambda\notag\\
=&n\int_{\Delta_\mathscr Z(K_X^{-1})}A(\mathfrak d,\lambda-\kappa_P)\pi(\lambda)d\lambda-\sum_{x\in C}\sum_{a=1}^N\int_{\Omega_a}\frac{m_{D_a(x)}}{h_{D_a(x)}}\pi(\lambda)d\lambda\notag\\
&+\int_{\Delta_\mathscr Z(K_X^{-1})}\langle\nabla(A(\mathfrak d,\lambda-\kappa_P)),\kappa_P\rangle\pi(\lambda)d\lambda.
\end{align}
By Theorems \ref{anti-can-div-thm} (for one-parameter case) and \ref{anti-can-div-thm-quasi-homo} (for quasihomogeneous case),
\begin{align}\label{mD-one-para}
m_{D}=1-h_D+h_Da_x,~\forall D\in\mathscr B(X).
\end{align}
Plugging \eqref{mD-one-para} into the second term of \eqref{h0(X,Lk)-bdry-term},
\begin{align*}
&\int_{\Delta_\mathscr Z(K_X^{-1})}\langle\nabla(A(\mathfrak d,\lambda-\kappa_P)),\lambda\rangle\pi(\lambda)d\lambda\\
=&\int_{\Delta_\mathscr Z(K_X^{-1})}A(\mathfrak d,\lambda-\kappa_P)\pi(\lambda)d\lambda-\sum_{x\in C}\sum_{a=1}^N\int_{\Omega_a}(\frac1{h_{D_a(x)}}+1-a_x)\pi(\lambda)d\lambda\\
&+\int_{\Delta_\mathscr Z(K_X^{-1})}\langle\nabla(A(\mathfrak d,\lambda-\kappa_P)),\kappa_P\rangle\pi(\lambda)d\lambda.
\end{align*}
Note that since $\mathfrak a=\sum_{x\in C}a_x\cdot s$ is an anti-canonical divisor on $C=\mathbb P^1$, it has degree
\begin{align}\label{deg(a)}
\deg(\mathfrak a)=\sum_{x\in C}a_x=2.
\end{align}
We get
\begin{align}\label{h0(X,Lk)-bdry-1st-term}
&\int_{\Delta_\mathscr Z(K_X^{-1})}\langle\nabla(A(\mathfrak d,\lambda-\kappa_P)),\lambda\rangle\pi(\lambda)d\lambda\notag\\
=&\int_{\Delta_\mathscr Z(K_X^{-1})}A(\mathfrak d,\lambda-\kappa_P)\pi(\lambda)d\lambda-\sum_{x\in C}\sum_{a=1}^N\int_{\Omega_a}(\frac1{h_{D_a(x)}}+1)\pi(\lambda)d\lambda\notag\\
&-2\int_{\Delta_\mathscr Z(K_X^{-1})}\pi(\lambda)d\lambda+\int_{\Delta_\mathscr Z(K_X^{-1})}\langle\nabla(A(\mathfrak d,\lambda-\kappa_P)),\kappa_P\rangle\pi(\lambda)d\lambda.
\end{align}
On the other hand,
\begin{align}\label{h0(X,Lk)-bdry-ext-term}
&\int_{\partial\Delta_\mathscr Z(K_X^{-1})}A(\mathfrak d,\lambda-\kappa_P)\pi(\lambda)\langle\nu,\kappa_P\rangle d\sigma_0\notag\\
=&\int_{\Delta_\mathscr Z(K_X^{-1})}A(\mathfrak d,\lambda-\kappa_P)\langle\nabla\pi(\lambda),\kappa_P\rangle d\lambda+\int_{\Delta_\mathscr Z(K_X^{-1})}\langle\nabla(A(\mathfrak d,\lambda-\kappa_P)),\kappa_P\rangle\pi(\lambda)d\lambda.
\end{align}
Plugging \eqref{h0(X,Lk)-bdry-term}-\eqref{h0(X,Lk)-bdry-ext-term} into \eqref{h0(X,Lk)-eq}, we get
\begin{align}\label{h0(X,Lk)-eq-one-para}
\dim{\rm H}^0(X,L^k)=&k^n\int_{\Delta_\mathscr Z(K_X^{-1})}A(\mathfrak d,\lambda-\kappa_P)\pi(\lambda)d\lambda\notag\\
&+\frac12k^{n-1}n\int_{\Delta_\mathscr Z(K_X^{-1})}A(\mathfrak d,\lambda-\kappa_P)\pi(\lambda)d\lambda+O(k^{n-1})\notag\\
=&Vk^n+\frac12Vnk^{n-1}+O(k^{n-1}),~k\to+\infty.
\end{align}

Then we deal with the total weight $w_k$ in Lemma \ref{wk}. We only show the reduction of \eqref{wk-h0not=0} since the remaining case is much simpler. Recall \eqref{tilde-DZ(d)} Denote by $\tilde \nu$ the unit outer normal vector of $\partial\Delta_\mathscr Z(\mathcal L)$. Then
\begin{align}\label{tilde-nu-one-para}
\tilde\nu=\left\{\begin{aligned}&(0,-1),~&\text{on}~\Delta_\mathscr Z(K_X^{-1})\times\{0\},\\&(\nu,0),~&\text{on the walls}.\end{aligned}\right.
\end{align}
Denote by $\tilde W$ the union of all walls of $\partial\Delta_\mathscr Z(\mathcal L)$. By Lemma \ref{bdry-measure} (2) we have
\begin{align}\label{wk-one-para-eq-1}
&\int_{\partial\Delta_\mathscr Z(\mathcal L)}A(\mathfrak D,\lambda-\kappa_P,t)\pi(\lambda)d\tilde\sigma\notag\\
=&\int_{\Delta_\mathscr Z(K_X^{-1})} A(\mathfrak d,\lambda-\kappa_P)\pi(\lambda)d\lambda+\int_{\partial\Delta_\mathscr Z(\mathcal L)}A(\mathfrak D,\lambda-\kappa_P,t)\pi(\lambda)\langle \tilde\nu,(\lambda,t)\rangle d\tilde\sigma_0\notag\\
&-\int_{\tilde W}A(\mathfrak D,\lambda-\kappa_P,t)\pi(\lambda)\nu(\kappa_P)d\sigma_0\wedge dt.
\end{align}
For the second term, taking integration by parts,
\begin{align}\label{wk-one-para-eq-2}
&\int_{\partial\Delta_\mathscr Z(\mathcal L)}A(\mathfrak D,\lambda-\kappa_P,t)\pi(\lambda)\langle \tilde\nu,(\lambda,t)\rangle d\tilde\sigma_0\notag\\
=&\int_{\Delta_\mathscr Z(\mathcal L)}(nA(\mathfrak D,\lambda-\kappa_P,t)+\langle\nabla(A(\mathfrak D,\lambda-\kappa_P,t)),(\lambda,t)\rangle )\pi(\lambda) d\lambda\wedge dt.
\end{align}
Recall the division \eqref{linear-domains}. We have
\begin{align*}
\langle\nabla(A(\mathfrak D,\lambda-\kappa_P,t)),(\lambda,t)\rangle=&A(\mathfrak D,\lambda-\kappa_P,t)-\sum_{x\not=x_0}\sum_{a=1}^N\frac{m_{D_a(x)}-\ell_{D_a(x)}(\kappa_P)}{h_{D_a(x)}}\chi_{\tilde\Omega_a'\cup\tilde\Omega^o_a}\\
&-\sum_{a=1}^N\frac{m_{D_a(x_0)}-\ell_{D_a(x_0)}(\kappa_P)}{h_{D_a(x_0)}}\chi_{\tilde\Omega_a'}-\frac{m_0-\ell(\kappa_P)}{h_0}\chi_{\tilde\Delta_\mathscr Z^o(\mathfrak d)},
\end{align*}
where $\chi_S=\chi_S(\lambda,t)$ denotes the characteristics function of the set $S$.

As before, using \eqref{mD-one-para}, \eqref{deg(a)} and taking integration, we have
\begin{align}\label{wk-one-para-eq-3}
&\int_{\Delta_\mathscr Z(\mathcal L)}\langle\nabla(A(\mathfrak D,\lambda-\kappa_P,t)),(\lambda,t)\rangle \pi(\lambda)d\lambda\wedge dt\notag\\
=&\int_{\Delta_\mathscr Z(\mathcal L)}A(\mathfrak D,\lambda-\kappa_P,t)\pi(\lambda)d\lambda\wedge dt-\sum_{x\not=x_0}\sum_{a=1}^N\int_{\tilde\Omega_a'\cup\tilde\Omega^o_a}(\frac1{h_{D_a(x)}}-1)\pi(\lambda)d\lambda\wedge dt\notag\\
&-\sum_{a=1}^N\int_{\tilde\Omega_a'}(\frac1{h_{D_a(x_0)}}-1)\pi(\lambda)d\lambda\wedge dt-\int_{\Delta_\mathscr Z^o(\mathcal L)}\frac{m_0-\ell(\kappa_P)}{h_0}\pi(\lambda)d\lambda\wedge dt\notag\\
&-2\int_{\Delta_\mathscr Z(\mathcal L)}\pi(\lambda)d\lambda\wedge dt+a_{x_0}\int_{\tilde\Delta_\mathscr Z^o(\mathfrak d)}\pi(\lambda)d\lambda\wedge dt\notag\\
&+\sum_{x\not=x_0}\sum_{a=1}^N\int_{\tilde\Omega_a'\cup\tilde\Omega^o_a}\frac{\ell_{D_a(x)}(\kappa_P)}{h_{D_a(x)}}\pi(\lambda)d\lambda\wedge dt+\sum_{a=1}^N\int_{\tilde\Omega_a'}\frac{\ell_{D_a(x_0)}(\kappa_P)}{h_{D_a(x_0)}}\pi(\lambda)d\lambda\wedge dt.
\end{align}
Plugging \eqref{wk-one-para-eq-2}-\eqref{wk-one-para-eq-3} into \eqref{wk-one-para-eq-1}, we have
\begin{align}\label{wk-one-para-eq-4}
&\int_{\partial\Delta_\mathscr Z(\mathcal L)}A(\mathfrak D,\lambda-\kappa_P,t)\pi(\lambda)d\tilde\sigma\notag\\
=&\int_{\Delta_\mathscr Z(K_X^{-1})} A(\mathfrak d,\lambda-\kappa_P)\pi(\lambda)d\lambda-\int_{\tilde W}A(\mathfrak D,\lambda-\kappa_P,t)\pi(\lambda)\nu(\kappa_P)d\sigma_0\wedge dt\notag\\
&+(n+1)\int_{\Delta_\mathscr Z(\mathcal L)}A(\mathfrak D,\lambda-\kappa_P,t)\pi(\lambda)d\lambda\wedge dt-\sum_{x\not=x_0}\sum_{a=1}^N\int_{\tilde\Omega_a'\cup\tilde\Omega^o_a}(\frac1{h_{D_a(x)}}-1)\pi(\lambda)d\lambda\wedge dt\notag\\
&-\sum_{a=1}^N\int_{\tilde\Omega_a'}(\frac1{h_{D_a(x_0)}}-1)\pi(\lambda)d\lambda\wedge dt-\int_{\Delta_\mathscr Z^o(\mathcal L)}\frac{m_0-\ell(\kappa_P)}{h_0}\pi(\lambda)d\lambda\wedge dt\notag\\
&-2\int_{\Delta_\mathscr Z(\mathcal L)}\pi(\lambda)d\lambda\wedge dt+a_{x_0}\int_{\tilde\Delta_\mathscr Z^o(\mathfrak d)}\pi(\lambda)d\lambda\wedge dt\notag\\
&+\sum_{x\not=x_0}\sum_{a=1}^N\int_{\tilde\Omega_a'\cup\tilde\Omega^o_a}\frac{\ell_{D_a(x)}(\kappa_P)}{h_{D_a(x)}}\pi(\lambda)d\lambda\wedge dt+\sum_{a=1}^N\int_{\tilde\Omega_a'}\frac{\ell_{D_a(x_0)}(\kappa_P)}{h_{D_a(x_0)}}\pi(\lambda)d\lambda\wedge dt.
\end{align}

On the other hand, by Lemma \ref{nabla-pi},
\begin{align*}
\int_{\Delta_\mathscr Z(\mathcal L)}A(\mathfrak D,\lambda-\kappa_P,t)\langle\nabla\pi(\lambda),\rho\rangle d\lambda\wedge dt
=&\frac12\int_{\Delta_\mathscr Z(\mathcal L)}A(\mathfrak D,\lambda-\kappa_P,t)\langle\nabla\pi(\lambda),\kappa_P\rangle d\lambda\wedge dt\notag\\
=&\frac12\int_{\partial\Delta_\mathscr Z(\mathcal L)}A(\mathfrak D,\lambda-\kappa_P,t)\langle\tilde\nu,\kappa_P\rangle \pi(\lambda)d\tilde\sigma_0\notag\\
&-\frac12\int_{\Delta_\mathscr Z(\mathcal L)}\langle\nabla(A(\mathfrak D,\lambda-\kappa_P,t)),\kappa_P\rangle\pi(\lambda) d\lambda\wedge dt.
\end{align*}
Here in the brackets we write $\kappa_P$ in short of $(\kappa_P,0)$. By \eqref{tilde-nu-one-para} and the fact that $\tilde A\equiv0$ on the graph of $\tau^0$, we further get
\begin{align}\label{wk-one-para-eq-5}
\int_{\Delta_\mathscr Z(\mathcal L)}A(\mathfrak D,\lambda-\kappa_P,t)\langle\nabla\pi(\lambda),\rho\rangle d\lambda\wedge dt\notag
=&\frac12\int_{\tilde W}A(\mathfrak D,\lambda-\kappa_P,t)\nu(\kappa_P) \pi(\lambda)d\sigma_0\wedge dt\notag\\
&-\frac12\int_{\Delta_\mathscr Z(\mathcal L)}\langle\nabla(A(\mathfrak D,\lambda-\kappa_P,t)),\kappa_P\rangle\pi(\lambda) d\lambda\wedge dt.
\end{align}
Clearly,
\begin{align}\label{wk-one-para-eq-6}
&\int_{\Delta_\mathscr Z(\mathcal L)}\langle\nabla(A(\mathfrak D,\lambda-\kappa_P,t)),\kappa_P\rangle\pi(\lambda) d\lambda\wedge dt\notag\\
=&\sum_{x\not=x_0}\sum_{a=1}^N\int_{\tilde\Omega_a'\cup\tilde\Omega^o_a}\frac{\ell_{D_a(x)}(\kappa_P)}{h_{D_a(x)}}\pi(\lambda)d\lambda\wedge dt+\sum_{a=1}^N\int_{\tilde\Omega_a'}\frac{\ell_{D_a(x_0)}(\kappa_P)}{h_{D_a(x_0)}}\pi(\lambda)d\lambda\wedge dt\notag\\
&+\int_{\Delta_\mathscr Z^o(\mathcal L)}\frac{\ell_0(\kappa_P)}{h_0}\pi(\lambda)d\lambda\wedge dt.
\end{align}

Plugging \eqref{wk-one-para-eq-4}-\eqref{wk-one-para-eq-6} into \eqref{wk-h0not=0}, the coefficient of the $k^n$-term is
\begin{align*}
&\frac12\int_{\partial\Delta_\mathscr Z(\mathcal L)}A(\mathfrak D,\lambda-\kappa_P,t)\pi(\lambda)d\tilde\sigma+\frac12\sum_{\tilde a=1}^{\tilde N}\sum_{x\in C}\int_{\tilde\Omega_{\tilde a}}(\frac1{h_{\tilde D_{\tilde a}(x)}}-1)\pi(\lambda)d\lambda\wedge dt\notag\\
&-\int_{\Delta_\mathscr Z(K_X^{-1})}A(\mathfrak d,\lambda-\kappa_P)\pi(\lambda)d\lambda+\int_{\tilde\Delta_\mathscr Z(\mathfrak d)}\pi(\lambda)d\lambda\wedge dt\notag\\
&+\int_{\Delta_\mathscr Z(\mathcal L)}A(\mathfrak D,\lambda-\kappa_P,t)\langle\nabla\pi(\lambda),\rho\rangle d\lambda\wedge dt\notag\\
=&\frac12n\int_{\Delta_\mathscr Z(\mathcal L)}A(\mathfrak D,\lambda-\kappa_P,t)\pi(\lambda) d\lambda\wedge dt-\frac12\int_{\Delta_\mathscr Z(K_X^{-1})}A(\mathfrak d,\lambda-\kappa_P)\pi(\lambda)d\lambda\notag\\
&+\frac12\int_{\Delta_\mathscr Z(\mathcal L)}A(\mathfrak D,\lambda-\kappa_P,t)\pi(\lambda) d\lambda\wedge dt+\frac12\int_{\Delta_\mathscr Z^o(\mathcal L)}(\frac{1-m_0}{h_0}-1+a_{x_0})\pi(\lambda) d\lambda\wedge dt.
\end{align*}
Combining with \eqref{h0(X,Lk)-eq-one-para} and \eqref{Fut-def},
\begin{align*}
{\rm Fut}(\mathcal X,\mathcal L)
=&\frac1{V}\int_{\Delta_\mathscr Z(K_X^{-1})}A(\mathfrak d,\lambda-\kappa_P)\pi(\lambda)d\lambda
-\frac1{V}\int_{\Delta_\mathscr Z(\mathcal L)}A(\mathfrak D,\lambda-\kappa_P,t)\pi(\lambda) d\lambda\wedge dt\notag\\&-\frac1{V}\int_{\Delta_\mathscr Z^o(\mathcal L)}(\frac{1-m_0}{h_0}-1+a_{x_0})\pi(\lambda) d\lambda\wedge dt.
\end{align*}
Using \eqref{tilde-DZ(d)} and \eqref{tilde-A}, we can rewrite the integrations on the right-hand side as integrations over $\Delta_\mathscr Z(K_X^{-1})$, and get
\begin{align}\label{fut-inv-integral}
{\rm Fut}(\mathcal X,\mathcal L)
=&-\frac1{V}\int_{\Delta_\mathscr Z(K_X^{-1})}\ell_0(\lambda-\kappa_P)A(\mathfrak d,\lambda-\kappa_P)\pi(\lambda)d\lambda-h_0(a_{x_0}-1)\notag\\
&-\frac{h_0}{2V}\int_{\Delta_\mathscr Z(K_X^{-1})}(A(\mathfrak d,\lambda-\kappa_P)-2A_{x_0}(\mathfrak d,\lambda-\kappa_P))A(\mathfrak d,\lambda-\kappa_P)\pi(\lambda)d\lambda.
\end{align}
For the last term in the above equation, it holds
\begin{align*}
&\frac{h_0}{2V}\int_{\Delta_\mathscr Z(K_X^{-1})}(A(\mathfrak d,\lambda-\kappa_P)-2A_{x_0}(\mathfrak d,\lambda-\kappa_P))A(\mathfrak d,\lambda-\kappa_P)\pi(\lambda)d\lambda\notag\\
=&\frac{h_0}{2V}\int_{\Delta_\mathscr Z(K_X^{-1})}\left((A(\mathfrak d,\lambda-\kappa_P)-A_{x_0}(\mathfrak d,\lambda-\kappa_P))^2-A_{x_0}^2(\mathfrak d,\lambda-\kappa_P)\right)\pi(\lambda)d\lambda\notag\\
=&\frac{h_0}{V}\int_{\Delta_\mathscr Z(K_X^{-1})}\left(\int_{-A_{x_0}(\mathfrak d,\lambda-\kappa_P)} ^{A(\mathfrak d,\lambda-\kappa_P)-A_{x_0}(\mathfrak d,\lambda-\kappa_P)} tdt\right)\pi(\lambda)d\lambda\notag\\
=&\frac{h_0}V\int_{\Delta_{x_0}^O(K_X^{-1})}t\pi(\lambda) d\lambda\wedge dt-h_0(a_{x_0}-1).
\end{align*}
Plugging this into \eqref{fut-inv-integral} one directly gets \eqref{Fut-one-para-eq}.
\end{proof}

Using Theorem \ref{Fut-one-para-thm}, Remark \ref{base-change-rmk} and \eqref{base-change-of-NA}, one directly gets
\begin{cor}\label{MNA-coro}
Let $(\mathcal X,\mathcal L)$ be the $G$-equivariant normal test configuration of $(X,L)$ associated to $(v_0,m)\in\mathscr V\times\mathbb Z_{<0}$ defined in Proposition \ref{tc-to-v0}. Then
\begin{align*}
{\rm M}^{\rm NA}(\mathcal X,\mathcal L)=\langle\kappa_P-\mathbf{b}(\Delta_{x_0}^O(K_X^{-1})),v_0\rangle.
\end{align*}
\end{cor}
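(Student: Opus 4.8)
The plan is to reduce everything to Theorem \ref{Fut-one-para-thm}, which computes the invariant for a test configuration with integral (hence reduced) central fibre, exploiting the fact recorded after Definition \ref{K-stab-def-Fut} that ${\rm M}^{\rm NA}$ and ${\rm Fut}$ coincide exactly on configurations with reduced central fibre. First I would dispose of the case $m=-1$. Here Proposition \ref{tc-to-v0} gives ${\rm pr}^{-1}(0)=\mathcal X_0^{\rm red}$, so the central fibre is reduced and $v_0$ is integral; thus $(\mathcal X,\mathcal L)$ is precisely the type of test configuration to which Theorem \ref{Fut-one-para-thm} applies, and the coincidence of the two functionals on reduced central fibres gives
\begin{align*}
{\rm M}^{\rm NA}(\mathcal X,\mathcal L)={\rm Fut}(\mathcal X,\mathcal L)=\langle\kappa_P-\mathbf{b}(\Delta_{x_0}^O(K_X^{-1})),v_0\rangle,
\end{align*}
which is the asserted identity in this case.

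For the general case $m<-1$, where ${\rm pr}^{-1}(0)=-m\,\mathcal X_0^{\rm red}$ is non-reduced, the strategy is to pass to a base change that makes the central fibre reduced and then transport the formula back. By Remark \ref{base-change-rmk}, the base change $t\mapsto t^{-m}$ followed by normalization yields a $G$-equivariant normal test configuration $(\mathcal X^{(-m)},\mathcal L^{(-m)})$ whose defining ray $\mathbb Q_{\geq0}(v_0,m)$ becomes $\mathbb Q_{\geq0}(v_0,-1)$ with primitive generator $(v_0,-1)$; in particular it has integral central fibre and is associated to $(v_0,-1)$ in the sense of Proposition \ref{tc-to-v0}. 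Applying the $m=-1$ step to this configuration gives ${\rm M}^{\rm NA}(\mathcal X^{(-m)},\mathcal L^{(-m)})=\langle\kappa_P-\mathbf{b}(\Delta_{x_0}^O(K_X^{-1})),v_0\rangle$. I would then invoke the linearity of the non-Archimedean Mabuchi functional under base change, equation \eqref{base-change-of-NA}, to relate ${\rm M}^{\rm NA}(\mathcal X^{(-m)},\mathcal L^{(-m)})$ to ${\rm M}^{\rm NA}(\mathcal X,\mathcal L)$ and conclude.

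I expect the main delicacy to be the bookkeeping across the base change rather than any new computation: one must verify that the valuation datum transforms exactly as in Remark \ref{base-change-rmk} so that the first component $v_0$ entering the pairing is the same before and after, and one must be careful that the replacement of ${\rm Fut}$ by ${\rm M}^{\rm NA}$ is only legitimate once the central fibre has been made reduced, since the two invariants genuinely differ for the non-reduced fibre of the original $(\mathcal X,\mathcal L)$. This is precisely why the argument is routed through ${\rm M}^{\rm NA}$, which is insensitive to the non-reduced structure up to the scaling in \eqref{base-change-of-NA}, rather than through ${\rm Fut}$ directly. Once the correspondence of Remark \ref{base-change-rmk} and the scaling in \eqref{base-change-of-NA} are tracked correctly, the statement follows by direct substitution into Theorem \ref{Fut-one-para-thm}, with no further integration or combinatorial estimate required.
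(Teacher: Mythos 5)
Your proposal follows exactly the paper's own route: the published proof is precisely the chain
\begin{align*}
{\rm M}^{\rm NA}(\mathcal X,\mathcal L)=-\frac1{m}{\rm M}^{\rm NA}(\mathcal X^{(-m)},\mathcal L^{(-m)})=-\frac1{m}{\rm Fut}(\mathcal X^{(-m)},\mathcal L^{(-m)}),
\end{align*}
that is, your combination of Remark \ref{base-change-rmk}, the coincidence of ${\rm Fut}$ and ${\rm M}^{\rm NA}$ on test configurations with reduced central fibre, the case $m=-1$ of Theorem \ref{Fut-one-para-thm}, and the base-change relation \eqref{base-change-of-NA}.

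That said, the step you summarize as ``relate ${\rm M}^{\rm NA}(\mathcal X^{(-m)},\mathcal L^{(-m)})$ to ${\rm M}^{\rm NA}(\mathcal X,\mathcal L)$ and conclude'' is exactly where the bookkeeping does not close, and you assert it rather than check it. You correctly observe that the base change is associated to $(v_0,-1)$ with the \emph{same} first component $v_0$; hence Theorem \ref{Fut-one-para-thm} gives ${\rm Fut}(\mathcal X^{(-m)},\mathcal L^{(-m)})=\langle\kappa_P-\mathbf{b}(\Delta_{x_0}^O(K_X^{-1})),v_0\rangle$, and then \eqref{base-change-of-NA} leaves an uncancelled prefactor $-1/m$, so the argument actually yields
\begin{align*}
{\rm M}^{\rm NA}(\mathcal X,\mathcal L)=\Bigl\langle\kappa_P-\mathbf{b}(\Delta_{x_0}^O(K_X^{-1})),-\frac1m\,v_0\Bigr\rangle,
\end{align*}
which agrees with the identity as stated only when $m=-1$. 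For $m<-1$ nothing cancels the factor $1/|m|$; equivalently, the right-hand side should pair against the rescaled valuation $-v_0/m$ (the valuation the central fibre induces in the sense of \cite{Boucksom-Hisamoto-Jonsson}) rather than against the primitive $v_0$. To be fair, the paper's own two-line proof drops this factor silently as well, and the slip is harmless downstream, since only the sign of ${\rm M}^{\rm NA}$ enters the stability criterion and the proof of Theorem \ref{K-st-thm} invokes the formula only for configurations with $m=-1$. But your writeup, which explicitly identifies the scaling across the base change as ``the main delicacy,'' should actually carry that scaling out: carried out honestly, it forces either inserting the factor $-1/m$ into the formula or restricting the clean identity to the case $m=-1$.
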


\begin{proof}
This is a consequence of
$${\rm M}^{\rm NA}(\mathcal X,\mathcal L)=-\frac1{m}{\rm M}^{\rm NA}(\mathcal X^{(-m)},\mathcal L^{(-m)})=-\frac1{m}{\rm Fut}(\mathcal X^{(-m)},\mathcal L^{(-m)}).$$
\end{proof}

\subsection{The non-Archimedean J-functional}
Let $v_0=\ell_0+h_0q_{x_0}\in\mathscr Q_{x_0,+}$ for some $x_0\in C$, and $\mathfrak d=\mathfrak d_\mathfrak a$ be an anti-canonical divisor given by \eqref{anti-can-div} or \eqref{anti-can-div-quasi-homo}. 

We have
\begin{prop}\label{J-NA-prop}
Let $X$ be a $\mathbb Q$-Fano $G$-variety of complexity 1 with ${\rm k}(X)^B\cong{\rm k}(\mathbb P^1)$. Let $(\mathcal X,\mathcal L)$ be a test configuration of $(X,K_X^{-1})$ that is associated to some integral $v_0=\ell_0+h_0q_{x_0}\in\mathscr Q_{x_0,+}$ for some $x_0\in C$ (if $h_0=0$ then $v_0\in\mathscr Q\subset\mathscr Q_{x,+}$ for all $x\in C$) and $m=-1$. Then
\begin{align}\label{JNA-eq}
{\rm J^{NA}}(\mathcal X,\mathcal L)=\frac1V\int_{\Delta_{x_0}^O(K_X^{-1})}(\max_{(\lambda,t)\in\Delta_{x_0}^O(K_X^{-1})}\langle(\lambda,t),v_0\rangle-\langle(\lambda,t),v_0\rangle)\pi(\lambda)d\lambda\wedge dt.
\end{align}
\end{prop}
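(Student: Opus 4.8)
The plan is to evaluate separately the two terms in the definition \eqref{J-NA-def} of ${\rm J}^{\rm NA}$ and to match each against the polytope $\Delta_{x_0}^O(K_X^{-1})$, on which I view $v_0=\ell_0+h_0q_{x_0}$ as the linear functional $\langle(\lambda,t),v_0\rangle=\ell_0(\lambda)+h_0t$. The basic observation is a total-mass identity: by Lemma \ref{polytope-anti-can} the $t$-fibre of $\Delta_{x_0}^O(K_X^{-1})$ over a fixed $\lambda$ is an interval of length $A(K_X^{-1},\lambda-\kappa_P)$, so $\int_{\Delta_{x_0}^O(K_X^{-1})}\pi(\lambda)\,d\lambda\wedge dt=\int_{\Delta_\mathscr Z(K_X^{-1})}A(K_X^{-1},\lambda-\kappa_P)\pi(\lambda)\,d\lambda=V$. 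Hence $\max_{\Delta_{x_0}^O}\langle\cdot,v_0\rangle=\tfrac1V\int_{\Delta_{x_0}^O}\big(\max_{\Delta_{x_0}^O}\langle\cdot,v_0\rangle\big)\pi\,d\lambda\wedge dt$, and it suffices to prove the two identities $\Lambda_{\max}(\mathcal X,\mathcal L)=\max_{\Delta_{x_0}^O}\langle\cdot,v_0\rangle+C$ and $\tfrac1{(n+1)V}(\mathcal L^{n+1})=\tfrac1V\int_{\Delta_{x_0}^O}\langle(\lambda,t),v_0\rangle\pi\,d\lambda\wedge dt+C$ with one and the same constant $C=m_0-\ell_0(\kappa_P)-h_0(a_{x_0}-1)$; the constant cancels in the difference and yields \eqref{JNA-eq}.

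For the second term I note that $\tfrac1{(n+1)V}(\mathcal L^{n+1})$ is the barycentre of the Duistermaat--Heckman measure, hence equals $\lim_{k\to\infty}\big(w_k(\mathcal X,\mathcal L)/k\big)/\dim{\rm H}^0(X,L^k)$. Comparing the leading coefficients in Lemma \ref{wk} with that of \eqref{h0(X,Lk)-eq-one-para} identifies this ratio with $\tfrac1V\int_{\Delta_\mathscr Z(\mathcal L)}A(\mathfrak D,\lambda-\kappa_P,t)\pi(\lambda)\,dt\wedge d\lambda$ (using \eqref{wk-h0not=0} when $h_0\neq0$ and \eqref{wk-h0=0} when $h_0=0$). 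The key step is then a Fubini reduction in the $t$-variable: for fixed $\lambda$, inserting the explicit piecewise form \eqref{tilde-A} of $A(\mathfrak D,\lambda-\kappa_P,\cdot)$ and integrating over $0\le t\le\tau^0(\lambda-\kappa_P)$ gives a closed expression, while integrating $\ell_0(\lambda)+h_0t$ over the $t$-fibre of $\Delta_{x_0}^O(K_X^{-1})$ produces the same expression shifted by exactly $-C\,A(K_X^{-1},\lambda-\kappa_P)$. Integrating against $\pi(\lambda)\,d\lambda$ and using $\int_{\Delta_\mathscr Z(K_X^{-1})}A(K_X^{-1},\lambda-\kappa_P)\pi\,d\lambda=V$ converts this into the desired second identity.

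For $\Lambda_{\max}$ I argue from the induced filtration. By Proposition \ref{F-tau-Rk} a $B$-eigensection of weight $\lambda$ in $R_k$ has grading at most $\lfloor k\tau^0(\tfrac\lambda k-\kappa_P)\rfloor$, with $\tau^0$ as in \eqref{tau-0}; hence, as $k\to\infty$, the ${\rm k}^\times$-weights $\tau/k$ occurring in ${\rm Gr}(\mathscr F_{(\mathcal X,\mathcal L)})$ fill out an interval whose right endpoint is $\max_{\lambda\in\Delta_\mathscr Z(L)}\tau^0(\lambda-\kappa_P)$. Since $\Lambda_{\max}$ is the upper bound of the support of the Duistermaat--Heckman measure (cf. \cite{Boucksom-Hisamoto-Jonsson}), this yields $\Lambda_{\max}=\max_\lambda\tau^0(\lambda-\kappa_P)$. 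When $h_0>0$ the maximum of $\ell_0(\lambda)+h_0t$ over the $t$-fibre of $\Delta_{x_0}^O(K_X^{-1})$ is attained at its upper endpoint, and a direct substitution (again using $\tau^0$ from \eqref{tau-0} and the endpoint from Lemma \ref{polytope-anti-can}) shows that this maximum equals $\tau^0(\lambda-\kappa_P)-C$; taking the maximum over $\lambda$ gives the first identity. The case $h_0=0$, where the functional is independent of $t$, is immediate.

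Combining the two identities, the constant $C$---which absorbs all dependence on the auxiliary shift $m_0$ and reflects the invariance of ${\rm J}^{\rm NA}$ under it---cancels, and the total-mass identity turns $\max_{\Delta_{x_0}^O}\langle\cdot,v_0\rangle$ into its $\pi\,d\lambda\wedge dt$--average, producing \eqref{JNA-eq}. I expect the main obstacle to be the clean identification of $\Lambda_{\max}$ with $\max_\lambda\tau^0$: one must verify that the grading bound of Proposition \ref{F-tau-Rk} is asymptotically attained by genuine sections as $k\to\infty$, so that the support of the Duistermaat--Heckman measure indeed closes up to this value rather than merely being bounded by it. By contrast the Fubini computation of the second term is routine, provided the constant $C$ is tracked consistently across both terms so that the cancellation is visible.
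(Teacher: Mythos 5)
Your proposal is correct and is essentially the paper's own proof: you split ${\rm J}^{\rm NA}$ into its two terms as in \eqref{J-NA-def}, identify $\Lambda_{\max}(\mathcal X,\mathcal L)=\max_{\lambda\in\Delta_\mathscr Z(K_X^{-1})}\tau^0(\lambda-\kappa_P)$ and the energy term with data of $\Delta_{x_0}^O(K_X^{-1})$ up to one and the same constant $m_0-\ell_0(\kappa_P)-h_0(a_{x_0}-1)$, and cancel it in the difference --- exactly the computation \eqref{E-NA} and the display following it in the paper. The only cosmetic deviation is that you reach $\mathcal L^{n+1}/((n+1)V)$ through the Duistermaat--Heckman barycenter and the leading coefficients of Lemmas \ref{wk} and \ref{h0(X,Lk)}, whereas the paper invokes Timashev's intersection formula directly; both yield $\frac1V\int_{\Delta_\mathscr Z(\mathcal L)}A(\mathfrak D,\lambda-\kappa_P,t)\pi(\lambda)\,dt\wedge d\lambda$, and the attainment concern you raise for $\Lambda_{\max}$ is harmless, since Proposition \ref{F-tau-Rk} produces nonzero graded pieces whenever $\deg\delta_k(\lambda,\tau)\geq0$, so the bound is approached up to an $O(1)$ loss from the floor functions.
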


\begin{proof}
Fix any $\mathfrak d$ given by \eqref{anti-can-div} in the one-parameter case or \eqref{anti-can-div-quasi-homo} in the quasihomogeneous case, which is the divisor of a $B$-seiinvariant section of weight $\kappa_P$. By \eqref{tilde-DZ(d)}, \eqref{tilde-A} and direct computation, we have
\begin{align}\label{E-NA}
\frac1{V}\mathcal L^{\cdot{n+1}}=&\frac1V\int_{\Delta_\mathscr Z(\mathcal L)}A(\mathfrak D,\lambda-\kappa_P,t)\pi(\lambda)d\lambda\wedge dt\notag\\
=&\frac1V\int_{\Delta_\mathscr Z(K_X^{-1})}(m_0+\ell_0(\lambda-\kappa_P))A(\mathfrak d,\lambda-\kappa_P)\pi(\lambda)d\lambda\notag\\
&+\frac1V\int_{\Delta_\mathscr Z(K_X^{-1})}h_0(\frac12A(\mathfrak d,\lambda-\kappa_P)-A_{x_0}(\mathfrak d,\lambda-\kappa_P))A(\mathfrak d,\lambda-\kappa_P)\pi(\lambda)d\lambda\notag\\
=&\frac1V\int_{\Delta_{x_0}^O(K_X^{-1})}\langle(\lambda,t),v_0\rangle\pi(\lambda)d\lambda\wedge dt+m_0-h_0(a_{x_0}-1)-\ell_0(\kappa_P),
\end{align}
where in the last line we used \eqref{Delta-O-K-def}, the definition of $\Delta_{x_0}^O(K_X^{-1})$. 

On the other hand, note that the upper bound of the support of the DH-measure of $(\mathcal X,\mathcal L)$ is $$\Lambda_{\max}(\mathcal X,\mathcal L)=\max_{\lambda\in\Delta_\mathscr Z(K_X^{-1})}\tau^0(\lambda-\kappa_P),$$
where the function $\tau^0(\cdot)$ is defined by \eqref{tau-0}. Again by Lemma \ref{polytope-anti-can} we get
\begin{align*}
\Lambda_{\max}(\mathcal X,\mathcal L)=&\max_{\lambda\in\Delta_\mathscr Z(K_X^{-1})}\{\tau^0(\lambda-\kappa_P)+h_0(a_{x_0}-1)\}-h_0(a_{x_0}-1)\\
=&\max_{(\lambda,t)\in\Delta_{x_0}^O(K_X^{-1})}\{m_0+\ell_0(\lambda-\kappa_P)+h_0t\}-h_0(a_{x_0}-1)\\
=&\max_{(\lambda,t)\in\Delta_{x_0}^O(K_X^{-1})}\langle(\lambda,t),v_0\rangle+m_0-h_0(a_{x_0}-1)-\ell_0(\kappa_P).
\end{align*}
Combining with \eqref{E-NA} we get the Proposition.

\end{proof}

\subsection{K-stability criterion}
Recall Definition \ref{K-stab-def-Fut}. Theorem \ref{K-st-thm} is then a direct consequence of Theorem \ref{Fut-one-para-thm}.

\begin{proof}[Proof of Theorem \ref{K-st-thm}]
The proof of (1)$\Leftrightarrow$(2) is a combination of Proposition \ref{Aut-cone-prop} and Theorem \ref{Fut-one-para-thm}.

It remains to show (1)$\Leftrightarrow$(3) when $X$ is quasihomogeneous and is not a $G\times{\rm k}^\times$-spherical variety. In this case, $\mathscr A_x=\mathscr A$ for any $x\in C$ by Corollary \ref{non-spherical-cor}. Assume that (1) fails. By Theorem \ref{Fut-one-para-thm} there is a non-zero $v$ in some $\mathscr V_x$ so that the test configuration induced by $v$ is non-product and has non-positive Futaki invariant. Hence (3) fails, a contradiction. Thus we get the direction (3)$\Rightarrow$(1).

It remains to show (1)$\Rightarrow$(3). 
Both $G$ and the torus ${\mathbf T}$ (see Section 2.2.1 for definition) are reductive subgroups in ${\rm Aut}(X)$, and they commute with each other.\footnote{More precisely, the image of $G$ in ${\rm Aut}(X)$ of the homomorphism $G\to{\rm Aut}(X)$.} Thus ${\mathbf G}$ is the homomorphism image of $G\times{\mathbf T}$ in ${\rm Aut}(X)$, which is reductive and ${\mathbf T}$ is contained in the centre of ${\mathbf G}$. Also note that ${\rm Lie}({\mathbf T})\cong\mathscr A\hookrightarrow\bar{\mathscr A}$, the linear part of the valuation cone $\bar{\mathscr V}$ of $\mathcal X$. Any $G$-equivariant test configuration of $(X,L)$ is automatically ${\mathbf G}$-equivariant. It suffices to show \eqref{uni-sta-def} holds by taking $\sigma\in{\mathbf T}$ for any $G$-equivariant special test configuration.

We adopt the arguments used in spherical cases (see for example \cite{Do,Del-2020-09}). For any test configuration $(\mathcal X,\mathcal L)$ associated to $v_0$, we can twist $(\mathcal X,\mathcal L)$ by an $\ell'\in\mathscr A\cong{\rm Lie}({\mathbf T})$. 
Fix any inner product on $\mathscr Q_{x,+}\cong\mathscr Q\times\mathbb Q_+$ which is invariant under the little Weyl group $W_X$ of $X$ (see \cite{Knop94} and \cite[Section]{Timashev-book} for details on $W_X$). Choose $\ell'$ so that $v_0+\ell'$ is orthogonal to $\mathscr A$ with respect to this inner product. We say such an $(\mathcal X',\mathcal L')$ is \emph{normalized}. Note that $\mathscr A$ is an integral, $W_X$-invariant subspace of $\mathscr Q$ (cf. \cite[Theorem 7.4]{Knop94}). The element $\ell'$ is always rational. Assume that $q(v_0+\ell')$ is primitive. By Lemma \ref{twist-lem}, the base change $({\mathcal X'}^{(q)},{\mathcal L'}^{(q)})$ of $(\mathcal X',\mathcal L')$ is a test configuration associated to $q(v_0+\ell')$. In particular, $({\mathcal X'}^{(q)},{\mathcal L'}^{(q)})$ has reduced central fibre.
By \eqref{base-change-of-NA} and \eqref{JNA-eq},
\begin{align}\label{J(X',L')}
{\rm J^{NA}}(\mathcal X',\mathcal L')=&\frac1q{\rm J^{NA}}({\mathcal X'}^{(q)},{\mathcal L'}^{(q)})&\notag\\=&\frac1V\int_{\Delta_{x_0}^O(K_X^{-1})}(\max_{(\lambda,t)\in\Delta_{x_0}^O(K_X^{-1})}\langle(\lambda,t),v_0+\ell'\rangle-\langle(\lambda,t),v_0+\ell'\rangle)\pi(\lambda)d\lambda\wedge dt.
\end{align}
Also, by \eqref{Fut-one-para-eq},
$${\rm M}^{\rm NA}(\mathcal X,\mathcal L)={\rm Fut}(\mathcal X,\mathcal L)=\langle\kappa_P-\mathbf{b}(\Delta_{x_0}^O(K_X^{-1})),v_0\rangle.$$
Since Condition \eqref{K-ss-eq} implies
$$\langle\kappa_P-\mathbf{b}(\Delta_{x_0}^O(K_X^{-1})),\ell'\rangle=0.$$
We get
\begin{align}\label{seq-bar}
\langle\kappa_P-\mathbf{b}(\Delta_{x_0}^O(K_X^{-1})),v_0+\ell'\rangle={\rm M}^{\rm NA}(\mathcal X,\mathcal L).
\end{align}

Now we prove that when (1) is true, there is a constant $\epsilon_0>0$ so that
\begin{align}\label{K-us-eq}
{\rm M}^{\rm NA}(\mathcal X,\mathcal L)\geq\epsilon_0\inf_{\ell'\in{\rm Lie}({\mathbf T})}{\rm J^{NA}}(\mathcal X_\ell,\mathcal L_\ell)
\end{align}
for any $(\mathcal X,\mathcal L)$ associated to some integral $v_0\in\mathscr V$. Clearly \eqref{K-us-eq} implies (3).

Let us use a standard argument by contradiction introduced by \cite{Do}. Suppose that \eqref{K-us-eq} is not true. Then there is a sequence of test configurations $\{(\mathcal X_k,\mathcal L_k)\}_{k=1}^{+\infty}$ so that
\begin{align}\label{seq-tc-1}
\left\{\begin{aligned}
&\inf_{\ell'\in{\rm Lie}({\mathbf T})}{\rm J^{NA}}(\mathcal X'_{k\,\ell'},\mathcal L'_{k\,\ell'})>0,~\forall k\in\mathbb N_+,\\
&\lim_{k\to+\infty}\frac{{\rm M}^{\rm NA}(\mathcal X_k,\mathcal L_k)}{\inf_{\ell'\in{\rm Lie}({\mathbf T})}{\rm J^{NA}}(\mathcal X'_{k\,\ell'},\mathcal L'_{k\,\ell'})}=0.\end{aligned}\right.
\end{align}
Assume that each $(\mathcal X_k,\mathcal L_k)$ is associated to an integral $v_k\in\mathscr Q_{x_k,+}$. As above, there is a sequence $\{\ell'_k\in\mathscr A\}_{k=1}^{+\infty}$ such that each $v'_k:=v_k+\ell'_k\in\mathscr A^\perp$. Then the twisted sequence $\{(\mathcal X'_k,\mathcal L'_k):=(\mathcal X'_{k\,\ell'_k},\mathcal L'_{k\,\ell'_k})\}_{k=1}^{+\infty}$ satisfies
\begin{align}\label{seq-tc-2}
c_k:={\rm J^{NA}}(\mathcal X'_k,\mathcal L'_k)\geq\inf_{\ell'\in{\rm Lie}({\mathbf T})}{\rm J^{NA}}(\mathcal X'_{k\,\ell'},\mathcal L'_{k\,\ell'})>0,~\forall k\in\mathbb N_+,
\end{align}
and
\begin{align}\label{seq-tc-4}
0\leq\lim_{k\to+\infty}\frac{{\rm M}^{\rm NA}(\mathcal X_k,\mathcal L_k)}{{\rm J^{NA}}(\mathcal X'_{k\,\ell'_k},\mathcal L'_{k\,\ell'_k})}\leq\lim_{k\to+\infty}\frac{{\rm M}^{\rm NA}(\mathcal X_k,\mathcal L_k)}{\inf_{\ell'\in{\rm Lie}({\mathbf T})}{\rm J^{NA}}(\mathcal X'_{k\,\ell'},\mathcal L'_{k\,\ell'})}=0.
\end{align}

Note that there are only finitely many different $\Delta_x^O(K_X^{-1})$'s. Thus there is a subsequence $\{k_j\}_{j=1}^{+\infty}$ satisfying
$$\Delta_{x_{k_j}}^O(K_X^{-1})\cong\Delta_{x_{k_1}}^O(K_X^{-1}),~j\in\mathbb N_+,$$
and we can further identify $\{v_{k_j}\}_{j=1}^{+\infty}$ (hence also $\{v'_{k_j}\}_{j=1}^{+\infty}$) with a subsequence (still denoted by the same letter) in $\mathscr Q_{x_1,+}\cong\mathscr Q\times\mathbb Q_+$ so that \eqref{seq-tc-1}-\eqref{seq-tc-4} still hold.

By \eqref{J(X',L')} and \eqref{seq-tc-2},
\begin{align}\label{seq-tc-3}
&\frac1V\int_{\Delta_{x_1}^O(K_X^{-1})}(\max_{(\lambda,t)\in\Delta_{x_1}^O(K_X^{-1})}\langle(\lambda,t),\frac{v'_k}{c_k}\rangle-\langle(\lambda,t),\frac{v'_k}{c_k}\rangle)\pi(\lambda)d\lambda\wedge dt\notag\\
=&\frac1{c_k}{\rm J^{NA}}(\mathcal X'_k,\mathcal L'_k)=1,~\forall k\in\mathbb N_+.
\end{align}
We see that $\{\frac{v'_k}{c_k}\}_{k=1}^{+\infty}$ is a bounded sequence. Thus, up to passing to a subsequence, we can assume $\{\frac{v'_k}{c_k}\}_{k=1}^{+\infty}$ converges to some $v'_*\in\mathscr Q_{x_1,+}\cap\mathscr A^\perp$.

On the other hand,
By \eqref{seq-bar} and \eqref{seq-tc-4},
\begin{align*}
\lim_{k\to+\infty}\langle\kappa_P-\mathbf{b}(\Delta_{x_1}^O(K_X^{-1})),\frac{v'_k}{c_k}\rangle=0.
\end{align*}
Taking limit one gets
\begin{align*}
\langle\kappa_P-\mathbf{b}(\Delta_{x_{k_1}}^O(K_X^{-1})),v'_*\rangle=0.
\end{align*}
Together with \eqref{K-ps-eq} this implies that $v'_*\in\mathscr A$. Thus $v'_*=0$. A contradiction to \eqref{seq-tc-3}.
\end{proof}

\begin{rem}
We remark that \eqref{J(X',L')} can also be derived from \eqref{grading-twist} and \cite[Eq. (2.3)]{Hisamoto} (based on \cite[Section 7.2]{Boucksom-Hisamoto-Jonsson}) without using Lemma \ref{twist-lem}.
\end{rem}

There is a variant of Theorem \ref{K-st-thm} for horospherical $G$-varieties of complexity 1, which is much more simplified. The following is a generalization of the K-polystability criterion \cite[Theorem 4.12]{Ilten-Suss-Duke} of Fano $T$-varieties of complexity 1 to horospherical $G$-varieties of complexity 1.
\begin{theo}\label{horo-st-thm}
Let $X$ be a $\mathbb Q$-Fano, horospherical $G$-varieties of complexity 1 with ${\rm k}(X)^B\cong{\rm k}(\mathbb P^1)$. Then $X$ is $G$-equivariantly $K$-semistable if and only if
\begin{align}\label{K-ss-horo-eq}
\kappa_P-\mathbf{b}(\Delta_{x}^O(K_X^{-1}))\in\{O\}\times\mathbb R_{\geq0}\subset\Gamma_\mathbb R\times\mathbb R_{\geq0},~\forall x\in C.
\end{align}
Moreover, $X$ is $G$-equivariantly $K$-polystable if and only if in addition it holds
\begin{align}\label{K-ps-horo-eq}
\kappa_P-\mathbf{b}(\Delta_{x}^O(K_X^{-1}))\in\{O\}\times\mathbb R_{>0}\subset\Gamma_\mathbb R\times\mathbb R_{>0},~\forall x\in C,
\end{align}
when $X$ is not a $G\times{\rm k}^\times$-spherical variety, or in addition
\begin{align}\label{K-ps-horo-Gtimesk}
\mathbf{b}(\Delta_{x}^O(K_X^{-1}))=\kappa_P~\text{if and only if}~\mathscr A_x\setminus\mathscr Q\not=\emptyset,
\end{align}
otherwise.
\end{theo}

\begin{proof}
When $X$ is horospherical, $\mathscr V\cap\mathscr Q=\mathscr Q$ (cf. \cite{Vinberg-86,Popov-1986}) and $\mathscr V_x\cong\mathscr Q\times\mathbb Q_+$ for any $x\in C$ (cf. \cite{Langlois-Terpereau-2016}). Also by Propositions \ref{Aut-cone-prop}, it holds $\mathscr Q\subset\mathscr A_x$ for any $x\in C$. Combining with Corollary \ref{central-spherical}, $\mathscr A_x\setminus\mathscr Q\not=\emptyset$ if and only if $X$ is a $G\times{\rm k}^\times$-spherical variety. The conditions \eqref{K-ss-horo-eq} (and \eqref{K-ps-horo-eq}, resp.) is equivalent to \eqref{K-ss-eq} (and \eqref{K-ps-eq}, resp.) in the horospherical case. The last condition \eqref{K-ps-horo-Gtimesk} is equivalent to that the Futaki invariant vanishes precisely on product test configurations.
\end{proof}

\section{Examples}
In this section we apply Theorem \ref{K-st-thm} to concrete examples.

\subsection{The Mukai-umemura threefold}
The Mukai-Umemura threefold was constructed in \cite{Mukai-Umemura-1983} as a smooth Fano ${\rm SL}_2$-variety. The existence of K\"ahler-Einstein metrics on this threefold (and equivalently, uniform K-stability) has been proved in \cite[Section 5.4]{Don-2008} by computing the $\alpha$-invariant. In the following we view the Mukai-Umemura threefold as an equivariant completion of ${\rm SL}_2/H$, where $H$ is the binary icosahedral group, and present concretely how the theorems established in the previous sections apply on this well-known example. We also remark that the equivariant K-polystability of smooth, Fano quasihomogeneous ${\rm SL}_2$-varieties of complexity 1 has been studied in \cite{Rogers-2022} via a different approach.

Let us recall the construction in \cite{Mukai-Umemura-1983}: Denote by $s^{k}$ the irreducible $(G:=){\rm SL}_2$-representation which consists of homogeneous symmetric polynomials of two variables of degree $k\in\mathbb N$. We identify $\mathbb P^1$ with the Riemann sphere $S^2$ so that $G$ acts on it, and ${\rm SU}_2\subset{\rm SL}_2$ acts on $S^2$ via the epimorphism ${\rm SU}_2\to{\rm SO}_3$. Also we identify a polynomial in $s^{12}$ with its roots located on $S^2$. There is an element $f_v\in s^{12}$ so that its roots are the 12 vertices of an icosahedron inscribed in $S^2$, the stabilizer subgroup of $f_v$ in $G$ is exactly $H$. Then the closure $X$ of the $G$-orbit of $[f_v]\in\mathbb P(s^{12})$ is the Mukai-Umemura threefold, which is a smooth Fano threefold (cf. \cite{Mukai-Umemura-1983, Don-2008}).

As shown in \cite{Mukai-Umemura-1983}, there are three $G$-orbits in $X$. First, we have
\begin{itemize}
\item The unique open orbit $\mathcal O:=G\cdot[f_v]\cong G/H$.
\end{itemize}
The complement of $\mathcal O$ is a $G$-divisor $D_\infty$ which is the closure of
\begin{itemize}
\item The orbit $\mathcal O_1:=G\cdot[f_1]$, where $f_1\in s^{12}$ is a polynomial which has a root of multiplicity 11 and a simple root.
\end{itemize}
The divisor $D_\infty$ contains $\mathcal O_1$ as its unique open $G$-orbit and the complement $\Delta_\infty$ of $\mathcal O_1$ in $D_\infty$ is
\begin{itemize}
\item The unique closed $G$-orbit in $X$, which consists of $G\cdot[f_2]$, where $f_2\in s^{12}$ is a polynomial which has a root of multiplicity 12.
\end{itemize}

The coloured data of $G/H$ are given in \cite[Section 5.6]{Timashev-1997}. Let us briefly recall the construction in \cite{Timashev-1997} here. Recall the action of $G$ on $S^2$ and an icosahedron $\mathbf I$ inscribed in this $S^2$. Then $H$ is the pre-image of the symmetry group of $\mathbf I$ in $G$. The vertices, edge midpoints, and face centers of $\mathbf I$ give those points on $S^2$ so that $H$ acts with non-trivial subgroup of stabilizers. In fact, they form three $H$-orbits, respectively. The corresponding points are denoted by $x_v$, $x_e$ and $x_f$, respectively. Also they define three polynomials $f_v\in s^{12}$, $f_e\in s^{30}$ and $f_f\in s^{20}$.

Denote by $B$ the subgroup of upper-triangular matrixes in $G$ and fix it as the positive Borel subgroup. Also let $\omega$ be the corresponding fundamental weight of $G$. It is showed in \cite[Section 5.6]{Timashev-1997} that the subregular semiinvariants are $f_v$, $f_e$ and $f_f$ with corresponding $B\times H$-biweights $(12\omega; 1)$, $(30\omega; 1)$ and $(20\omega; 1)$. The regular semiinvariants fill the two-dimensional subspace ${\rm k}[G]^{(B\times H)}_{(60\omega;1)}$ spanned by $f_v^5$, $f_e^2$ and $f_f^3$, where $$f_v^5+f_e^2+f_f^3=0.$$
Functions in ${\rm k}[G]^{(B\times H)}_{(60\omega;1)}$ give all the colours in $X$, and there is no colour of central type (cf. \cite[Section 2.2]{Timashev-1997}). Also $\Gamma=\mathbb Z\cdot2\omega$ with generator
$$e_{2\omega}:=\frac{f_vf_f}{f_e}.$$
In particular, ${\rm rk}(X)=1$.

The valuation cone $\mathscr V$ is given as follows: an element $hq_x+a\frac12\omega^*\in\mathscr Q_{x,+}$\footnote{We put $\frac12$ here since the dual lattice $\Gamma^*=\frac12\mathbb Z\omega^*$} is contained in $\mathscr V_x$ if and only if
\begin{align*}
(a,h)~\text{satisfies}~\left\{
\begin{aligned}
&-a\geq h\geq0,~\text{if}~x\not=x_v,x_f,\\
&h,-a\geq0,~\text{if}~x=x_v,x_f.
\end{aligned}
\right.
\end{align*}

Colours are computed in \cite[Section 5.6]{Timashev-1997}, and the coloured data of $X$ by \cite[Section 7.8]{Rogers-2022}. We list the data of $B$-stable divisors in Table-1.1.
\begin{table}[h]
Table-1.1: $B$-stable divisors in $X$\\
\begin{tabular}{|c|c|c|}
\hline
Colour                                                 & $x_D$                                            & $v_D$                                      \\ \hline
$X_x,~x\not=x_e,x_v,x_f$                               & $x$                                              & $q_x$                                      \\ \hline
$X_{x_e}$                                             & $x_e$                                            & $-\frac12\omega^*+2q_{x_e}$                    \\ \hline
$X_{x_v}$                                             & $x_v$                                            & $\frac12\omega^*+5q_{x_v}$                   \\ \hline
$X_{x_f}$                                             & $x_f$                                            & $\frac12\omega^*+3q_{x_f}$                    \\ \hline
$G$-stable divisor                                                 & $x_D$                                            & $v_D$                                      \\ \hline
$D_\infty$                                             & $x_v$                                            & $q_{x_v}$   \\ \hline
\end{tabular}
\end{table}
\\
Clearly the associated parabolic subgroup of $X$ is $B$ and $\kappa_P=2\omega$.

The coloured fan of $X$ consists of one maximal hypercone $\mathscr C$ of type \uppercase\expandafter{\romannumeral 2}. More precisely, it is a hypercone of type A$_1$ (in the sense of \cite[Section 9]{Luna-Vust}) with data $$\mathscr W=\{q_{x_v}\},~\mathscr R=\{X_x|x\not=x_v\}.$$
This hypercone is shown below in Fig-2, where its intersection with $\mathscr Q_{x_+}$ is marked as the dark areas, and the valuation cones as hatched. Colors are represented by circles and $D_\infty$ by a dark dot.

\begin{figure}[h]
\begin{tikzpicture}[scale=0.8]
\fill[color=gray!20] (0,0)--(-2.2,0)--(-2.2,5)--(0,5)--(0,0);
\draw [semithick] (-2.2,0)--(1.2,0);
\draw [semithick] (0,-0.2)--(0,5.2);
\draw (1,0) node {$\cdot$};
\draw (-1,0) node {$\cdot$};
\draw (0.2,-0.2) node {\scriptsize{$O$}};
\draw (-1,-0.2) node {\scriptsize{$-1$}};
\draw (1,-0.2) node {\scriptsize{$1$}};
\draw (-0.3,1) node {\scriptsize{${X_x}$}};
\draw (0.2,1) node {\scriptsize{$1$}};
\draw (0,1) node {$\circ$};
\draw [dashed] (0,0)--(-2.2,2.2);
\draw [dashed] (-2,0)--(-2,2);
\draw [dashed] (-1.75,0)--(-1.75,1.75);
\draw [dashed] (-1.5,0)--(-1.5,1.5);
\draw [dashed] (-1.25,0)--(-1.25,1.25);
\draw [dashed] (-1,0)--(-1,1);
\draw [dashed] (-0.75,0)--(-0.75,0.75);
\draw [dashed] (-0.5,0)--(-0.5,0.5);
\draw [dashed] (-0.25,0)--(-0.25,0.25);
\draw (0,-0.5) node {\scriptsize{$x\not=x_v,x_e,x_f$}};
\draw (1,1) node {$\mathscr Q_{x,+}$};
\draw (-2.4,0) node {$\mathscr Q$};
\end{tikzpicture}
\begin{tikzpicture}[scale=0.8]
\fill[color=gray!20] (0,0)--(-2.2,0)--(-2.2,4.4)--(0,0);
\draw [semithick] (-2.2,0)--(1.2,0);
\draw [semithick] (0,-0.2)--(0,5.2);
\draw [semithick] (0,0)--(-2.2,4.4);
\draw (1,0) node {$\cdot$};
\draw (-1,0) node {$\cdot$};
\draw (0.2,-0.2) node {\scriptsize{$O$}};
\draw (-1,-0.2) node {\scriptsize{$-1$}};
\draw (1,-0.2) node {\scriptsize{$1$}};
\draw (0.2,1) node {\scriptsize{$1$}};
\draw (0.2,2) node {\scriptsize{$2$}};
\draw (-1,2) node {$\circ$};
\draw (-0.6,2) node {\scriptsize{${X_{x_e}}$}};
\draw (0,1) node {$\cdot$};
\draw (0,2) node {$\cdot$};
\draw [dashed] (0,0)--(-2.2,2.2);
\draw [dashed] (-2,0)--(-2,2);
\draw [dashed] (-1.75,0)--(-1.75,1.75);
\draw [dashed] (-1.5,0)--(-1.5,1.5);
\draw [dashed] (-1.25,0)--(-1.25,1.25);
\draw [dashed] (-1,0)--(-1,1);
\draw [dashed] (-0.75,0)--(-0.75,0.75);
\draw [dashed] (-0.5,0)--(-0.5,0.5);
\draw [dashed] (-0.25,0)--(-0.25,0.25);
\draw (0,-0.5) node {\scriptsize{$x=x_e$}};
\draw (1,1) node {$\mathscr Q_{x_e,+}$};
\draw (-2.4,0) node {$\mathscr Q$};
\end{tikzpicture}
\begin{tikzpicture}[scale=0.8]
\fill[color=gray!20] (0,0)--(-2.2,0)--(-2.2,5)--(0,5)--(0,0);
\draw [semithick] (-2.2,0)--(1.2,0);
\draw [semithick] (0,-0.2)--(0,5.2);
\draw (1,0) node {$\cdot$};
\draw (-1,0) node {$\cdot$};
\draw (0.2,-0.2) node {\scriptsize{$O$}};
\draw (-1,-0.2) node {\scriptsize{$-1$}};
\draw (1,-0.2) node {\scriptsize{$1$}};
\draw (0.2,1) node {\scriptsize{$1$}};
\draw (0.2,2) node {\scriptsize{$2$}};
\draw (0.2,3) node {\scriptsize{$3$}};
\draw (0.2,4) node {\scriptsize{$4$}};
\draw (0.2,5) node {\scriptsize{$5$}};
\draw (1,5) node {$\circ$};
\draw (1.4,5) node {\scriptsize{${X_{x_v}}$}};
\draw (0,1) node {$\bullet$};
\draw (-0.4,1) node {\scriptsize{${D_\infty}$}};
\draw (0,1) node {$\cdot$};
\draw (0,2) node {$\cdot$};
\draw (0,3) node {$\cdot$};
\draw (0,4) node {$\cdot$};
\draw (0,5) node {$\cdot$};
\draw [dashed] (-2,0)--(-2,5);
\draw [dashed] (-1.75,0)--(-1.75,5);
\draw [dashed] (-1.5,0)--(-1.5,5);
\draw [dashed] (-1.25,0)--(-1.25,5);
\draw [dashed] (-1,0)--(-1,5);
\draw [dashed] (-0.75,0)--(-0.75,5);
\draw [dashed] (-0.5,0)--(-0.5,5);
\draw [dashed] (-0.25,0)--(-0.25,5);
\draw (0,-0.5) node {\scriptsize{$x=x_v$}};
\draw (1,1) node {$\mathscr Q_{x_v,+}$};
\draw (-2.4,0) node {$\mathscr Q$};
\end{tikzpicture}
\begin{tikzpicture}[scale=0.8]
\fill[color=gray!20] (0,0)--(-2.2,0)--(-2.2,5)--(1.2,5)--(1.2,3.6)--(0,0);
\draw [semithick] (-2.2,0)--(1.2,0);
\draw [semithick] (0,-0.2)--(0,5.2);
\draw [semithick] (0,0)--(1.2,3.6);
\draw (1,0) node {$\cdot$};
\draw (-1,0) node {$\cdot$};
\draw (0.2,-0.2) node {\scriptsize{$O$}};
\draw (-1,-0.2) node {\scriptsize{$-1$}};
\draw (1,-0.2) node {\scriptsize{$1$}};
\draw (0.2,1) node {\scriptsize{$1$}};
\draw (0.2,2) node {\scriptsize{$2$}};
\draw (0.2,3) node {\scriptsize{$3$}};
\draw (1,3) node {$\circ$};
\draw (1.4,3) node {\scriptsize{${X_{x_f}}$}};
\draw (0,1) node {$\cdot$};
\draw (0,2) node {$\cdot$};
\draw (0,3) node {$\cdot$};
\draw [dashed] (-2,0)--(-2,5);
\draw [dashed] (-1.75,0)--(-1.75,5);
\draw [dashed] (-1.5,0)--(-1.5,5);
\draw [dashed] (-1.25,0)--(-1.25,5);
\draw [dashed] (-1,0)--(-1,5);
\draw [dashed] (-0.75,0)--(-0.75,5);
\draw [dashed] (-0.5,0)--(-0.5,5);
\draw [dashed] (-0.25,0)--(-0.25,5);
\draw (0,-0.5) node {\scriptsize{$x=x_f$}};
\draw (1,1) node {$\mathscr Q_{x_f,+}$};
\draw (-2.4,0) node {$\mathscr Q$};
\end{tikzpicture}
\\
\begin{center}
\begin{tikzpicture}[scale=1.2]
\fill[color=gray!10] (1,0)--(3.1,2.1)--(2.1-1.2,2.1)--(-1.2,0)--(1,0);
\fill[color=gray!25] (1,0)--(1.6+0.2,1.8+0.6)--(1.6+0.2,3)--(-1.2,3)--(-1.2,0)--(1,0);
\fill[color=gray!50] (1,0)--(-0.8,2.4)--(-1.8,2.4)--(-1.2,0)--(1,0);
\fill[color=gray!75] (1,0)--(0.4,1.2)--(-1.8,1.2)--(-1.2,0)--(1,0);

\draw [semithick] (-1.2,0)--(1.2+0.6,0)--(0.6+0.6,1.2)--(-1.8,1.2)--(-1.2,0);
\draw [semithick] (-1.2,0)--(1.2+0.6,0)--(-0.6+1.2+0.6,2.4)--(-0.6-1.2,2.4)--(-1.2,0);
\draw [semithick] (-1.2,0)--(1.2+0.6,0)--(1.2+0.6,3)--(-1.2,3)--(-1.2,0);
\draw [semithick] (-1.2,0)--(1.2+0.6,0)--(2.1+1.2+0.6,2.1)--(2.1-1.2,2.1)--(-1.2,0);

\draw [thick] (-1.2,0)--(1,0);
\draw [thick] (1,0)--(0.4,1.2);
\draw (0.8,0.4) node {$\bullet$};
\draw [thick] (1,0)--(-0.8,2.4);
\draw (0.25,1) node {$\circ$};
\draw [thick] (1,0)--(1.6+0.2,1.8+0.6);
\draw (1.6,1.8) node {$\circ$};
\draw [thick] (1,0)--(3.1,2.1);
\draw (1.4,0.4) node {$\circ$};
\draw (0,-0.2) node {\scriptsize{$\mathscr Q$}};
\draw (1,-0.2) node {\scriptsize{$O$}};
\draw (-1.8,0.8) node {\scriptsize{$\mathscr C_{x_v}$}};
\draw (-1.8,1.6) node {\scriptsize{$\mathscr C_{x_e}$}};
\draw (-1.4,2.8) node {\scriptsize{$\mathscr C_{x_f}$}};
\draw (3.6,0.6) node {\scriptsize{$\mathscr C_{x},~x\not=x_v,x_e,x_f$}};
\end{tikzpicture}
\end{center}
Fig-2: Colored hypercone of the Mukai-Umemura threefold.\\ {\footnotesize (Marked as dark areas. In $\mathscr Q$, the generator of $\Gamma^*$ is $\frac12\omega^*$)}
\end{figure}

Now we compute anti-canonical divisors. By Theorem \ref{anti-can-div-quasi-homo}, for each anti-canonical divisor $\mathfrak a=\sum_{x\in \mathbb P^1}a_x\cdot x$ on $\mathbb P^1$, there is a anti-canonical divisor
\begin{align}\label{anti-can-div-mukai}
\mathfrak d_\mathfrak a=\sum_{x\not= x_v,x_e,x_f}a_xX_{x}+(-4+5a_{x_v})X_{x_v}+(-1+2a_{x_e})X_{x_e}+(-2+3a_{x_f})X_{x_f}+a_{x_v}D_\infty
\end{align}
of $X$, which is the divisor of a rational section of $K_X^{-1}$ of weight $2\omega$.

However, there is no effective divisor in \eqref{anti-can-div-mukai}. In fact, it is showed in \cite[Section 5.1]{Don-2008} that
$${\rm H}^0(X,K_X^{-1})\cong s^0\oplus s^{12},$$
which implies
$${\rm H}^0(X,K_X^{-1})^{(B)}_{2\omega}=0.$$
Also, \cite[Section 5.1]{Don-2008} gives an effective divisor
$$\mathfrak d_0=D_\infty$$
on $X$, which is the divisor of a $G$-invariant section in the component $s^0\subset K_X^{-1}$. The divisor $\mathfrak d_0$ can also be expressed as
$$\mathfrak d_0=\mathfrak d_\mathfrak a-{\rm div}(e_{2\omega})$$
with $$\mathfrak a=[x_v]+[x_f].$$
Also by choosing
$$\mathfrak a'=3[x_e]-[x_f],$$
we get another effective anti-canonical divisor
$$\mathfrak d_{\mathfrak a'}+5{\rm div}(e_{2\omega})=X_{x_v},$$
which is the divisor of a section in ${\rm H}^0(X,K_X^{-1})^{(B)}_{12\omega}\subset s^{12}$.

For convenience, below we work with the divisor $\mathfrak d_0$. Since it is $G$-invariant, we have $\Delta_{\mathscr Z}(\mathfrak d_0)=\Delta_{\mathscr Z}(K_X^{-1})$ and $A_x(\mathfrak d_0,\lambda)=A_x(K_X^{-1},\lambda-\kappa_P)$ for all $x$. Set $\lambda=k\omega$. We have
$$\Delta_{\mathscr Z}(K_X^{-1})=\{k\omega|k\in\mathbb Q,~0\leq k\leq 12\},$$
and the data are listed in Table-1.2. The ploytopes $\Delta^O_x(K_X^{-1})$ are drawn in Fig-3. Also,
$$A(\mathfrak d_0,\lambda)=\min\{\frac1{60}k,1-\frac1{12}k\}.$$
\begin{table}[h]
Table-1.2: Data of $K_X^{-1}$\\
\begin{tabular}{|c|c|c|}
\hline
$x$                                & $A_x(\mathfrak d,\lambda)$           & $\mathbf{b}(\Delta_x^O(K_X^{-1}))-\kappa_P=(k,t)$   \\ \hline
$x\not=x_e,x_v,x_f$                & $0$                                  & $(\frac{69}{11},-\frac{31}{33})$ \\ \hline
$x_e$                              & $-\frac14k$                          & $(\frac{69}{11},\frac{149}{132})$   \\ \hline
$x_v$                              & $\min\{1,\frac1{10}k\}$              & $(\frac{69}{11},-\frac34)$   \\ \hline
$x_f$                              & $\frac16k$                           & $(\frac{69}{11},-\frac{29}{22})$   \\ \hline
\end{tabular}
\end{table}

\begin{figure}[h]
\begin{center}
\begin{tikzpicture}[scale=0.5]
\fill[color=gray!20] (2,0)--(6.4,2.2)--(12,2.2)--(12,-1.5)--(2,-1.5)--(2,0);
\draw [dashed] (-0.5,0)--(12.2,0);
\draw [dashed] (0,2.2)--(0,-1.5);
\draw [dashed] (2,0)--(2,-1.5);
\draw [dashed] (2,0)--(6.4,2.2);
\draw (2,0) node {\scriptsize{$\cdot$}};
\draw (12,0) node {\scriptsize{$\cdot$}};
\draw (2,0.3) node {\scriptsize{$\kappa_P$}};
\draw (-0.3,0.3) node {\scriptsize{$O$}};
\draw (2,-0.3) node {\scriptsize{$2\omega$}};
\draw (12,-0.3) node {\scriptsize{$12\omega$}};

\draw [thick] (0,-1)--(12,-1)--(10,0.1667-1)--(0,-1);
\draw (8.273,-0.939) node {\scriptsize{$\times$}};

\draw (5,-2) node {\scriptsize{$x\not=x_v,x_e,x_f$}};
\end{tikzpicture}
\begin{tikzpicture}[scale=0.5]
\fill[color=gray!20] (2,0)--(6.4,2.2)--(12,2.2)--(12,-1.5)--(2,-1.5)--(2,0);
\draw [dashed] (-0.5,0)--(12.2,0);
\draw [dashed] (0,2.2)--(0,-1.5);
\draw [dashed] (2,0)--(2,-1.5);
\draw [dashed] (2,0)--(6.4,2.2);
\draw (2,0) node {\scriptsize{$\cdot$}};
\draw (12,0) node {\scriptsize{$\cdot$}};
\draw (2,0.3) node {\scriptsize{$\kappa_P$}};
\draw (-0.3,0.3) node {\scriptsize{$O$}};
\draw (2,-0.3) node {\scriptsize{$2\omega$}};
\draw (12,-0.3) node {\scriptsize{$12\omega$}};

\draw [thick] (0,-1)--(10,1.667)--(12,2)--(0,-1);
\draw (8.273,1.129) node {\scriptsize{$\times$}};

\draw (5,-2) node {\scriptsize{$x=x_e$}};
\end{tikzpicture}

\begin{tikzpicture}[scale=0.5]
\fill[color=gray!20] (2,0)--(12,0)--(12,-2)--(2,-2)--(2,0);
\draw [dashed] (-0.5,0)--(12.2,0);
\draw [dashed] (0,1)--(0,-2);
\draw [dashed] (2,0)--(2,-2);
\draw (2,0) node {\scriptsize{$\cdot$}};
\draw (12,0) node {\scriptsize{$\cdot$}};
\draw (2,0.3) node {\scriptsize{$\kappa_P$}};
\draw (-0.3,0.3) node {\scriptsize{$O$}};
\draw (2,-0.3) node {\scriptsize{$2\omega$}};
\draw (12,-0.3) node {\scriptsize{$12\omega$}};

\draw [thick] (0,0)--(10,-1)--(12,-1)--(0,0);
\draw (8.273,-0.75) node {\scriptsize{$\times$}};

\draw (5,-2.5) node {\scriptsize{$x=x_v$}};
\end{tikzpicture}
\begin{tikzpicture}[scale=0.5]
\fill[color=gray!20] (2,0)--(12,0)--(12,-2)--(2,-2)--(2,0);
\draw [dashed] (-0.5,0)--(12.2,0);
\draw [dashed] (0,1)--(0,-2);
\draw [dashed] (2,0)--(2,-2);
\draw (2,0) node {\scriptsize{$\cdot$}};
\draw (12,0) node {\scriptsize{$\cdot$}};
\draw (2,0.3) node {\scriptsize{$\kappa_P$}};
\draw (-0.3,0.3) node {\scriptsize{$O$}};
\draw (2,-0.3) node {\scriptsize{$2\omega$}};
\draw (12,-0.3) node {\scriptsize{$12\omega$}};

\draw [thick] (0,0)--(12,-2)--(10,-1.5)--(0,0);
\draw (8.273,-1.318) node {\scriptsize{$\times$}};

\draw (5,-2.5) node {\scriptsize{$x=x_f$}};
\end{tikzpicture}
\end{center}
Fig-4: The polytopes $\Delta^O_x(K_X^{-1})$.\\ {\footnotesize (In the figure the dark area is $\kappa_P+(-\mathscr V)^\vee$, and the barycenter is marked by $\times$)}
\end{figure}

It is direct to check that \eqref{K-ss-eq} holds and
$$(\kappa_P-\mathbf{b}(\Delta_{x}^O(K_X^{-1})))^\perp\cap \mathscr V_x=\{O\}$$
for all $x$. This implies that $X$ is not ${\rm SL}_2\times{\rm k}^\times$-spherical. Otherwise the $\{\rm Id\}\times{\rm k}^\times$-action would induce a non-trivial, $G$-equivariant product test configuration with zero Futaki invariant that is associated to some non-central $v_0\in\mathscr V$. A contradiction. Thus by Theorem \ref{K-st-thm}, $X$ is ${\mathbf G}$-uniform K-stable.

\subsection{A completion of space of ordered triangles in $\mathbb P^2$}
In this section we apply our results to study the equivariant uniform K-stability of a $\mathbb Q$-Fano quasihomogeneous ${\rm SL}_3$-variety given by \cite[Example 16.24]{Timashev-book}. In the following we use the notations and conventions in \cite{Timashev-book}. Let $G={\rm SL}_3$ and $T\subset G$ the torus of diagonal matrixes.  Then $G/T$ is quasihomogeneous, which is referred as the space of ordered triangles in $\mathbb P^2$. Choose $B$ the group of upper triangular matrixes. Combinatorial data, in particular, the colours $\mathscr D^B$, the lattice $\Gamma$ and the embedding $e$ of $\Gamma$ into $K^{(B)}$, and the valuation cone $\mathscr V$ of $G/H$ are established in \cite[Section 5.14]{Timashev-1997} (see also \cite{Timashev-book}). We recall some of them for our later use.

As usual, for $g=(g_{ij})\in G$, we choose $\epsilon_i(g)=g_{ii}$, $\omega_1 =\epsilon_1$, $\omega_2 =\epsilon_1+\epsilon_2$ are the fundamental weights, and $\alpha_1 =\epsilon_1-\epsilon_2$, $\alpha_2 =\epsilon_2-\epsilon_3$ are the simple roots. Denote by $\omega^\vee_i$ the fundamental coweights. 
Then for the weight lattice $\Gamma$ we have $$\Gamma=\mathbb Z\alpha_1\oplus\mathbb Z\alpha_2.$$

The data of colours are listed as in Table-2.1 (according to the notation of \cite[Example 16.24]{Timashev-book}).
\begin{table}[h]
Table-2.1: Colours in $G/H$\\
\begin{tabular}{|c|c|c|}
\hline
Colour                                                 & $x_D$                                            & $v_D$                                      \\ \hline
$D_x,~x\in\mathbb P^1\setminus\{x_1,x_2,x_3,\infty\}$  & $x$                                              & $q_x$                                      \\ \hline
$D_\infty$                                             & $\infty$                           & $q_\infty-(\omega_1^\vee+\omega_2^\vee)$   \\ \hline
$D_{i},~i=1,2,3$                                     & $x_i$                                            & $q_{x_i}+\omega_2^\vee$                    \\ \hline
$\tilde D_{i},~i=1,2,3$                              & $x_i$                                            & $q_{x_i}+\omega_1^\vee$                    \\ \hline
\end{tabular}
\end{table}
\\
Clearly, the associated parabolic subgroup
$$P(G/H)={\rm Stab}_G(D_2)\cap{\rm Stab}_G(\tilde D_2)=B.$$
Thus $\kappa_P=2(\alpha_1+\alpha_2)$.

The valuation cone of $G/H$ is given as following: an element $a_1\omega_1^\vee+a_2\omega_2^\vee+hq_x$ is contained in $\mathscr V_x$ if and only if
\begin{align*}
(a_1,a_2,h)~\text{satisfies}~\left\{
\begin{aligned}
&a_1,a_2\leq0\leq h,~\text{if}~x=x_1, x_2,x_3,\\
&a_1,a_2\leq-2h\leq0,~\text{if}~x=\infty,\\
&a_1,a_2\leq-h\leq0,~\text{otherwise}.
\end{aligned}
\right.
\end{align*}

We have the following completion of $G/H$,
$$X = \{(p_1, p_2, p_3, l_1, l_2, l_3)|p_j\in \mathbb P^2,~l_i \in \mathbb P^{2^*},~p_j \in l_i~\text{whenever}~i\not=j\}.$$
Here $G$ acts on $\mathbb P^2$ via the tautological representation on ${\rm k}^3$ and on the dual space $\mathbb P^{2^*}$ via the corresponding dual representation. Let us recall the list of $G$-orbits in $X$ given in \cite[Section 16.5]{Timashev-book},
\begin{itemize}
\item The open $G$-orbit of non-degenerate triangles.
\item The divisors $W_i$, $i=1,2,3$ which consists of degenerate triangles with $p_j=p_k$ and $l_j=l_k$, where $\{i,j,k\}=\{1,2,3\}$.
\item The divisor $\tilde W$ which consists of degenerate triangles with $p_1$, $p_2$, $p_3$ collinear and $l_1=l_2=l_3$.
\item The divisor $W$ which consists of degenerate triangles with $p_1=p_2=p_3$ and $l_1$, $l_2$, $l_3$ pass through this point.
\item Codimension 2 orbits $\tilde Y_i$ which consists of degenerate triangles with $p_j=p_k$ and $l_1=l_2=l_3$, where $\{i,j,k\}=\{1,2,3\}$.
\item Codimension 2 orbits $Y_i$ which consists of degenerate triangles with $l_j=l_k$ and $p_1=p_2=p_3$, where $\{i,j,k\}=\{1,2,3\}$. It has codimension 3.
\end{itemize}
and
\begin{itemize}
\item A minimal $G$-germ (the closed orbit) $Y$ which consists of degenerate triangles with $p_1=p_2=p_3$ and $l_1=l_2=l_3$.
\end{itemize}
Thus $X = G\mathring X$, where $\mathring X$ is the minimal $B$-chart of $Y$ determined by the colored hypercone $\mathscr C=\mathscr C(\mathscr W,\mathscr R)$ with
\begin{align*}
\mathscr W=\{W,\tilde W, W_1,W_2,W_3\},~
\mathscr R=\{D_x|x\not=x_1,x_2,x_3\},
\end{align*}
where the $G$-valuations in $\mathscr W$ are given in Table-2.2.
\begin{table}[h]
Table-2.2: $G$-valuations in $\mathscr W$\\
\begin{tabular}{|c|c|c|}
\hline
Valuation                 & $x_D$                                            & $v_D$                                      \\ \hline
$W$                       & Central                                          & $-\omega_2^\vee$                           \\ \hline
$\tilde W$                & Central                                          & $-\omega_1^\vee$                           \\ \hline
$W_i,~i=1,2,3$            & $x_i$                                            & $q_{x_i}$                                  \\ \hline
\end{tabular}
\end{table}

There is a $G$-equivariant small resolution of $X$ (cf. \cite[Section 1]{Collino-Fulton} and \cite[Remark 11]{Timashev-2000}). By ``small" we mean the exceptional locus has codimension at least 2 (it is showed in \cite[Section 1]{Collino-Fulton} that in case of $X$ the exceptional locus has dimension 4). A small resolution is in particular crepant. This implies that $X$ has klt singularities (cf. \cite{Kollar-lecture}).

By Theorem \ref{anti-can-div-thm-quasi-homo}, there is a $B$-stable anti-canonical $\mathbb Q$-divisor
$$\mathfrak d=W+\tilde W+\frac23\sum_{i=1}^3(D_i+\tilde D_i+W_i),$$
where the corresponding $\mathfrak a=\frac23([x_1]+[x_2]+[x_3])$. Choose the function $f=f_0e_{-\alpha_1-\alpha_2}\in K^{(B)}_{-\alpha_1-\alpha_2}$, where $f_0\in K^B$ with ${\rm div}(f_0)=\frac23([x_1]+[x_2]+[x_3])-2[\infty]$ on $\mathbb P^1$, it is direct to check by Theorem \ref{ampleness-criterion} that $3\mathfrak d$ is an ample divisor on $X$, which is the divisor of a section in ${\rm H}^0(X,K_X^{-3})^{(B)}_{3\kappa_P}$.
It is showed that $\Gamma=\mathbb Z\alpha_1\oplus\mathbb Z\alpha_2$. Choose a coordinate $\lambda=\lambda_1\alpha_1+\lambda_2\alpha_2$ for $\lambda\in\Gamma_\mathbb Q$. We have
$$\Delta_\mathscr Z(\mathfrak d)=\{\lambda|2+2\lambda_1-\lambda_2\geq0,~2+2\lambda_2-\lambda_1\geq0,~2-\lambda_1-\lambda_2\geq0,~1-\lambda_1\geq0,~1-\lambda_2\geq0\}.$$

\begin{figure}[h]
\begin{center}
\begin{tikzpicture}[scale=1.2]
\draw [dashed] (0,0) -- (1.2,0);
\draw [semithick] (1,1)--(-0.5,1)--(-1,0)--(-2,-2)--(1,-0.5)--(1,1);
\draw [dashed] (0,0)--(0,1.2);
\draw [dashed] (0,0)--(-2.2,-2.2);

\draw (0.2,-0.2) node {\scriptsize{$O$}};
\draw (1.4,1) node {\scriptsize{$(1,1)$}};
\draw (-1,1) node {\scriptsize{$(-\frac12,1)$}};
\draw (1.4,-0.5) node {\scriptsize{$(1,-\frac12)$}};
\draw (-1.6,-2.2) node {\scriptsize{$(-2,-2)$}};
\draw (-1.6,0.2) node {\scriptsize{$A(\mathfrak d,\lambda)=2+2\lambda_1-\lambda_2$}};
\draw (0.2,-1.6) node {\scriptsize{$A(\mathfrak d,\lambda)=2+2\lambda_2-\lambda_1$}};
\draw (1.2,0.6) node {\scriptsize{$A(\mathfrak d,\lambda)=2-\lambda_1-\lambda_2$}};
\end{tikzpicture}
\end{center}
Fig-4: The polytope $\Delta_\mathscr Z(\mathfrak d)$ and domains of linearity of $A(\mathfrak d,\lambda)$.
\end{figure}

The functions $\{A_x(\mathfrak d,\lambda):\Delta_\mathscr Z(\mathfrak d)|x\in C\}$, and barycenter of $\Delta_x^O(K_X^{-1})$ are given in Table-2.3. The concave piecewise linear function $A(\mathfrak d,\lambda)$ is marked in Fig-4.
\begin{table}[h]
Table-2.3: Data of $K_X^{-1}$\\
\begin{tabular}{|c|c|c|}
\hline
$x\in\mathbb P^1$                                 & $A_x(\mathfrak d,\lambda)$           & $\mathbf{b}(\Delta_x^O(K_X^{-1}))-\kappa_P=(\lambda_1,\lambda_2,t)$   \\ \hline
$x\in\mathbb P^1\setminus\{x_1,x_2,x_3,\infty\}$  & $0$                                  & $(\frac{16141}{76706},\frac{16141}{76706},-\frac{12279}{27395})$ \\ \hline
$\infty\in\mathbb P^1$                            & $-\lambda_1-\lambda_2$               & $(\frac{16141}{76706},\frac{16141}{76706},-\frac{5248}{191 765})$   \\ \hline
$x_i,~i=1,2,3$            & $\min\{\frac23+\lambda_1,\frac23+\lambda_2,\frac23\}$        & $(\frac{16141}{76706},\frac{16141}{76706},-\frac{166658}{575 295})$   \\ \hline
\end{tabular}
\end{table}\\
It is direct to check that \eqref{K-ss-eq} holds and
$$(\kappa_P-\mathbf{b}(\Delta_{x}^O(K_X^{-1})))^\perp\cap \mathscr V_x=\{O\},~\forall x\in \mathbb P^1,$$
which in particular implies that $X$ can not be $G\times{\rm k}^\times$-spherical again. 
By Theorem \ref{K-st-thm}, we have
\begin{prop}
The $\mathbb Q$-Fano variety $X$ is ${\mathbf G}$-uniformly K-stable.
\end{prop}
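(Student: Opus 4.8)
The plan is to apply Theorem \ref{K-st-thm} directly: I would verify condition (1) for this particular $X$ and then invoke the equivalence (1)$\Leftrightarrow$(3), which is available precisely because $X$ will be shown to be non-spherical and (as already recorded above, via the small, hence crepant, resolution of \cite{Collino-Fulton}) klt. Thus, after the geometric input, everything reduces to a combinatorial verification on $C=\mathbb P^1$ using the data of Tables 1--2.

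First I would pin down the polarization and the anti-canonical data. Applying Theorem \ref{anti-can-div-thm-quasi-homo} with associated parabolic $P=B$, so that $\kappa_P=2(\alpha_1+\alpha_2)$, and reading the $B$-stable divisors off Tables 1--2, one gets the $B$-stable anti-canonical $\mathbb Q$-divisor $\mathfrak d=W+\tilde W+\tfrac23\sum_{i=1}^3(D_i+\tilde D_i+W_i)$ with boundary part $\mathfrak a=\tfrac23([x_1]+[x_2]+[x_3])$ on $\mathbb P^1$ (so $\deg\mathfrak a=2$, as Theorem \ref{anti-can-div-thm-quasi-homo} requires). Using Theorem \ref{ampleness-criterion} together with the explicit $f=f_0e_{-\alpha_1-\alpha_2}$ one checks $3\mathfrak d$ is ample, so $L=K_X^{-3}$ is a genuine polarization and the machinery of Sections 3--5 applies. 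From $\mathfrak d$ I would then extract $\Delta_\mathscr Z(\mathfrak d)$ and the piecewise-linear functions $A_x(\mathfrak d,\cdot)$ for each $x\in C$. Since by Timash\"ev's results only $x_1,x_2,x_3,\infty$ have $A_x\not\equiv0$, the family $\{\Delta_x^O(K_X^{-1})\}_{x\in C}$ realizes only finitely many types (the generic $x$, the three $x_i$, and $\infty$), so the criterion need only be tested on these representatives.

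The computational heart, and the step I expect to be the main obstacle, is the exact evaluation of the barycenters $\mathbf b(\Delta_x^O(K_X^{-1}))$. Here $\pi(\lambda)=\prod_{\alpha^\vee\in\Pi_G^\vee}\langle\lambda,\alpha^\vee\rangle/\langle\rho,\alpha^\vee\rangle$ is the cubic attached to the three positive coroots of ${\rm SL}_3$, and each barycenter is an integral of $(\lambda,t)\,A(K_X^{-1},\lambda-\kappa_P)\,\pi(\lambda)$ over the $3$-dimensional polytope $\Delta_x^O(K_X^{-1})$, normalized by $V=\int_{\Delta_\mathscr Z(K_X^{-1})}A(K_X^{-1},\lambda-\kappa_P)\pi(\lambda)\,d\lambda$. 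These are elementary but lengthy polynomial integrals over explicitly-vertexed polytopes, delicate because $A(\mathfrak d,\cdot)$ has several regions of linearity and the arithmetic must be kept exact; they produce the rational values of Table 3. With the barycenters in hand one checks directly, using the explicit description of $\mathscr V_x$ recalled above, that $\kappa_P-\mathbf b(\Delta_x^O(K_X^{-1}))\in(\mathscr V_x)^\vee$ for every $x$ (this is \eqref{K-ss-eq}), and moreover the stronger $(\kappa_P-\mathbf b(\Delta_x^O(K_X^{-1})))^\perp\cap\mathscr V_x=\{O\}$ for every $x$.

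Finally I would close the logical loop. Suppose $X$ were $G\times{\rm k}^\times$-spherical; then the analysis of product test configurations in Section 4.2.1 (Proposition \ref{Aut-cone-prop} and Corollary \ref{central-spherical}) would furnish a non-zero $v\in\mathscr A_x\subset\mathscr V_x$ defining a product test configuration, whose Futaki invariant $\langle\kappa_P-\mathbf b(\Delta_x^O(K_X^{-1})),v\rangle$ vanishes by Theorem \ref{Fut-one-para-thm}; this would place $v$ in $(\kappa_P-\mathbf b(\Delta_x^O(K_X^{-1})))^\perp\cap\mathscr V_x=\{O\}$, a contradiction. Hence $X$ is not $G\times{\rm k}^\times$-spherical, so $\mathscr A_x=\mathscr A$ for all $x$ by Corollary \ref{non-spherical-cor}; and since $\mathscr A$ lies in the linear part of $\mathscr V$, \eqref{K-ss-eq} forces $\mathscr A\subset(\kappa_P-\mathbf b)^\perp\cap\mathscr V_x=\{O\}$, whence $\mathscr A_x=\{O\}=(\kappa_P-\mathbf b(\Delta_x^O(K_X^{-1})))^\perp\cap\mathscr V_x$, which is exactly \eqref{K-ps-eq}. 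Thus condition (1) of Theorem \ref{K-st-thm} holds, and because $X$ is klt and not $G\times{\rm k}^\times$-spherical, the implication (1)$\Rightarrow$(3) yields that $X$ is ${\mathbf G}$-uniformly K-stable.
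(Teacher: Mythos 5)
Your proposal follows the paper's own proof essentially step for step: the same anti-canonical divisor $\mathfrak d$ and ampleness check via Theorems \ref{anti-can-div-thm-quasi-homo} and \ref{ampleness-criterion}, the same barycenter computations recorded in Table 3, the same verification of \eqref{K-ss-eq} together with $(\kappa_P-\mathbf{b}(\Delta_x^O(K_X^{-1})))^\perp\cap\mathscr V_x=\{O\}$ for all $x$, the same contradiction argument ruling out $G\times{\rm k}^\times$-sphericality (a nonzero $v$ inducing a product test configuration would have vanishing Futaki invariant, contradicting the perpendicularity computation), and the same final appeal to Theorem \ref{K-st-thm}. The only difference is expository: you make explicit, via Corollary \ref{non-spherical-cor} and the observation that \eqref{K-ss-eq} forces $\mathscr A=\{O\}$, how \eqref{K-ps-eq} follows, a step the paper leaves implicit.
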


\section{Appendix}

\subsection{Lemmas on colours in spherical homogeneous spaces}
The following lemma is a combination of several known results (cf. \cite[Section 30.10]{Timashev-book} and \cite[Remark 11.8]{Gandini-2018}), we include it here for readers' convenience.
\begin{lem}\label{spherical-quot}
Let $G/H$ be a spherical homogeneous space and $\sigma\in {\rm Aut}_G(G/H)$ a $G$-equivariant isomorphism. Suppose that two different colours $D$ and $D'$ satisfies $D=\sigma(D')$. Then both $D$ and $D'$ are of type-a, and there is a simple root $\alpha\in\Pi_G$ so that $\mathscr D^B(G/H;\alpha)=\{D,D'\}$. Moreover, $v_D=v_{D'}$ in the hyperspace of $G/H$.
\end{lem}

\begin{proof}
It is obvious that if $D\in\mathscr D^B(G/H;\alpha)$, then so is $D'$. Thus $\#\mathscr D^B(G/H;\alpha)\geq2$. This is possible only if both $D$ and $D'$ are of type-a (cf. \cite[Section 30.10]{Timashev-book}), and $\mathscr D^B(G/H;\alpha)=\{D,D'\}$. For the last point it suffices to show that for any $e_\lambda\in{\rm k}(G/H)^{(B)}_\lambda$,
$${\rm ord}_D(e_\lambda)={\rm ord}_{D'}(e_\lambda).$$
Since $\sigma$ commutes with the $G$-action, $\sigma\cdot e_\lambda\in{\rm k}(G/H)^{(B)}_\lambda$. Hence $\sigma\cdot e_\lambda=c e_\lambda$ for some constant $c\not=0$, and
$${\rm ord}_D(e_\lambda)={\rm ord}_{\sigma(D')}(e_\lambda)={\rm ord}_{D'}(\sigma^{-1}\cdot e_\lambda)={\rm ord}_{D'}(e_\lambda),$$
which concludes the Lemma.
\end{proof}

\begin{lem}\label{bdry-coe-spherical}
Let $(X,L)$ be a polarized $G$-spherical variety and $\mathfrak d=\sum_{D\in\mathscr B(X)}m_DD$ a divisor of $L$, which is the divisor of some $B$-semiinvariant rational section $s$ of $L$ with $B$-weight $\lambda_0$. 
Suppose that $D\in\mathscr D^B(X;\alpha)$ is a colour of type-a' or b that corresponds to a simple root $\alpha\in\Pi_G$. Then $\alpha^\vee(\lambda_0)=m_D$, and for any $\lambda\in\Gamma_\mathbb Q$ so that $v_D(\lambda)+m_D=0$, it holds
$$\langle\alpha^\vee,\lambda+\lambda_0\rangle=0.$$
\end{lem}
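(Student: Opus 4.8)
The plan is to prove Lemma \ref{bdry-coe-spherical} by reducing everything to the rank-one situation associated to the simple root $\alpha$, following the philosophy of the spherical theory for colours of type-a' and b. First I would recall the structural facts about such colours: if $D\in\mathscr D^B(X;\alpha)$ is of type-a' or b, then $\alpha$ is moved by the minimal parabolic $P_\alpha$, and the restriction of $v_D$ to $\Gamma$ is a rational multiple of $\alpha^\vee$ (namely $\frac12\alpha^\vee$ for type-a' and $\alpha^\vee$ for type-b, by Remark \ref{v-colour}). The key input is the local structure of the action of $P_\alpha$ near the generic point of $D$, where the situation is governed by an $\mathrm{SL}_2$ (or $\mathrm{PGL}_2$) acting on a rank-one homogeneous space.

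The first main step is to establish $\alpha^\vee(\lambda_0)=m_D$. I would argue via the $P_\alpha$-equivariance of the section $s$. Since $D$ is $B$-stable but \emph{not} $P_\alpha$-stable, applying the raising operator from the $\mathfrak{sl}_2$-triple attached to $\alpha$ to $s$ produces another section whose divisor can be compared with $\mathfrak d$. Concretely, the order of vanishing of $s$ along $D$ is linked to the $B$-weight $\lambda_0$ through the action of the simple reflection $s_\alpha$: the $P_\alpha$-module generated by $s$ is a finite-dimensional $\mathrm{SL}_2$-representation, and the coefficient $m_D=v_D(s)$ equals the weight of $s$ under the coweight $\alpha^\vee$, i.e. $\alpha^\vee(\lambda_0)$. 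This is exactly the rank-one computation underlying \cite[Section 4]{Brion97}, and I would cite that framework rather than redo the representation theory.

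For the second assertion, given $\lambda\in\Gamma_\mathbb Q$ with $v_D(\lambda)+m_D=0$, I would simply combine $v_D(\lambda)=-m_D$ with the first part. Using $v_D=c\,\alpha^\vee|_\Gamma$ with $c=\frac12$ or $1$, the hypothesis reads $c\,\langle\alpha^\vee,\lambda\rangle=-m_D=-\alpha^\vee(\lambda_0)$. For type-b, $c=1$ gives $\langle\alpha^\vee,\lambda\rangle=-\langle\alpha^\vee,\lambda_0\rangle$, hence $\langle\alpha^\vee,\lambda+\lambda_0\rangle=0$ directly. For type-a', where $c=\frac12$, one must additionally use the relation $2\alpha\in\Sigma(G/H)$ and the corresponding normalization of $m_D$, which forces $\langle\alpha^\vee,\lambda_0\rangle$ to have the right value so that again $\langle\alpha^\vee,\lambda+\lambda_0\rangle=0$; here I would invoke the conventions of Remark \ref{v-colour} and \eqref{coe-colour-one-para} to pin down the factor.

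The main obstacle I anticipate is the bookkeeping in the type-a' case, where the half-integer coefficient $\frac12\alpha^\vee$ interacts with the normalization of $m_D$, and I must be careful that ``$v_D$'' is the \emph{normalized} valuation rather than $\alpha^\vee$ itself. The cleanest route is to phrase the first step so that it directly yields $\langle\alpha^\vee,\lambda_0\rangle=m_D$ (with $m_D=v_D(s)$ measured by the normalized valuation), after which both cases collapse into a single linear identity. I expect the representation-theoretic reduction to $\mathrm{SL}_2$ to be routine once the correct normalization is fixed, so the real care lies in matching conventions between $v_D$, $\alpha^\vee$, and the weight $\lambda_0$.
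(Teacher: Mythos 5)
Your overall plan (reduce to the rank-one ${\rm SL}_2$-situation attached to $\alpha$) is the classical route and could in principle be completed, but as written it has a genuine gap, located exactly where you anticipate ``bookkeeping''. The identity your first step rests on, $m_D=v_D(s)=\langle\alpha^\vee,\lambda_0\rangle$, is \emph{false} for colours of type-a': the correct relation is $\langle\alpha^\vee,\lambda_0\rangle=2m_D$ for type-a' and $\langle\alpha^\vee,\lambda_0\rangle=m_D$ for type-b. Concretely, take $G={\rm SL}_2$ and $X=\mathbb P({\rm Sym}^2{\rm k}^2)\cong\mathbb P^2$, whose open orbit is ${\rm SL}_2/N(T)$ with unique colour $D$ of type-a'; for $L=\mathscr O(1)$ and $s$ the $B$-semiinvariant linear form cutting out the closure of $D$, one has $m_D=1$ but $\langle\alpha^\vee,\lambda_0\rangle=2$. (The same example shows the displayed claim $\alpha^\vee(\lambda_0)=m_D$ in the lemma must be read as $v_D(\lambda_0)=m_D$; the paper's proof indeed produces $\langle\alpha^\vee,\lambda_0\rangle=2m_D$ in the type-a' case.) Moreover, your proposed repair --- pinning down the factor via Remark \ref{v-colour} and \eqref{coe-colour-one-para} --- cannot work: those statements concern the normalized valuation $v_D$ and the \emph{anticanonical} coefficients $\bar m_D$ (pairings with $\kappa_P$), and contain no information about the coefficient of $D$ in the divisor of a $B$-semiinvariant section of an \emph{arbitrary} ample $L$ with weight $\lambda_0$. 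The fact you actually need is $\langle\alpha^\vee,\lambda_D\rangle=2$ (type-a') resp.\ $=1$ (type-b), where $\lambda_D$ is the $B$-weight of the canonical section defining the colour $D$ on the open orbit $G/H$; this is the input the paper imports from Foschi, Gagliardi--Hofscheier, and \cite[Lemma 30.24]{Timashev-book}.

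There is also a structural step your sketch leaves implicit which any proof must make explicit: isolating the contribution of $D$ inside $\langle\alpha^\vee,\lambda_0\rangle$. The divisor of $s$ in general contains $G$-stable components and colours not moved by $P_\alpha$; one must argue that these pair to zero with $\alpha^\vee$, and --- crucially --- that $\mathscr D^B(X;\alpha)$ consists of the \emph{single} colour $D$ in the type-a' and type-b cases. This uniqueness is what makes $m_D$ determined by $\lambda_0$ at all: for type-a there are two colours in $\mathscr D^B(X;\alpha)$ and only the sum of their coefficients is determined, which is exactly why the lemma excludes that type; your proposal never invokes it. The paper handles both points at once by a softer mechanism than ${\rm SL}_2$-theory: each colour $D'$ equals ${\rm div}(s_{D'})$ for a section of some line bundle on $G/H$ with weight $\lambda_{D'}$; since $s/\prod_{D'}s_{D'}^{m_{D'}}$ is invertible on $G/H$, it is a $G$-eigenfunction by \cite[Proposition 1.2]{KKV}, whence $\lambda_0=\sum_{D'}m_{D'}\lambda_{D'}+\lambda_G$; pairing with $\alpha^\vee$ kills $\lambda_G$ and every $\lambda_{D'}$ with $D'\notin\mathscr D^B(X;\alpha)$, and the cited values of $\langle\alpha^\vee,\lambda_D\rangle$ then give both assertions. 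If you insist on the ${\rm SL}_2$-reduction, you would need to add a restriction-to-a-generic-$P_\alpha$-orbit argument and verify the weight/coefficient relation in the rank-one models ($\mathbb P^1$ for type-b, $\mathbb P({\rm Sym}^2{\rm k}^2)$ for type-a') --- which is precisely where the factor $2$ arises.
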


\begin{proof}
Let $G/H$ be the spherical homogeneous space that is embedded in $X$. Then for each $D\in\mathscr D^B$, there is a line bundle $L_D$ on $G/H$ so that $D={\rm div}(s_D)$ for some $s_D\in{\rm H}^0(G/H,L_D)^{(B)}_{\lambda_D}$, where $\lambda_D$ is the $B$-weight of $s_D$. Since $(\mathfrak d-\sum_{D\in\mathscr D^B}m_DD)|_{G/H}=0$, we have
$$f:=\frac{s|_{G/H}}{\prod_{D\in\mathscr D^B}s_D^{m_D}}\in\mathscr O(G/H)^\times.$$
By \cite[Proposition 1.3]{KKV}, $G$-acts on $f$ through a character $\lambda_G\in\mathfrak X(G)$. Consequently,
$$\lambda_0=\sum_{D\in\mathscr D^B}m_D\lambda_D+\lambda_G.$$
Let $\alpha$ be a simple root in $\Pi_G$. It holds
\begin{align*}
\langle\alpha^\vee,\lambda_0\rangle=\sum_{D\in\mathscr D^B}m_D\langle\alpha^\vee,\lambda_D\rangle
=\sum_{D\in\mathscr D^B(X;\alpha)}m_D\langle\alpha^\vee,\lambda_D\rangle.
\end{align*}

When $\alpha$ is of type-a', $\mathscr D^B(X;\alpha)$ contains precisely one colour (denoted by $D$), and by \cite[Section 2.2, Theorem 2.2]{Fo} (see also \cite[Proposition 2.2]{Ga-Ho-datum} or \cite[Lemma 30.24]{Timashev-book}),
$$\langle\alpha^\vee,\lambda_0\rangle=m_D\langle\alpha^\vee,\lambda_D\rangle=2m_D.$$
Combining with the fact that $v_D=\frac12\alpha^\vee|_\Gamma$ for the type-a' colour $D$, we get the Lemma.

When $\alpha$ is of type-b, again $\mathscr D^B(X;\alpha)$ contains only one colour (denoted by $D$), similarly as above,
$$\langle\alpha^\vee,\lambda_0\rangle=m_D\langle\alpha^\vee,\lambda_D\rangle=m_D.$$
Combining with the fact that $v_D=\alpha^\vee|_\Gamma$ for the type-b colour $D$, we get the Lemma.
\end{proof}

\subsubsection{A combinatorial property of $B$-stable ample divisors}

Let $(X,L)$ be a polarized $G$-variety of complexity 1, and $$\mathfrak d=\sum_{D\in\mathscr B(X)}m_DD$$ a divisor of $L$, which is the divisor of some $B$-semiinvariant rational section $s$ of $L$ with $B$-weight $\lambda_0$. We have
\begin{lem}\label{face-vanishing}
Suppose that
\begin{itemize}
\item[(1)] $X$ is a one-parameter $G$-variety and $D\in\mathscr D^B(\alpha)$ a colour of type-a' or b,
\end{itemize}
 or
\begin{itemize}
\item[(2)] $X$ is a quasi-homogeneous $G$-variety and $D$ a central colour that descends to a type-a' or b colour in $\mathscr D^B(\alpha)$ on $Z'$ in the proof of Theorem \ref{anti-can-div-thm-quasi-homo}.
\end{itemize}
Then
$$\langle\alpha^\vee,\lambda+\lambda_0\rangle=0,$$
whenever $\lambda\in\Gamma_\mathbb Q$ satisfies $v_D(\lambda)+m_D=0$.
\end{lem}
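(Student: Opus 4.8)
The plan is to reduce Lemma~\ref{face-vanishing} directly to the spherical computation already carried out in Lemma~\ref{bdry-coe-spherical}, since the statement to be proved is formally identical: the conclusion $\langle\alpha^\vee,\lambda+\lambda_0\rangle=0$ on the facet $\{v_D(\lambda)+m_D=0\}$ is exactly what Lemma~\ref{bdry-coe-spherical} gives for a type-a' or b colour in the spherical setting. The essential content is therefore to transfer the key relation between $m_D$, $v_D$, and $\alpha^\vee$ from the generic spherical orbit (one-parameter case) or from the Levi slice $Z'$ (quasihomogeneous case) up to $X$ itself.

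First I would treat case~(1). By Remark~\ref{v-colour}, a colour $D\in\mathscr D^B(\alpha)$ of type-a' satisfies $v_D=\tfrac12\alpha^\vee|_{\mathfrak M(O)}$ and of type-b satisfies $v_D=\alpha^\vee|_{\mathfrak M(O)}$, where $O=G/H$ is the generic $G$-orbit and $\mathfrak M(O)\cong\Gamma$ by Proposition~\ref{central-elements-one-para}. Thus $v_D(\lambda)=\tfrac12\langle\alpha^\vee,\lambda\rangle$ (type-a') or $v_D(\lambda)=\langle\alpha^\vee,\lambda\rangle$ (type-b). Next I would invoke Lemma~\ref{bdry-coe-spherical} applied to the section $s$ restricted to a normal fibre $X_z$ (a spherical embedding of $O$, as in the proof of Theorem~\ref{anti-can-div-thm}); this yields $\langle\alpha^\vee,\lambda_0\rangle=2m_D$ in the type-a' case and $\langle\alpha^\vee,\lambda_0\rangle=m_D$ in the type-b case. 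Combining $v_D(\lambda)+m_D=0$ with these two identities gives, in the type-a' case, $\tfrac12\langle\alpha^\vee,\lambda\rangle=-m_D=-\tfrac12\langle\alpha^\vee,\lambda_0\rangle$, whence $\langle\alpha^\vee,\lambda+\lambda_0\rangle=0$; the type-b computation is identical with the factor $\tfrac12$ removed. The only subtlety here is that $\lambda_0$ is the $B$-weight of the \emph{global} section $s$ on $X$, so I must check that restricting $s$ to $X_z$ does not change the relevant $\alpha^\vee$-pairings; this follows because the decomposition $s=s_1\wedge s'$ along the local structure \eqref{local-structure} splits off the $P_u$-direction, and $\alpha\in\Pi_{L_P}$ contributes only through the spherical factor $Z$ (equivalently $X_z$).

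For case~(2), I would run the same argument on the Levi slice $Z'$ constructed in the proof of Theorem~\ref{anti-can-div-thm-quasi-homo}, using the local structure $X'\setminus X_x\cong P'_u\times Z'$. The central colour $D$ descends to a type-a' or b colour $D'\in\mathscr D^B(\alpha)$ of $Z'$, with $\alpha$ a simple root of $L_{P'}$, and by Remark~\ref{v-colour} applied to the $L_{P'}$-variety $Z'$ we again have $v_{D'}=\tfrac12\alpha^\vee$ or $\alpha^\vee$ on $\mathfrak M(Z')$; the identification $v_D=v_{D'}$ and $m_D=m_{D'}$ is built into the reduction of Section~3.2. Applying Lemma~\ref{bdry-coe-spherical} to $Z'$ with its section $s_{Z'}$ of weight $\lambda_0|_{L_{P'}}$ gives the same two weight relations, and the computation closes exactly as before.

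The main obstacle I anticipate is purely bookkeeping rather than conceptual: one must be careful that the $B$-weight $\lambda_0$ appearing in Lemma~\ref{bdry-coe-spherical} (the weight of the section on the spherical variety) is consistent with the $\lambda_0$ in the present lemma, and that the pairing $\langle\alpha^\vee,\lambda_0\rangle$ is unaffected by the $P_u$ (resp. $P'_u$) factor. This is where I would spend the most care, verifying via Remark~\ref{coef-relation} that $\kappa_P$ and the Levi decomposition are compatible so that $\langle\alpha^\vee,\lambda_0\rangle$ computed on $X$ agrees with the value computed on the spherical slice. Once that compatibility is pinned down, the identity $\langle\alpha^\vee,\lambda+\lambda_0\rangle=0$ is immediate from the two weight relations and the defining equation $v_D(\lambda)+m_D=0$ of the facet.
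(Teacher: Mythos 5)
Your overall strategy is the paper's own: prove case (1) by restricting the data to a generic spherical fibre/orbit and quoting Lemma \ref{bdry-coe-spherical}, then reduce case (2) to the one-parameter $L_{P'}$-variety $Z'$. Case (1) of your argument is essentially identical to the paper's (the paper restricts $L$, $s$ and $\mathfrak d$ to the generic orbit $O$ rather than to the fibre $X_z$; this is immaterial). In case (2) the implementations diverge: you restrict the section $s$ to the Levi slice $Z'$ and rerun the argument there, whereas the paper never restricts the section at all — it decomposes the weight on the open orbit $G/H$ via \cite[Proposition 1.2]{KKV} as $\lambda_0=m\lambda^o+\lambda_0'+\lambda_0''+\lambda_G$, observes that $\lambda^o$, $\lambda_0''$ are $L_{P'}$-characters and $\lambda_G$ a $G$-character, hence $\langle\alpha^\vee,\lambda_0\rangle=\langle\alpha^\vee,\lambda_0'\rangle$ for $\alpha\in\Pi_{L_{P'}}$, and then invokes the previous case for the colours that descend to $Z'$. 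Your restriction-to-the-slice variant achieves the same reduction; the paper's character-level route buys the advantage of never having to make sense of (or check nonvanishing of) the restricted section.

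There are, however, three slips in your write-up that need repair, none fatal. (i) You cannot literally "apply Lemma \ref{bdry-coe-spherical} to $Z'$": $Z'$ is a one-parameter $L_{P'}$-variety of complexity $1$, not a spherical variety. What you must apply to $(Z',L|_{Z'})$ and the descended colour $D'$ is case (1) of the present lemma, which you have just proved, and which itself passes through the generic spherical orbit of $Z'$. (ii) The wedge decompositions $s=s_1\wedge s'$ and $s=s_{Z'}\wedge s_{P'_u}$ that you use for the weight bookkeeping exist only for sections of anti-canonical bundles (they come from the proofs of Theorems \ref{anti-can-div-thm} and \ref{anti-can-div-thm-quasi-homo}); in this lemma $L$ is an arbitrary ample line bundle, so they are unavailable — and the first one is defined on $X'\setminus\delta$, i.e.\ away from all colours, so it could not detect $m_D$ in any case. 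The compatibility you actually need is true for a simpler reason: the generic fibre $X_z$ is $B$-stable and the slice $Z'$ is $(B\cap L_{P'})$-stable, so the restriction of the $B$-semiinvariant rational section $s$ stays semiinvariant with weight $\lambda_0$ (respectively with $\langle\alpha^\vee,\cdot\rangle$-pairing unchanged for $\alpha\in\Pi_{L_{P'}}$), while transversality of $D$ with the fibre/slice gives $m_{\hat D}=m_D$, respectively $m_{D'}=m_D$. (iii) Your parenthetical claim in case (1) that $\alpha\in\Pi_{L_P}$ is false: since $P_\alpha$ moves $D$ and the associated parabolic $P$ stabilizes every colour, necessarily $\alpha\in\Pi_{P_u}$ (this is exactly why $\pi$ contains the factor $\langle\alpha^\vee,\cdot\rangle$ when the lemma is applied later in the proof of Theorem \ref{anti-can-div-thm-quasi-homo}); this confusion does not enter the proof of the lemma itself. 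With these repairs your argument matches the paper's.
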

\begin{proof}
We start with case (1) when $X$ is a one-parameter $G$-variety. In this case a general $G$-orbit in $X$ is isomorphic to some spherical homogeneous space $O$. For simplicity we denote by $O$ any fixed such an orbit. Recall that $\Gamma\cong\Gamma(O)$. Let $D\in\mathscr D^B$ be a colour of $X$ of type-a' or b. Then from the construction in Section 3.1.1 and Lemma \ref{spherical-quot}, $D\cap O=\hat D$ is a single colour of $O$ which has the same type with $D$, and satisfies $v_D=v_{\hat D}$. By restricting $L$ and $s$ on $O$, we get $s|_O$ a $B$-semiinvariant section of $L|_O$ with the same weight $\lambda_0$ and $\mathfrak d|_O$ its divisor. The statement then follows from Lemma \ref{bdry-coe-spherical}.

Now we turn to case (2). Assume that $X$ is quasihomogeneous which contains a homogeneous space $G/H$. Then for any colour $D$, $D\cap(G/H)$ is the divisor of a $B$-semiinvariant section $s_D\in{\rm H}^0(G/H,L_D)^{(B)}_{\lambda_D}$ for some $\lambda_D\in \mathfrak X(B)$. Recall the set $C^o$ constructed in Section 3.2 above. We claim that $$\lambda_{X_z}\equiv\lambda^o~\text{(modulo a $G$-character)}$$ for some $\lambda^o\in\mathfrak X(B)$ for any $X_z$ with $z\in C^o$. Note for any $z_1,z_2\in\mathring C$, the difference of the colours $X_{z_1}-X_{z_2}$ coincides with the divisor of a $B$-invariant function $f_0:=\frac{z-z_1}{z-z_2}\in{\rm k}(\mathbb P^1)\cong{\rm k}(X)^B$. This implies
$$f':=\frac {s_{X_{z_1}}}{f_0s_{X_{z_2}}}\in\mathscr O(G/H)^\times.$$
By \cite[Proposition 1.3]{KKV}, $G$ acts on $f'$ through some character $\mu$, whence
$$\lambda_{X_{z_1}}=\lambda_{X_{z_2}}+\mu,$$
and we get the claim.

By restricting $s$ on $G/H$, we get
$$f=\frac{s|_{G/H}}{\prod_{D\in\mathscr D^B}s_D^{m_D}}\in\mathscr O(G/H)^\times.$$
Again by \cite[Proposition 1.3]{KKV}, we can decompose
$$\lambda_0=m\lambda^o+\lambda_0'+\lambda_0''+\lambda_G,$$
where $m=\sum_{z\in C^o}m_{X_z}\in\mathbb N_+$, $\lambda_0'=\sum_{D\in\mathscr D^B\setminus\mathscr B(X)^{P'}}m_D\lambda_D$ (every $D$ appears in this sum descends to a colour of  $Z'$), $\lambda_0''=\sum_{D\in(\mathscr D^B)^{P'},x_D\not\in C^o}m_D\lambda_D$ (every $D$ in this sum descends to a $L_{P'}$-stable divisor in $Z'$), and $\lambda_G\in\mathfrak X(G)$. In particular, $\lambda^o$ and $\lambda_0''$ are $L_{P'}$-characters, since $P'$ stabilizes $X_z$ for $z\in C^o$ and any $D\in(\mathscr D^B)^{P'}$. Thus,
$$\langle\alpha^\vee,\lambda_0\rangle=\langle\alpha^\vee,\lambda_0'\rangle,~\forall\alpha\in\Pi_{L_{P'}}.$$
On the other hand, for a colour $D$ that descends to a colour $D'$ of type-a' or b in $Z'$, $v_D=\frac12\alpha^\vee|_{\Gamma}$ or $\alpha^\vee|_\Gamma$ for $\alpha\in\Pi_{L_{P'}}$. The Lemma then follows from the previous case.

\end{proof}

\subsection{Lemmas on one-parameter $G$-varieties of type  \uppercase\expandafter{\romannumeral1}}

\begin{lem}\label{orbit-C-lem}
Let $X$ be a one-parameter $G$-variety of type \uppercase\expandafter{\romannumeral1} and $\mathcal O$ any $G$-orbit in it. Then the coloured cone $(\mathscr C,\mathscr R)$ of  $\mathcal O$ can not be totally contained in $\mathscr Q$.
\end{lem}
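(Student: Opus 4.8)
The plan is to argue by contradiction. Suppose the coloured cone $(\mathscr C,\mathscr R)\subset\mathscr Q_{x_0,+}$ of $\mathcal O$ satisfies $\mathscr C\subset\mathscr Q$. Since $X$ is of type \uppercase\expandafter{\romannumeral1}, $(\mathscr C,\mathscr R)$ is a genuine coloured cone, so its generators lie in $\mathscr V_{\mathcal O}\sqcup\mathscr D^B_{\mathcal O}$, and in the one-parameter case every colour is central (by \cite[Lemma 2.1]{Timashev-1997}). Thus $\mathscr C\subset\mathscr Q$ is equivalent to saying that every $G$-stable prime divisor $D\supset\overline{\mathcal O}$ has zero jump $h_D=0$, i.e. $\mathscr V_{\mathcal O}\subset\mathscr Q$. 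I would test this against a non-constant $G$-invariant rational function $f\in K^G=K^B={\rm k}(C)$, which exists because $C$ is a curve.

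First I would show that under this assumption $f$ is a unit of the local ring $\mathscr O_{X,\mathcal O}=A(\mathscr V_{\mathcal O},\mathscr D^B_{\mathcal O})$. Since $f$ is $G$-invariant, its divisor ${\rm div}(f)$ is $G$-stable, so all of its prime components are $G$-stable divisors; consequently $v(f)=0$ for every colour and for every non-$B$-stable prime divisor in $\mathscr D\setminus\mathscr D^B$, as these are not $G$-stable. For a central $G$-stable divisor one has $v_D(f)=h_Dq_{x_D}(f)=0$. Hence every valuation occurring in the defining intersection of $\mathscr O_{X,\mathcal O}$ vanishes on $f$, so both $f$ and $f^{-1}$ are regular at $\mathcal O$. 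Reducing modulo $\mathfrak m_{X,\mathcal O}$, the residue $f(\mathcal O)$ lies in the $G$-invariants of the residue field ${\rm k}(\overline{\mathcal O})$; since $\mathcal O$ is a single orbit one has ${\rm k}(\overline{\mathcal O})^G={\rm k}$, so $f(\mathcal O)=c$ for some $c\in{\rm k}^\times$. Then $g:=f-c$ is a non-zero element of $K^G$ lying in $\mathfrak m_{X,\mathcal O}$.

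Finally I would derive the contradiction from the support $\mathscr S_{\mathcal O}$. Because $\mathcal O$ is an actual $G$-orbit, its coloured cone satisfies ${\rm RelInt}(\mathscr C)\cap\mathscr V=\mathscr S_{\mathcal O}\neq\emptyset$; picking $v\in\mathscr S_{\mathcal O}$, the ring $\mathscr O_v$ dominates $\mathscr O_{X,\mathcal O}$. As $g\in\mathfrak m_{X,\mathcal O}$, domination forces $v(g)>0$. On the other hand $\mathscr C\subset\mathscr Q$ gives $v\in\mathscr Q$, i.e. $v$ is central, so $v|_{K^B}=0$ and in particular $v(g)=0$ because $g\in K^G\subset K^B$. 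This contradiction shows $\mathscr C\not\subset\mathscr Q$. The step I expect to require the most care is the verification that $f$ is a unit in $\mathscr O_{X,\mathcal O}$: it rests on the fact that a $G$-invariant function has $G$-stable divisor, so that the colours and the non-$B$-stable divisors of $\mathscr D\setminus\mathscr D^B$ entering the intersection $A(\mathscr V_{\mathcal O},\mathscr D^B_{\mathcal O})$ do not detect $f$; one must also ensure that the existence of a central $G$-valuation dominating $\mathscr O_{X,\mathcal O}$ is guaranteed precisely by the nonemptiness of the support ${\rm RelInt}(\mathscr C)\cap\mathscr V$.
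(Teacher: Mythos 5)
Your proof is correct, but it takes a genuinely different route from the paper's. The paper argues geometrically and combinatorially: it invokes Arzhantsev's result that every $G$-orbit in a one-parameter variety is spherical, normality of $\overline{\mathcal O}$ (\cite[Theorem 16.25]{Timashev-book}), and Akhiezer's finiteness theorem to conclude that $\overline{\mathcal O}$ contains only finitely many $G$-orbits; it then observes that if $\mathscr C\subset\mathscr Q$, the hypercone structure forces $({\rm Cone}(q_x,\mathscr C),\mathscr R)$ to be a coloured cone of the fan for almost every $x\in C$, each containing $(\mathscr C,\mathscr R)$ as a face and hence contributing an orbit to $\overline{\mathcal O}$ --- infinitely many orbits, a contradiction. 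Your argument is instead local and valuation-theoretic: you take a non-constant $f\in K^G=K^B$, show that $f$ is a unit of $\mathscr O_{X,\mathcal O}=A(\mathscr V_{\mathcal O},\mathscr D^B_{\mathcal O})$ (here you should say explicitly that each prime component of ${\rm div}(f)$ is $G$-stable because $G$ is \emph{connected}, so the colours and the divisors in $\mathscr D\setminus\mathscr D^B$ do not detect $f$), use ${\rm k}(\mathcal O)^G={\rm k}$ to produce $g=f-c\in\mathfrak m_{X,\mathcal O}\setminus\{0\}$ with $g\in K^G$, and then play off $v(g)>0$ (domination by any element $v$ of the nonempty support $\mathscr S_{\mathcal O}={\rm RelInt}(\mathscr C)\cap\mathscr V$) against $v(g)=0$ (centrality of $v$, since central valuations vanish on $K^B\supset K^G$). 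Your route is more elementary and self-contained: it needs none of the three external inputs above, only the Luna--Vust formalism already recalled in Section 2.2, and it isolates precisely where the orbit hypothesis enters (through ${\rm k}(\mathcal O)^G={\rm k}$), which also explains why the statement fails for general $G$-germs such as a central $G$-stable divisor without dense orbit. What the paper's route buys is the extra geometric information that $\overline{\mathcal O}$ is a spherical variety with finitely many orbits, and a fan-theoretic picture that is reused immediately afterwards in the proof of Lemma \ref{X-C-map-lem}.
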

\begin{proof}
Note that any $G$-orbit in general position is spherical. By \cite[Proposition 1]{Arzhantsev-1997} any $G$-orbit is spherical. Also $X_\mathcal O:=\overline{\mathcal O}$ is normal by \cite[Theorem 16.25]{Timashev-book}, whence a spherical variety. In particular $X_\mathcal O$ contains only finitely many $G$-orbits by Akhiezer's theorem \cite{Akhiezer-1985}. Thus there are only finitely many coloured cones in the fan $\mathfrak F_X$ of $X$ whose relative interior intersects $\mathscr V$ and contains $(\mathscr C,\mathscr R)$ as a face.

Denote by $C$ the smooth projective curve so that ${\rm k}(X)^G={\rm k}(C)$. If $(\mathscr C,\mathscr R)\subset\mathscr Q$, then for almost every $x\in C$, $({\rm Cone}(q_x,\mathscr C),\mathscr R)$ is a coloured cone in $\mathfrak F_X$. Also ${\rm RelInt}(\mathscr C)\cap\mathscr V\not=\emptyset$ since $(\mathscr C,\mathscr R)$ is the coloured cone of a $G$-subvariety, which implies ${\rm RelInt}({\rm Cone}(q_x,\mathscr C))\cap\mathscr V\not=\emptyset$ for those $x\in C$ above. Thus $X_\mathcal O$ contains infinitely many $G$-orbits. A contradiction.
\end{proof}

The following Lemma is a special case of a general result \cite[Proposition 12.12]{Timashev-book}. We include an elementary proof below in our case only for readers' convenience.

\begin{lem}\label{X-C-map-lem}
Let $X$ be a one-parameter $G$-variety of type \uppercase\expandafter{\romannumeral1} and $C$ the smooth projective curve so that ${\rm k}(X)^G={\rm k}(C)$. Then the rational quotient ${\rm pr}_B:X\dashrightarrow C$ is a morphism ${\rm pr}_B:X\to C$ separating general $G$-orbits.
\end{lem}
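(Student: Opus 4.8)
The plan is to show that the $G$-invariant rational map ${\rm pr}_B$ has empty indeterminacy locus, arguing orbit by orbit via the combinatorial description of the Luna--Vust local rings. Since ${\rm pr}_B$ is induced by the inclusion ${\rm k}(C) = K^G = K^B \hookrightarrow {\rm k}(X)$ (recall $K^G = K^B$ in the one-parameter case), it is constant on $G$-orbits, so its locus of definition $U \subseteq X$ is a $G$-stable open set; write $Z = X \setminus U$, a closed $G$-stable subset. First I would reduce to a single orbit: if $Z \neq \emptyset$, then being a nonempty closed $G$-stable subset of the $G$-variety $X$, it contains a closed $G$-orbit $\mathcal O$ (an orbit of minimal dimension in $Z$ is closed in $X$). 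Let $(\mathscr C, \mathscr R)$ be the coloured cone attached to the $G$-germ $\mathcal O$; since $X$ is of type \uppercase\expandafter{\romannumeral1} this is an honest coloured cone, so $\mathscr C \subset \mathscr Q_{x,+}$ for a single $x \in C$, and by Lemma \ref{orbit-C-lem} it is not contained in $\mathscr Q$.

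Next I would run the domination argument. Recall $\mathscr O_{X,\mathcal O} = A(\mathscr V_{\mathcal O}, \mathscr D^B_{\mathcal O})$, so for $f \in K^B$ the membership $f \in \mathscr O_{X,\mathcal O}$ is governed by $v(f) \geq 0$ for $v$ ranging over the generators of $\mathscr C$ lying in $\mathscr V$, together with the colours; but every colour is central in the one-parameter case, hence $v_D(f) = h_{v_D} q_{x_D}(f) = 0$ for $f \in K^B$, so colours impose no condition. Writing each generator $v = h_v q_{x_v} + \ell_v$ and using $v(f) = h_v q_x(f)$ for $f \in {\rm k}(C)$ (with $x_v = x$ whenever $h_v > 0$, as $\mathscr C \subset \mathscr Q_{x,+}$), the condition collapses to $q_x(f) \geq 0$, i.e. $f \in \mathscr O_{C,x}$. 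Because $\mathscr C \not\subset \mathscr Q$ there is a generator $w_0$ with $h_{w_0} > 0$, so for a uniformizer $t$ of $C$ at $x$ one has $w_0(t) = h_{w_0} > 0$, whence $t \in \mathfrak m_{X,\mathcal O}$. This shows $\mathscr O_{X,\mathcal O}$ dominates the discrete valuation ring $\mathscr O_{C,x}$, so ${\rm pr}_B$ extends to a morphism at the generic point of $\mathcal O$ sending it to $x$. Hence $\mathcal O \cap U \neq \emptyset$, and $G$-stability of $U$ forces $\mathcal O \subset U$, contradicting $\mathcal O \subset Z$. Therefore $Z = \emptyset$ and ${\rm pr}_B \colon X \to C$ is a morphism.

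Finally, the separation of general orbits is just the defining property of the rational quotient: in the one-parameter case ${\rm k}(C) = K^G$, so ${\rm pr}_B$ is the rational quotient for the $G$-action, and by Rosenlicht's theorem it restricts to a geometric quotient on a $G$-stable dense open subset, whose fibres are single $G$-orbits. Thus ${\rm pr}_B$ separates $G$-orbits in general position.

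I expect the main obstacle to be the precise bookkeeping in the domination step: verifying that, after restricting to $B$-invariant (equivalently $G$-invariant) functions, the ring $A(\mathscr V_{\mathcal O}, \mathscr D^B_{\mathcal O})$ cuts out exactly $\mathscr O_{C,x}$ inside $K^B$, and isolating Lemma \ref{orbit-C-lem} as precisely the input that excludes the degenerate case $\mathscr C \subset \mathscr Q$ in which no single target point $x$ would exist. Everything else is formal once orbit-closedness and $G$-stability of the definition locus are in place.
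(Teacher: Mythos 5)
Your proof is correct, but it is organized genuinely differently from the paper's. The paper's argument is constructive and chart-based: it first uses Lemma \ref{orbit-C-lem} to define a set-theoretic $G$-invariant map ${\rm Pr}:X\to C$ (each orbit goes to the unique $x$ with its coloured cone in $\mathscr Q_{x,+}$), then covers $X$ by the $G$-spans of $B$-charts $U(\mathscr C,\mathscr R)$ attached to type \uppercase\expandafter{\romannumeral1} hypercones, proves that ${\rm k}[\mathring C]\subset{\rm k}[U(\mathscr C,\mathscr R)]$ exactly when ${\rm Loc}(\mathscr C,\mathscr R)\subset\mathring C$, and glues the resulting morphisms. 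You instead argue by contradiction at a closed orbit of the indeterminacy locus and conclude by domination of local rings; this is precisely the Knop--Timashev extension argument (\cite[Theorem 5.1]{Knop91}, \cite[Proposition 12.12]{Timashev-book}) that the paper itself invokes later, in Section 4.3, to extend ${\rm pr}$ on the total space of a test configuration. Both routes rest on the same combinatorial inputs --- Lemma \ref{orbit-C-lem} plus the fact that for $f\in K^B$ only generators with positive jump impose conditions (colours, being central in the one-parameter case, impose none) --- so the computational core is shared; the difference is global organization. Your version avoids the gluing bookkeeping, isolates cleanly where type \uppercase\expandafter{\romannumeral1} and Lemma \ref{orbit-C-lem} enter, and reduces everything to one local statement; the paper's version is constructive, identifying explicitly the image point of every orbit and producing the morphism chart by chart. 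One small point you should add to make the domination step airtight: the ring $A(\mathscr V_{\mathcal O},\mathscr D^B_{\mathcal O})$ is cut out not only by $\mathscr V_{\mathcal O}$ and the colours containing $\mathcal O$ but also by \emph{all} non-$B$-stable prime divisors $D\in\mathscr D\setminus\mathscr D^B$; these too impose no condition on $f\in K^B$, since ${\rm div}(f)$ is a $B$-stable divisor and $B$ is connected, so every component of ${\rm div}(f)$ is $B$-stable and ${\rm ord}_D(f)=0$ for such $D$. With that sentence included your argument is complete, and your appeal to Rosenlicht's theorem for the separation of general orbits supplies exactly what the paper leaves implicit.
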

\begin{proof}
Suppose that $\mathring X\subset X$ is any $B$-chart given by the coloured data $(\mathscr W,\mathscr R)$. Then there is an open subset $X^o\subset\mathring X$ where ${\rm pr}_B$ is defined. By definition, for any $x\in C$, ${\rm pr}_B^{-1}(x)\cap X^o$ is the union of all $D\cap X^o$, where $D\in\mathscr W\sqcup\mathscr R$ so that $x_D=x$ and $h_D>0$. We hope to show under our assumptions, ${\rm pr}_B$ in fact can be extended to the whole $X$.

By Lemma \ref{orbit-C-lem}, for any $G$-orbit $\mathcal O\subset X$, there is a unique $x\in C$ so that the coloured cone of $X_\mathcal O\subset\mathscr Q_{x,+}$. Thus there is a $G$-invariant map
$${\rm Pr}: X\to C$$
that maps any point in $\mathcal O$ to $x\in C$. This is a map globally defined on $X$, and ${\rm Pr}|_{X^o}={\rm pr}_B|_{X^o}$. It suffices to show that ${\rm Pr}$ is regular on the whole $X$.

Suppose that $(\mathscr C,\mathscr R)$ is a hypercone of type \uppercase\expandafter{\romannumeral1} so that each $(\mathscr C_x,\mathscr R_x)$, $x\in C$ is a coloured cone in $\mathscr F_X$. As in \cite{Petersen-Suss-2011, Langlois} we denote the locus of $(\mathscr C,\mathscr R)$ by ${\rm Loc}(\mathscr C,\mathscr R):=\{x\in C|(\mathscr C_x,\mathscr R_x)\not\subset\mathscr Q\}$. Denote by $U(\mathscr C,\mathscr R)$ the corresponding $B$-chart defined by $(\mathscr C,\mathscr R)$. Note that $h_D\geq0$ for any $D\in\mathscr B(X)$. For any affine subset $\mathring C\subset C$, ${\rm k}[\mathring C]\subset{\rm k}(C)={\rm k}(X)^G$ is contained in ${\rm k}[U(\mathscr C,\mathscr R)]$ if and only if ${\rm Loc}(\mathscr C,\mathscr R)\subset \mathring C$. On the other hand, since every $f\in{\rm k}[\mathring C]$ is $G$-invariant, $f$ is regular on the $G$-span of $U(\mathscr C,\mathscr R)$ if and only if $f\in{\rm k}[U(\mathscr C,\mathscr R)]$. Then ${\rm Pr}$ is regular on the $G$-span of every $U(\mathscr C,\mathscr R)$ so that ${\rm Loc}(\mathscr C,\mathscr R)\subset \mathring C$, and is a globally defined morphism.
\end{proof}

\subsection{Lemmas on counting integral points}
\begin{lem}\label{sum-neg}
Suppose there are $m$ numbers $\alpha_1,...,\alpha_m\in\mathbb R$ so that
\begin{align}\label{sum-cond-si}
\sum_{i=1}^m\alpha_i\geq0>\sum_{i=1}^m[\alpha_i].
\end{align}
Then
\begin{align}\label{sum-cond-estimate}
m>\sum_{i=1}^m\alpha_i\geq0>\sum_{i=1}^m[\alpha_i]>-m.
\end{align}
\end{lem}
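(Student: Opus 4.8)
The plan is to deduce all four inequalities of \eqref{sum-cond-estimate} from the single elementary estimate $\alpha-1<[\alpha]\leq\alpha$, valid for every real $\alpha$; equivalently, the fractional part $\alpha-[\alpha]$ always lies in $[0,1)$. Summing the right-hand inequality over $i=1,\dots,m$ yields $\sum_{i=1}^m[\alpha_i]\leq\sum_{i=1}^m\alpha_i$, and summing the left-hand one yields $\sum_{i=1}^m\alpha_i-m<\sum_{i=1}^m[\alpha_i]$. These two bounds, combined with the two hypotheses packaged in \eqref{sum-cond-si}, are all that the argument requires.

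First I would dispatch the two middle inequalities of \eqref{sum-cond-estimate}, namely $\sum_{i=1}^m\alpha_i\geq0$ and $0>\sum_{i=1}^m[\alpha_i]$, since these are precisely the two halves of the hypothesis \eqref{sum-cond-si} and need no argument. For the leftmost inequality $m>\sum_{i=1}^m\alpha_i$, I would write $\sum_{i=1}^m\alpha_i=\sum_{i=1}^m[\alpha_i]+\sum_{i=1}^m(\alpha_i-[\alpha_i])$: the second sum is strictly less than $m$ because each summand is less than $1$, while the first sum is at most $-1$ since it is a strictly negative integer by hypothesis. Hence $\sum_{i=1}^m\alpha_i<m-1<m$. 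For the rightmost inequality $\sum_{i=1}^m[\alpha_i]>-m$, I would instead invoke the lower bound $[\alpha_i]>\alpha_i-1$, which upon summation gives $\sum_{i=1}^m[\alpha_i]>\sum_{i=1}^m\alpha_i-m\geq-m$, the final step using the hypothesis $\sum_{i=1}^m\alpha_i\geq0$.

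There is no genuine obstacle here: the statement is a bookkeeping consequence of the floor-function inequalities, and the only places where the hypotheses enter are the observations that $\sum_{i=1}^m[\alpha_i]$ is a strictly negative integer, hence $\leq-1$, and that $\sum_{i=1}^m\alpha_i$ is nonnegative. I would then simply assemble the four resulting bounds into the displayed chain \eqref{sum-cond-estimate}, which completes the proof.
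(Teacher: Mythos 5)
Your proof is correct and follows essentially the same route as the paper: decompose each $\alpha_i$ into integer and fractional parts, use that fractional parts lie in $[0,1)$, and combine with the two hypotheses. The only (cosmetic) difference is that you exploit integrality to get $\sum_{i=1}^m[\alpha_i]\leq-1$ and hence the slightly sharper bound $\sum_{i=1}^m\alpha_i<m-1$, whereas the paper argues directly via $\sum_{i=1}^m\{\alpha_i\}<m$.
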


\begin{proof}
Note that $$\alpha_i=[\alpha_i]+\{\alpha_i\},~i=1,...,m.$$
From the right-hand side inequality of \eqref{sum-cond-si} we have
$$m>\sum_{i=1}^m\{\alpha_i\}>\sum_{i=1}^m\alpha_i\geq0.$$
At the same time, the left-hand side inequality of \eqref{sum-cond-si} yields
$$\sum_{i=1}^m[\alpha_i]\geq-\sum_{i=1}^m\{\alpha_i\}>-m.$$
Hence we get the Lemma.
\end{proof}

From the above Lemma we get the following estimates
\begin{lem}\label{bad-point-lem}
Let $\mathfrak d_0$ be an ample divisor given by \eqref{B-stable-divisor}. Set
$$\mathfrak T_k:=\{\lambda\in\Delta_\mathscr Z(\mathfrak d_0)\cap\frac 1k\Gamma|\sum_{x\in C}[kA_x(\mathfrak d_0,\lambda)]<0\},~k\in\mathbb N_+.$$
Then there is a $k_0\in\mathbb N_+$ depends only on $\mathfrak d_0$, and constants $c,c'>0$ so that the cardinal number $\#\mathfrak T_k\leq ck^{r-1}$ for any $k\in\mathbb N_{\geq k_0}$, and
$$-c'<\sum_{x\in C}[kA_x(\mathfrak d_0,\lambda)]<kA(\mathfrak d_0,\lambda)<c', ~\forall k\in\mathbb N_{\geq k_0}~\text{and}~\lambda\in\mathfrak T_k.$$
\end{lem}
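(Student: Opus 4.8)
The plan is to prove Lemma \ref{bad-point-lem} by combining the elementary estimate of Lemma \ref{sum-neg} with a counting argument on the boundary strata of $\Delta_\mathscr Z(\mathfrak d_0)$. First I would recall that $A(\mathfrak d_0,\lambda)=\sum_{x\in C}A_x(\mathfrak d_0,\lambda)$ is a finite sum (only finitely many $x\in C$ have $A_x(\mathfrak d_0,\cdot)\not\equiv0$, say $x_1,\dots,x_m$), that each $A_{x_j}(\mathfrak d_0,\cdot)$ is concave and piecewise affine on $\Delta_\mathscr Z(\mathfrak d_0)$, and that $A(\mathfrak d_0,\lambda)\geq0$ for every $\lambda\in\Delta_\mathscr Z(\mathfrak d_0)$ by the very definition of $\Delta_\mathscr Z(\mathfrak d_0)$ in Proposition \ref{Delta(d)-Z}.

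The key observation is that for $\lambda\in\mathfrak T_k$ the numbers $\alpha_j:=kA_{x_j}(\mathfrak d_0,\lambda)$ satisfy exactly the hypothesis \eqref{sum-cond-si} of Lemma \ref{sum-neg}: indeed $\sum_j\alpha_j=kA(\mathfrak d_0,\lambda)\geq0$ while $\sum_j[\alpha_j]<0$ by the defining condition of $\mathfrak T_k$. Applying Lemma \ref{sum-neg} with these $m$ numbers yields immediately
\begin{align*}
-m<\sum_{x\in C}[kA_x(\mathfrak d_0,\lambda)]<kA(\mathfrak d_0,\lambda)<m,
\end{align*}
so the two-sided bound in the statement holds with $C=C'=m$, a constant depending only on $\mathfrak d_0$. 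In particular $0\le kA(\mathfrak d_0,\lambda)<m$, which pins $\lambda$ to a thin slab near the locus $\{A(\mathfrak d_0,\cdot)=0\}$.

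It remains to bound the cardinality $\#\mathfrak T_k$. The inequality $kA(\mathfrak d_0,\lambda)<m$ forces every $\lambda\in\mathfrak T_k$ to lie within distance $O(1/k)$ of the face(s) $\{A(\mathfrak d_0,\cdot)=0\}\cap\Delta_\mathscr Z(\mathfrak d_0)$. Since $\Delta_\mathscr Z(\mathfrak d_0)$ is a solid polytope in $\Gamma_\mathbb Q$ of dimension $r=\mathrm{rk}(\Gamma)$ and $\{A(\mathfrak d_0,\cdot)=0\}$ cuts out a union of faces of dimension at most $r-1$, the number of points of $\frac1k\Gamma$ lying in such an $O(1/k)$-neighbourhood of an $(r-1)$-dimensional set is $O(k^{r-1})$: there are $O(k^{r-1})$ lattice points on each $(r-1)$-dimensional stratum, and the slab has bounded thickness in lattice units in the transverse direction. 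The cleanest way to present this is to cover the relevant faces by finitely many affine pieces and, on each, change coordinates so that $A(\mathfrak d_0,\cdot)$ becomes one of the coordinate directions; then $\lambda\in\mathfrak T_k$ requires the transverse coordinate to take one of boundedly many values, while the remaining $r-1$ coordinates range over a set with $O(k^{r-1})$ points of $\frac1k\Gamma$. Summing over the finitely many pieces gives $\#\mathfrak T_k\le Ck^{r-1}$.

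The routine algebraic input (Lemma \ref{sum-neg}) does all the work for the two-sided inequality, so the only real content is the cardinality estimate, and there the main obstacle is bookkeeping: the locus $\{A(\mathfrak d_0,\cdot)=0\}$ is generally a union of several polyhedral faces along which different $A_x$ realize their minima, so one must handle the common domains of linearity (the $\Omega_a$ of Lemma \ref{h0(X,Lk)}) uniformly and make sure the implied constant is independent of $k$. I expect this lattice-point count near a codimension-one boundary to be the step requiring the most care, though it is standard once the geometry is set up.
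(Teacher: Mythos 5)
Your proposal is correct and follows essentially the same route as the paper's proof: apply Lemma \ref{sum-neg} to the finitely many nonzero terms $\alpha_i=kA_{x_i}(\mathfrak d_0,\lambda)$ to get the two-sided bound with $C=C'=m$, then observe that this confines $\mathfrak T_k$ to the strip $\{0\le A(\mathfrak d_0,\cdot)<\tfrac mk\}$ near the codimension-one locus where $A$ vanishes, and count points of $\tfrac1k\Gamma$ there to get $\#\mathfrak T_k\le Ck^{r-1}$. Your extra remarks on covering the zero locus by affine pieces merely flesh out the lattice-point count that the paper states more tersely.
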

\begin{proof}
Note that there are only finitely many points $x_1,...,x_m\in C$ so that $A_{x_i}(\mathfrak d_0,\lambda)\not\equiv0$ for $i=1,...,m$. Take $\alpha_i=kA_{x_i}(\mathfrak d_0,\lambda)$. By Lemma \ref{sum-neg},
\begin{align}\label{neg-set}
\mathfrak T_k\subset\{\lambda\in\Delta_\mathscr Z(\mathfrak d_0)\cap\frac 1k\Gamma|A(\mathfrak d_0,\lambda)<\frac mk\}.
\end{align}
On the other hand, the piecewise linear function $A(\mathfrak d_0,\lambda)>0$ on $\Delta_\mathscr Z(\mathfrak d_0)$. Thus there is a $k_0\in\mathbb N_+$ that depends only on $\mathfrak d_0$ (more precisely, the integer $m$, the function $A(\mathfrak d_0,\lambda)$, and the shape of $\Delta_\mathscr Z(\mathfrak d_0)$ which are completely determined by $\mathfrak d_0$) such that the right-hand side of \eqref{neg-set} is contained in a strip near the boundary of $\Delta_\mathscr Z(\mathfrak d_0)$ with facets parallel to that of the boundary. It is also direct to check that for $k\geq k_0$, this strip can be chosen so that its width $\leq\frac {c_0m}k$, where $c_0>0$ is again a constant that depends only on the function $A(\mathfrak d_0,\lambda)$ and the shape of $\Delta_\mathscr Z(\mathfrak d_0)$. In particular, $c_0$ is independent of $k\in\mathbb N_+$. Thus $\#\mathfrak T_k\leq ck^{r-1}$ for some uniform $c>0$. The last point follows from \eqref{sum-cond-estimate} by taking $c'=m$.
\end{proof}

The following Lemma can be derived from a general result of Pukhlikov-Khovanskij \cite{Pukhlikov-Khovanskii}. We include it here for reads' convenience.
\begin{lem}\label{riemann-sum}
Let $\mathfrak M\cong \mathbb Z^r$ be a lattice and $\Delta\subset \mathfrak M_\mathbb R$ be a solid, integral convex polytope in it. Let $\pi:\Delta\to\mathbb R$ be a monomial of degree $d$ on $\mathfrak M_\mathbb R$ and $f:\Delta\to\mathbb R$ a concave, piecewise linear function whose domains of linearity $\{\Omega_a\}_{a=1}^{N_f}$ consist of integral polytopes in $\Delta$. Suppose that on each domain of linearity $\Omega_a$,
$$f(\lambda)=\frac1{p_a}(q_a(\lambda)+r_a),$$
where $(p_a,-q_a)\in\mathbb Z\oplus\mathbb Z$ is a primitive vector, $r_a\in\mathbb Q$, and $f$ takes integral value at every vertex f its domains of linearity. Define
$$S_k(f;\pi):=\sum_{\lambda\in k\Delta\cap\mathfrak M}[kf(\frac\lambda k)]\pi(\lambda),~k\in\mathbb N_+.$$
Then
\begin{align*}
S_k(f;\pi)=&k^{r+d+1}\int_{\Delta}f(\lambda)\pi(\lambda)d\lambda+\frac12k^{r+d}\int_{\partial\Delta}f(\lambda)\pi(\lambda)d\sigma\notag\\
&-\frac12k^{r+d}\sum_{a=1}^{N_f}\int_{\Omega_a}(1-\frac1{|p_a|})\pi(\lambda)d\lambda+O(k^{r+d-1}),~k\to+\infty.
\end{align*}
where $d\sigma$ is the induced lattice measure on $\partial\Delta$.
\end{lem}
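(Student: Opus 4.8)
The plan is to write $[kf(\lambda/k)] = kf(\lambda/k) - \{kf(\lambda/k)\}$ and to treat the two resulting sums separately; call them $S_k^{(1)}$ and $S_k^{(2)}$, so that $S_k(f;\pi) = S_k^{(1)} - S_k^{(2)}$. Since $\pi$ is homogeneous of degree $d$, the rescaling $\mu := \lambda/k$ turns $S_k^{(1)}$ into $k^{d+1}\sum_{\mu \in \frac1k\mathfrak M \cap \Delta}(f\pi)(\mu)$, a Riemann sum for the continuous, piecewise-polynomial function $f\pi$. The idea for this term is to apply the Euler--Maclaurin formula of Pukhlikov--Khovanskij \cite{Pukhlikov-Khovanskii} on each domain of linearity $\Omega_a$, where $f\pi$ is a genuine polynomial, and then to sum over $a$. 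The concavity of $f$ guarantees that $\{\Omega_a\}$ is an honest polyhedral subdivision of $\Delta$, so every lattice point is interior to exactly one $\Omega_a$ or lies on a shared wall. Writing the interior sums as full sums minus boundary sums and adding back the wall contribution once, the codimension-one boundary terms of adjacent cells carry opposite orientations but, by continuity of $f\pi$, equal integrands; they cancel in pairs over interior walls and leave exactly $\tfrac12 k^{r-1}\int_{\partial\Delta} f\pi\, d\sigma$. This yields $S_k^{(1)} = k^{r+d+1}\int_\Delta f\pi\, d\lambda + \tfrac12 k^{r+d}\int_{\partial\Delta} f\pi\, d\sigma + O(k^{r+d-1})$, which accounts for the first two terms of the asserted expansion.

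The heart of the argument is $S_k^{(2)} = \sum_{\lambda}\{kf(\lambda/k)\}\pi(\lambda)$. First I would record the integrality $r_a \in \mathbb Z$: at a vertex $v$ of the integral polytope $\Omega_a$ one has $v \in \mathfrak M$ and $q_a(v) \in \mathbb Z$, while $f(v) \in \mathbb Z$ by hypothesis, so $r_a = p_a f(v) - q_a(v) \in \mathbb Z$. Consequently, for $\lambda \in k\Omega_a \cap \mathfrak M$ one has $kf(\lambda/k) = \frac{1}{p_a}(q_a(\lambda) + kr_a)$ with $q_a(\lambda) + kr_a \in \mathbb Z$, so $\{kf(\lambda/k)\}$ depends only on the residue $q_a(\lambda) \bmod p_a$ and is $|p_a|$-periodic in it. The plan for $S_k^{(2)}$ is then an equidistribution and averaging argument on each $\Omega_a$: because $(p_a, q_a)$ is primitive, $q_a$ is surjective onto $\mathbb Z$ modulo $p_a$, so the lattice points of $k\Omega_a$ split evenly among the $|p_a|$ residue classes of $q_a$, and summing the polynomial weight $\pi$ over one class gives $\frac{1}{|p_a|}\sum_{\lambda \in k\Omega_a}\pi(\lambda)$ up to a lower-order boundary error $O(k^{r+d-1})$. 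Replacing the periodic weight by its mean over a full period and using $kr_a \in \mathbb Z$ (so the shift merely permutes residues) gives the average $\frac{1}{|p_a|}\sum_{i=0}^{|p_a|-1}\frac{i}{|p_a|} = \frac12\bigl(1 - \frac1{|p_a|}\bigr)$. Combined with $\sum_{\lambda \in k\Omega_a}\pi(\lambda) = k^{r+d}\int_{\Omega_a}\pi\, d\lambda + O(k^{r+d-1})$ and summed over $a$, this produces $S_k^{(2)} = \frac12 k^{r+d}\sum_a \bigl(1 - \frac1{|p_a|}\bigr)\int_{\Omega_a}\pi\, d\lambda + O(k^{r+d-1})$, and subtracting it from $S_k^{(1)}$ gives the formula.

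I expect the main obstacle to be the rigorous control of the error in the equidistribution step for $S_k^{(2)}$: one must verify that the weighted residue counts are equidistributed to leading order uniformly in $k$, and --- crucially --- that the $k$-dependent shift $kr_a \bmod p_a$ does not contaminate the $k^{r+d}$ coefficient. This is precisely where the hypothesis that $f$ is integral at the vertices of its domains of linearity is indispensable, since it forces $r_a \in \mathbb Z$ and hence $\{kr_a\} = 0$; without it an extra term of order $k^{r+d}$ would survive. A secondary technical point is the bookkeeping of lattice points lying on the walls of the subdivision in both sums, which I would dispatch by the standard observation that such loci have codimension one and therefore contribute only at order $O(k^{r+d-1})$.
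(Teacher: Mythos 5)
Your proposal is correct, but it takes a genuinely different route from the paper. The paper does not split the floor function at all: it introduces the $(r+1)$-dimensional subgraph polytope $\Delta_m=\{(t,\lambda)\mid \lambda\in\Delta,\ \min_\Delta f\leq t\leq f(\lambda)\}$, observes that $[kf(\lambda/k)]$ counts the points of $\tfrac1k\mathbb Z$ on the vertical fibre of $\Delta_m$ over $\lambda/k$ (up to the additive constant $k\min_\Delta f-1$), and then applies Pukhlikov--Khovanskij \emph{once}, in dimension $r+1$, to $\Delta_m$. In that approach the factors $\tfrac1{|p_a|}$ fall out of the normalized lattice measure on the slanted top facets (the graph of $f$), and the vertex-integrality hypothesis is used to guarantee that $\Delta_m$ is an \emph{integral} polytope, so that the two-term Euler--Maclaurin expansion applies. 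You instead write $[x]=x-\{x\}$, treat the linear part by Euler--Maclaurin cell-by-cell over the subdivision $\{\Omega_a\}$ with an inclusion--exclusion over interior walls, and convert the fractional part into a residue-averaging problem, where vertex-integrality enters as $r_a\in\mathbb Z$ so that the $k$-dependent shift $kr_a$ permutes residues and the sawtooth averages to $\tfrac12(1-\tfrac1{|p_a|})$; as you note, without this an order-$k^{r+d}$ term would survive, which is exactly the role the hypothesis plays in the paper as well. What each approach buys: the paper's lift is shorter and needs no equidistribution input, since all arithmetic is absorbed into the lattice geometry of one integral polytope; yours is more elementary and makes the arithmetic origin of the $\tfrac12(1-\tfrac1{|p_a|})$ correction transparent, at the price of an extra (standard, but not free) lemma on $\pi$-weighted equidistribution of lattice points among the $|p_a|$ cosets of $\{q_a\equiv 0 \bmod p_a\}$ in $k\Omega_a$ with error $O(k^{r+d-1})$, which you would need to prove or cite (e.g.\ weighted Ehrhart counting for shifted lattices). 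Two small points to tighten: primitivity of $(p_a,-q_a)$ gives $\gcd(p_a,\gcd_i q_{a,i})=1$, which is what actually makes $q_a$ surjective onto $\mathbb Z/p_a\mathbb Z$; and in the $S_k^{(1)}$ step the interior walls do not cancel by ``opposite orientations'' (the Euler--Maclaurin boundary terms are unsigned), but rather the two half-weight wall terms from adjacent cells are cancelled by the inclusion--exclusion subtraction of the doubly counted wall lattice points --- the bookkeeping you describe is right, only the wording is off.
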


\begin{proof}
Since $f$ is concave, $\min_\Delta f$ is attained at some vertex of $\Delta$. Hence $\min_\Delta f\in\mathbb Z$. Consider a convex polytope
$$\Delta_m:=\{(t,\lambda)|\lambda\in\Delta,~t\in\mathbb R,~\min_\Delta f\leq t\leq f(\lambda)\}.$$
Then $\Delta_m$ is an $(r+1)$-dimensional convex integral polytope in $\mathfrak M_\mathbb R\oplus\mathbb R$. Clearly,
\begin{align*}
[kf(\frac\lambda k)]=&[k(f(\frac\lambda k)-\min_\Delta f)]+k\min_\Delta f\notag\\
=&\#\{(\{\frac1k\lambda\}\times\frac1k\mathbb Z)\cap\Delta_m\}+k\min_\Delta f-1.
\end{align*}
Thus
\begin{align}\label{1st-term1-end-lemma-app}
\sum_{\lambda\in k\Delta\cap\mathfrak M}[kf(\frac\lambda k)]\pi(\lambda)=&\sum_{(\lambda,t)\in\Delta_m\cap\frac1k(\mathfrak M\oplus\mathbb Z)}\pi(k\lambda)+(k\min_\Delta f-1)\sum_{\lambda\in\Delta\cap\frac1k\mathfrak M}\pi(k\lambda)\notag\\
=&k^d\sum_{(\lambda,t)\in\Delta_m\cap\frac1k(\mathfrak M\oplus\mathbb Z)}\pi(\lambda)+k^d(k\min_\Delta f-1)\sum_{\lambda\in\Delta\cap\frac1k\mathfrak M}\pi(\lambda).
\end{align}
By \cite{Pukhlikov-Khovanskii}, the first term
\begin{align}\label{1st-term1-lemma-app}
\sum_{(\lambda,t)\in\Delta_m\cap\frac1k(\mathfrak M\oplus\mathbb Z)}\pi(\lambda)=&k^{r+1}\int_{\Delta_m}\pi(\lambda)dt\wedge d\lambda+\frac12k^r\int_{\partial\Delta_m}\pi(\lambda)d\bar\sigma+O(k^{r-1}),~k\to+\infty,
\end{align}
where $d\bar\sigma$ is the induced lattice measure on $\partial\Delta_m$. We have
\begin{align}\label{main-term-lemma-app}
\int_{\Delta_m}\pi(\lambda)dt\wedge d\lambda=\int_{\Delta}(f(\lambda)-\min_\Delta f)\pi(\lambda)d\lambda.
\end{align}
The boundary $\partial\Delta_m$ consists of three parts: On $F_1:=(\mathbb R\times\partial\Delta)\cap\Delta_m$,
\begin{align}\label{bdry-term1-lemma-app}
\int_{F_1}\pi(\lambda)d\bar\sigma=\int_{\partial\Delta}(f(\lambda)-\min_\Delta f)\pi(\lambda)d\sigma,
\end{align}
where $d\sigma$ is the induced lattice measure on $\partial\Delta$. On $F_2=\{\min_\Delta f\}\times \Delta$,
\begin{align}\label{bdry-term2-lemma-app}
\int_{F_2}\pi(\lambda)d\bar\sigma=\int_{\Delta}\pi(\lambda)d\bar\sigma.
\end{align}
On $F_3=\{\text{graph of}~f\}$, since $f$ is rational, on each domain of linearity $\Omega$ where $$f(\lambda)=\frac 1p(q(\lambda)+r')$$
for primitive vector $(p,-q)\in\mathbb Z\oplus\mathbb M^*$ and $r'\in\mathbb Q$,
\begin{align}\label{bdry-term3-lemma-app}
\int_{\text{graph $f$ on}~\Omega}\pi(\lambda)d\bar\sigma=\int_{\Omega}\pi(\lambda)\frac1{|(p,-q)|}d\sigma_0
=\frac1{|p|}\int_{\Omega}\pi(\lambda)d\lambda.
\end{align}
Here $d\sigma_0=\sqrt{1+\frac{|q|^2}{p^2}}d\lambda$ is the standard induced Lebesgue measure. Plugging \eqref{main-term-lemma-app}-\eqref{bdry-term3-lemma-app} into \eqref{1st-term1-lemma-app}, we get
\begin{align*}
\sum_{(\lambda,t)\in\Delta_m\cap\frac1k(\mathfrak M\oplus\mathbb Z)}\pi(\lambda)=&k^{r+1}\int_{\Delta}(f(\lambda)-\min_\Delta f)\pi(\lambda)d\lambda+\frac12k^r\int_{\partial\Delta}(f(\lambda)-\min_\Delta f)\pi(\lambda)d\sigma\notag\\
&+\frac12k^r\sum_{a=1}^{N_f}\int_{\Omega_a}(1-\frac1{|p_a|})\pi(\lambda)d\lambda+O(k^{r-1}),~k\to+\infty.
\end{align*}
Similarly,
\begin{align*}
\sum_{\lambda\in\Delta\cap\frac1k\mathfrak M}\pi(\lambda)=&k^{r}\int_{\Delta}\pi(\lambda)d\lambda+\frac12k^{r-1}\int_{\partial\Delta}\pi(\lambda)d\sigma+O(k^{r-2}),~k\to+\infty.
\end{align*}
Plugging the above two relations into \eqref{1st-term1-end-lemma-app} we get the Lemma.
\end{proof}

\subsection{An alternative proof to Theorem \ref{Fut-one-para-thm}}
In the following we give an alternative proof of via an intersection formula of the Futaki invariant (cf. \cite{Li-Xu-Annl,Boucksom-Hisamoto-Jonsson}). Given a normal test configuration $(\mathcal X,\mathcal L)$ of $(X,L)$, the Futaki invariant can also be interpreted as intersection numbers
\begin{align}\label{Fut-def-int}
{\rm Fut}(\mathcal X,\mathcal L)=\frac{1}VK_{\mathcal X/\mathbb P^1}\cdot\mathcal L^{\cdot n}+\frac{\bar S}{V(n+1)}\mathcal L^{\cdot(n+1)},
\end{align}
where $V=L^n$ is the volume of $(X,L)$, $\bar S$
the mean value of the scalar curvature of $(X,L)$, and
\begin{align*}
K_{\mathcal X/\mathbb P^1}=K_{\mathcal X}-{\rm pr}^*K_{\mathbb P^1}
\end{align*}
a Weil divisor on $\mathcal X$.

Assume that $(X,L)$ is a polarized spherical variety with $L$ has a divisor $\mathfrak d_0$ given by \eqref{B-stable-divisor}, and
$$\mathfrak d_K=\sum_{D\in\mathscr B(X)}\bar m_DD$$
a $B$-stable anti-canonical divisor of $X$ corresponds to weight $\kappa_P$. Let $(\mathcal X,\mathcal L)$ be a normal test configuration associated to $v_0\in\mathcal Q_{x_0,+}$ and $m=-1$. Without loss of generality we may also assume that $\mathcal L$ has a divisor $\mathfrak D$ given by \eqref{div-mathcal-L} with $r_0=1$ and $m_\infty=0$ (otherwise one needs to divide \eqref{Fut-def-int} by $r_0^{n+1}$). Then by Theorems \ref{anti-can-div-thm} (in the one-parameter case) and \ref{anti-can-div-thm-quasi-homo} (in the quasihomogeneous case), it is direct to see
\begin{align*}
\mathfrak d_{\mathcal X/\mathbb P^1}=-\sum_{D\in\mathscr B(X)}\bar m_D\overline D-h_0(a_{x_0}-1)\mathcal X_0.
\end{align*}
In particular, if $X$ is $\mathbb Q$-Fano and $K=K_X^{-1}$, we can take $\mathfrak d_K=\mathfrak d_0$ (in particular, $\bar m_D=m_D$ for all $D\in\mathscr B(X)$) and
$$\mathfrak d_{\mathcal X/\mathbb P^1}=-(\mathfrak D+(h_0(a_{x_0}-1)-m_0){\rm pr}^*([0]))$$
is also a $\mathbb Q$-Cartier divisor that corresponds to the weight $-\kappa_P$.

The line bundle
\begin{align*}
\mathcal L_\epsilon:=\mathcal L+\epsilon K_{\mathcal X/\mathbb P^1}
\end{align*}
is a $\mathbb Q$-line bundle on $\mathcal X$, and is ample when $0<\epsilon\ll1$. The divisor
\begin{align*}
\mathfrak D_\epsilon:=\mathfrak D+\epsilon \mathfrak d_{\mathcal X/\mathbb P^1}=\sum_{D\in\mathscr B(X)}(m_D-\epsilon m_D)\overline D+(m_0-\epsilon(a_{x_0}-1))\mathcal X_0
\end{align*}
is a divisor of $\mathcal L_\epsilon$ corresponds to weight $(1-\epsilon)\kappa_P$, and the associated function is
\begin{align*}
\tilde A_{x}^\epsilon(\lambda,t):=\left\{\begin{aligned}&(1-\epsilon)\min\{A_{x_0}(\mathfrak d,\frac\lambda{1-\epsilon}),\frac{-t+m_0-\epsilon h_0(a_{x_0}-1)+\ell_0(\lambda)}{h_0(1-\epsilon)}\},~\text{when}~x=x_0,\\
&(1-\epsilon)A_{x}(\mathfrak d,\frac{\lambda}{1-\epsilon}),~\text{when}~x\not=x_0,\end{aligned}\right.
\end{align*}
for $\lambda\in\Gamma_\mathbb R$ and $\tilde A^\epsilon(\lambda,t)=\sum_{x\in C}\tilde A_{x}^\epsilon(\lambda,t)$. The polytope
\begin{align}\label{Delta-eps}
\tilde\Delta_\mathscr Z^\epsilon(\mathfrak D_\epsilon)=&\{(\lambda,t)|\tilde A^\epsilon(\lambda,t)\geq0\}\notag\\
=&\{(\lambda,t)|\lambda\in(1-\epsilon)\Delta_\mathscr Z(\mathfrak d_0),~0\leq t\leq\tau^0_\epsilon(\lambda)\},
\end{align}
where
\begin{align*}
\tau^0_\epsilon(\lambda)=m_0-\epsilon h_0(a_{x_0}-1)+\ell_0(\lambda)+h_0(1-\epsilon)(A(\frac\lambda{1-\epsilon})-A_{x_0}(\frac\lambda{1-\epsilon})).
\end{align*}
Set
\begin{align*}
\tilde\tau^0_\epsilon(\lambda)=m_0-\epsilon h_0(a_{x_0}-1)+\ell_0(\lambda)-h_0(1-\epsilon)A_{x_0}(\frac\lambda{1-\epsilon}).
\end{align*}
Then for $\lambda\in\Gamma_\mathbb R$,
\begin{align}\label{A-x-eps-piece}
\tilde A^\epsilon(\lambda,t):=\left\{\begin{aligned}
&(1-\epsilon)A(\mathfrak d_0,\frac{\lambda}{1-\epsilon}),~\text{when}~0\leq t\leq\tilde\tau_0^\epsilon(\lambda),\\
&(1-\epsilon)\sum_{x\not=x_0}A_{x}(\mathfrak d_0,\frac{\lambda}{1-\epsilon})+\frac{m_0-\epsilon h_0(a_{x_0}-1)-t+\ell_0(\lambda)}{h_0},~\text{when}~\tilde\tau^0_\epsilon(\lambda)\leq t\leq\tau^0_\epsilon(\lambda).\end{aligned}\right.
\end{align}

Note that when $\epsilon\in\mathbb Q$,
$$\mathcal L_\epsilon^{\cdot(n+1)}=\mathcal L^{\cdot(n+1)}+(n+1)\epsilon K_{\mathcal X/\mathbb P^1}\cdot \mathcal L^{\cdot n}+O(\epsilon^2),~\epsilon\to0^+.$$
Apply the intersection formula \cite[Theorem 8]{Timashev-2000} to $\mathcal L_\epsilon$, we get
\begin{align}\label{mathcal-L-n+1}
\mathcal L^{\cdot(n+1)}=&(n+1)!\int_{\tilde\Delta_\mathscr Z(\mathfrak D)}\tilde A(\lambda,\tau)\pi(\lambda+\kappa_P) d\lambda\wedge d\tau\notag\\
=&(n+1)!\int_{\Delta_\mathscr Z(\mathfrak d_0)}\int_{0}^{\tau^0(\lambda)}\tilde A(\lambda,\tau)\pi(\lambda+\kappa_P) d\lambda\wedge d\tau,
\end{align}
and
\begin{align}\label{KX/P-term}
K_{\mathcal X/\mathbb P^1}\cdot \mathcal L^{\cdot n}&=\frac1{n+1}\left.\frac d{d\epsilon}\right|_{\epsilon=0}\mathcal L_\epsilon^{\cdot(n+1)}\notag\\
&=n!\left.\frac d{d\epsilon}\right|_{\epsilon=0}\int_{\tilde\Delta_\mathscr Z^\epsilon(\mathfrak D_\epsilon)}\tilde A^\epsilon(\lambda,\tau)\pi(\lambda+(1-\epsilon)\kappa_P) d\lambda\wedge d\tau.
\end{align}
By \eqref{Delta-eps} and \eqref{A-x-eps-piece},
\begin{align*}
&\int_{\tilde\Delta_\mathscr Z^\epsilon(\mathfrak D_\epsilon)}\tilde A^\epsilon(\lambda,\tau)\pi(\lambda+(1-\epsilon)\kappa_P) d\lambda\wedge d\tau\notag\\
=&\int_{(1-\epsilon)\Delta_\mathscr Z(\mathfrak d_0)}\int_{0}^{\tilde\tau^0_\epsilon(\lambda)}(1-\epsilon)A(\mathfrak d_0,\frac{\lambda}{1-\epsilon})\pi(\lambda+(1-\epsilon)\kappa_P) d\lambda\wedge dt\notag\\
&+\int_{(1-\epsilon)\Delta_\mathscr Z(\mathfrak d_0)}\int_{\tilde\tau^0_\epsilon(\lambda)}^{\tau^0_\epsilon(\lambda)}(1-\epsilon)\sum_{x\not=x_0}A_x(\mathfrak d_0,\frac{\lambda}{1-\epsilon})\pi(\lambda+(1-\epsilon)\kappa_P) d\lambda\wedge dt\\
&+\int_{(1-\epsilon)\Delta_\mathscr Z(\mathfrak d_0)}\int_{\tilde\tau^0_\epsilon(\lambda)}^{\tau^0_\epsilon(\lambda)}\frac{m_0-\epsilon h_0(a_{x_0}-1)-t+\ell_0(\lambda)}{h_0}\pi(\lambda+(1-\epsilon)\kappa_P) d\lambda\wedge dt.
\end{align*}
Using change of variables $\lambda\to(1-\epsilon)\lambda$, $t\to(1-\epsilon)t$, and the relation \eqref{n=c+r+1}, we can rewrite the first two terms as
\begin{align*}
&\int_{(1-\epsilon)\Delta_\mathscr Z(\mathfrak d_0)}\int_{0}^{\tilde\tau^0_\epsilon(\lambda)}(1-\epsilon)A(\mathfrak d_0,\frac{\lambda}{1-\epsilon})\pi(\lambda+(1-\epsilon)\kappa_P) d\lambda\wedge dt\notag\\
&+\int_{(1-\epsilon)\Delta_\mathscr Z(\mathfrak d_0)}\int_{\tilde\tau^0_\epsilon(\lambda)}^{\tau^0_\epsilon(\lambda)}(1-\epsilon)\sum_{x\not=x_0}A_x(\mathfrak d_0,\frac{\lambda}{1-\epsilon})\pi(\lambda+(1-\epsilon)\kappa_P) d\lambda\wedge dt\notag\\
=&(1-\epsilon)^{n+1}\int_{\Delta_\mathscr Z(\mathfrak d_0)}\int_{0}^{\frac1{1-\epsilon}\tilde\tau^0_\epsilon((1-\epsilon)\lambda)}A(\mathfrak d_0,\lambda)\pi(\lambda+\kappa_P) d\lambda\wedge dt\notag\\
&+(1-\epsilon)^{n+1}\int_{\Delta_\mathscr Z(\mathfrak d_0)}\int_{\frac1{1-\epsilon}\tilde\tau^0_\epsilon((1-\epsilon)\lambda)}^{\frac1{1-\epsilon}\tau^0_\epsilon((1-\epsilon)\lambda)}\sum_{x\not=x_0}A_x(\mathfrak d_0,\lambda)\pi(\lambda+\kappa_P) d\lambda\wedge dt,
\end{align*}
and the last term can be rewritten as
\begin{align*}
&\int_{(1-\epsilon)\Delta_\mathscr Z(\mathfrak d_0)}\int_{\tilde\tau^0_\epsilon((1-\epsilon)\lambda)}^{\tau^0_\epsilon((1-\epsilon)\lambda)}\frac{m_0-\epsilon h_0(a_{x_0}-1)-t+\ell_0(\lambda)}{h_0}\pi(\lambda+(1-\epsilon)\kappa_P) d\lambda\wedge dt\\
=&(1-\epsilon)^{n+1}\int_{\Delta_\mathscr Z(\mathfrak d_0)}\int_{\frac1{1-\epsilon}\tilde\tau^0_\epsilon((1-\epsilon)\lambda)}^{\frac1{1-\epsilon}\tau^0_\epsilon((1-\epsilon)\lambda)}\frac{\frac1{1-\epsilon}({m_0-\epsilon h_0(a_{x_0}-1)})-t+\ell_0(\lambda)}{h_0}\pi(\lambda+\kappa_P) d\lambda\wedge dt
\end{align*}
Taking variation and using Lemma \ref{bdry-measure}, we get
\begin{align*}
&\left.\frac d{d\epsilon}\right|_{\epsilon=0}\int_{\tilde\Delta_\mathscr Z^\epsilon(\mathfrak D_\epsilon)}\tilde A^\epsilon(\lambda,\tau)\pi(\lambda+(1-\epsilon)\kappa_P) d\lambda\wedge d\tau\notag\\
=&-(n+1)\int_{\tilde\Delta_\mathscr Z(\mathfrak D)}\tilde A(\mathfrak D,\lambda,\tau)\pi(\lambda+\kappa_P) d\lambda\wedge d\tau
+\int_{\Delta_\mathscr Z(\mathfrak d_0)}\int_{\tilde\tau^0(\lambda)}^{\tau^0(\lambda)}(\frac{m_0}{h_0}-a_{x_0}+1)\pi(\lambda+\kappa_P)d\lambda\wedge dt.
\end{align*}
Combining with \eqref{Fut-def-int}, \eqref{mathcal-L-n+1}-\eqref{KX/P-term}, we get
\begin{align*}
{\rm Fut}(\mathcal X,\mathcal L)=&\frac1V\int_{\Delta_\mathscr Z(\mathfrak d_0)}\int_{\tilde\tau^0(\lambda)}^{\tau^0(\lambda)}\frac{m_0-h_0(a_{x_0}-1)}{h_0}\pi(\lambda+\kappa_P)d\lambda\wedge dt\\
&-\frac1V\int_{\Delta_\mathscr Z(\mathfrak d_0)}\int_{0}^{\tau^0(\lambda)}\tilde A(\mathfrak D,\lambda,t)\pi(\lambda+\kappa_P)d\lambda\wedge dt\notag\\
=&\frac1V\int_{\Delta_{x_0}^O(K_X^{-1})}\langle(\kappa_P-\lambda,-t),v_0\rangle\pi(\lambda)d\lambda\wedge dt,
\end{align*}
where in the last line we used \eqref{E-NA} and the fact that
\begin{align*}
V=\frac{(K_X^{-1})^{\cdot n}}{n!}=\int_{\Delta_\mathscr X(\mathfrak d_0)}A(\mathfrak d_0,\lambda)\pi(\lambda+\kappa_P)d\lambda=\int_{\Delta_{x_0}^O(K_X^{-1})}\pi(\lambda)d\lambda\wedge dt.
\end{align*}

\subsection{One the polytope $\Delta_{x_0}^O(K_X^{-1})$}
The polytope $\Delta_{x_0}^O(K_X^{-1})$ has another geometric meaning
\begin{prop}\label{polytope-centre-lem}
Suppose that $(\mathcal X,\mathcal L)$ is a $G$-equivariant special test configuration of $(X,K_X^{-1})$ associated to $(v_0,-1)$. Assume that $v_0=\ell_0+h_0q_{x_0}$ with $h_0\not=0$. Then $(\mathcal X_0,\mathcal L_0)$ is a polarized $G\times{\rm k}^\times$-spherical variety, and the associated moment polytope is $\Delta_{x_0}^O(K_X^{-1})-(0,a_{x_0}-1)$.
\end{prop}
\begin{proof}
Recall that for any $k\in\mathbb N$,
$${\rm H}^0(\mathcal X_0,\mathcal L_0^k)^{(B\times{\rm k}^\times)}_{(\lambda,\tau)}\cong(\mathscr F_{(\mathcal X,\mathcal L)}^\tau R_k/\mathscr F_{(\mathcal X,\mathcal L)}^{>\tau}R_k)^{(B)}_\lambda\subset{\rm Gr}(\mathscr F_{(\mathcal X,\mathcal L)}).$$
We decompose ${\rm H}^0(\mathcal X_0,\mathcal L_0^k)$ into irreducible $G\times{\rm k}^\times$-modules using Proposition \ref{F-tau-Rk}.
From Proposition \ref{F-tau-Rk} (2) and \eqref{pt-disconti-t-GrR} we see that ${\rm H}^0(\mathcal X_0,\mathcal L_0^k)^{(B\times{\rm k}^\times)}_{(\lambda,\tau)}\not=0$ if and only if
$$\deg(\delta_k(\lambda,\tau))\geq0,~\text{where $\delta_k(\lambda,\tau)$ is defined by \eqref{delta(d,lambda)}},$$
and
$$\deg(\delta_k(\lambda,\tau))-\deg(\delta_k(\lambda,\tau+1))>0.$$

The above conditions require that $(\lambda,\tau)$ satisfies
\begin{align*}
\left\{\begin{aligned}&\tau\leq k\tau^0(\frac\lambda k-\kappa_P),~\text{where $\tau^0$ is defined by \eqref{A-sum-of-D}},   \\&A_{x_0}(\mathfrak d,\frac\lambda k-\kappa_P)\geq\frac{-\frac\tau k+m_0+\ell_0(\frac\lambda k-\kappa_P)}{h_0},  \end{aligned}\right.
\end{align*}
or equivalently $(\lambda,\tau)\in k{\Delta}_\mathscr Z^o(\mathcal L)$ (see \eqref{polytope-D-Zo(mathcal-L)} for definition). Conversely, by Lemma \ref{bad-point-lem}, for any $k\in\mathbb N_+$, all points at which the first condition may fail lie in a strip of uniform (to $k$) width along $k\partial{\Delta}_\mathscr Z^o(\mathcal L)$. Thus the closure of
\begin{align*}
\bigcup_{k=0}^{+\infty}\frac1k\{(\lambda,\tau)\in(\Gamma+k\kappa_P)\times\mathbb Z|{\rm H}^0(\mathcal X_0,\mathcal L_0^k)^{(B\times{\rm k}^\times)}_{(\lambda,\tau)}\not=0\}
\end{align*}
in $(\Gamma_\mathbb R+\kappa_P)\times\mathbb R$ is $\Delta_\mathscr Z^o(\mathcal L)$.

However, to get the moment polytope we need some normalization: Take a (unimodular and integral) transformation of $\sigma$ on $\Gamma\oplus\mathbb Z$,
$$\sigma:(\lambda,\tau)\to(\lambda,\tau-m_0-\ell_0(\lambda)),$$
which induce a transformation
$$(\lambda+\kappa_P,\tau)\to(\lambda+\kappa_P,\tau-m_0-\ell(\lambda))$$
on $(\Gamma+\kappa_P)\times\mathbb Z$. The polytope ${\Delta}_\mathscr Z^o(\mathcal L)\subset(\Gamma+\kappa_0)_\mathbb R\times\mathbb R$ is then transformed to
$$\{(\lambda,t)|-h_0A_{x_0}(\mathfrak d,\lambda-\kappa_P)\leq t\leq h_0A(\mathfrak d,\lambda-\kappa_P)-h_0A_{x_0}(\mathfrak d,\lambda-\kappa_P)\}\subset(\Gamma+\kappa_0)_\mathbb R\times\mathbb R.$$
On the other hand, from \eqref{pt-disconti-t-GrR} we see the second condition also requires
\begin{align*}
{-\tau +k m_0+\ell_0(\lambda-k\lambda_0)}\in{h_0}\mathbb Z.
\end{align*}
Hence the group generated by ${\rm k}^\times$-weights on $\cup_{k=0}^{+\infty}{\rm H}^0(\mathcal X_0,\mathcal L_0^k)$ is $h_0\mathbb Z$. By rescaling it to the standard $\mathbb Z$ we get the moment polytope of $(\mathcal X_0,\mathcal L_0)$ is $\Delta_{x_0}^O(K_X^{-1})-(0,a_{x_0}-1)$.
\end{proof}
We remark that Proposition \ref{polytope-centre-lem} for $T$-varieties of complexity 1 has been proved in \cite[Corollary 4.6]{Ilten-Suss-Duke}. Also the above method applies to $G$-equivariant special test configurations of a general polarized $(X,L)$ (not necessarily $L=K_X^{-1}$).

\vskip20pt

\subsection*{Acknowledgement} We sincerely thank Prof. D.A. Timash\"ev for kindly introduce us his book \cite{Timashev-book} and many helpful discussions. We also thank the referees for careful reading and valuable comments, which improve this paper a lot.


\end{document}